\documentclass[12pt]{article}

\usepackage{amsmath, amsthm, amssymb, accents,amscd}
\usepackage{enumerate}
\usepackage{pdflscape}
\usepackage{caption}

\usepackage{ifpdf}
\ifpdf
\usepackage[pdftex]{graphicx}
\else
\usepackage[dvips]{graphicx}
\fi
\usepackage{tikz}
 	 \usetikzlibrary{arrows,backgrounds}
\usepackage[all]{xy}
\usepackage{tikz-cd}
\usepackage{multicol}

\input xy
\xyoption{all} 

\usepackage[pdftex,plainpages=false,hypertexnames=false,pdfpagelabels]{hyperref}
\newcommand{\arxiv}[1]{\href{http://arxiv.org/abs/#1}{\tt arXiv:\nolinkurl{#1}}}
\newcommand{\arXiv}[1]{\href{http://arxiv.org/abs/#1}{\tt arXiv:\nolinkurl{#1}}}

\newcommand{\googlebooks}[1]{(preview at \href{http://books.google.com/books?id=#1}{google books})}

\usepackage{xcolor}
\definecolor{dark-red}{rgb}{0.7,0.25,0.25}
\definecolor{dark-blue}{rgb}{0.15,0.15,0.55}
\definecolor{medium-blue}{rgb}{0,0,.8}
\definecolor{DarkGreen}{RGB}{0,150,0}
\definecolor{rho}{named}{red}
\hypersetup{
   colorlinks, linkcolor={purple},
   citecolor={medium-blue}, urlcolor={medium-blue}
}

\usepackage{longtable}
\usepackage{fullpage}

\setlength\topmargin{-.25in}
\setlength\headheight{0in}
\setlength\headsep{.2in}
\setlength\textheight{9in}
\setlength\parindent{0.25in}

\theoremstyle{plain}
\newtheorem{thm}{Theorem}[section]
\newtheorem*{thm*}{Theorem}
\newtheorem{thmalpha}{Theorem}

\newtheorem{cor}[thm]{Corollary}

\newtheorem*{cor*}{Corollary}

\newtheorem*{conj*}{Conjecture}
\newtheorem*{lem*}{Lemma}
\newtheorem{lem}[thm]{Lemma}
\newtheorem{prop}[thm]{Proposition}

\newtheorem*{quest*}{Question}
\newtheorem*{claim*}{Claim}

\theoremstyle{definition}

\newtheorem{defn}[thm]{Definition}

\theoremstyle{remark}

\newtheorem{ex}[thm]{Example}
\newtheorem{rem}[thm]{Remark}
\newtheorem{remark}[thm]{Remark}

\DeclareMathOperator{\Aut}{Aut}

\DeclareMathOperator{\Conf}{Conf}
\DeclareMathOperator{\tDiff}
{\mathrm{D}\!\widetilde{\,i\hspace{1.5mm}}\hspace{-1.5mm}\mathrm{ff}} 
\DeclareMathOperator{\Diff}{Diff}
\DeclareMathOperator{\End}{End}

\DeclareMathOperator{\Ann}{Ann}

\DeclareMathOperator{\tAnn}{A\widetilde{\!\!\phantom{\imath}n\phantom{\imath}\!\!}n}
\DeclareMathOperator{\tRot}{R\!\!\widetilde{\phantom{i}o\phantom{i}}\!\!t}
\DeclareMathOperator{\tMob}{M\!\!\widetilde{\phantom{i}\text{\"o}\phantom{i}\!}\!b}

\DeclareMathOperator{\spann}{span}
\DeclareMathOperator{\id}{id}

\DeclareMathOperator{\Mob}{M\ddot{o}b}
\DeclareMathOperator{\Univ}{Univ}
\DeclareMathOperator{\tUniv}{U\widetilde{ni}v}
\DeclareMathOperator{\Exp}{Exp}
\DeclareMathOperator{\Vir}{Vir}
\DeclareMathOperator{\supp}{supp}


\newcommand{\comment}[1]{}

\newcommand{\be}{\begin{enumerate}[label=(\arabic*)]}
\newcommand{\ee}{\end{enumerate}}

\newcommand{\cD}{\mathcal{D}}

\newcommand{\cC}{\mathcal{C}}

\newcommand{\norm}[1]{\left\| #1 \right\|}
\newcommand{\abs}[1]{\left| #1 \right|}

\def\acts{\hspace{.1cm}{\setlength{\unitlength}{.30mm}\linethickness{.09mm}
                        \begin{picture}(8,8)(0,0)\qbezier(7,6)(4.5,8.3)(2,7)\qbezier(2,7)(-1.5,4)(2,1)\qbezier(2,1)(4.5,-.3)(7,2)
                                                 \qbezier(7,6)(6.1,7.5)(6.8,9)\qbezier(7,6)(5,6.1)(4.2,4.4)
                        \end{picture}\hspace{.1cm}}}

\def\semicolon{;}
\def\applytolist#1{
    \expandafter\def\csname multi#1\endcsname##1{
        \def\multiack{##1}\ifx\multiack\semicolon
            \def\next{\relax}
        \else
            \csname #1\endcsname{##1}
            \def\next{\csname multi#1\endcsname}
        \fi
        \next}
    \csname multi#1\endcsname}

\def\calc#1{\expandafter\def\csname c#1\endcsname{{\mathcal #1}}}
\applytolist{calc}QWERTYUIOPLKJHGFDSAZXCVBNM;
\def\bbc#1{\expandafter\def\csname bb#1\endcsname{{\mathbb #1}}}
\applytolist{bbc}QWERTYUIOPLKJHGFDSAZXCVBNM;
\def\bfc#1{\expandafter\def\csname bf#1\endcsname{{\mathbf #1}}}
\applytolist{bfc}QWERTYUIOPLKJHGFDSAZXCVBNM;
\def\sfc#1{\expandafter\def\csname s#1\endcsname{{\sf #1}}}
\applytolist{sfc}QWERTYUIOPLKJHGFDSAZXCVBNM;

\newcommand{\noshow}[1]{}
\newcommand{\MR}[1]{}

\usetikzlibrary{shapes}
\usetikzlibrary{backgrounds}
\usetikzlibrary{decorations,decorations.pathreplacing,decorations.markings}
\usetikzlibrary{fit,calc,through}
\usetikzlibrary{external}
\tikzset{
	super thick/.style={line width=3pt}
}
\tikzstyle{shaded}=[fill=red!10!blue!20!gray!30!white]
\tikzstyle{unshaded}=[fill=white]
\tikzstyle{empty box}=[circle, draw, thick, fill=white, opaque, inner sep=2mm]
\tikzstyle{annular}=[scale=.7, inner sep=1mm, baseline]
\tikzstyle{rectangular}=[scale=.75, inner sep=1mm, baseline=-.1cm]
\tikzstyle{mid>}=[decoration={markings, mark=at position 0.5 with {\arrow{>}}}, postaction={decorate}]
\tikzstyle{mid<}=[decoration={markings, mark=at position 0.5 with {\arrow{<}}}, postaction={decorate}]
\tikzstyle{over}=[double, draw=white, super thick, double=]

\title{Every conformal net has an associated unitary VOA}
\author{Andr\'e G. Henriques, James E. Tener}
\date{}

\begin{document}

\maketitle

\begin{abstract}
    Unitary vertex operator algebras (VOAs) and conformal nets
    are the two most prominent mathematical axiomatizations of two-dimensional unitary chiral conformal field theories. 
    They are conjectured to be equivalent, but a rigorous comparison has proven challenging.
    We resolve one direction of the conjecture by showing that every conformal net has an associated unitary VOA.
   We also show that every representation of a conformal net in which the generator of rotation acts with discrete spectrum and finite-dimensional eigenspaces yields a unitary module of the corresponding VOA.
   A talk describing our results is available at:  \begin{verbatim}
   https://www.youtube.com/watch?v=f_LhNSeiiaE
   \end{verbatim}
\end{abstract}

\tableofcontents

\section{Introduction}
There currently exist two main, well-developed, mathematical formalisms for $2d$ unitary chiral conformal field theory:
the formalism of unitary vertex operator algebras \cite{DongLin14,CKLW18}
 (see \cite{Kac98,FrenkelBenZvi04,LepowskyLi04} for standard textbooks on VOAs),
and the formalism of chiral conformal nets \cite{BuchholzMackTodorov,BuchholzSchulz-Mirbach90,FrReSc92,GabbianiFrohlich93,BrunettiGuidoLongo93,Wa98}, building on the foundational work of Haag and Kastler \cite{HaKa64}.\footnote{In this paper, we use the words chiral conformal net and conformal net interchangeably.}

From a physical point of view, a chiral conformal field theory is not a stand-alone quantum field theory. It is instead a boundary theory for a 3-dimensional bulk TQFT.\footnote{This is in fact not a 3d TQFT in the sense usually understood in mathematics, as it suffers from a `gravitational anomaly': it is a metric dependent theory, but a change of the space-time metric only affects the value of the correlators by a universal scalar \cite[\S2]{Witten89}.}
We quickly explain, with a physicist audience in mind, the notions of vertex operator algebras and of chiral conformal net:

\begin{itemize}
\item\underline{Vertex o}p\!\!\underline{\,\,erator al}g\!\underline{\,ebras}:
The VOA distils and axiomatizes the properties of the vector space of  {\bf boundary local operators}, along with their {\bf operator product expansion}. It consists of a vector space $V$ equipped with a `state-operator correspondence'
\[
Y(-,z) : V\,\,\to\,\,\End(V)[[z,z^{-1}]]
\] 
and a specified vector $\nu\in V$ that is the stress-energy tensor (see Definition \eqref{def: definition of N-graded VOA}).
A \emph{unitary} VOA comes additionally equipped with an anti-linear involution $\Theta:V\to V$ that implements the coordinate change $z\mapsto \bar z$.
For $a,b,c\in V$, and $z\in \bbC$, the inner product $\langle \Theta(c),Y(a,z)b\rangle$ describes the correlator $\langle a(z) b(0) c(\infty)\rangle$ on $\bbC P^1$, with the three local operators $a, b, c$ inserted as the points $z, 0, \infty$, respectively, and the whole thing placed on the boundary of a $3$-ball.

\item\underline{Conformal nets}:
Chiral conformal nets axiomatize the {\bf Hilbert space associated to a Cauchy slice} of a cylindrical spacetime $\bbD^2\times \bbR$. The Hilbert space is called the `vacuum sector' of the conformal net, and denoted $H_0$. Following Haag-Kastler \cite{HaKa64}, for every open subset $\cO\subset \bbD^2$ of the disc, one has an associated {\bf von Neumann algebra of observables} localised in $\cO$.
If $\cO\subset \bbD^2$ is a regular neighbourhood of an interval $I=\cO\cap \partial\bbD^2$, then the bulk geometry of $\cO$ does not affect the algebra of observables: we get a map
\[
\cA:\{\text{intervals in $S^1$}\}\to 
\{\text{von Neumann algebras on $H_0$}\},
\]
which is the data of the conformal net (see Definition~\ref{def: definition of conformal net} for more details).
Conformal symmetry is encoded by an action of $\Diff(S^1)$ on $H_0$.
\end{itemize}

In the landmark paper \cite{CKLW18},
it was shown that the formalism of 
unitary vertex operator algebras
and the formalism of chiral conformal nets
are strongly interconnected in the sense that every unitary vertex operator algebra $V$ satisfying a certain technical assumption called \emph{strong locality} yields a chiral conformal net $\cA_V$, and that it is possible to recover $V$ from $\cA_V$.
We introduce a slight simplification of strong locality, which we call \emph{AQFT-locality}, that avoids a certain technical requirement (polynomial energy bounds) used in \cite{CKLW18}, but is still sufficient for generating a chiral conformal net $\cA_V$.

The main result of our paper is that every chiral conformal net $\cA$ has an associated unitary vertex operator algebra $V_\cA$. 
(Throughout this paper, we assume that the generator of rotations $L_0$ acts on the vaccum Hilbert space $H_0$ with finite-dimensional eigenspaces.)
We prove that the vertex algebra $V_\cA$ associated to a conformal net $\cA$ is AQFT-local, and that $\cA\cong \cA_{V_\cA}$.
\begin{thmalpha}
    For every conformal net $\cA$ with vacuum Hilbert space $H_0$, there is a natural structure of a unitary vertex operator algebra on the space of finite-energy vectors $V \subset H_0$.
    The unitary VOA $V$ is AQFT-local, and $\cA = \cA_{V}$.
\end{thmalpha}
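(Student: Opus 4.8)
The plan is to build the VOA structure directly from the geometric and operator-algebraic data of the net. The underlying space $V = \bigoplus_{n} \ker(L_0 - n)$ is the algebraic direct sum of the (finite-dimensional, by the standing hypothesis) $L_0$-eigenspaces; positivity of the energy makes this an $\mathbb{N}$-grading with the vacuum $\Omega \in \ker L_0$ as the unit. The Virasoro operators $L_n$, obtained by differentiating the projective $\Diff(S^1)$-action, supply both the central charge and the conformal vector $\nu = L_{-2}\Omega$. The inner product is the restriction to $V$ of the one on $H_0$, and the antilinear involution $\Theta$ is the antiunitary implementing the reflection $z \mapsto \bar z$ of the circle (the PCT/modular-conjugation datum already packaged in the net). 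All of this is essentially read off from the axioms; the only nontrivial point at this stage is that $\Theta$ preserves $V$, which follows from its commutation with $L_0$.

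The heart of the argument is the construction of the state--field correspondence $Y(-,z)$, which I would define geometrically. The guiding idea is that $Y(a,z)b$ is the vector produced by inserting the state $a$ at the interior point $z \in \mathbb{D}$ and the state $b$ at the origin. To make this precise I would first realize a finite-energy vector $a$ as localized near a boundary point using the $\Diff(S^1)$-action together with Reeh--Schlieder density, then transport it to the insertion point by the element of the Segal/Neretin semigroup of annuli that uniformizes the twice-punctured disc; equivalently, one can extract pointlike fields as a Fredenhagen--J\"or{\ss}-type scaling limit of operators in $\mathcal{A}(I)$ as $I$ shrinks to $z$. The formulation I would actually verify axioms against, however, is the dual one: first define the three-point functions $\langle \Theta(c), Y(a,z)b\rangle$ as honest holomorphic functions of $z$ on a punctured disc, pinning down their singular and regular behavior via the conformal covariance of the net, and then show that for fixed $a,b$ these functionals assemble into a well-defined series $Y(a,z)b \in \bar V[[z,z^{-1}]]$ whose coefficients lie in $V$ (not merely its completion) with only finitely many negative powers of $z$. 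Establishing this finiteness is exactly where finite-dimensionality of the eigenspaces and sharp energy bounds on the localized operators enter.

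With $Y$ in hand I would verify the VOA axioms by translating each into a property of the net. The vacuum and creation axioms ($Y(\Omega,z)=\id$ and $Y(a,z)\Omega = a + O(z)$) are built into the geometric normalization. Translation covariance $[L_{-1},Y(a,z)] = \tfrac{d}{dz}Y(a,z)$ and, more generally, full Virasoro covariance follow from the covariance of the insertion construction under the $\Diff(S^1)$-action; the same covariance fixes the conformal weight grading of the modes and identifies $Y(\nu,z) = \sum_n L_n z^{-n-2}$, so that $\nu$ really reproduces the net's stress-energy tensor. The decisive axiom is locality: the weak commutativity $(z-w)^N\,[Y(a,z),Y(b,w)] = 0$ for $N \gg 0$ is the formal-variable shadow of the defining locality of the net, namely that $\mathcal{A}(I)$ and $\mathcal{A}(J)$ commute for disjoint $I,J$, promoted from bounded localized operators to pointlike fields through the insertion/scaling limit. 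Unitarity --- the compatibility of $\Theta$ and the inner product with $Y$ --- then follows from the antiunitarity of the reflection together with M\"obius covariance.

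I expect the main obstacle to be analytic rather than formal. The difficulty is passing from the family of bounded operators and smeared fields attached to shrinking intervals to a single pointlike object $Y(a,z)$, controlling the resulting series in $z$, and --- crucially --- showing that operator-algebraic locality survives this limit as polynomial (formal) locality. Securing the requisite energy bounds, i.e.\ uniform control of $\|(1+L_0)^{-k}\,Y(a,z)\,(1+L_0)^{-k}\|$, and the convergence underlying the operator product expansion and associativity, is in my estimation the technical crux on which the entire construction rests; once convergence and the finite-energy finiteness are in place, the axioms themselves fall out of the covariance and locality of the net.
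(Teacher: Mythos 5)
Your overall architecture matches the paper's: the grading by $L_0$-eigenspaces, the vacuum, the conformal vector $\nu=L_{-2}\Omega$, the inner product inherited from $H_0$, the PCT operator $\Theta$ coming from the modular conjugation of $\cA(I_-)$ via Bisognano--Wichmann, locality of the fields descending from locality of the net, and the use of the semigroup of annuli to transport insertions. But there is a genuine gap at the heart of your construction of $Y(a,z)$. You propose to produce pointlike fields either as a Fredenhagen--J\"or{\ss}-type scaling limit of operators in $\cA(I)$ as $I$ shrinks to a point, or by invoking Reeh--Schlieder density together with ``sharp energy bounds,'' and you identify uniform control of $\|(1+L_0)^{-k}Y(a,z)(1+L_0)^{-k}\|$ as the technical crux to be secured. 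That is precisely the step that is \emph{not} available for a general conformal net: there is no known way to establish polynomial energy bounds for the point fields of an arbitrary net, and this obstruction is exactly why the conjecture remained open (it is also why the paper's authors go out of their way to avoid the energy-bound hypothesis built into strong locality). Reeh--Schlieder gives only that $\{x\Omega: x\in\cA(I)\}$ is \emph{dense}, so your route forces you to control limits of localized operators --- which is the unresolved analytic problem, not a routine verification.

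The missing idea is the paper's Lemma on realizing finite-energy vectors via worms: for any two intervals $I_1,I_2$ whose interiors cover $S^1$, every finite-energy vector admits an \emph{exact} finite decomposition $v=x_1\Omega+x_2\Omega$ with $x_i\in\cA(I_i)$ \emph{bounded}. This is where finite-dimensionality of the $L_0$-eigenspaces enters (not in tandem with energy bounds, but \emph{instead} of them): density of $\cA(J)\Omega$ composed with the finite-rank projection $p_n$ gives surjectivity onto the eigenspace, and the rotation average $p_n=\frac{1}{2\pi}\int e^{-in\theta}R_\theta\,d\theta$ is split by a partition of unity into two pieces, each a single bounded operator localized in $I_1$ or $I_2$. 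Point insertions are then \emph{finite} sums of $2^n$ products of bounded local operators interleaved with bounded annulus operators, so boundedness, holomorphicity in the insertion points, and the convergence needed for associativity (verified in the geometric-vertex-algebra framework of Br\"ugmann) all come for free, with no smeared fields, no scaling limits, and no energy bounds anywhere. Without this decomposition your proof cannot be completed as written; with it, the analytic difficulties you flag largely evaporate. A secondary gloss: the identification $Y(\nu,z)=T(z)$ does not simply ``follow from covariance'' --- the paper needs a careful comparison of two differently-constructed objects via doubled annuli $A^\dagger\cup A$, sesquilinear forms on $\underline{A}^{-1}V$, and a separating-vector argument.
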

This resolves a slightly weakened version of \cite[Conj. 9.4]{CKLW18}.
The vertex algebra structure on $V$ is established in Corollary~\ref{cor: n point functions from conformal nets}, conformal symmetry of the vertex algebra is Theorem~\ref{thm: unitary VOA from conformal net}, and unitarity is Theorem~\ref{thm: unitarity of VOA}.
The reconstruction of $\cA$ from $V$ is established in Theorem~\ref{thm: inverse constructions}, which also establishes that for every AQFT-local unitary VOA $V$ we have $V_{\cA_V}\cong V$:
\begin{thmalpha}
Let $V$ be a unitary VOA which is AQFT-local, and let $\cA_V$ be the associated conformal net. Then the unitary VOA associated to $\cA_V$ is canonically the same as $V$.
\end{thmalpha}
As in \cite[Conj. 8.18]{CKLW18}, we conjecture that every unitary vertex operator algebra is AQFT-local, and that the inclusion $\{$conformal nets$\}\subset\{$unitary VOAs$\}$ that we get is in fact an equality. Summarising, we have:
{\sf\[
\begin{tikzpicture}[black]
\fill[blue!10, rounded corners = 12] (0,0) -- +(0,3) -- +(4,3) -- +(4,0) --node[below, black
, scale=.9]{\rm The situation as understood}
node[below, yshift=-11, black
, scale=.9]{\rm before our work} cycle;
\fill[blue!10, rounded corners = 12] (5,0) -- +(0,3) -- +(4,3) -- +(4,0) --node[below, black
]{\rm by our present work} cycle;
\fill[blue!10, rounded corners = 12] (10,0) -- +(0,3) -- +(4,3) -- +(4,0) --node[below, black
]{\rm conjecturally} cycle;
\fill[red!45] (2,1) arc (-120:120:1 and .7) arc (60:-60:1 and .7);
\fill[purple!45] (2,1) arc (-120:-240:1 and .7) arc (60:-60:1 and .7);
\fill[blue!30] (2,1) arc (-120:-240:1 and .7) arc (60:300:1 and .7);
\draw (2,1) arc (-120:120:1 and .7);
\draw (2,1) arc (-120:-240:1 and .7);
\draw (2,1) arc (-60:60:1 and .7);
\draw (2,1) arc (300:60:1 and .7);
\node[blue, scale=.7] at (1.4,.65) {uVOAs};
\node[red, scale=.7] at (2.6,.65) {CNets};
\filldraw[fill=blue!30] (7,1.5) circle (1.35 and .8); 
\node[blue, scale=.7] at (7,.45) {uVOAs};
\filldraw[fill=purple!45] (7.5,1.5) circle (.8 and .5);
\node[red, scale=.7] at (8.3,.95) {CNets};
\filldraw[fill=purple!45] (12,1.5) circle (1.2 and .75);
\node[purple, scale=.7] at (12,.5) {\textcolor{blue}{uVOAs} = \textcolor{red}{CNets}};
\end{tikzpicture}
\]}
\indent
We now explain our construction of a vertex algebra from a conformal net.
Fix a conformal net $\cA$, with vacuum Hilbert space $H_0$.
The construction of the vertex algebra structure on the space $V\subset H_0$ of finite energy vectors proceeds in two main steps.
Given a family of disjoint intervals $I_j$ embedded \emph{inside} the unit disc, and bounded operators $x_j \in \cA(I_j)$\footnote{We use here the `coordinate-free' description of conformal nets, which assigns von Neumann algebras to arbitrary intervals, not just subintervals of the unit circle \cite{BartelsDouglasHenriques15}. Alternatively, one can parametrize the intervals $I_j$ by intervals of $S^1$; this parametrization should then be considered as part of the data of~$x_j$.}, we construct in Section~\ref{sec: worms}  a vector $|x_1 \ldots x_n\rangle \in H_0$
\begin{equation}\label{eqn: intro worm diagram}
\begin{tikzpicture}[scale=0.4,baseline]
  \filldraw[fill=red!10!blue!20!gray!30!white, draw=black, thick] (0,0) circle (3cm);
  \draw[thick] (-1.7,1) to[out=30,in=150] (-1.2,1.1) to[out=-30,in=210] (-0.8,1);
  \node at (-1.2,1.4) {${\scriptstyle x_1}$};
  \draw[thick] (0.5,1.7) to[out=60,in=120] (0.9,1.7) to[out=-60,in=240] (1.3,1.6);
  \node at (0.9,2) {${\scriptstyle x_2}$};
  \draw[thick,rotate around={20:(0,0)}] (0,0.2) to[out=60,in=120] (0.4,0.2) to[out=-60,in=240] (0.8,0.1);
  \node at (0.4,0.6) {${\scriptstyle x_3}$};
  \draw[thick] (-1.5,-1.2) to[out=10,in=170] (-1,-1.3) to[out=-10,in=190] (-0.5,-1.2);
  \node at (-1,-0.9) {${\scriptstyle x_4}$};
\end{tikzpicture}
\quad\, \mapsto \,\quad |x_1 \ldots x_4 \rangle \in H_0.
\end{equation}
Informally, we call these `worm-shaped insertions' in the unit disc, in contrast to the point insertions considered below.
If each of the intervals $I_j$ lies on the boundary of the disc we have $|x_1 \ldots x_n\rangle = x_1 \ldots x_n\Omega$.
In general, the vectors $|x_1 \ldots x_n\rangle$ are determined by that property in combination with a certain compatibility condition with the action of the \emph{semigroup of annuli}, which we now describe.

An \emph{annulus} is represented by a pair of Jordan curves $C^\infty(S^1) \to \bbC$, with one curve encircling the other. The collection of all annuli, denoted $\Ann$, forms a semigroup under the operation of conformal welding.
As in \cite{HenriquesTener24ax}, we allow the Jordan curves to touch but not cross, so that the annulus might be `thin' along part of the boundary. The following is an example of a thin annulus:
\[
A=
\begin{tikzpicture}[baseline={([yshift=-.5ex]current bounding box.center)},rotate=-35,transform shape]
	\coordinate (a) at (120:1cm);
	\coordinate (b) at (240:1cm);
	\coordinate (c) at (180:.25cm);
	\fill[fill=red!10!blue!20!gray!30!white] (0,0) circle (1cm);
	\draw (0,0) circle (1cm);
	\fill[fill=white] (a)  .. controls ++(210:.6cm) and ++(90:.4cm) .. (c) .. controls ++(270:.4cm) and ++(150:.6cm) .. (b) -- ([shift=(240:1cm)]0,0) arc (240:480:1cm);
	\draw ([shift=(240:1cm)]0,0) arc (240:480:1cm);
	\draw (a) .. controls ++(210:.6cm) and ++(90:.4cm) .. (c);
	\draw (b) .. controls ++(150:.6cm) and ++(270:.4cm) .. (c);
\end{tikzpicture}
\,\,\in\,\,\Ann.
\]
We constructed in loc.~cit central extensions $\tAnn_c \to \Ann$ for every $c\in \bbC$, and it was shown in \cite{HenriquesTenerIntegratingax} that every unitary\footnote{The unitarity condition restricts the cental charge $c$ to take its values in $\bbR_{\ge0}$.} positive energy representation of the Virasoro algebra with central charge $c$ exponentiates to a holomorphic representation of $\tAnn_c$ by bounded operators on the Hilbert space completion of the representation.
Similarly to \eqref{eqn: intro worm diagram}, given a centrally-extended annulus $\underline{A} \in \tAnn_c$ with disjoint intervals $I_j$ in the interior of $A$, and worm-shaped insertions $x_j \in \cA(I_j)$, we construct a bounded operator $\underline{A}[x_1 \ldots x_n]\in B(H_0)$:
\begin{equation*}
\begin{tikzpicture}[scale=0.4,baseline]
  \filldraw[fill=red!10!blue!20!gray!30!white, draw=black, thick] (0,0) circle (3cm);
  \filldraw[fill=white, draw=black, thick] (0,0) circle (1cm);
  \begin{scope}[xshift=-10]
  \draw[thick] (-1.7,1) to[out=30,in=150] (-1.2,1.1) to[out=-30,in=210] (-0.8,1);
  \node at (-1.2,1.4) {${\scriptstyle x_1}$};
  \end{scope}
  \begin{scope}[xshift=-10]
  \draw[thick] (0.5,1.7) to[out=60,in=120] (0.9,1.7) to[out=-60,in=240] (1.3,1.6);
  \node at (0.9,2) {${\scriptstyle x_2}$};
  \end{scope}
  \begin{scope}[xshift=50]
  \draw[thick,rotate around={20:(0,0)}] (0,0.2) to[out=60,in=120] (0.4,0.2) to[out=-60,in=240] (0.8,0.1);
  \node at (0.4,0.6) {${\scriptstyle x_3}$};
  \end{scope}
  \begin{scope}[xshift=-20]
  \draw[thick] (-1.5,-1.2) to[out=10,in=170] (-1,-1.3) to[out=-10,in=190] (-0.5,-1.2);
  \node at (-1,-0.9) {${\scriptstyle x_4}$};
  \end{scope}
\end{tikzpicture}
\quad \mapsto \,\,\,\,
\underline{A}[x_1 \ldots x_4]: H_0 \to H_0.
\end{equation*}
Moreover, if $\underline{A}$ is supported in an interval $I \subset S^1$ (the boundary parametrizations agree outside of $I$ \footnote{The property of $\underline{A}$ being supported in $I$ is in fact slightly stronger than simply requiring that the two boundary parametrizations to agree outside of $I$ (see Definition~\ref{def: 4.12}).}), then $\underline{A}[x_1 \ldots x_n] \in \cA(I)$.

The next step of our construction, in Section~\ref{sec:Point insertions}, assigns vectors (resp. operators) to discs (resp. annuli) with \emph{point} insertions\footnote{These are called \emph{local operators} in the physics terminology.}.
More precisely, given finite-energy vectors $v_1, \ldots, v_n \in V \subset H_0$ and distinct points $z_1, \ldots, z_n$ lying in the interior of the unit disc, we construct vectors $| v_1(z_1) \ldots v_n(z_n) \rangle \in H_0$.
Similarly, if $\underline{A} \in \tAnn_c$ and the points $z_j$ lie in the interior of the annulus, then we define a bounded operator $\underline{A}[v_1(z_1) \ldots v_n(z_n)]\in B(H_0)$, which lies in $\cA(I)$ when $\underline{A}$ is localized in $I$.
A key insight of this paper is that point insertions can be realized as finite linear combinations of worm-shaped insertions:
\begin{equation*}
\begin{tikzpicture}[scale=0.3,baseline=-2]
  \filldraw[fill=red!10!blue!20!gray!30!white, draw=black, thick] (0,0) circle (3);
  \filldraw[fill=white, draw=black, thick] (0,0) circle (1);
\fill (140:2.1) circle (.1);
    \node[xshift=6,yshift=1] at (140:2.1) {$\scriptstyle v_1$};
\fill (30:1.6) circle (.1);
    \node[xshift=6,yshift=1] at (30:1.6) {$\scriptstyle v_2$};
\end{tikzpicture}
\,\,\,\,=\,\,\,\,
\begin{tikzpicture}[scale=0.3,baseline=-2]
  \filldraw[fill=red!10!blue!20!gray!30!white, draw=black, thick] (0,0) circle (3);
  \filldraw[fill=white, draw=black, thick] (0,0) circle (.9);
\draw[thick] (140:2.1) + (-20:.5) arc (-20:200:.5);
    \node[xshift=5,yshift=7] at (140:2.1) {$\scriptstyle x_1$};
\draw[thick](30:1.6) + (-20:.5) arc (-20:200:.5);
    \node[xshift=4,yshift=7] at (30:1.6) {$\scriptstyle x_2$};
\end{tikzpicture}
\,\,+\,\,
\begin{tikzpicture}[scale=0.3,baseline=-2]
  \filldraw[fill=red!10!blue!20!gray!30!white, draw=black, thick] (0,0) circle (3);
  \filldraw[fill=white, draw=black, thick] (0,0) circle (.9);
\draw[thick] (140:2.1) + (20:.5) arc (20:-200:.5);
    \node[xshift=4,yshift=5] at (140:2.1) {$\scriptstyle y_1$};
\draw[thick](30:1.6) + (-20:.5) arc (-20:200:.5);
    \node[xshift=4,yshift=7] at (30:1.6) {$\scriptstyle x_2$};
\end{tikzpicture}
\,\,+\,\,
\begin{tikzpicture}[scale=0.3,baseline=-2]
  \filldraw[fill=red!10!blue!20!gray!30!white, draw=black, thick] (0,0) circle (3);
  \filldraw[fill=white, draw=black, thick] (0,0) circle (.9);
\draw[thick] (140:2.1) + (-20:.5) arc (-20:200:.5);
    \node[xshift=5,yshift=7] at (140:2.1) {$\scriptstyle x_1$};
\draw[thick](30:1.6) + (20:.5) arc (20:-200:.5);
    \node[xshift=3,yshift=4.5] at (30:1.6) {$\scriptstyle y_2$};
\end{tikzpicture}
\,\,+\,\,
\begin{tikzpicture}[scale=0.3,baseline=-2]
  \filldraw[fill=red!10!blue!20!gray!30!white, draw=black, thick] (0,0) circle (3);
  \filldraw[fill=white, draw=black, thick] (0,0) circle (.9);
\draw[thick] (140:2.1) + (20:.5) arc (20:-200:.5);
    \node[xshift=4,yshift=5] at (140:2.1) {$\scriptstyle y_1$};\draw[thick](30:1.6) + (20:.5) arc (20:-200:.5);
    \node[xshift=3,yshift=4.5] at (30:1.6) {$\scriptstyle y_2$};
\end{tikzpicture}
\end{equation*}
The vectors $| v_1(z_1) \ldots v_n(z_n) \rangle$ depend holomorphically on the $z_i$, and satisfy a geometric analog of the vertex algebra axioms, which yields a vertex algebra structure on $V$; this is described in Section~\ref{sec: geometric vertex algebras}.
We then show in Section~\ref{sec: conformal vector} that there is a natural conformal vector for $V$, and that the associated representation of Virasoro algebra exponentiates to the diffeomorphism symmetry of the original net $\cA$. In Section~\ref{sec: unitarity}, we show that the resulting VOA satisfies all the axioms of a \emph{unitary} vertex operator algebra.
Finally, in Section~\ref{sec: smeared fields}, we show that the constructions $\cA\mapsto\cA_V$ and $V\mapsto \cA_V$ are inverse to each other.

Let us call a map $f:\bbD\to \bbD$ \emph{univalent} if it injective, holomorphic in the interior, smooth all the way to the boundary, and has everywhere non-zero derivative.
In future work, we will establish an equivalent condition to AQFT-locality which is expressed purely in terms of the VOA $V$ and its Hilbert space completion $H_0$:
a unitary $V$ VOA is \emph{integrable} if for every $v\in V$, every univalent map $f:\bbD\to \bbD$ sending $0$ to $0$, and every point $z\in \mathring \bbD \setminus f(\bbD)$ the map $V \to H_0$
\[
u \mapsto Y(v,z)\;\! f {\cdot} u
\]
extends to a bounded linear map $H_0 \to H_0$.
We will show that a unitary VOA is AQFT-local if and only if it is integrable, providing an alternative description of the class of VOAs that correspond to conformal nets.

Consider now a representation $(K,\pi)$ of the conformal net $\cA$
in which the generator $L_0$ of rotation acts with discrete spectrum and finite-dimensional eigenspaces,
and let $M \subset K$ be its subspace of finite-energy vectors.
In Section~\ref{sec: modules},
we show that $M$ naturally acquires the structure of a unitary $V$-module.
The compatibility between the representation $\pi$ of $\cA$ and the action $Y^M$ of $V$ on $M$ is expressed in terms of the smeared fields $Y(v,f)$ and $Y^M(v,f)$:

\begin{thmalpha}
   Let $\cA$ be a conformal net, and $V$ the associated unitary VOA. Let $(K,\pi)$ be a representation of $\cA$ in which $L_0$ acts with discrete spectrum and finite-dimensional eigenspaces, and let
   $M\subset K$ be its subspace of finite energy vectors. 
   Then there is a unique unitary $V$-module structure on $M$
   \[
   Y^M:V\to \End(M)[[z,z^{-1}]],
   \]
   characterised by the requirement that
   \[
   \pi_I(Y(v,f)) = Y^M(v,f)
   \]
   for every function $f \in C^\infty(S^1)$ with support in an interval $I\subset S^1$.
\end{thmalpha}

\noindent
This is proven in Theorem~\ref{thm: CN rep applied to smeared field is smeared field} and Corollary~\ref{cor: Kfe strong module}.

The article is organised as follows.
In Section~\ref{sec: conformal nets} we give some background information on conformal nets, including the coordinate-free version of conformal nets introduced in \cite{BartelsDouglasHenriques15}, and in Section~\ref{sec: VOAs} we provide some background on vertex operator algebras.
Section~\ref{sec: semigroup of annuli} introduces the semigroup of annuli, along with its central extensions and representation theory \cite{HenriquesTener24ax,HenriquesTenerIntegratingax}.
In Section~\ref{sec: worms}, we construct from an arbitrary conformal net vectors (and operators) associated to worm-shaped insertions in discs (and annuli).
In Section~\ref{sec:Point insertions}, we construct the analogous vectors and operators associated to point insertions in discs and annuli.
In Section~\ref{sec: The vertex algebra associated to a conformal net}, we construct a unitary VOA using the point insertions considered in the previous section.
In Section~\ref{sec: smeared fields}, we show that the VOA associated to a conformal net is AQFT-local, and that the smeared fields generate the local algebras of the original net.
Finally, in Section~\ref{sec: modules} we consider representations of conformal nets, and show that they produce modules for the associated VOA.

\subsection*{Acknowledgements}

The second author was supported by ARC Discovery Project DP200100067.
For the purpose of Open Access, the authors have applied a CC BY public copyright licence
to any Author Accepted Manuscript (AAM) version arising from this submission.

\section{Conformal nets}\label{sec: conformal nets}

Let $\bbD=\{z\in\bbC:|z|\le 1\}$ be the \emph{standard unit disc}, and
$S^1=\partial\bbD=\{z\in\bbC:|z|=1\}$ the \emph{standard circle}.
Let $\Diff(S^1)$ be the group of orientation preserving diffeomorphisms of $S^1$, and $\Mob:=\Aut(\bbD)\cong \mathit{PSL}(2,\bbR)$ its subgroup of M\"obius transformations.

\begin{defn}\label{def: definition of conformal net}
A \emph{conformal net} $\cA$ consists of:
\begin{itemize}
\item
A Hilbert space $H_0$ called the \emph{vacuum sector}.
\item
A unit vector $\Omega \in H_0$ called the \emph{vacuum vector}.
\item
A strongly continuous representation $\Mob\to U(H_0)$ with one-dimensional fix-point space spanned by $\Omega$.
The infinitesimal generator of rotations $L_0$ is required to have positive spectrum, and finite dimensional eigenspaces\footnote{\label{foot: finite dim L0 eigenspaces}It is plausible that the finite-dimensionality of the $L_0$-eigenspaces follows from the other axioms of conformal nets (even though, in the absence of $\Diff(S^1)$-covariance, this is known to be false).}.
\item
A strongly continuous projective representation $\Diff(S^1)\to PU(H_0)$ extending the action of $\Mob$.
\item
An assignment
\[
(I \subset S^1)\,\,\,\mapsto\,\,\, (\cA(I)\subset B(H_0))
\]
of a von Neumann algebra to every interval $I\subset S^1$.
This assignment should be $\Diff(S^1)$-equivariant,
and satisfy
\[
(I \subset J)\,\,\,\,\Rightarrow\,\,\,\,\,(\cA(I)\subset \cA(J)),\vspace{-2mm}
\]
and \hspace{3.6cm} $(\mathring I\cap \mathring J=\emptyset)\,\Rightarrow\,([\cA(I), \cA(J)]=0)$.\\[4mm]
Moreover, if $\varphi\in \Diff(S^1)$ fixes $I$ pointwise, then $\varphi$ should also fix $\cA(I)$ pointwise.\newline
Finally, the vacuum vector should be cyclic for the joint actions of the algebras $\cA(I)$.
\end{itemize}
\end{defn}

As a consequence of the above axioms we have the Reeh-Schlieder theorem, which states that $\Omega$ is cyclic and separating for each algebra $\cA(I)$ \cite[Cor. 2.8]{GabbianiFrohlich93} (see also \cite{ReehSchlieder61,Borchers65}).
We also have the celebrated Haag duality theorem \cite[Thm. 2.19(ii)]{GabbianiFrohlich93} (see also \cite{BuchholzSchulz-Mirbach90}, \cite[Thm. 6.2.3]{LongoLectureNotesII}):
\[
\cA(I')=\cA(I)',
\]
where $A':=\{b\in B(H_0):ab=ba\,\,\forall a\in A\}$ denotes the commutant of a von Neumann algebra $A\subset B(H_0)$, and $I':=S^1\setminus\mathring I$ denotes the closed complement of an interval $I\subset S^1$.

We write $H_0^{f.e.}$ for the algebraic direct sum of the $L_0$-eigenspaces of $H_0$, and call it the space of \emph{finite energy vectors}. It is a dense subspace of $H_0$ of countable dimension.

\begin{defn}
A \emph{representation} of a conformal net $\cA$ is a Hilbert space $K$ equipped with actions $\rho_I:\cA(I)\to B(K)$ for every interval $I\subset S^1$, which are 
compatible in the sense that $\rho_J|_{\cA(I)}=\rho_I$ whenever $I \subset J$.

A representation is called \emph{irreducible} if it cannot be written as a direct sum of other representations.
\end{defn}

Let $\Diff_I(S^1)\subset \Diff(S^1)$ be the group of diffeomorphisms supported in $I\subset S^1$.
By the axioms of conformal nets, the operator $U_\varphi\in PU(H_0)$ associated to a diffeomorphism 
$\varphi\in \Diff_I(S^1)$ commutes with $\cA(I')$;
by Haag duality, it is thus in $PU(\cA(I))$.
For every interval $I\subset S^1$, the action $\Diff(S^1)\to PU(H_0)$ therefore restricts to a homomorphism
\[
\Diff_I(S^1)\to PU(\cA(I)).
\]
Given a representation $K$ of the conformal net, we thus get a compatible collection of maps
\begin{equation}\label{eq:Diff_I -> PU(H)}
\Diff_I(S^1)\to PU(K)
\end{equation}
hence a homomorphism
\[
\mathrm{colim}_{I\subset S^1}\Diff_I(S^1)\to PU(K).
\]
The above colimit was computed in \cite[Thm. 11]{Henriques19Colimits},
and shown to be canonically isomorphic to the universal cover $\tDiff(S^1)$ of $\Diff(S^1)$.
So we get a homomorphism
\begin{equation}\label{eq:universal cover Diff -> PU(H)}
\tDiff(S^1)\to PU(K)
\end{equation}
for every representation $K$ of our conformal net.

Let $\tRot(S^1)\subset \tDiff(S^1)$ be the universal cover of the subgroup of rigid rotations, 
and let $L_0$ be
the infinitesimal generator of the action $\tRot(S^1)\to PU(K)$.
This is a self-adjoint unbounded operator on $K$, well defined up to an additive constant (one may fix the constant by considering the action of the universal cover of $\Mob$, but we shall not need this here\footnote{In \cite{Weiner06}, it is proven that this operator is always positive.
}). 
In this paper, we shall always assume that our conformal net representations have the property that $L_0$ has discrete spectrum.
(A general representation can be written as direct integral of irreducible ones \cite[App.~C]{KaLoMu01}, and every irreducible representation has the property that $L_0$ has discrete spectrum.)
\begin{defn}\label{def: H^f.e.}
Given a representation $K$ of a conformal net,
we write $K^{f.e.}$ for the algebraic direct sum of the $L_0$-eigenspaces of $K$, and call it the space of \emph{finite energy vectors} of $K$.
\end{defn}

Given a conformal net $\cA$, there is an associated ``coordinate-free conformal net'' \cite[Prop. 4.3]{BartelsDouglasHenriques15}, which assigns von Neumann algebras to abstract intervals, and Hilbert spaces to abstract discs.

\begin{defn}\label{def: disc and circle}
A \emph{disc} is a complex manifold with boundary which is holomorphically equivalent to $\bbD$.
A \emph{circle} is a smooth oriented $1$-manifold diffeomorphic to $S^1$.
An \emph{interval} is a smooth oriented $1$-manifold with boundary diffeomorphic to $[0,1]$.
\end{defn}

A coordinate-free conformal net is a functor from the category of intervals (no longer subsets of $S^1$) and embeddings to the category of von Neumann algebras (no longer subalgebras of $B(H_0)$) and injective $*$-algebra homomorphisms.
Orientation preserving embeddings go to $\bbC$-linear homomorphisms, and orientation reversing embeddings go to $\bbC$-linear anti-homomorphisms.
Given a conformal net, we shall denote by the same letter $\cA$ the corresponding coordinate-free conformal net.

Furthermore, for every circle $S$, there is the notion of an $S$-sector.
This is a Hilbert space $K$ equipped with compatible actions $\cA(I)\to B(K)$ of all the von Neumann algebras associated to intervals 
$I\subset S$
(for $S^1$ the standard circle, this recovers the notion of a representation of $\cA$).
Finally, every diffeomorphism $\varphi:S_1\to S_2$ induces a pullback functor $\varphi^*$ from the category of $S_2$-sectors to the category of $S_1$-sectors.

\begin{defn}\label{def: unitary implements}
Let $K_1$ be an $S_1$-sector, $K_2$ an $S_2$-sector, and $\varphi:S_1\to S_2$ a diffeomorphism.
A unitary $U:K_1\to K_2$ is said to \emph{implement} $\varphi$ if it is a morphism of $S_1$-sectors from $K_1$ to $\varphi^*(K_2)$.
Equivalently, $U$ implements $\varphi$ if it satisfies
\[
U x U^* = \varphi(x)
\]
for every $x\in\cA(I)$ and $I\subset S_1$.
\end{defn}
\begin{rem}
The homomorphism $U:\Diff(S^1)\to PU(H_0)$ in the definition of conformal net sends a diffeomorphism $\varphi\in\Diff(S^1)$ to the unique-up-to-phase
unitary $U(\varphi):H_0\to H_0$ that implements it.
\end{rem}
If $D$ is a disc with boundary $S=\partial D$, then we can associate to it 
a canonical $S$-sector
\begin{equation}\label{eq:vacuum sector def}
H_0(D) := \Conf(\bbD,D) \times_{\Mob} H_0 
\end{equation}
called the \emph{vacuum sector of $\cA$ associated to $D$}
(see \cite[Thm. 2.13]{BartelsDouglasHenriques15} for a version of this construction).
It admits actions $\cA(I)\to B(H_0(D))$,
for every $I\subset \partial D$,
given by
\begin{equation}\label{eq: action of A(I) on H_0(D)}
x[(\varphi,\xi)] := [(\varphi, {\varphi}^{-1}(x) \xi)],
\end{equation}
where ${\varphi}^{-1}(x)$ denotes the image of $x\in\cA(I)$ under the map $\cA(I)\to \cA(\varphi^{-1}(I))$ induced by $\varphi^{-1}$,
and $[(\varphi,\xi)]\in H_0(D)$ denotes the equivalence class of $(\varphi,\xi)\in \Conf(\bbD,D) \times H_0$.
The vacuum sector $H_0(D)$ also admits a canonical vector
$\Omega_D\in H_0(D)$,
called the \emph{vacuum vector associated to $D$},
defined as the equivalence class of $(\varphi,\Omega)$ for any $\varphi \in \Conf(\bbD,D)$.

Finally, when $D$ is the standard disc, the vacuum sector $H_0(\bbD)$  is canonically identified with $H_0$,
and the vacuum vector $\Omega_\bbD\in H_0(\bbD)$ maps to $\Omega\in H_0$ under that identification.

\section{Vertex operator algebras}\label{sec: VOAs}

\begin{defn}\label{def: definition of N-graded VOA}
An $\mathbb N$-graded \emph{vertex operator algebra} (VOA) consists of a vector space $V = \bigoplus_{n=0}^\infty V(n)$ with $\dim(V(0))=1$, $\dim(V(n))<\infty$, two distinguished vectors $\Omega\in V(0)$ and $\nu\in V(2)$, and a \emph{state-field correspondence}
\[
v\,\,\mapsto\,\, Y(v,z)\in\End(V)[[z,z^{-1}]]
\] 
satisfying:
\begin{itemize}
\item For any vector $v\in V$, we have
$Y(\Omega,z)v=v$ and $Y(v,z)\Omega=v+\mathcal O(z)$.
\item (\emph{locality axiom}) $\forall u,v\in V$, the commutator $[Y(u,z),Y(v,w)]$ is killed by $(z-w)^N$ for $N>\!\!>0$.
\item The operators $L_n$ defined by
$\sum_{n\in \bbZ}L_nz^{-n-2} =Y(\nu,z)$ satisfy the Virasoro algebra relations: $[L_m,L_n]=(m-n)L_{m+n}+\tfrac c{12}(m^3-m)\delta_{m+n,0}$ for some scalar $c$.
\item $L_0$ is the grading operator
\item $Y(L_{-1}v,z)=\tfrac{d}{dz}Y(v,z)$.
\end{itemize}
The number $c$ is called the \emph{central charge} of the VOA.
\end{defn}

\begin{remark}Writing $Y(v,z) = \sum v_{(n)} z^{-n-1}$, the locality axiom is known to be equivalent to the Borcherds identities (see e.g. \cite[\S4.8]{Kac98}):
\begin{align*}
\sum_{j = 0}^\infty \binom{m}{j} \big(u_{(n+j)}v\big)_{(m+k-j)}& = \sum_{j=0}^\infty (-1)^j \binom{n}{j} u_{(m+n-j)} v_{(k+j)}\\
& -\sum_{j=0}^\infty(-1)^{j+n} \binom{n}{j} v_{(n+k-j)} u_{(m+j)}. 
\end{align*}
\end{remark}

Given a vertex algebra $V = \bigoplus_{n=0}^\infty V(n)$, we define $\widehat V:= \prod_{n=0}^\infty V(n)$ to be the product of its weight spaces.
For any vectors $v_1,\ldots,v_n\in V$ and complex numbers $|z_1|>\ldots>|z_n|\ge 0$, the expression
\[
Y(v_1,z_1)\ldots Y(v_n,z_n)\Omega
\]
converges in $\widehat V$.
This is a holomorphic function of the $z_i$ in the domain $|z_1|>\ldots>|z_n|\ge 0$ that extends (uniquely) to a holomorphic function on $\bbC^n\setminus\Delta:=\{(z_i)\in\bbC^n: z_i\neq z_j\}$.
Moreover, for any permutation $\sigma\in S_n$, the functions defined by the above procedure starting from 
$Y(v_1,z_1)\ldots Y(v_n,z_n)\Omega$ 
and from
$Y(v_{\sigma(1)},z_{\sigma(1)})\ldots Y(v_{\sigma(n)},z_{\sigma(n)})\Omega$
agree as $\widehat V$-valued functions on $\bbC^n\setminus\Delta$.

\begin{defn}\label{def: unitary vertex operator algebra}
A \emph{unitary vertex operator algebra} is an $\mathbb N$-graded vertex operator algebra equipped with an inner product $\langle\,,\,\rangle:V\times V\to \bbC$ and an involution $\Theta:V\to V$, called the \emph{PCT involution}.
The involution $\Theta$ is an anti-linear VOA automorphism, and the inner product\footnote{Here, either $z$ is a formal variable, or $z \in \bbC^\times$ is a number.
If $z$ is treated as a number, then the inner product in the LHS is a map $V\times\widehat V\to \bbC$, while the one in the RHS is a map $\widehat V\times V\to\bbC$.} satisfies 
\[
\big\langle v,Y(u,z)v'\big\rangle = \big\langle Y(u,z)^\dagger v,v'\big\rangle
\]
for all $u,v,v'\in V$, where
\begin{align}\label{eq: not: Y(v,z)^dagger}
Y(u,z)^\dagger\;\!:=&\;Y\big(\Theta\;\! e^{zL_1}(-z^{-2})^{L_0}u,\bar z^{-1}\big)\\
                        =&\;Y(e^{\bar zL_1}(-\bar z^{-2})^{L_0}\Theta u,\bar z^{-1})\notag
\end{align}
(see Remark~\ref{rem: mysterious formula for adjoint of field} for an explanation of this formula).
It is customary to also include the normalisation condition $\|\Omega\|=1$ in the definition of unitary vertex operator algebra.
\end{defn}

Let $\Aut(\bbC[[t]])$ be the group of formal power series $a_1z+a_2z^2+\ldots$ with $a_1\neq 0$, under the operation of composition,
and let $\Aut_0(\bbC[[t]])\subset \Aut(\bbC[[t]])$ be the subgroup where the coefficient of $z$ is equal to $1$.
For any vertex operator algebra $V$, the subalgebra $\Vir_{>0}:=\mathrm{Span}\{L_n\}_{n>0}$ of the Virasoro algebra acts locally nilpotently on $V$, and thus exponentiates to an action of $\Aut_0(\bbC[[t]])$.
Furthermore, since $V$ has integer grading, the operator $L_0$ exponentiates to an action of $\bbC^\times$. These assemble to an action\footnote{This is not an action by VOA automorphisms.}
\begin{equation}\label{eq: action of Aut Ct on V}
\Aut_0(\bbC[[t]])\rtimes \bbC^\times = \Aut(\bbC[[t]])\to \End(V).
\end{equation}
When $V$ is a unitary vertex operator algebra, the above action extends to a bigger group that also contains the orientation reversing changes of coordinates:

\begin{lem}\label{lem: action of Aut(C[[t]]) rtimes Z/2 -- VOA}
Let $V$ be a unitary VOA. Then the Virasoro operators $L_n$ for $n\ge 0$ and the anti-linear PCT operator $\Theta:V\to V$ together induce an action of 
\[
\Aut(\bbC[[t]])\rtimes \bbZ/2
\]
on $V$ (where the generator of $\bbZ/2$ is the map $t\mapsto \bar t$).
\end{lem}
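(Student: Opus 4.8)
The plan is to define the extended action by letting $\Aut(\bbC[[t]])$ act exactly as in \eqref{eq: action of Aut Ct on V} and sending the generator $\sigma$ of $\bbZ/2$ (the coordinate change $t\mapsto\bar t$) to the anti-linear operator $\Theta$. By the universal property of the semidirect product, such an assignment extends to a genuine action of $\Aut(\bbC[[t]])\rtimes\bbZ/2$ precisely when two relations hold: first, $\Theta^2=\id$, which is the hypothesis that $\Theta$ is an involution; and second, the covariance relation $\Theta\,\rho(f)\,\Theta^{-1}=\rho(\bar f)$ for every $f\in\Aut(\bbC[[t]])$, where $\bar f$ is obtained by conjugating the Taylor coefficients of $f$. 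This coefficient-conjugation is exactly the automorphism of $\Aut(\bbC[[t]])$ by which $\sigma$ acts in the semidirect product, since conjugating a substitution $f(t)=\sum a_n t^n$ by $t\mapsto\bar t$ yields $t\mapsto\overline{f(\bar t)}=\sum\bar a_n t^n$. Everything therefore reduces to establishing this covariance relation.

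The key infinitesimal input I would isolate is that $\Theta$ commutes with every Virasoro operator, i.e. $\Theta L_n\Theta^{-1}=L_n$ for all $n$. Since $\Theta$ is an anti-linear VOA automorphism it fixes the conformal vector $\nu$ and satisfies $\Theta\,Y(a,z)\,\Theta^{-1}=Y(\Theta a,\bar z)$ for all $a$; taking $a=\nu$ gives $\Theta\,Y(\nu,z)\,\Theta^{-1}=Y(\nu,\bar z)$. Expanding $Y(\nu,z)=\sum_n L_n z^{-n-2}$ and using that anti-linearity of $\Theta$ replaces each scalar $z^{-n-2}$ by $\bar z^{-n-2}$, I would match coefficients of $\bar z^{-n-2}$ on the two sides to conclude $\Theta L_n\Theta^{-1}=L_n$ for every $n\in\bbZ$, in particular for $n\ge 0$.

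I would then exponentiate. Writing a general element as $f=u\cdot\lambda$ with $u\in\Aut_0(\bbC[[t]])$ and $\lambda\in\bbC^\times$, the action is $\rho(u)=\exp(X)$ for some $X=\sum_{n>0}c_n L_n\in\Vir_{>0}$ and $\rho(\lambda)=\lambda^{L_0}$. Because $\Theta$ is anti-linear and commutes with each $L_n$, conjugation by $\Theta$ carries complex scalars to their conjugates: $\Theta\exp(X)\Theta^{-1}=\exp\big(\sum_{n>0}\bar c_n L_n\big)$, and, using that $L_0$ has integer eigenvalues, $\Theta\,\lambda^{L_0}\Theta^{-1}=\bar\lambda^{L_0}$. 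Since coefficient-conjugation $X\mapsto\sum\bar c_n L_n$ is precisely the differential of the group automorphism $u\mapsto\bar u$, and $\lambda\mapsto\bar\lambda$ is its effect on the $\bbC^\times$-factor, this yields $\Theta\rho(f)\Theta^{-1}=\rho(\bar f)$, completing the verification.

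The main obstacle I anticipate is the bookkeeping in this last step: one must check carefully that coefficient-conjugation of the Lie-algebra element $X$ corresponds, under the exponential map, to the coefficient-conjugation $u\mapsto\bar u$ of the group element, i.e. that complex conjugation is a group automorphism of $\Aut(\bbC[[t]])$ whose differential is the anti-linear conjugation of $\Vir_{\ge 0}$. This is routine for the pro-unipotent factor $\Aut_0(\bbC[[t]])$, where $\exp$ is a bijection, and for the $\bbC^\times$-factor, where single-valuedness of $\lambda^{L_0}$ relies on the integrality of the $L_0$-grading; but it is the point that requires the most care, along with confirming that the anti-linearity of $\Theta$ is correctly reflected in the semilinear nature of the resulting action on $V$.
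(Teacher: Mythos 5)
Your proposal is correct and follows essentially the same route as the paper's proof: both reduce the lemma to the commutation relation $\Theta(\lambda L_n)\Theta = \bar\lambda L_n$ (derived from $\Theta$ being an anti-linear VOA automorphism fixing $\nu$) and then conjugate the exponentiated action to get $\Theta g\Theta = \bar g$ for $g\in\Aut(\bbC[[t]])$. The paper states this in four lines; your write-up simply fills in the coefficient-matching and the exponentiation bookkeeping that the paper leaves implicit.
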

\begin{proof} As explained above, $\Vir_{\ge 0}$ exponentiates to an action of $\Aut(\bbC[[t]])$ on $V$.
The PCT operator commutes with the Virasoro operators $L_n$ because it is an automorphism of the VOA.
It satisfies $\Theta (\lambda L_n) \Theta = \bar \lambda L_n$ for $\lambda\in\bbC$, and therefore $\Theta g \Theta = \bar g$ for any $g\in \Aut(\bbC[[t]])$.
The latter is equivalent an action of $\Aut(\bbC[[t]])\rtimes \bbZ/2$ on $V$.
\end{proof}

\begin{remark}\label{rem: mysterious formula for adjoint of field}
The mysterious $\Theta e^{zL_1}(-z^{-2})^{L_0}u$ which appears in \eqref{eq: not: Y(v,z)^dagger} is just the action on $u$
of the coordinate transformation $t \mapsto \overline{(t+z)}^{-1}-\bar z^{-1}$
(the germ of $\zeta\mapsto \bar \zeta^{-1}$ around $\zeta = z$). 
To see this, note that 
\[
\big(t \mapsto \overline{(t+z)}^{-1}-\bar z^{-1}\big) = h\circ f \circ g 
\]
where $h(t)=\bar t$, $f(t) = (t^{-1}-z)^{-1}$, and $g(t) = -z^{-2} t$.
The Mobius transformations $f$ and $g$ correspond to the operators $e^{z L_1}$ and $(-z^{-2})^{L_0}$, respectively.


\end{remark}

\subsection*{Modules for vertex operator algebras}

Let $V$ be a vertex operator algebra, and let $M=\bigoplus_{n}M(n)$ be a vector space  graded by (some subset of) $\bbR_{\ge 0}$, or, more generally, graded by some subset of $\bbC$ whose real part is bounded below.
\begin{defn}\label{def: VOA module}
We say that $M$ is a \emph{module} for $V$ (also called a \emph{strong module}) if it satisfies $\dim(M(n))<\infty$, and if it comes equipped with a state-field correspondence
\[
v\,\,\mapsto\,\, Y^M(v,z)\in\End(M)[[z,z^{-1}]]
\]
which is degree preserving when the formal variable $z$ is given degree $-1$, and satisfies: 
\begin{itemize}
\item $Y^M(\Omega,z) = \id_M$
\item For any $u,v\in V$, the commutator $[Y^M(u,z),Y^M(v,w)]$ is killed by $(z-w)^N$ for $N>\!\!>0$.
\item For any $u,v\in V$, the difference between $Y^M(Y(u,z)v,w)$ and $Y^M(u,z+w)Y^M(v,w)$ is killed by $(z+w)^N$ for $N>\!\!>0$.
\end{itemize}
\end{defn}
Writing $Y^M(v,z) = \sum v^M_{(n)} z^{-n-1}$,
the last two axioms are known to be equivalent to the Borcherds identities (see \cite[\S4.2-4.4]{LepowskyLi04}, in which the Borcherds identities are written in the notation of formal delta functions):
\begin{align*}
\sum_{j = 0}^\infty \binom{m}{j} \big(u_{(n+j)}v\big)^M_{(m+k-j)}& = \sum_{j=0}^\infty (-1)^j \binom{n}{j} u_{(m+n-j)}^M v_{(k+j)}^M\\
& -\sum_{j=0}^\infty(-1)^{j+n} \binom{n}{j} v_{(n+k-j)}^M u_{(m+j)}^M. 
\end{align*}

A \emph{weak module} is a vector space $M$ as above, but without any grading, equipped with a state-field correspondence subject to the lower truncation condition 
$Y^M(v,z)a\in M[[z]][z^{-1}]$ $\forall a\in M$, and subject to all the axioms above, except for the one requiring that the state-field correspondence is degree preserving.

\begin{defn}
A unitary (weak) module for a unitary VOA $V$ is a (weak) $V$-module $M$ equipped with an inner product $\langle\,\,,\,\rangle_M:M\times M\to\bbC$ that satisfies
\[
\big\langle a,Y^M(u,z)b\big\rangle_M = \big\langle Y^M(u,z)^\dagger a, b\big\rangle_M.
\]
for all $u\in V$ and $a,b\in M$ (where $Y^M(u,z)^\dagger$ is defined as in \eqref{eq: not: Y(v,z)^dagger}).
\end{defn}

\section{The semigroup of annuli and its representations}\label{sec: semigroup of annuli}

\subsection{The semigroup of annuli}

The Lie algebra of $\Diff(S^1)$ is the set $\cX:=\cX(S^1)$ of vector fields on $S^1$, equipped with the Lie bracket
\begin{equation}\label{eq: ie bracket of vector fields - OPP}
\big[f(\theta)\tfrac{\partial}{\partial \theta},g(\theta)\tfrac{\partial}{\partial \theta}\big]:=
(gf'-fg')\tfrac{\partial}{\partial \theta}
\end{equation}
(the opposite of the usual Lie bracket of vector fields).
Its complexification $\cX_\bbC:=\cX\otimes_\bbR\bbC$ is known to have no associated Lie group.
But it has an associated semigroup, the \emph{semigroup of annuli} $\Ann$ \cite{SegalDef,Neretin90,HenriquesTener24ax}, which we now describe.

An \emph{embedded annulus} is a subset of $\bbC$ of the form
\begin{equation*} 
A=D_{out}\setminus \mathring D_{in}\subset \bbC,
\end{equation*}
where $D_{in}\subset D_{out}\subset \bbC$ are discs (closed subsets bounded by smooth Jordan curves).
Its inner and outer boundaries $\partial_{in}A:=\partial D_{in}$ and $\partial_{out}A:=\partial D_{out}$ are equipped with the orientations inherited from $D_{in/out}$.
Note that $\partial_{in}A\cap\partial_{out}A$ is allowed to be non-empty, and that $\mathring A$ is allowed to be disconnected, or even empty.
We equip $A$ with the sheaf $\cO_A$ of functions that are continuous on $A$, holomorphic on $\mathring A$, and smooth on $\partial_{in}A$ and on $\partial_{out}A$.

\begin{defn}\label{def: annulus}
An \emph{annulus} is a locally ringed space $A=(A,\cO_A)$ which is isomorphic to an embedded annulus.
We also record the orientations of $\partial_{in}A$ and $\partial_{out}A$ as part of the data.\footnote{When $\mathring A\not = \emptyset$, 
the orientations of $\partial_{in}A$ and $\partial_{out}A$ can be deduced from the orientation of $\mathring A$.}
\end{defn}

An annulus $A$ is called \emph{thick} if $\partial_{in}A\cap\partial_{out}A=\emptyset$ (in which case it is diffeomorphic to $S^1\times [0,1]$).
It is called \emph{thin} if $\partial_{in}A\cap\partial_{out}A$ contains an interval, and
\emph{completely thin} if $\partial_{in}A=\partial_{out}A$ (in which case it is diffeomorphic to $S^1$).

\begin{lem}\label{lem: A iso to A with d_out = S^1}
Every annulus is isomorphic to an embedded annulus $A\subset \bbC$ that satisfies $\partial_{out}A=S^1$ and $0\not\in A$. 
\end{lem}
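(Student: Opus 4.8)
The plan is to reduce everything to a single application of the Riemann mapping theorem to the outer disc. By hypothesis the annulus is isomorphic to an embedded annulus $A=D_{out}\setminus\mathring D_{in}\subset\bbC$, so it suffices to produce a biholomorphism of $D_{out}$ onto $\bbD$ that is smooth up to the boundary and then transport the entire annulus $A$ through it; the image will automatically have outer boundary $S^1$, and a suitable normalisation will place $0$ inside the inner disc.

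Concretely, first I would choose a point $p\in\mathring D_{in}$. Since $\mathring D_{in}\subset\mathring D_{out}$ (an interior point of $D_{in}$ has a $D_{in}$-neighbourhood, which is also a $D_{out}$-neighbourhood), the point $p$ lies in $\mathring D_{out}$. Let $\phi:\mathring D_{out}\to\mathring\bbD$ be the Riemann map normalised by $\phi(p)=0$ and $\phi'(p)>0$. The key analytic input is that, because $\partial D_{out}$ is a smooth Jordan curve, $\phi$ extends to a $C^\infty$ diffeomorphism $\phi:D_{out}\to\bbD$ with nonvanishing derivative on the boundary (the smooth strengthening of Carath\'eodory's theorem, due to Kellogg and Warschawski). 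Then $\phi(A)=\bbD\setminus\mathring{\phi(D_{in})}$ is again an embedded annulus: its outer disc is $\bbD$, so $\partial_{out}\phi(A)=S^1$; its inner disc $\phi(D_{in})$ is bounded by the smooth Jordan curve $\phi(\partial D_{in})$, because $\phi$ is a diffeomorphism of $D_{out}$; and $0=\phi(p)\in\phi(\mathring D_{in})=\mathring{\phi(D_{in})}$, so $0\notin\phi(A)$.

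It remains to check that $\phi$ is an isomorphism of annuli in the sense of locally ringed spaces, i.e.\ that it intertwines the structure sheaves $\cO_A$ and $\cO_{\phi(A)}$. This is immediate from the construction: $\phi$ is a homeomorphism $A\to\phi(A)$, holomorphic on $\mathring A$ and smooth on each boundary component, with an inverse enjoying the same properties, so $f\mapsto f\circ\phi$ carries continuous functions (resp.\ functions holomorphic on the interior, resp.\ functions smooth on a boundary arc) to functions of the same type. Since $\phi$ is orientation preserving, the recorded orientations of $\partial_{in}$ and $\partial_{out}$ are also respected.

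I expect the only genuine obstacle to be the boundary regularity of the Riemann map: one must know that the biholomorphism of the interiors extends to a smooth diffeomorphism of the closed discs, which is exactly what upgrades $\phi$ from a mere biholomorphism to a morphism of locally ringed spaces in the sense of Definition~\ref{def: annulus}. The possibility that $A$ is thin (so that $\partial D_{in}$ meets $\partial D_{out}$) causes no difficulty, since $\phi$ is smooth on all of $D_{out}$, including the points where the two boundary curves touch.
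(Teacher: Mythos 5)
Your proof is correct and follows essentially the same route as the paper, which also just applies the Riemann mapping theorem for domains with smooth boundary to $D_{out}$ (citing Bell's book for the boundary regularity you attribute to Kellogg--Warschawski). The extra details you supply --- normalising $\phi(p)=0$ for $p\in\mathring D_{in}$ to arrange $0\notin A$, and checking that $\phi$ respects the structure sheaves --- are exactly the steps the paper leaves implicit.
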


\begin{proof}
By definition, every annulus is isomorphic to one of the form $D_{out}\setminus \mathring D_{in}\subset \bbC$.
Now apply the Riemann mapping theorem for domains with smooth boundary \cite[Thm. 8.2]{Bell92} 
to $D_{out}$.
\end{proof}

An \emph{annulus with parametrised boundary} is an annulus equipped with an orientation reversing diffeomorphisms $\varphi_{in}:S^1\to \partial_{in}A$, and an orientation preserving diffeomorphism $\varphi_{out}:S^1\to \partial_{out}A$.

\begin{defn}
The \emph{semigroup of annuli} $\Ann$ is the set of isomorphisms classes of annuli with parametrised boundary.
The semigroup operation is \emph{conformal welding}, which
is the gluing of annuli along their boundaries:
\[
\begin{split}
\cup:\Ann &\times \Ann \,\longrightarrow\, \Ann\\
A_1 &\cup A_2\, :=\, \mathrm{pushout}\big(A_1 \xleftarrow{\,\varphi_{in}^{A_1}\,} S_1 \xrightarrow{\,\varphi_{out}^{A_2}\,} A_2\big)\\
&\phantom{\cup A_2\,\, :}=\, A_1 \sqcup A_2 \big/ \varphi_{in}^{A_1}(x)\sim \varphi_{out}^{A_2}(x).
\end{split}
\]
The annulus $A_1 \cup A_2$ is equipped with the sheaf $\cO_{A_{1} \cup A_{2}}$ specified by
$f\in \cO_{A_1 \cup A_2}(U) \Leftrightarrow f|_{U\cap A_i}\in \cO_{A_i}(U\cap A_i)$ for $i=1,2$.
We refer the reader to \cite[Prop. 3.5]{HenriquesTener24ax} for a proof that $(A_1 \cup A_2,\cO_{A_1\cup A_2})$ is again an element of $\Ann$.
\end{defn}

The semigroup of annuli is equipped with an involution $\dagger: \Ann \to \Ann$
which sends an annulus $A$ to the same annulus equipped with the opposite complex structure: $f\in \cO_{A^\dagger}(U)\Leftrightarrow \bar f\in \cO_{A}(U)$. The incoming and outgoing circles are exchanged, but their parametrisations remain the same.
This involution satisfies
\begin{equation}\label{eq: dag of product}
(A\cup B)^\dagger=B^\dagger\cup A^\dagger.
\end{equation}

\begin{ex}\label{ex: r^ell_0}
For $r\in(0,1]$, the round annulus $A_r:=\{z\in\bbC:r\le|z|\le1\}$, with boundary parametrizations $\varphi_{in}(z)=rz$ and $\varphi_{out}(z)=z$ is denoted $r^{\ell_0}\in\Ann$.
\end{ex}

There is an obvious embedding
\[
\Diff(S^1)\hookrightarrow \Ann
\]
which sends a diffeomorphism $\varphi$ to the completely thin annulus $A_\varphi:=(S^1,\varphi_{in}{=}\varphi,\varphi_{out}{=}\id)$.
The semigroup of annuli is, in some sense, a complexification of this subgroup. 

\begin{lem}\label{lem: A_varphi  A B=r^ell_0}
For every annulus $A\in\Ann$ there exist $0<r\le 1$, 
$B\in\Ann$, and $\varphi\in \Diff(S^1)$ such that $A_\varphi\cup A\cup B=r^{\ell_0}$. 
\end{lem}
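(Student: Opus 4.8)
The plan is to use the completely thin annulus $A_\varphi$ to standardise the \emph{outer} boundary parametrisation of $A$, and then to use $B$ to conformally fill in the region between the inner boundary of $A$ and a small round circle. First I would invoke Lemma~\ref{lem: A iso to A with d_out = S^1} to replace $A$ by an isomorphic embedded annulus $A=\bbD\setminus\mathring D_{in}$ with $\partial_{out}A=S^1$ and $0\notin A$; since $\partial_{out}A=S^1$, this forces the origin to lie in the open inner disc $\mathring D_{in}$. Write $\varphi_{in}^A,\varphi_{out}^A$ for the boundary parametrisations of $A$, and note that $\varphi_{out}^A$ is an orientation preserving diffeomorphism of $S^1$, i.e.\ an element of $\Diff(S^1)$.

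The first step is to set $\varphi:=\varphi_{out}^A\in\Diff(S^1)$. Welding the completely thin annulus $A_\varphi$ onto the outside of $A$ adds no conformal modulus, so $A':=A_\varphi\cup A$ is again the embedded annulus $\bbD\setminus\mathring D_{in}$, with unchanged inner parametrisation $\varphi_{in}^{A'}=\varphi_{in}^A$. Unwinding the welding formula $\varphi_{in}^{A_\varphi}(x)\sim\varphi_{out}^A(x)$ shows that the outer parametrisation of $A'$, read off on the curve $\partial_{out}A=S^1$, is $\varphi_{out}^A\circ\varphi^{-1}=\id_{S^1}$; this is precisely the statement that welding with $A_\varphi$ reparametrises the outer boundary. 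Thus $A'$ already has the standard outer boundary of $r^{\ell_0}$, and the only remaining discrepancy from a round annulus is the shape of its inner boundary.

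The second step is to construct $B$. Choose $r\in(0,1)$ small enough that the round disc $D_r=\{z:|z|\le r\}$ is contained in $\mathring D_{in}$, which is possible because $0\in\mathring D_{in}$. Let $B:=D_{in}\setminus\mathring D_r$ be the corresponding embedded thick annulus, equipped with inner parametrisation $\varphi_{in}^B(z):=rz$ and outer parametrisation $\varphi_{out}^B:=\varphi_{in}^A$. With these choices the welding $A'\cup B$ identifies $\varphi_{in}^{A'}(x)=\varphi_{in}^A(x)=\varphi_{out}^B(x)$, so the gluing along $\partial D_{in}$ is the identity and the underlying set of $A'\cup B$ is the embedded round annulus $\bbD\setminus\mathring D_r$. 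Its outer parametrisation is $\id_{S^1}$ and its inner parametrisation is $z\mapsto rz$, which are exactly the data defining $r^{\ell_0}$ in Example~\ref{ex: r^ell_0}; hence $A_\varphi\cup A\cup B=A'\cup B=r^{\ell_0}$, as desired.

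Two points require care, and the second is the genuine obstacle. The first is a bookkeeping check that the choice $\varphi_{out}^B:=\varphi_{in}^A$ is admissible: the map $\varphi_{in}^A$ is orientation reversing with respect to the inner-boundary orientation of $\partial D_{in}$, but as the \emph{outer} boundary of $B$ the curve $\partial D_{in}$ carries the opposite orientation, so the same map is orientation preserving as required of $\varphi_{out}^B$. The second is to verify that the welded locally ringed space $A'\cup B$ is isomorphic to $r^{\ell_0}$ as an annulus and not merely as a set: one must check that a function which is continuous on $\bbD\setminus\mathring D_r$ and holomorphic on each of $\mathring{A'}$ and $\mathring B$ is automatically holomorphic across the smooth gluing curve $\partial D_{in}$. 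I expect this to be the main point to pin down, but it is exactly a removable-curve statement of Morera--Painlev\'e type, so the sheaf $\cO_{A'\cup B}$ coincides with $\cO_{r^{\ell_0}}$ and the set-theoretic identity map is the required isomorphism of annuli.
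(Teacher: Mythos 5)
Your proof is correct and is essentially the paper's own argument: normalise $A$ via Lemma~\ref{lem: A iso to A with d_out = S^1}, absorb the outer parametrisation into a completely thin annulus $A_\varphi$, and take $B:=D_{in}\setminus\mathring D_r$ with $\varphi^B_{in}(z)=rz$ and $\varphi^B_{out}=\varphi^A_{in}$; the sheaf-gluing point you flag at the end is already covered by the welding result cited in the definition of $\cup$ (\cite[Prop.~3.5]{HenriquesTener24ax}), so nothing further is needed there. The only discrepancy is that you take $\varphi=\varphi^A_{out}$ where the paper takes $\varphi=(\varphi^A_{out})^{-1}$ --- a convention-level difference in how $A_\varphi\cup A$ reparametrises the outer boundary that does not affect the existence statement (and your computation of which one yields the identity is consistent with the welding convention as stated).
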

\begin{proof}
By Lemma~\ref{lem: A iso to A with d_out = S^1}, we may assume that $A=\bbD\setminus \mathring D_{in}\subset \bbC$, with $0\not\in A$.
Let $\varphi^A_{in}:S^1\to \partial D_{in}$ and $\varphi^A_{out}\in \Diff(S^1)$  be the boundary parametrizations of $A$.
Pick $r$ such that $D_r:=\{z\in\bbC:|z|\le r\}\subset D_{in}$, and let $B:=D_{in}\setminus \mathring D_r$ with boundary parametrizations 
$\varphi^B_{in}(z)=rz$ and $\varphi^B_{out}=\varphi^A_{in}$.
Then setting $\varphi=(\varphi^A_{out})^{-1}$, we have $A_\varphi\cup A\cup B=r^{\ell_0}$.
\end{proof}

\begin{defn}\label{def: localized annuli}
An annulus with parametrised boundary is said to be \emph{localized in some interval} $I\subset S^1$ if its incoming and outgoing boundary parametrizations agree on the closed complement $I':=S^1\setminus \mathring I$ of $I$: 
\[
\varphi_{in}|_{I'} = \varphi_{out}|_{I'}: I'\to A.
\]
(Such annuli are never thick.)
We denote by $\Ann(I)\subset \Ann$ the sub-semigroup of annuli localized in $I$.
\end{defn}

The semigroup of annuli contains another important sub-semigroup $\Univ\subset \Ann$, which behaves in many respects like the Borel subgroup of an algebraic group:

\begin{defn}
The \emph{semigroup of univalent maps} $\Univ$ is the set of holomorphic embeddings $f:\bbD\to\bbD$ (with $f'$ everywhere non-zero, including on $\partial\bbD$).
The inclusion $\Univ\hookrightarrow \Ann$ sends a univalent map $f\in \Univ$ to the annulus
\[
A_f := \bbD \setminus f(\mathring \bbD),
\]
with $\varphi_{in}=f$ and $\varphi_{out}=\id$.

Within $\Univ$, we have a further sub-semigroup $\Univ_0$, consisting of the univalent maps which fix the point $0 \in \bbD$.
\end{defn}

\begin{defn}
A \emph{disc with parametrised boundary} is a disc $D$ (Definition~\ref{def: disc and circle}) equipped with an orientation preserving diffeomorphism $S^1\to\partial D$. 

The ($C^\infty$ version of the) \emph{universal Teichm\"uller space} $\cT$ is the set of isomorphism classes of disc with parametrised boundary.
\end{defn}

The semigroup of annuli acts on the universal Teichm\"uller space by conformal welding, and the map
\begin{equation}\label{eq: action of Ann on T}
\Ann\to\cT:A\mapsto A\cup\bbD
\end{equation}
identifies $\cT$ with the quotient $\Ann/\Univ$ (the quotient of $\Ann$ by the equivalence relation generated by $A\cup B\sim A$, for all $B\in \Univ$).
The universal Teichm\"uller space is thus an analog of the flag manifold $G/B$ of an algebraic group $G$.

\subsection{Central extensions of the semigroup of annuli}
\label{sec: Central extensions of the semigroup of annuli}

For $c\in\bbC$, let
\begin{equation}\label{eq: Vir SES}
0 \to \bbC\to \widetilde{\cX}_c \to \cX_\bbC \to 0
\end{equation}
be the Lie algebra central extension determined by the Virasoro cocycle
\begin{equation}\label{eq: Vir cocycle}
\omega_{\Vir}(f,g)=\omega_{\Vir}\big(f(\theta)\tfrac{\partial}{\partial \theta},g(\theta)\tfrac{\partial}{\partial \theta}\big):=\tfrac c{12}\int_{S^1}(f'''(\theta)+f'(\theta))\,g(\theta)\,\tfrac{d\theta}{2\pi i}.
\end{equation}
Let $\ell_n:=-ie^{in\theta}\tfrac{\partial}{\partial \theta}\in \cX_\bbC$,\footnote{In terms of the coordinate $z=e^{i\theta}$ on $S^1$, this is $\ell_n=z^{n+1}\tfrac{\partial}{\partial z}$,
and the Virasoro cocycle is given by $\omega_{\Vir}\big(f(z)\tfrac{\partial}{\partial z},g(z)\tfrac{\partial}{\partial z}\big)=\tfrac c{12}\int_{S^1}\tfrac{\partial^3 f}{\partial z^3}(z)\,g(z)\,\tfrac{dz}{2\pi i}$.}
and let $L_n\in \widetilde{\cX}_c$ be its standard lift so that, using \eqref{eq: ie bracket of vector fields - OPP}, we have the familiar
$\big[L_m,L_n\big]\,=\,(m-n)L_{m+n}+\tfrac c{12}(m^3-m)\delta_{m+n,0}$.
In particular, the Virasoro Lie algebra $\Vir_c:=\mathrm{Span}\{L_n\}_{n\in\bbZ}\oplus\bbC$ is a dense subalgebra in $\widetilde{\cX}_c$.

Associated to the central extension \eqref{eq: Vir SES}, there exists a central extension of semigroups
\begin{equation}\label{eq: tAnnc SES}
0 \to \bbC^\times{\times}\,\bbZ\to \tAnn_c \to \Ann \to 0
\end{equation}
that we describe shortly.
We first list some related central extensions that follow from the one above.
First, if we mod out $\tAnn_c$ by the central $\bbC^\times$, we get the universal cover of $\Ann$:
\[
0 \to \bbZ\to \tAnn \to \Ann \to 0.
\]
Let $\tMob$ be the universal cover of the M\"obius group.
Since $\tMob$ is also the universal central extension of $\Mob$,
the map $\tMob\to\Mob\to \Diff(S^1)\to\Ann$ lifts uniquely to a map $\tMob\to \tAnn_c$,
yielding a canonical copy of $\bbZ$ in the center of $\tAnn_c$.
(I.e., the center of $\tAnn_c$ is \emph{canonically} isomorphic to $\bbC^\times{\times}\bbZ$.)
\begin{defn}
We let $\Ann_c$ be the quotient of $\tAnn_c$ by the central subgroup $\bbZ$ described above.
This is a central extension of the semigroup of annuli by $\bbC^\times$:
\begin{equation}\label{eq: Annc SES}
0 \to \bbC^\times\to \Ann_c \to \Ann \to 0.
\end{equation}
If $I \subset S^1$ is an interval, we also let $\Ann_c(I)$ be the pullback of the central extension $\Ann_c$ to $\Ann(I)$:
\[
0 \to \bbC^\times\to \Ann_c(I) \to \Ann(I) \to 0.
\]
We call the elements of $\Ann_c$ \emph{lifted annuli}, and the elements of $\tAnn_c$ \emph{$z$-lifted annuli}.
\end{defn}

Our construction of $\tAnn_c$ relies of the notion of \emph{framing} of an annulus:

\begin{defn}\label{def: Framings and paths}
Let $A$ be an annulus with parametrised boundary.
A \emph{framing} of $A$ is a smooth surjective map
\[
h:S^1\times[0,1]\to A
\]
whose restriction to each $S^1\times\{t\}$ is an embedding, 
satisfies $h|_{S^1\times\{0\}}=\varphi_{in}$ and $h|_{S^1\times\{1\}}=\varphi_{out}$,
and for which $-\frac{\partial h/\partial t}{\partial h/\partial \theta}$ has everywhere non-negative imaginary part.

The \emph{path} associated to a framing $h$ is the map $X:[0,1]\to \cX_\bbC$ given by:
\[
X(t):=\Big({-}\tfrac{\partial h/\partial t}{\partial h/\partial \theta}\Big) \tfrac{\partial}{\partial \theta}.
\]
\end{defn}

By \cite[Cor. 4.17]{HenriquesTener24ax} every annulus with parametrised boundary admits a framing.
Given a framing $h$ with associated path $\{X(t)\}_{t\in[0,1]}$ then, following \cite[\S4]{HenriquesTener24ax}, we can recover $A$ from the path by a certain geometric procedure, which we think of,
and formally denote by a time-ordered exponential:
\[
A = \prod_{1\ge \tau\ge 0} \Exp\big(X(\tau)d\tau\big).
\]

Given an $\cX_\bbC$-valued path $X$ as above,
let $\{\widetilde X(t)\}_{t\in[0,1]}$ be its image under the standard splitting $\cX_\bbC \to \cX_\bbC\oplus\bbC=\widetilde{\cX}_c$ of \eqref{eq: Vir SES}.
In \cite[\S5]{HenriquesTener24ax}, we constructed $\tAnn_c$ as a certain set of equivalence classes of formal expressions
\begin{equation}\label{eq: under A (formal)}
\underline A = z\,{\cdot} \prod_{1\ge \tau\ge 0} \Exp\big(\widetilde X(\tau)d\tau\big),
\end{equation}
where $\widetilde X$ is as above, and $z\in\bbC^\times$.

We now describe the equivalence relation on the set of formal expressions \eqref{eq: under A (formal)}.
If $A\in\Ann$ is an annulus with parametrised boundary, and if $\{\widetilde X_0(t)\}_{t\in[0,1]}$ and $\{\widetilde X_1(t)\}_{t\in[0,1]}$
come from two framings $h_0$ and $h_1$ of $A$, then we declare
\begin{equation}\label{eq: under A (formal)  tild  }
z_0\,{\cdot}\! \prod_{1\ge \tau\ge 0} \Exp\big(\widetilde X_0(\tau)d\tau\big)
\,\,\,\sim\,\,\,
z_1\,{\cdot}\! \prod_{1\ge \tau\ge 0} \Exp\big(\widetilde X_1(\tau)d\tau\big)
\end{equation}
in $\tAnn_c$ if there exists an (equivalently for every) isotopy $h(\theta,t,s)$ between $h_0(\theta,t)$ and $h_1(\theta,t)$ such that
\begin{equation}\label{eq: zz = int}
z_1\;\!z_0^{-1}
\,=\,\exp
\int \omega_{\Vir}\Big(\tfrac{\partial h/\partial t}{\partial h/\partial \theta},\tfrac{\partial h/\partial s}{\partial h/\partial \theta}\Big) \,dtds.
\end{equation}

Let $\ast:\widetilde{\cX}_c\to \widetilde{\cX}_c$ be the continuous conjugate-linear extension of the map $L_n\mapsto L_{-n}$. 
The dagger structure on $\Ann$ lifts to a dagger structure on $\tAnn_c$ given by
\[
\Big(z\,{\cdot}\! \prod_{1\ge \tau\ge 0} \Exp\big(\widetilde X(\tau)d\tau\big)\Big)^\dagger
\,:=\, \bar z\,{\cdot}\! \prod_{1\ge \tau\ge 0} \Exp\big(\widetilde X(1-\tau)^*d\tau\big)
\]
and satisfying
\begin{equation}\label{eq: dag of product $z$-lifted}
(\underline A\cup \underline B)^\dagger=\underline B^\dagger\cup \underline A^\dagger.
\end{equation}

\begin{ex}\label{ex: r^L_0}
The annulus $r^{\ell_0}\in\Ann$ from Example~\ref{ex: r^ell_0} admits a canonical $z$-lift $r^{L_0}\in\tAnn_c$ 
coming from the framing $h(\theta,t)=e^{i\theta} r^{1-t}$ of $A_r$.
In that case, $X(t)=i|\log(r)|$ is independent of $t$, and corresponds to the vector field $\log(r)\ell_0\in\cX_\bbC$.
This lifts to $\log(r)L_0\in\Vir_c\subset \widetilde\cX_c$, and our notation $r^{L_0}$ is justified:
\[
\prod_{1\ge \tau\ge 0} \Exp\big(\widetilde X(\tau)d\tau\big) = \Exp\big(\log(r)L_0\big) = r^{L_0}.
\]
\end{ex}

When $c\in\bbR$ and $f$, $g$ in \eqref{eq: Vir cocycle} are real-valued,
the Virasoro cocycle takes its values in $i\bbR$.
The same construction (\ref{eq: under A (formal)},\,\ref{eq: under A (formal)  tild  }) as above
restricted to the case of completely thin annuli, and with $z\in U(1)$ as opposed to $\bbC^\times$,
produces a central extension
\begin{equation}\label{eq: tAnnc SES - Diff}
0 \to U(1){\times}\,\bbZ\to \tDiff_c(S^1) \to \Diff(S^1) \to 0.
\end{equation}
Modding out by the canonical central $\bbZ$, we then also get an analog of \eqref{eq: tAnnc SES}
\begin{equation}\label{eq: Annc SES - Diff}
0 \to U(1)\to \Diff_c(S^1) \to \Diff(S^1) \to 0.
\end{equation}

The central extensions \eqref{eq: tAnnc SES - Diff}, \eqref{eq: Annc SES - Diff}, \eqref{eq: tAnnc SES}, \eqref{eq: Annc SES} fit into a commutative diagram
\[
\begin{tikzcd}
0 \arrow[r] &\bbZ \arrow[r] \arrow[d, equal] &\tDiff_c(S^1) \arrow[r] \arrow[d] & \Diff_c(S^1) \arrow[d]\arrow[r] & 0\\
0 \arrow[r] &              \bbZ \arrow[r] &               \tAnn_c \arrow[r]                    & \Ann_c\arrow[r] & 0
\end{tikzcd}
\]
where each vertical arrow is the inclusion of a real Lie group in its ``complexification''.
For $\varphi\in\Diff_c(S^1)$ or $\tDiff_c(S^1)$, we shall write $\underline A{}_\varphi$ for the corresponding element of $\Ann_c$ or $\tAnn_c$.

The above semigroups are further equipped with anti-linear involutions 
that we now describe.
To start, the continuous $\bbR$-linear extensions of the maps 
$\lambda\ell_n\mapsto\bar\lambda\ell_n$
and
$\lambda L_n\mapsto\bar\lambda L_n$ (where $\lambda\in\bbC$) yield actions of $\bbZ/2\bbZ$ on 
$\cX_\bbC$ and 
$\widetilde{\cX}_c$ by Lie algebras automorphisms. Those integrate to actions
\[
\bbZ/2\bbZ\,\acts \tAnn
\qquad\text{and}
\qquad
\bbZ/2\bbZ\,\acts \tAnn_c
\]
that preserve $\tMob$, and hence the central $\bbZ\subset \tMob$ (on which they act by $-1$).
They therefore descend to actions
\begin{equation}\label{eq: the invol}
\bbZ/2\bbZ\,\acts \Ann
\qquad\text{and}
\qquad
\bbZ/2\bbZ\,\acts \Ann_c.
\end{equation}
Note that the involution \eqref{eq: the invol} on $\Ann$ admits the following more elementary description,
as the map $\Ann\to \Ann:A\mapsto \overline A$ which sends an annulus $A\in \Ann$, with boundary parametrizations $\varphi_{in/out}$, to that same annulus with opposite complex structure, and boundary parametrizations $\varphi_{in/out}\circ (z\mapsto\bar z)$.

\begin{defn}\label{def: 4.12}
If an annulus $A\in\Ann$ is localised in $I\subset S^1$, then a $z$-lift $\underline{A}\in\tAnn_c$ is said to be \emph{localised in $I$} if it can be written as
\[
\underline A = z\,{\cdot}\! \prod_{1\ge \tau\ge 0} \Exp(\widetilde X(\tau)d\tau),
\]
where each $\widetilde X(\tau)$ covers a vector field $X(\tau)\in\cX_\bbC$ supported in $I$. 
We write $\tAnn_c(I)\subset\tAnn_c$ for the sub-semigroup of $z$-lifted annuli that are localised in $I$.
\end{defn}

The projection $\tAnn_c\to\Ann_c$ maps $\tAnn_c(I)$ isomorphically onto $\Ann_c(I)$.
We may thus drop the distinction between $\tAnn_c(I)$ and $\Ann_c(I)$, and treat 
$\Ann_c(I)$ as a sub-semigroup of both $\Ann_c$ and $\tAnn_c$.

Let $\tUniv$ denote the universal cover of $\Univ$, and let us recall that
$\cT=\Ann/\Univ=\tAnn/\tUniv$.
By \cite[Prop. 4.22]{HenriquesTener24ax}, every element $A\in \tUniv$ can be written as
\begin{equation}\label{eq: A univalent}
A = \prod_{1\ge \tau\ge 0} \Exp(X(\tau)d\tau),
\end{equation}
with each $X(\tau)$ in the \emph{univalent subalgebra}
$\overline{\mathrm{Span}}\{\ell_n\}_{n\ge -1}$ of $\cX_\bbC$.
(In terms of the coordinate $z=e^{i\theta}$, the univalent subalgebra consists of  vector fields $f(z)\tfrac{\partial}{\partial z}$ with $f$ extending holomorphically to the unit disc.)
Let us define a \emph{univalent framing} to be a framing $h$ such that the corresponding path $X$ takes values in the univalent subalgebra. 
Any pair of univalent framings of $A\in \tUniv$ are homotopic through univalent framings: they are homotopic through framings by \cite[Lem. 4.18]{HenriquesTener24ax}, and these framings may be modified to be univalent as in the proof of \cite[Prop. 4.22]{HenriquesTener24ax}.
A choice of univalent framing induces a lift 
of $A$ to $\tAnn_c$.
Moreover, since the Virasoro cocycle vanishes on the univalent subalgebra, this lift is independent of the choice of univalent framing (because the integral in the right hand side of \eqref{eq: zz = int} vanishes).
It follows that the central extension $\tAnn_c\to \tAnn$ uniquely trivialises over $\tUniv$, and we may thus view $\tUniv$ as a sub-semigroup of $\tAnn_c$.
Modding out by the central $\bbZ$, we also get an embedding $\Univ \hookrightarrow \Ann_c$.
\begin{defn}
The quotient space
\[
\cT_c\,:=\,\Ann_c/\Univ\,=\,\tAnn_c/\tUniv.
\]
is a $\bbC^\times$-torsor over the universal Teichm\"uller space $\cT$.
Its elements are called \emph{lifted discs}.
\end{defn}
Since the action of $\Ann_c$ on $\cT_c$ covers the action of $\Ann$ on $\cT$ by conformal welding, we shall denote it by the same symbol:
given $\underline A\in\Ann_c$ and $\underline{D}\in \cT_c$, we write $\underline A\cup \underline{D}\in \cT_c$ for the action of $\underline A$ on $\underline{D}$.
We shall also abuse notation and write $\bbD\in\cT_c$ for the standard lift of the the standard disc $\bbD\in\cT$ (defined as the image of $1\in\Ann_c$ under the quotient map $\Ann_c\to\cT_c$).

Given a lifted disc $\underline{D}\in \cT_c$ with underlying disc $D$, and given an embedding $f:\bbD\to D$, there is a unique lift $\underline A \in \Ann_c$ of 
$A:=D\setminus f(\bbD)$ that satisfies $\underline A \cup \bbD = \underline{D}$. 
We write $\underline{D}\setminus f(\bbD)$ for this unique element of $\Ann_c$ so that,
by definition, we have
\[
(\underline{D}\setminus f(\bbD))\cup \bbD \,=\, \underline{D}.
\]

\subsection{Representations of the semigroup of annuli}\label{sec: reps of semigroup of annuli}

The conjugate-linear involution $*:\Vir_c\to \Vir_c$ given on generators by $L_n\mapsto L_{-n}$ equips the Virasoro algebra with the structure of a $*$-Lie algebra.

\begin{defn}
A \emph{unitary representation} of the Virasoro algebra on a Hilbert space $H$
is map $\pi$ from $\Vir_c$ to the set of closed unbounded operators on $H$ satisfying:
\begin{itemize}
\item $\pi(1)=1$
\item $\pi(X^*)=\pi(X)^*$
\item $\pi$ restricts to a Lie algebra homomorphism $\Vir_c\to\End(H^{f.e.})$,
where $H^{f.e.}$ denotes the algebraic direct sum of the eigenspaces of $\pi(L_0)$.\footnote{If we wished to also accommodate representations in which $\pi(L_0)$ has continuous spectrum, then $H^{f.e.}$ should be instead defined as the algebraic span of the ranges of the finite spectral projections of $\pi(L_0)$.}
\end{itemize}
A unitary representation $(H,\pi)$ is said to have \emph{positive energy} if $\pi(L_0)$ is a positive operator with discrete spectrum and finite dimensional eigenspaces.
\end{defn}

We have shown in \cite{HenriquesTenerIntegratingax}
that every positive energy unitary representation $\pi$ of $\Vir_c$ integrates to a holomorphic representation of $\tAnn_c$, denoted again $\pi$.
Given a $z$-lifted annulus $\underline A\in \tAnn_c$, represented 
as $\underline A = z\,{\cdot} \prod_{1\ge \tau\ge 0} \Exp\big(\widetilde X(\tau)d\tau\big)$, we define in loc.~cit.
\begin{equation}\label{eq: def how annuli act}
\pi\Big(z\,{\cdot}\! \prod_{1\ge \tau \ge 0} \Exp(\widetilde X(\tau)d\tau)\Big)
\,:=\,z\,{\cdot}\! \prod_{1\ge \tau \ge 0} \Exp\big(\pi(\widetilde X(\tau))d\tau\big).
\end{equation}
Here, the time-ordered exponential in the right-hand side 
is, by definition, the value at $t=1$ of the solution of the evolution equation
$\tfrac d{dt} Y_t = \pi(\widetilde X(t))Y_t$ (see \cite[\S4]{HenriquesTenerIntegratingax} for the proof of existence of uniqueness of solutions to that evolution equation).
The representation \eqref{eq: def how annuli act} is compatible with the involutions in the sense that
\begin{equation}\label{eq: pi(Adag)=pi(A)*}
\pi(\underline A^\dagger)=\pi(\underline A)^*.
\end{equation}

When $H=H_0$, this action of $\tAnn_c$ on $H_0$ descends to an action of $\Ann_c$.
Also, as an immediate consequence of the formula \eqref{eq: A univalent} for univalent annuli, along with the fact that the operators $L_n$ for $n\ge -1$
kill the vacuum vector $\Omega\in H_0$, we have
\begin{equation}\label{eq:  A Omega=Omega}
\pi(\underline A)\Omega=\Omega
\end{equation}
for every lifted annulus $\underline A$ in the image of the embedding $\Univ \hookrightarrow \Ann_c$.

\begin{lem}\label{lem: annuli have dense image}
The operator $\pi(\underline A):H\to H$ associated to a $z$-lifted annulus $\underline A\in\tAnn_c$ is injective with dense image.
\end{lem}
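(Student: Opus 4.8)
The plan is to deduce both assertions from the factorization of Lemma~\ref{lem: A_varphi  A B=r^ell_0}, together with the fact that $\pi$ is multiplicative for welding ($\pi(\underline A\cup\underline B)=\pi(\underline A)\pi(\underline B)$) and compatible with the dagger via \eqref{eq: pi(Adag)=pi(A)*}. First I would establish dense image for \emph{every} $z$-lifted annulus, and then read off injectivity essentially for free by applying the dense-image statement to $\underline A^\dagger$.

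\emph{Dense image.} Let $A\in\Ann$ underlie $\underline A$. By Lemma~\ref{lem: A_varphi  A B=r^ell_0} there are $0<r\le 1$, $B\in\Ann$, and $\varphi\in\Diff(S^1)$ with $A_\varphi\cup A\cup B=r^{\ell_0}$. Choose any lifts $\underline A_\varphi,\underline B\in\tAnn_c$. Since $\underline A_\varphi\cup\underline A\cup\underline B$ and the canonical lift $r^{L_0}$ of Example~\ref{ex: r^L_0} both lie over $r^{\ell_0}$, they differ by a central element, which $\pi$ sends to a nonzero scalar $\lambda\in\bbC^\times$. Multiplicativity of $\pi$ then gives
\[
\pi(\underline A_\varphi)\,\pi(\underline A)\,\pi(\underline B)=\lambda\,\pi(r^{L_0}).
\]
Now $\pi(\underline A_\varphi)$ is invertible, because the completely thin annulus $A_\varphi$ lies in the image of $\tDiff_c(S^1)$, on which $\pi$ restricts to the (projective) unitary representation of $\Diff(S^1)$; and $\pi(r^{L_0})=r^{\pi(L_0)}$ is injective with dense image, since $\pi(L_0)$ has discrete nonnegative spectrum, so $r^{\pi(L_0)}$ acts by the nonzero scalar $r^\mu$ on each eigenspace and restricts to a bijection of the dense subspace $H^{f.e.}$. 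Rearranging,
\[
\pi(\underline A)\,\pi(\underline B)=\lambda\,\pi(\underline A_\varphi)^{-1}\,\pi(r^{L_0}),
\]
whose right-hand side has dense image, as $\pi(\underline A_\varphi)^{-1}$ is a homeomorphism. Since $\operatorname{im}\big(\pi(\underline A)\pi(\underline B)\big)\subseteq\operatorname{im}\pi(\underline A)$, the operator $\pi(\underline A)$ has dense image.

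\emph{Injectivity.} Applying the previous paragraph to $\underline A^\dagger$ shows $\pi(\underline A^\dagger)$ has dense image. By \eqref{eq: pi(Adag)=pi(A)*}, $\pi(\underline A^\dagger)=\pi(\underline A)^*$, hence
\[
\ker\pi(\underline A)=\ker\big(\pi(\underline A^\dagger)^*\big)=\big(\operatorname{im}\pi(\underline A^\dagger)\big)^\perp=\{0\},
\]
so $\pi(\underline A)$ is injective.

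The point that needs care—the main obstacle—is that Lemma~\ref{lem: A_varphi  A B=r^ell_0} only strips off an invertible factor on the \emph{outer} side, so the identity above directly controls the composite $\pi(\underline A)\pi(\underline B)$; this yields dense image of the middle factor $\pi(\underline A)$ at once, but injectivity cannot be read off the same equation and must instead be routed through the adjoint using the dagger symmetry. One should also verify that lifting the welding identity to $\tAnn_c$ introduces only a nonzero central scalar, and that the diffeomorphism factor $\pi(\underline A_\varphi)$ is genuinely invertible, which are the two places where the positive-energy hypothesis and the structure of the central extension are used.
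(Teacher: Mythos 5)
Your proof is correct and uses exactly the same ingredients as the paper's: the factorization of Lemma~\ref{lem: A_varphi  A B=r^ell_0}, invertibility of the lifted-diffeomorphism factor, the injectivity/dense image of $r^{\pi(L_0)}$, and the adjoint relation \eqref{eq: pi(Adag)=pi(A)*} to transfer between injectivity and dense image. The only difference is the order in which the two halves are run: the paper applies the factorization to $\underline A^\dagger$ (so that after dualizing, the invertible factor sits on the side applied first), reads off injectivity directly, and then gets dense image from the adjoint, whereas you prove dense image directly from the factorization of $\underline A$ itself and then obtain injectivity from the adjoint.
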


\begin{proof}
An operator is injective if and only if its adjoint has dense image.
So, by \eqref{eq: pi(Adag)=pi(A)*}, it is enough to just check injectivity.
Applying Lemma~\ref{lem: A_varphi  A B=r^ell_0} to the conjugate annulus $\underline A^\dagger$,
we may write $r^{L_0}=\underline B\cup \underline A\cup \underline A{}_\varphi$
for some $\underline B\in\tAnn_c$ and $\varphi\in \tDiff_c(S^1)$.
It follows that
\[
\pi(r^{L_0})=r^{\pi(L_0)}=\pi(\underline B)\pi(\underline A)\pi(\underline A{}_\varphi).
\]
The left hand side is the exponential of a self-adjoint operator, hence injective.
The operator $\pi(\underline A{}_\varphi)$ is invertible.
It follows that $\pi(\underline A)$ is injective.
\end{proof}

Let $\cA$ be a conformal net, let $H_0$ be its vacuum sector, and let $U_0:\Diff(S^1)\to PU(H_0)$ be the projective representation of $\Diff(S^1)$ on $H_0$.
Recall that all our conformal net representations are assumed to have discrete $L_0$ spectrum.

\begin{prop}\label{prop: exist: stress energy tensor}
Let $(H,\rho)$ be a representation of $\cA$. 
Then there exists a unique continuous linear map
$f \mapsto T_H(f)$
from $C^\infty(S^1)$ to the space of closed unbounded operators on $H$ with core $H^{f.e.}$\,(with the topology of pointwise convergence on $H^{f.e.}$\!) such that:
\begin{itemize}
\item
The operators $L_n:=T_H(e^{in\theta})$ satisfy the Virasoro algebra relations on $H^{f.e.}$.
\item
$\rho_I(U_0(\exp(f \tfrac{d}{d\theta}))) = \exp(iT_H(f))$ up to phase, for any real-valued function $f$ with support in $I$.
\end{itemize}
These operators satisfy $T_H(f)^*=T_H(\bar f)$, 
and $L_0:=T_H(1)$ agrees (up to constant) with the operator used in Definition~\ref{def: H^f.e.}.
Moreover, for any function $f$ with support in $I$, the operator $T_{H_0}(f)$ is affiliated with $\cA(I)$, and
\begin{equation}\label{eq: rho(T_H0(f)) = T_H(f)}
\rho_I(T_{H_0}(f)) = T_H(f).
\end{equation}
\end{prop}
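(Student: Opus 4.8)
The plan is to obtain $T_K$, for any positive-energy representation $K$ of $\cA$ (in particular for $K=H_0$ and for $K=H$), by differentiating the projective action $\tDiff(S^1)\to PU(K)$ of \eqref{eq:universal cover Diff -> PU(H)} on the finite-energy vectors $K^{f.e.}$. Recall that this action is the colimit of the local maps $\Diff_I(S^1)\to PU(\cA(I))$ of \eqref{eq:Diff_I -> PU(H)} composed with the representation; restricted to $\Diff_I(S^1)$, the action on $H$ is therefore literally $\varphi\mapsto \rho_I(U_0(\varphi))$, and the action on $H_0$ is $U_0$ itself. Both the characterising relation and the compatibility $\rho_I(T_{H_0}(f))=T_H(f)$ will then drop out of this single description, once the infinitesimal generators are in hand.

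\emph{Local generators and affiliation.} Fix a real-valued $f\in C^\infty(S^1)$ supported in a proper interval $I\subsetneq S^1$. The one-parameter subgroup $t\mapsto \exp(tf\tfrac{d}{d\theta})$ lies in $\Diff_I(S^1)$, so on $H_0$ it maps to a strongly continuous one-parameter group in $PU(\cA(I))$. As $\bbR$ is simply connected this lifts to a strongly continuous unitary group; since $\bbC\cdot 1\subset\cA(I)$, any such lift lies in $\cA(I)$, and Stone's theorem yields a self-adjoint generator affiliated with $\cA(I)$, determined up to an additive real constant (the freedom in the lift). The same construction applies verbatim on $H$, producing a generator affiliated with $\rho_I(\cA(I))$.

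\emph{The analytic heart (main obstacle).} To fix these constants coherently, to prove linearity and continuity, and to reach functions supported on all of $S^1$, I would pass to the infinitesimal action and smear. The essential input -- and the genuinely hard step -- is that $K^{f.e.}$ consists of smooth vectors for the $\tDiff(S^1)$-action, is invariant under the resulting Lie-algebra action of $\cX_\bbC$, and carries energy estimates strong enough that, using the rapid decay of the Fourier coefficients $\hat f_n$ of a smooth function, the series $T_K(f):=\sum_{n}\hat f_n\,L_n$ -- where $L_n$ is the image of $\ell_n\in\cX_\bbC$ under the infinitesimal action -- converges strongly on $K^{f.e.}$ and defines a closable operator with core $K^{f.e.}$. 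This rests on the representation theory of $\Diff(S^1)$ and is precisely where positive energy, i.e.\ discreteness of the spectrum of $L_0$ together with finite-dimensionality of its eigenspaces, is indispensable. Granting it, $f\mapsto T_K(f)$ is manifestly linear and continuous for the topology of pointwise convergence on $K^{f.e.}$; the central extension \eqref{eq: Vir SES}, whose cocycle determines the central charge $c$, promotes the bracket of $\cX_\bbC$ to the Virasoro relations for $L_n=T_K(e^{in\theta})$ on $K^{f.e.}$; unitarity gives $L_n^*=L_{-n}$ and hence $T_K(f)^*=T_K(\bar f)$; the constant in $L_0=T_K(1)$ is pinned by requiring agreement with the generator of $\tRot(S^1)$ from Definition~\ref{def: H^f.e.}; and exponentiating $T_K(f)$ recovers the corresponding element of the $\tDiff(S^1)$-action up to phase, which is the characterising exponential relation.

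\emph{Compatibility and uniqueness.} For $f$ supported in $I$ the two constructions are intertwined by $\rho_I$: since the $\tDiff(S^1)$-action on $H$ restricted to $\Diff_I(S^1)$ is $\rho_I\circ U_0$, and $\rho_I$ carries the strongly continuous one-parameter group $\exp(it\,T_{H_0}(f))\subset\cA(I)$ to a strongly continuous one-parameter group, we obtain $\rho_I(\exp(it\,T_{H_0}(f)))=\exp(it\,T_H(f))$ and hence $\rho_I(T_{H_0}(f))=T_H(f)$; independence of the choice of $I\supset\supp f$ follows from $\rho_J|_{\cA(I)}=\rho_I$. Finally, uniqueness holds because the exponential relation fixes $\exp(iT_K(f))$ up to phase for every $f$ supported in a proper interval, self-adjointness together with $T_K(f)^*=T_K(\bar f)$ removes the residual real constant, and linearity with continuity then determine $T_K$ on all of $C^\infty(S^1)$.
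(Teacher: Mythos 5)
Your proposal correctly identifies where the difficulty lies, but it does not overcome it: the entire existence half of the statement is concentrated in the step you label ``the analytic heart'' and then defer with ``Granting it, \dots''. What needs to be proved there is that the finite-energy vectors of an \emph{arbitrary} representation $(H,\rho)$ are smooth vectors for the projective $\tDiff(S^1)$-action, that the resulting Lie-algebra action preserves $H^{f.e.}$, that the smeared series converges and is closable with core $H^{f.e.}$, that the $L_n$ so obtained satisfy the Virasoro relations \emph{with the same central charge as on $H_0$}, and that exponentiating $T_H(f)$ recovers $\rho_I(U_0(\exp(f\tfrac{d}{d\theta})))$. None of these follow formally from the axioms; the last one in particular is the nontrivial assertion that the abstract net representation $\rho_I$ is implemented by the concrete Virasoro-module structure. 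The paper does not attack these analytically. Instead it observes that the characterising property only involves the Virasoro subnet $\cA_{\Vir_c}$, so one may assume $\cA=\cA_{\Vir_c}$, decompose $H$ into irreducibles (using discreteness of the $L_0$-spectrum), and then invoke the \emph{classification} of irreducible representations of the Virasoro nets (Carpi, Weiner): every such representation is some $H_{c,h}$, on which $T_H(f)$ is simply the closure of $\sum a_nL_n$ acting on the simple module $M_{c,h}$, and the key identity $\rho_I(\exp(iT_{H_0}(f)))=\exp(iT_H(f))$ is Carpi's Eqn.\ (31). All the analysis you would need to carry out is thereby imported from the cited literature; without either that reduction or an independent proof of the smooth-vector and convergence statements, your argument is not a proof.

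A secondary but genuine error: in your uniqueness paragraph you claim that ``self-adjointness together with $T_K(f)^*=T_K(\bar f)$ removes the residual real constant''. It does not. If $T'_K(f)=T_K(f)+c(f)$ for a real-linear functional $c$ with $c(\bar f)=\overline{c(f)}$, then $T'_K$ still satisfies the exponential relation up to phase, linearity, continuity, and the adjoint relation. What actually pins down the constant is the requirement that the $L_n$ satisfy the Virasoro relations: writing $L'_n=L_n+c_n$, the commutators force $(m-n)c_{m+n}=0$ for all $m,n$, hence $c_n=0$ for every $n$, and continuity then kills the constant for general $f$. This is exactly how the paper's uniqueness argument proceeds, and your proof should be corrected to use it.
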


The operator-valued distribution $f \mapsto T_H(f)$ is called the \emph{stress energy tensor} of $H$.

\begin{proof}
We first prove uniqueness of the stress-energy tensor.
For $f$ real-valued and supported in some interval $I$, the formula
\[
\qquad\qquad\quad
iT_H(f) = \tfrac{d}{dt}\big|_{t=0}\Big( \exp(iT_H(tf))\Big)
= \tfrac{d}{dt}\big|_{t=0}\Big(\rho_I\big(U_0(\exp(tf \tfrac{d}{d\theta}))\big)\Big)\qquad\quad(\mathrm{mod}\,\, \bbC{\cdot}1_H)
\]
determines $T_H(f)$ up to an additive constant in terms of the projective action of $\Diff(S^1)$ on $H_0$.
The linearity of $f \mapsto T_H(f)$
(along with the fact that finite-energy vectors are assumed to be a core) then also determines $T_H(f)$ up to a constant in the 
absence of any reality and support conditions.
The requirement that $L_n$ satisfy the Virasoro algebra relations fixes the constant when $f$ is a trigonometric polynomial,
and the continuity assumption fixes the constant for all $f$.

We now prove existence.
Let $c$ be the central charge of $\cA$, and let $\cA_{\Vir_c}\subset \cA$ be the Virasoro sub-net, given by
\begin{equation}\label{eq: def: Vir net}
\cA_{\Vir_c}(I)=\big\{\exp(iT_{H_0}(f)):\supp(f)\subset I \text{ and } f \text{ is real-valued} \big\}''.
\end{equation}
The characterising property
$
\rho_I\big(U_0(\exp(f \tfrac{d}{d\theta}))\big) = \exp(iT_H(f))
$
of $T_H(f)$ is stated purely in terms of the elements $U_0(\exp(f \tfrac{d}{d\theta}))$ of $\cA_{\Vir_c}(I)$ (and the definition of $H^{f.e.}\!$ also only uses the action of $\cA_{\Vir_c}$),
so we may assume without loss of generality that $\cA = \cA_{\Vir_c}$.
By decomposing $H$ into irreducible $\cA_{\Vir_c}$-reps (using our assumption that $L_0$ has discrete spectrum), we may furthermore assume that $H$ is irreducible.
Irreducible representations of $\cA_{\Vir_c}$ have been classified in \cite[Prop. 2.1]{Carpi04}, \cite[Cor. 6.4.7]{Weiner05}, and \cite[Thm. 5.6]{Weiner17}: they are all isomorphic to some $H_{c,h}$, characterised by a choice of minimal energy $h\ge 0$, which we now describe.
Let $M=M_{c,h}$ be the simple unitary module for the Virasoro algebra $\Vir_c$ with minimal energy $h$, and let $H=H_{c,h}$ be its Hilbert space completion, so that $H^{f.e.}=M$.
Then $\cA_{\Vir_c}$ acts on $H$ in the following way:

For $f=\sum a_ne^{in\theta}\in C^\infty(S^1)$, let $T_H(f)$ be the closure of $\sum a_n L_n$ acting on $M$.
These operators satisfy $T_H(f)^* = T_H(\overline{f})$ (see \cite[\S3.2.2]{Weiner05}).
Consider the action $U:\Diff(S^1)\to PU(H)$ uniquely specified by the requirement that
\begin{equation}\label{eq: def of proj action U on Vir module}
U\big(\exp(f \tfrac{d}{d\theta}))\big) = \exp\big(iT_{H}(f)\big)
\end{equation}
when $f$ is real-valued \cite{GoWa85,ToledanoLaredo99}.\footnote{This defines a projective action of $\Diff(S^1)$ because $H$ is irreducible. Otherwise, it would only give a projective action of its universal cover $\tDiff(S^1)$.}
In the same way, we have $T_{H_0}(f)$ acting on the vacuum sector $H_0=H_{c,0}$ of $\cA_{\Vir_c}$, and $U_0:\Diff(S^1)\to PU(H_0)$ given by $U_0(\exp(f \tfrac{d}{d\theta}))) = \exp(iT_{H_0}(f))$.
By \eqref{eq: def: Vir net}, the operator $T_{H_0}(f)$ is affiliated with $\cA_{\Vir_c}(I)$ when $\supp(f) \subset I$ and $f$ is real-valued.
The action 
of $\cA_{\Vir_c}(I)$ on $H$ is then given by \cite[Eqn. (31)]{Carpi04}:
\begin{equation}\label{eq: Carpi (31)}
\rho_I\big(\exp(iT_{H_0}(f))\big) =
\exp(iT_H(f)).
\end{equation}
In particular,
\(
i\rho_I(T_{H_0}(f))=
\tfrac{d}{dt}\big|_{t=0}\Big(\rho_I(\exp(iT_{H_0}(tf)))\Big)=
\tfrac{d}{dt}\big|_{t=0}\Big(\exp(iT_H(tf))\Big)=iT_H(f)
\).

By \cite[Lem. 4.1]{HenriquesTenerIntegratingax}, 
$T_{H}(f+ig)$ is the closure of $T_{H}(f)+iT_{H}(g)$,
and similarly for $T_{H_0}(f+ig)$.
It follows that $T_{H_0}(f)$ is affiliated with $\cA_{\Vir_c}(I)$,
and that $\rho_I(T_{H_0}(f))=T_H(f)$ holds
for all functions $f$ with support in $I$, not just real-valued ones.

To finish the proof, we must still show that $L_0:=T_H(1)$ agrees (up to constant) with the operator used in Definition~\ref{def: H^f.e.}.
By \eqref{eq: Carpi (31)} and by continuity, the action of $\cA_{\Vir_c}(I)$ on $H$ satisfies 
$\rho_I(U_0(\varphi))=U(\varphi)$ for all $\varphi\in\Diff_I(S^1)$.
The projective actions \eqref{eq:Diff_I -> PU(H)} and \eqref{eq: def of proj action U on Vir module} of $\Diff_I(S^1)$ on $H$ therefore agree.
Taking the colimit over $I$,
the projective actions \eqref{eq:universal cover Diff -> PU(H)} and \eqref{eq: def of proj action U on Vir module} of $\tDiff(S^1)$ on $H$ also agree.
Letting $r_t:=\exp(it\ell_0)\in\tRot(S^1)$ be 
rotation by angle $t$, the desired result follows:
\[
iL_0=iT_H(1)=\tfrac{d}{dt}\big|_{t=0}\big(\exp(iT_H(t))\big)=\tfrac{d}{dt}\big|_{t=0}U\big(\exp(t \tfrac{d}{d\theta}))\big)=\tfrac{d}{dt}\big|_{t=0}U(r_t).\qedhere
\]
\end{proof}

By Proposition \ref{prop: exist: stress energy tensor}, \eqref{eq:universal cover Diff -> PU(H)} and \eqref{eq: def of proj action U on Vir module},
we have compatible actions $\tDiff(S^1)\to PU(H)$ and $\Vir_c\to \End(H^{f.e.})$ on any representation $H$ of a conformal net.
And by \eqref{eq: def how annuli act}, the latter integrates to a holomorphic action
\begin{equation}\label{eq: pi:Ann_c to B(H)}
\pi:\tAnn_c\to B(H)
\end{equation}
extending and lifting the action of $\tDiff(S^1)$.
When $H=H_0$ is the vacuum sector of our conformal net, the eigenvalues of $L_0$ are all integral
(because the homomorphism 
$\pi:\tMob\to U(H_0)$ descends to $\Mob$),
so the map $\tAnn_c\to B(H_0)$ descends to an action
\begin{equation}\label{eq: pi_0:Ann_c to B(H_0)}
\pi_0:\Ann_c\to B(H_0).
\end{equation}

Let now $I\subset S^1$ be an interval.
The formula \eqref{eq: def how annuli act} for the action of 
$\Ann_{c}(I)\subset \Ann_c$ only involves operators 
affiliated with $\cA(I)$.
So the action of $\Ann_{c}(I)$ on $H_0$ lands in the subalgebra $\cA(I)\subset B(H_0)$,
and \eqref{eq: pi_0:Ann_c to B(H_0)} restricts to a homomorphism $\pi_0:\Ann_{c}(I)\to \cA(I)$.
The following was proven in \cite[Thm. 7.4]{HenriquesTenerIntegratingax}:

\begin{prop}\label{lem: local annuli lie in local algebra}
For every representation $(H,\rho)$ of $\cA$, the following diagram commutes
\[
\begin{tikzcd}
\Ann_{c}(I) \arrow[rd, "\pi"'] \arrow[r, "\pi_0"] & \cA(I) \arrow[d, "\rho_I"] &\\
& B(H)
\end{tikzcd}
\]
where 
$\pi_0$ and $\pi$ are given by \eqref{eq: pi_0:Ann_c to B(H_0)} and \eqref{eq: pi:Ann_c to B(H)}, respectively.
\end{prop}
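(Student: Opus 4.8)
The plan is to deduce the identity $\rho_I(\pi_0(\underline A)) = \pi(\underline A)$ from the intertwining relation \eqref{eq: rho(T_H0(f)) = T_H(f)} between the stress energy tensors on $H_0$ and on $H$, promoting it from the infinitesimal generators to the time-ordered exponentials that define $\pi_0$ and $\pi$. Since the projection $\tAnn_c(I)\to\Ann_c(I)$ is an isomorphism, I may work with a $z$-lifted annulus $\underline A\in\tAnn_c(I)$, written as in Definition~\ref{def: 4.12} as $\underline A = z\cdot\prod_{1\ge\tau\ge0}\Exp(\widetilde X(\tau)\,d\tau)$ with each $\widetilde X(\tau)$ covering a vector field $X(\tau)=f_\tau(\theta)\tfrac{\partial}{\partial\theta}\in\cX_\bbC$ supported in $I$. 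By the conventions $\ell_n=-ie^{in\theta}\tfrac{\partial}{\partial\theta}$ and $L_n=T_H(e^{in\theta})$, the operator $\pi_0(\widetilde X(\tau))$ equals $iT_{H_0}(f_\tau)$ up to the additive scalar from the central extension, and likewise $\pi(\widetilde X(\tau))=iT_H(f_\tau)$ with the same scalar. Because $\supp f_\tau\subset I$, relation \eqref{eq: rho(T_H0(f)) = T_H(f)} gives $\rho_I(\pi_0(\widetilde X(\tau)))=\pi(\widetilde X(\tau))$ on $H^{f.e.}$; this is the infinitesimal form of the desired equality.

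First I would treat the individual building blocks of the product integral. The generator $\pi_0(\widetilde X(\tau))$ is affiliated with $\cA(I)$ by Proposition~\ref{prop: exist: stress energy tensor}; the non-negativity of the imaginary part of $-\tfrac{\partial h/\partial t}{\partial h/\partial\theta}$ imposed in Definition~\ref{def: Framings and paths} makes the flow contractive, so each partial exponential $\exp(\pi_0(\widetilde X(\tau))\,d\tau)$ is bounded, and—commuting with $\cA(I)'=\cA(I')$ by Haag duality—lies in $\cA(I)$ (as does the full product integral $\pi_0(\underline A)$, already established to be in $\cA(I)$). Thus $\rho_I$ may be applied factorwise. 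Combining the infinitesimal intertwining above with the normality (ultraweak continuity) of the homomorphism $\rho_I:\cA(I)\to B(H)$, which preserves the functional calculus of affiliated operators, yields $\rho_I(\exp(\pi_0(\widetilde X(\tau))\,d\tau))=\exp(\pi(\widetilde X(\tau))\,d\tau)$ for each factor; this is the analog for a general net of the Virasoro-net identity \eqref{eq: Carpi (31)} used in the proof of Proposition~\ref{prop: exist: stress energy tensor}.

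Next I would pass from the individual factors to the whole time-ordered exponential. Approximating $\prod_{1\ge\tau\ge0}\Exp(\widetilde X(\tau)\,d\tau)$ by ordered products of exponentials over a partition refined to $0$, the product integral converges strongly—this is the existence/uniqueness of solutions to the evolution equation $\tfrac{d}{dt}Y_t=\pi(\widetilde X(t))Y_t$ established in \cite[\S4]{HenriquesTenerIntegratingax}—both on $H_0$ and, after applying $\rho_I$ factorwise, on $H$. Setting $W_t:=\rho_I\big(\pi_0(z\cdot\prod_{t\ge\tau\ge0}\Exp(\widetilde X(\tau)\,d\tau))\big)$, normality of $\rho_I$ shows that $W_t$ solves $\tfrac{d}{dt}W_t=\pi(\widetilde X(t))W_t$ with $W_0=z\cdot\id_H$, which is precisely the evolution equation defining $\pi(\underline A)$. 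By the uniqueness of solutions cited above, $W_1=\pi(\underline A)$, i.e. $\rho_I(\pi_0(\underline A))=\pi(\underline A)$, recovering \cite[Thm. 7.4]{HenriquesTenerIntegratingax}.

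I expect the main obstacle to be the analytic bookkeeping in the last two steps rather than any conceptual difficulty: one must justify that the normal homomorphism $\rho_I$ genuinely commutes both with the functional calculus of the \emph{unbounded} affiliated generators $T_{H_0}(f_\tau)$ (so that it transports exponentials to exponentials) and with the strong limit defining the product integral. The cleanest route is to verify every identity on the common core of finite-energy vectors, where $\rho_I$ intertwines $T_{H_0}$ and $T_H$ by \eqref{eq: rho(T_H0(f)) = T_H(f)}, and then to invoke uniqueness of the evolution equation to conclude at the level of bounded operators, sidestepping any direct manipulation of domains.
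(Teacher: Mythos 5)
The paper does not prove this proposition at all: it is quoted verbatim from the companion paper and justified only by the citation to \cite[Thm.~7.4]{HenriquesTenerIntegratingax}. So there is no in-paper argument to compare against; what you have written is a reconstruction of what that external theorem must do, and your overall strategy --- write $\underline A$ as a time-ordered exponential of vector fields supported in $I$, use \eqref{eq: rho(T_H0(f)) = T_H(f)} to intertwine the generators, and then transport the evolution equation $\tfrac{d}{dt}Y_t=\pi_0(\widetilde X(t))Y_t$ through $\rho_I$ and conclude by uniqueness of solutions --- is the right one and is surely how the cited result is proved.

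Two intermediate steps are asserted with justifications that do not work as literally stated, though your final paragraph already contains the correct repair. First, the appeal to ``normality of $\rho_I$ preserving the functional calculus of affiliated operators'' to get $\rho_I(\exp(\pi_0(\widetilde X(\tau))\,d\tau))=\exp(\pi(\widetilde X(\tau))\,d\tau)$ is not valid in this generality: for complex-valued $f$ the operator $T_{H_0}(f)$ is closed but not normal, its ``exponential'' is a semigroup/ODE object rather than a Borel function of the operator, and a normal $*$-homomorphism does not automatically commute with it. (For the same reason, the boundedness of an individual factor $\exp(\epsilon\,\pi_0(\widetilde X(\tau)))$ is itself part of the hard analytic content of \cite{HenriquesTenerIntegratingax}, not a consequence of ``contractivity of the flow'' that one can wave at.) The argument that actually closes the gap is the one you sketch at the end: set $W_t:=\rho_I(Y_t)$, note that the difference quotients of $Y_t$ are uniformly bounded and converge strongly (so that $\sigma$-strong continuity of the normal map $\rho_I$ on bounded sets lets you differentiate $W_t$), verify $\tfrac{d}{dt}W_t\xi=\pi(\widetilde X(t))W_t\xi$ on the core $H^{f.e.}$ using \eqref{eq: rho(T_H0(f)) = T_H(f)} together with multiplicativity of $\rho_I$ on products of an affiliated operator with a bounded one, and invoke uniqueness of solutions from \cite[\S4]{HenriquesTenerIntegratingax}. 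That single argument subsumes the factorwise claims, so I would drop the ``functional calculus'' step entirely rather than try to justify it. You should also record why $\rho_I$ is normal (the local algebras are type III factors on separable Hilbert spaces, so every representation is automatically normal); it is standard but it is doing real work here.
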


\subsection{Unparametrised annuli}\label{sec: unparametrised annuli}

Let $\cA$ be a conformal net. As in \eqref{eq:vacuum sector def}, we let $H_0(D) = \Conf(\bbD,D) \times_{\Mob} H_0$ be the vacuum sector associated to a disc $D$, and recall that $H_0=H_0(\bbD)$.
If $D_{in}\subset D_{out}$ is a pair of complex discs included into one another, with $A:=D_{out} \setminus \mathring D_{in}$, then there is an associated operator
\begin{equation}\label{Y_A def}
Y_A:H_0(D_{in}) \to H_0(D_{out})
\end{equation}
defined as follows.
Any choice of holomorphic parametrizations $\varphi_{in} : \bbD \to D_{in}$ and $\varphi_{out} : \bbD \to D_{out}$
provides a lift of $A$ to an element $f:=\varphi_{out}^{-1}\circ\varphi_{in}\in \Univ$.
Writing $(\varphi, \xi)$ for the element previously denoted $[(\varphi, \xi)]$,
we set
\[
Y_A(\varphi_{in}, \xi) := (\varphi_{out}, \pi(f) \xi),
\]
where $\pi:\Univ\subset \Ann_c\to B(H_0)$ is the map \eqref{eq: pi_0:Ann_c to B(H_0)}.
To see that the map $Y_A$ is well defined, we first verify that it is independent of the way one represents an element of $H_0(D_{in})$ as $(\varphi_{in}, \xi)$.
Indeed, for any $\psi\in\Mob$, the two expressions 
$Y_A(\varphi_{in}\circ \psi, \xi)=(\varphi_{out}, \pi({f\circ \psi}) \xi)$
and
$Y_A(\varphi_{in},\pi(\psi) \xi)=(\varphi_{out}, \pi(f) \pi(\psi) \xi)$
are equal in $H_0(D_{out})$.
We must also check that $Y_A$ is independent of the choice of parametrization $\varphi_{out}$.
Indeed, had we chosen $\varphi_{out} \circ \psi$ instead of $\varphi_{out}$ for some $\psi \in \Mob$, we would have obtained the same element of $H_0(D_{out})$:
\[
Y_A(\varphi_{in}, \xi) 
 = (\varphi_{out} \circ \psi, \pi({\psi^{-1}\circ f}) \xi)
 = (\varphi_{out},  \pi(f) \xi).
\]
This finishes the proof that the map \eqref{Y_A def} is well defined.
The maps $Y_A$ are compatible with composition,
\[
Y_{A_1\cup A_2} = Y_{A_1}\circ Y_{A_2},
\]
making $D\mapsto H_0(D)$ into a functor from the category of discs (Definition~\ref{def: disc and circle}) and holomorphic embeddings (a.k.a. univalent maps) to the category of Hilbert spaces and bounded linear maps.

Recall from \eqref{eq:vacuum sector def} that the vacuum vector $\Omega_D\in H_0(D)$ is the equivalence class of $(\varphi,\Omega)$, for any $\varphi \in \Conf(\bbD,D)$.
By \eqref{eq:  A Omega=Omega}, we have:
\begin{lem}\label{lem: annuli map Om to Om}
Let $D_{in}\subset D_{out}$ be discs,
and let $\Omega_{in/out}:=\Omega_{D_{in/out}}\in H_0(D_{in/out})$ be their respective vacuum vectors. 
Then
\[
\,\,Y_{D_{out} \setminus \mathring D_{in}} \Omega_{in}=\Omega_{out}.\!\!
\hspace{6.3cm}\square\hspace{-6.3cm}
\]
\end{lem}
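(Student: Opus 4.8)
The plan is to unwind the definition of $Y_A$ and apply the vacuum-invariance identity \eqref{eq:  A Omega=Omega} directly. Set $A := D_{out}\setminus \mathring D_{in}$, and choose holomorphic parametrizations $\varphi_{in}\in\Conf(\bbD,D_{in})$ and $\varphi_{out}\in\Conf(\bbD,D_{out})$. Because $D_{in}\subset D_{out}$ are both discs, the composite $f:=\varphi_{out}^{-1}\circ\varphi_{in}$ is a holomorphic embedding $\bbD\to\bbD$, that is $f\in\Univ$; this is precisely the univalent map through which $A$ is lifted in the definition \eqref{Y_A def} of $Y_A$.

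By definition of the vacuum vector, I may represent $\Omega_{in}$ by the pair $(\varphi_{in},\Omega)$. Applying the formula defining $Y_A$ then yields
\[
Y_A\,\Omega_{in} = Y_A(\varphi_{in},\Omega) = (\varphi_{out},\,\pi(f)\Omega).
\]
Now I invoke \eqref{eq:  A Omega=Omega}: since $f$ lies in the image of the embedding $\Univ\hookrightarrow\Ann_c$, one has $\pi(f)\Omega=\Omega$. Hence $Y_A\,\Omega_{in}=(\varphi_{out},\Omega)$, which is by definition the vacuum vector $\Omega_{out}\in H_0(D_{out})$, as desired.

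There is essentially no obstacle here. The only point meriting (trivial) attention is that $Y_A$ is well defined independently of the chosen parametrizations $\varphi_{in},\varphi_{out}$, but this has already been verified in the discussion preceding the lemma, so any convenient choice may be used without loss of generality. The entire content of the statement is carried by the fact that univalent annuli fix the vacuum, which is exactly \eqref{eq:  A Omega=Omega}; accordingly the lemma is immediate and requires no separate argument beyond this unwinding.
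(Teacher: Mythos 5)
Your proof is correct and follows exactly the route the paper intends: the lemma is stated as an immediate consequence of \eqref{eq:  A Omega=Omega}, and your unwinding of $Y_A$ via $f=\varphi_{out}^{-1}\circ\varphi_{in}\in\Univ$ together with $\pi(f)\Omega=\Omega$ is precisely that argument made explicit.
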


The operators $Y_A$ associated to thin annuli will be particularly important for us:

\begin{lem}\label{lem: thin annuli intertwine action of boundary intervals}
Let $A=D_{out} \setminus \mathring D_{in}$ be a thin annulus, and let $I\subset \partial_{in}A \cap \partial_{out}A$ be an interval.
Then $Y_A$ is equivariant for the actions of $\cA(I)$ on $H_0(D_{in})$ and on $H_0(D_{out})$.
\end{lem}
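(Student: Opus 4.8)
The plan is to reduce the statement to a single commutation relation on $H_0$ and then to recognise the relevant operator as the image of a \emph{localized} annulus. Fix holomorphic parametrizations $\varphi_{in}\colon\bbD\to D_{in}$ and $\varphi_{out}\colon\bbD\to D_{out}$, so that $f:=\varphi_{out}^{-1}\circ\varphi_{in}\in\Univ$ and $Y_A(\varphi_{in},\xi)=(\varphi_{out},\pi(f)\xi)$ by \eqref{Y_A def}. Given $x\in\cA(I)$, write $a:=\varphi_{in}^{-1}(x)\in\cA(J_{in})$ and $b:=\varphi_{out}^{-1}(x)\in\cA(J_{out})$, where $J_{in}:=\varphi_{in}^{-1}(I)$ and $J_{out}:=\varphi_{out}^{-1}(I)$. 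Unwinding the action \eqref{eq: action of A(I) on H_0(D)} on both vacuum sectors, the asserted equivariance $Y_A(x\cdot(\varphi_{in},\xi))=x\cdot Y_A(\varphi_{in},\xi)$ is equivalent to the single operator identity $\pi(f)\,a=b\,\pi(f)$ on $H_0$.

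Next I would bring in a diffeomorphism. Because $A$ is thin and $I\subset\partial_{in}A\cap\partial_{out}A$, the map $f$ restricts to an orientation-preserving diffeomorphism $f|_{J_{in}}\colon J_{in}\to J_{out}$ of boundary arcs of $S^1$, and $A_f=\bbD\setminus f(\mathring\bbD)$ is thin over $J_{out}$. Choose $\gamma\in\Diff(S^1)$ with $\gamma|_{J_{in}}=f|_{J_{in}}$ (possible since both are proper arcs with matching endpoints). By equivariance of the net, $b=\gamma_*(a)=U_0(\gamma)\,a\,U_0(\gamma)^*$, since the induced map $\cA(J_{in})\to\cA(J_{out})$ depends only on $\gamma|_{J_{in}}=f|_{J_{in}}$. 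The target identity $\pi(f)\,a=b\,\pi(f)$ thus becomes the statement that $U_0(\gamma)^*\pi(f)$ commutes with every $a\in\cA(J_{in})$.

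Finally, I would identify this operator geometrically. Using multiplicativity of $\pi_0$ and $U_0(\gamma)^*=\pi_0(A_{\gamma^{-1}})$ (up to a central phase, which is irrelevant for commutation), we have $U_0(\gamma)^*\pi(f)=\pi_0(B)$ with $B:=A_{\gamma^{-1}}\cup A_f$. A direct welding computation shows that $B$ has inner parametrization $f$ and outer parametrization $\gamma$; since $\gamma=f$ on $J_{in}$ and $A_f$ is thin over $J_{out}$, these two parametrizations agree on $J_{in}$, so $B$ is localized in $J_{in}'$ in the sense of Definition~\ref{def: localized annuli}. Then Proposition~\ref{lem: local annuli lie in local algebra} gives $\pi_0(B)\in\cA(J_{in}')$, which commutes with $\cA(J_{in})$ by the locality axiom, completing the reduction. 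The main obstacle is the welding computation in this last step: one must carefully track the boundary parametrizations through the conformal welding $A_{\gamma^{-1}}\cup A_f$ and use thinness of $A_f$ to see that the inner and outer boundaries of $B$ literally coincide along $J_{out}$, which is exactly what makes $B$ localized in $J_{in}'$. The degenerate completely thin case $D_{in}=D_{out}$ should be disposed of separately, where $Y_A$ is simply a diffeomorphism implementer and the claim is immediate.
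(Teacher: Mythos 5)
Your proposal is correct and takes essentially the same route as the paper's proof: reduce the equivariance to the single identity $\pi(f)\,a=b\,\pi(f)$ on $H_0$, absorb the reparametrization into an auxiliary diffeomorphism agreeing with $f$ on $\varphi_{in}^{-1}(I)$, and recognise $U_0(\gamma)^*\pi(f)$ as $\pi_0$ of an annulus localized in $\varphi_{in}^{-1}(I)'$, hence commuting with $\cA(\varphi_{in}^{-1}(I))$ by locality. The paper's auxiliary diffeomorphism $g$ (chosen so that $g_0\circ f$ is the identity on $\varphi_{in}^{-1}(I)$) is exactly your $\gamma^{-1}$, so the two welding computations coincide.
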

\begin{proof}
Recall from \eqref{eq: action of A(I) on H_0(D)} that the action of $x \in \cA(I)$ on $H_0(D_{in})$ is given by $x(\varphi_{in},\xi) = (\varphi_{in}, {\varphi^{-1}_{in}}(x) \xi)$, and similarly for $H_0(D_{out})$.
Letting $f=\varphi_{out}^{-1}\circ\varphi_{in}$,
we need to show that the following two expressions are equal in $H_0(D_{out})$:
\begin{align*}
&x Y_A(\varphi_{in}, \xi)
= x (\varphi_{out}, \pi(f) \xi) = (\varphi_{out}, {\varphi^{-1}_{out}}(x) \pi(f) \xi)\\
\text{and}\qquad
&Y_A x (\varphi_{in}, \xi) = Y_A(\varphi_{in}, {\varphi^{-1}_{in}}(x) \xi)= (\varphi_{out}, \pi(f) {\varphi^{-1}_{in}}(x) \xi).
\end{align*}
We check that ${\varphi^{-1}_{out}}(x) \pi(f)=\pi({f}){\varphi^{-1}_{in}}(x)$.
Let $g\in\Diff_c(S^1)$ be an element whose underlying diffeomorphism $g_0\in\Diff(S^1)$ satisfies $(g_0\circ f)|_{{\varphi^{-1}_{in}}(I)} = \id$,
and let $U_g\in U(H_0)$ be the corresponding unitary.
By construction, we have
\[
A_{g}\cup A_f\in\Ann_c(\varphi^{-1}_{in}(I)'),
\]
so $U_g\pi(f)=\pi(A_{g}\cup A_f)$ commutes with $\varphi^{-1}_{in}(x)$. It follows that
\[
{\varphi^{-1}_{out}}(x) \pi(f) = {\varphi^{-1}_{out}}(x) U_g^*U_g\pi(f) = U_g^*{\varphi^{-1}_{in}}(x) U_g\pi(f) = U_g^*U_g\pi(f){\varphi^{-1}_{in}}(x) = \pi(f){\varphi^{-1}_{in}}(x).
\]
\end{proof}

\begin{rem}
Note that, by the Reeh-Schlieder theorem, Lemmas~\ref{lem: annuli map Om to Om} and~\ref{lem: thin annuli intertwine action of boundary intervals} taken together completely
characterise the operator $Y_A:H_0(D_{in}) \to H_0(D_{out})$ for thin annuli.
\end{rem}

The operators $Y_A$ assigned to unparametrised annuli are closely related to the representation $\pi_0:\Ann_c \to B(H_0)$ described in Section~\ref{sec: reps of semigroup of annuli}.

\begin{lem}\label{lem: Aunderline in terms of A}
Let $A = D_{out} \setminus \mathring{D}_{in}$ be an annulus with boundary parametrizations $\varphi_{in/out}:S^1\to\partial D_{in/out}$ (we do not require $\varphi_{in/out}: S^1\to\partial D_{in/out}$ to extend to holomorphic maps $\bbD\to D_{in/out}$),
and let
\[
U(\varphi_{in}):H_0(\bbD)  \to  H_0(D_{in})
\qquad\text{and}\qquad
U(\varphi_{out}):H_0(\bbD)  \to  H_0(D_{out})
\]
be unitaries implementing $\varphi_{in/out}$ (Definition~\ref{def: unitary implements}).
Then for any lift $\underline{A}\in\Ann_c$ of $(A,\varphi_{in},\varphi_{out})$, the equality
\[
\pi_0(\underline{A}) = U(\varphi_{out})^*Y_A U(\varphi_{in})
\]
holds up to scalar.
\end{lem}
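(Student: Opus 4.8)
The plan is to reduce the statement to the case of \emph{holomorphic} boundary parametrizations, where the asserted identity is essentially the definition of $Y_A$, and to absorb the discrepancy between the given smooth parametrizations $\varphi_{in/out}$ and the holomorphic ones into the completely-thin-annulus (i.e.\ diffeomorphism) part of $\Ann_c$. By Lemma~\ref{lem: A iso to A with d_out = S^1} and the Riemann mapping theorem I first fix biholomorphisms $\psi_{in}:\bbD\to D_{in}$ and $\psi_{out}:\bbD\to D_{out}$, and set $f:=\psi_{out}^{-1}\circ\psi_{in}\in\Univ$. Comparing the given boundary parametrizations with the holomorphic ones produces two diffeomorphisms $\alpha_{in}:=\psi_{in}^{-1}\circ\varphi_{in}$ and $\alpha_{out}:=\varphi_{out}^{-1}\circ\psi_{out}$ of $S^1$, so that $\varphi_{in}=\psi_{in}\circ\alpha_{in}$ and $\varphi_{out}=\psi_{out}\circ\alpha_{out}^{-1}$. (One must check here that the inner-boundary orientation conventions make $\alpha_{in}$ orientation preserving; the round annulus of Example~\ref{ex: r^ell_0}, where $\varphi_{in}(z)=rz$ is itself the restriction of a biholomorphism, is the model case to keep in mind.)

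Next I record a factorization in $\Ann$. Welding completely thin annuli onto the two boundaries precomposes the boundary parametrizations with the corresponding diffeomorphisms, which gives $(A,\varphi_{in},\varphi_{out})=A_{\alpha_{out}}\cup A_f\cup A_{\alpha_{in}}$ in $\Ann$, where $A_{\alpha}$ is the completely thin annulus of $\alpha$. Lifting each factor to $\Ann_c$ — using the canonical trivialization of $\Ann_c$ over $\Univ$ for the middle factor, and any lifts $\underline A{}_{\alpha_{in/out}}$ of the diffeomorphism factors — yields $\underline A = s\cdot \underline A{}_{\alpha_{out}}\cup f\cup \underline A{}_{\alpha_{in}}$ for some scalar $s\in\bbC^\times$. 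Applying the homomorphism $\pi_0$, and using that $\pi_0$ restricts on $\Diff_c(S^1)$ to the projective action $U_0$ and that $\pi_0(f)$ is exactly the operator appearing in the definition of $Y_A$, gives
\[
\pi_0(\underline A) \;=\; s'\cdot U_0(\alpha_{out})\,\pi_0(f)\,U_0(\alpha_{in})
\]
for some scalar $s'\in\bbC^\times$.

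It remains to compute the right-hand side of the lemma. Let $W_{in/out}:H_0\to H_0(D_{in/out})$ be the tautological unitaries $\xi\mapsto(\psi_{in/out},\xi)$; the formula \eqref{eq: action of A(I) on H_0(D)} for the $\cA(I)$-action on $H_0(D)$ shows immediately that $W_{in/out}$ implements $\psi_{in/out}$ in the sense of Definition~\ref{def: unitary implements}. Since any two unitaries implementing the same diffeomorphism differ by an operator commuting with all the $\cA(I)$, hence by a scalar (the vacuum representation being irreducible), functoriality of the coordinate-free net under composition gives, up to phase, $U(\varphi_{in})=W_{in}\,U_0(\alpha_{in})$ and $U(\varphi_{out})=W_{out}\,U_0(\alpha_{out})^*$. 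Reading off $Y_A(\psi_{in},\xi)=(\psi_{out},\pi_0(f)\xi)$ gives $W_{out}^*\,Y_A\,W_{in}=\pi_0(f)$, and therefore
\[
U(\varphi_{out})^*\,Y_A\,U(\varphi_{in}) \;=\; U_0(\alpha_{out})\,\big(W_{out}^*Y_A W_{in}\big)\,U_0(\alpha_{in}) \;=\; U_0(\alpha_{out})\,\pi_0(f)\,U_0(\alpha_{in})
\]
up to phase, which coincides with $\pi_0(\underline A)$ up to a scalar.

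The two genuinely load-bearing points, where I expect to spend the most care, are: (i) the uniqueness of implementing unitaries up to phase, which rests on irreducibility of the vacuum representation and is what lets me identify $U(\varphi_{in/out})$ with the explicit unitaries $W_{in/out}$ twisted by the diffeomorphism action; and (ii) keeping the orientation conventions for the inner boundary straight throughout, so that $\alpha_{in}$ genuinely lies in $\Diff(S^1)$ and the welding identity $(A,\varphi_{in},\varphi_{out})=A_{\alpha_{out}}\cup A_f\cup A_{\alpha_{in}}$ holds on the nose. The scalar bookkeeping in the central extension $\Ann_c$ is harmless, since the lemma is only asserted up to scalar.
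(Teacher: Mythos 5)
Your proof is correct and follows essentially the same route as the paper's: both reduce to holomorphic boundary parametrizations (where the identity $W_{out}^*Y_AW_{in}=\pi_0(f)$ is the definition of $Y_A$) and absorb the discrepancy with the given smooth parametrizations into diffeomorphisms of $S^1$, using that implementing unitaries are unique up to phase. The paper's version is terser --- it writes $\varphi_{in/out}=\varphi_{in/out}'\circ\varphi_{in/out}''$ and declares all equalities to hold up to scalar --- whereas you make the welding decomposition in $\Ann_c$ and the phase bookkeeping explicit, but the content is identical.
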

\begin{proof}
Let $\varphi'_{in/out}: S^1\to\partial D_{in/out}$ be diffeomorphisms
that extend to holomorphic maps $\bbD\to D_{in/out}$, so that $\underline{A}':=(A,\varphi_{in}',\varphi_{out}')\in\Univ$.
The unitaries $U(\varphi_{in/out}'):H_0\to H_0(D_{in/out})$ defined by $U(\varphi_{in/out}')(\xi):=(\varphi_{in/out},\xi)$ implement $\varphi_{in/out}$.
And the map $Y_A$ satisfies
\[
\pi(\underline{A}') = U(\varphi_{out}')^*Y_A U(\varphi_{in}')        
\]
by definition.
Writing
$\varphi_{in}=\varphi_{in}'\circ \varphi_{in}''$ and $\varphi_{out}=\varphi_{out}'\circ \varphi_{out}''$,
and letting $U(\varphi_{in/out}'')$ be unitaries implementing $\varphi_{in/out}''$,
we then have
\[
\pi(\underline{A})
= U(\varphi_{out}'')^*\pi(\underline{A}')U(\varphi_{in}'')
= U(\varphi_{out}'')^*U(\varphi_{out}')^* Y_A U(\varphi_{in}')U(\varphi_{in}'')
 = U(\varphi_{out})^*Y_A U(\varphi_{in}),
\]
where all the equalities hold up to scalar.
\end{proof}

From now on, when no confusion can arise, we shall denote the operator $Y_A:H_0(D_{in}) \to H_0(D_{out})$ simply by
\begin{equation} \label{eq: A:H_0(D_in) --> H_0(D_out)}
A\,:\,H_0(D_{in}) \to H_0(D_{out}).
\end{equation}

Our next task will be to extend the construction \eqref{Y_A def} to assign anti-linear maps $H_0(D_1) \to H_0(D_2)$ to
anti-holomorphic embeddings $D_1\to D_2$ between discs.
Before that, we need some background on the Bisognano-Wichmann theorem
(the reader may consult \cite[Thm. 2.19]{GabbianiFrohlich93}, \cite{BuchholzSchulz-Mirbach90}, \cite[Thm. 2.1]{DantoniLongoRadulescu01}, \cite[Thm. II.9]{Borchers92}, or \cite[Thm. 6.2.3]{LongoLectureNotesII}).

Recall that $\bbD$ denotes the standard unit disc. Let $I_-\subset \partial \bbD$ be the lower semi-circle, and let $I_+:=I_-'$ be its closed complement, so that $\partial I_-=\partial I_+=\{\pm1\}$.
Let $\delta:i\bbR\to \Mob=\Aut(\bbD)$ be the unique action of $i\bbR$ on $\bbD$ that fixes $\{\pm 1\}$ and is normalised so that the derivative at $1$ of $\delta_{it}$ is $e^t$.
Finally, let $\vartheta(z)=\bar z$.
The Bisognano Wichman theorem for conformal nets says that $H_0$ is canonically isomorphic to the $L^2$-space of the von Neumann algebra $\cA(I_-)$, and that the modular flow $\Delta^{it} : L^2\cA(I_-) \to L^2\cA(I_-)$ agrees under that isomorphism with 
the action on $H_0=H_0(\bbD)$ induced by $\delta_{2\pi it}:\bbD \to \bbD$.
Moreover, the modular conjugation $J:L^2\cA(I_-) \to L^2\cA(I_-)$ corresponds to an operator
\begin{equation}\label{eq: first Theta}
\Theta: H_0 \to H_0
\end{equation}
that satisfies $\Theta\cA(I)\Theta=\cA(\vartheta I)$ for every interval $I\subset S^1$.
By \cite[Prop. 4.7]{BartelsDouglasHenriques15}, the actions of $\Theta$ and $\Diff_c(S^1)$ on $H_0$ assemble to an action of the semidirect product $\Diff_c(S^1) \rtimes \bbZ/2$.
By differentiating the $1$-parameter groups of unitaries
\begin{align*}
\Theta \exp(it(L_n+L_{-n})) \Theta &= \exp(-it(L_n+L_{-n}))\\
\text{and}\quad\,\,\,
\Theta \exp(t(L_n-L_{-n})) \Theta &= \exp(t(L_n-L_{-n}))
\end{align*}
and using that $\Theta$ is conjugate linear, we learn that
\begin{equation}\label{eq: Th L Th= L}
\Theta (\lambda L_n) \Theta = \bar\lambda L_n\qquad \forall \lambda\in\bbC.
\end{equation}
It follows by continuity that
\begin{equation}\label{eq: Theta T_H0(f)Theta}
\Theta T_{H_0}(f)\Theta =T_{H_0}(f^\vartheta)
\end{equation}
for all $f\in\cC^\infty(S^1)$, where $f^\vartheta(\theta):=\overline{f(-\theta)}$.
Combining \eqref{eq: Theta T_H0(f)Theta} with the explicit formula \eqref{eq: def how annuli act} for the action 
of $\Ann_c$, 
we see that the actions of $\Theta$ and $\Ann_c$ on $H_0$ assemble to an action
\begin{equation}\label{eq: pi:Ann_c rtimes Z/2 -> B(H_0)}
\pi:\Ann_c \rtimes\, \bbZ/2\to B(H_0).
\end{equation}

\begin{rem}
The semigroup $\Ann \rtimes\, \bbZ/2$ is the set of isomorphism classes of \emph{unoriented annuli with parametrised boundary},
and its subgroup
${\Diff(S^1) \rtimes \bbZ/2} $ is the group of all diffeomorphisms of the circle (not necessarily orientation preserving).
Here, \emph{unoriented annuli} are locally ringed spaces $A=(A,\cO_A)$ which are isomorphic to embedded annuli, as in Definition~\ref{def: annulus},
with the only difference being that we now treat $\cO_A$ as a sheaf of $\bbR$-algebras as opposed to a sheaf of $\bbC$-algebras.
Additionally, the boundary parametrizations $\varphi_{in}:S^1\to \partial_{in}A$ and $\varphi_{out}:S^1\to \partial_{out}A$ are no longer required to satisfy any orientation conditions.

The semigroup of unoriented annuli with parametrised boundary admits an extension by~$\bbC^\times$
\[
0 \to \bbC^\times\to \Ann_c \rtimes\, \bbZ/2 \to \Ann \rtimes\, \bbZ/2 \to 0
\]
but this is not a central extension, because $\bbZ/2$ acts non-trivially on the center $\bbC^\times$ of $\Ann_c$.
\end{rem}

We can now return to the task of extending the construction \eqref{Y_A def} to assign anti-linear maps to anti-holomorphic embeddings between discs.
Given two discs $D_1$, $D_2$ and 
an embedding $f:D_1\to D_2$ which is either holomorphic or anti-holomorphic, we have an associated linear or anti-linear map $f_*:H_0(D_1) \to H_0(D_2)$. It is given by
\begin{equation}\label{eqn: pushforward by (anti)holomorphic map}
f_*(\varphi_1, \xi) := (\varphi_2, \pi(\varphi_2^{-1}\circ f\circ\varphi_1) \xi),
\end{equation}
where $\varphi_{i} : \bbD \to D_{i}$ are holomorphic parametrisations,
$\varphi_2^{-1}\circ f\circ\varphi_1\in\Univ\rtimes\,\bbZ/2\subset \Ann_c\rtimes\,\bbZ/2$, and $\pi$ is as in \eqref{eq: pi:Ann_c rtimes Z/2 -> B(H_0)}.
This construction extends \eqref{Y_A def}, and is well defined for the same reasons.
It is functorial in the sense that given $f:D_1\to D_2$ and $g:D_2\to D_3$, we have
\[
g_*\circ f_*=(g\circ f)_*:H_0(D_1)\to H_0(D_3).
\]
In summary, our construction equips the assignment $D\mapsto H_0(D)$ with the structure of a functor from the category of discs and embeddings that are either holomorphic or anti-holomorphic, to the category of Hilbert spaces and bounded maps that either complex linear or complex anti-linear.

\section{Worm-shaped insertions}\label{sec: worms}

Given a vertex algebra $V=\bigoplus_{n\in\bbN} V_n$, a collection $z_1,\ldots,z_n$ of distinct points in the complex plane, and elements $v_1,\ldots,v_n \in V$, we can form the element 
$  
Y(v_1,z_1)\ldots Y(v_n,z_n)\Omega
$ 
of the algebraic completion $\widehat V:=\prod_{n\in\bbN} V_n$.
More generally, if we're also given 
local coordinates
$g_i\in \Aut(\bbC[[t]])$ at the points $z_i$,
we can form the element
\begin{equation}\label{eq: Y(g_1(v_1),z_1) ... Y(g_n(v_n),z_n)}
Y(g_1(v_1),z_1)\ldots Y(g_n(v_n),z_n)\Omega\,\in\,\widehat V.
\end{equation}
If the VOA is moreover unitary, and the points are in $\mathring\bbD$ (i.e., satisfy $|z_i|<1$),
then the above element \eqref{eq: Y(g_1(v_1),z_1) ... Y(g_n(v_n),z_n)} lies in fact in the Hilbert space completion of $V$.
In this section, we study an analogous construction which takes place in the world of conformal nets.
The $z_i$ and $g_i$ are replaced by disjoint intervals $I_i\subset \bbD$, and the $v_i$ are replaced by elements $x_i\in \cA(I_i)$ of the conformal net.

\subsection{Construction of worm-shaped insertions}

\begin{defn}\label{def: extendable}
Let $D$ be a disc.
An embedded interval $I\subset D$ is called \emph{extendable} if there exists a larger interval $I_+\subset D$
containing $I$ in its interior, and if the orientations of $I$ and $\partial D$ agree whenever the two intersect (they are necessarily tangent at such points).
\end{defn}

Let $\cA$ be a conformal net, and let $H_0$ be its vacuum sector.
In this section, for every collection of disjoint oriented extendable intervals $I_i\subset \bbD$, and elements $x_i\in \cA(I_i)$, we will construct a vector 
$| x_1 \ldots x_n \rangle \in H_0$.
More generally, for any disc $D$, disjoint oriented extendable intervals $I_i\subset  D$, and elements $x_i\in \cA(I_i)$, we will construct a corresponding vector 
\begin{equation*} 
| x_1 \ldots x_n \rangle_D \in H_0(D).
\end{equation*}
These vectors are completely determined by the following two requirements:
\begin{itemize}
\item
If all the intervals $I_i$ lie in the boundary of the disc, then
\begin{equation}\label{eq: worms in a disc}
| x_1 \ldots x_n \rangle_D = x_1 \cdots x_n \Omega_D \in H_0(D),
\end{equation}
where the right hand side uses the actions \eqref{eq: action of A(I) on H_0(D)} of the algebras $\cA(I_i)$ on $H_0(D)$.
(And the disc with no insertions corresponds to the vacuum vector.) 
\item
If $D_{in}\subset D_{out}$ are discs, and $A=D_{out}\setminus \mathring D_{in}$ is the corresponding annlus, 
then
\[
| x_1 \ldots x_n \rangle_{D_{out}} = Y_A | x_1 \ldots x_n \rangle_{D_{in}}.
\]
\end{itemize}
Given a disc $D$,
disjoint oriented extendable intervals $I_i \subset D$
(the intervals $I_i$ are allowed to touch the boundary of $D$),
and elements $x_i \in \cA(I_i)$, pick a sequence of discs\footnote{If the intersections $I_i\cap \partial D$ have multiple connected components, such a sequence of discs might only exist after permuting the order of the intervals $I_i$.}
\begin{equation}\label{eq:DDD 1}
D_1\subset D_2\subset \ldots \subset D_n=D
\end{equation}
with $I_i\subset \partial D_i$ (compatibly with the orientations), and let $A_i:=D_i\setminus D_{i-1}$:
\begin{equation*}
\begin{tikzpicture}[scale=0.7,baseline]
  \filldraw[fill=red!10!blue!20!gray!30!white, draw=black, thick] (0,0) circle (3cm);
  \begin{scope}[xshift=-10]
  \draw[very thick,rotate around={20:(0,0)}] (0,0.2) coordinate(a0) to[out=60,in=120] (0.4,0.2) to[out=-60,in=240] (0.8,0.1) coordinate(b0);
  \node at (0.4,0.6) {${\scriptstyle x_1}$};
  \end{scope}
  \begin{scope}[xshift=-10]
  \draw[very thick] (0.5,1.7) coordinate(a1) to[out=60,in=120] (0.9,1.7)  to[out=-60,in=240] (1.3,1.6) coordinate(b1);
  \node at (0.9,2) {${\scriptstyle x_2}$};
  \end{scope}
  \begin{scope}[yshift=-50, xshift=-10, rotate=-70]
  \draw[very thick] (-1.5,-1.2) coordinate(a2) to[out=10,in=170] (-1,-1.3) to[out=-10,in=190] (-0.5,-1.2) coordinate(b2);
  \node at (-1,-0.9) {${\scriptstyle x_3}$};
  \end{scope}
  \begin{scope}[yshift=60, xshift=-5, rotate=60]
  \draw[very thick] (-1.7,1) coordinate(a3) to[out=30,in=150] (-1.2,1.1) to[out=-30,in=210] (-0.8,1) coordinate(b3);
  \node at (-1.2,1.4) {${\scriptstyle x_4}$};
  \end{scope}
\draw (a0)
to[out=-110, in=180] (270:.8)
to[out=0, in=-100] (0:.7)
to[out=80, in=80, looseness=3] (b0);
\draw (a1)
to[out=-120, in=40] (125:1.2)
to[out=180+40, in=90] (180:1.1)
to[out=-90, in=180] (270:1.2)
to[out=0, in=-90] (0:1.2)
to[out=90, in=-120] (50:1.9)
to[out=60, in=60, looseness=3] (b1);
\draw (a2)
to[out=120, in=-130] (160:1.8)
to[out=50, in=-150](90:2.1)
to[out=30, in=140](50:2.3)
to[out=-40, in=80](0:2)
to[out=-100, in=10](270:2)
to[out=-170, in=-60](b2);
\draw (b3)
to[out=90, in=-140] (117:2.7)
to[out=40, in=180, looseness=.8](90:2.7)
to[out=0, in=135, looseness=.8](50:2.65)
arc(50:-170:2.65)
to[out=100, in=-85, looseness=1.3](a3);
\node[scale=.85] at (.05,-.3) {$D_1$};
\node[scale=.85] at (72:1.2) {$A_2$};
\node[scale=.85] at (30:1.8) {$A_3$};
\node[scale=.85] at (115:2.25) {$A_4$};
\node[scale=.85] at (160:2.6) {$A_5$};
\end{tikzpicture}
\end{equation*}
Letting $\Omega_1\in H_0(D_1)$ be the vacuum vector associated to $D_1$,
we set
\begin{equation}\label{eq: <x,...,x>_D}
| x_1 x_2 \ldots x_n\rangle_{(D_j)} \,:=\, A_n x_{n-1} \cdots A_3 x_2 A_2 x_1\Omega_1
\end{equation}
where $A_j$ denotes the operator $Y_{A_j}:H_0(D_{i-1})\to H_0(D_i)$, as in \eqref{eq: A:H_0(D_in) --> H_0(D_out)}.
In Proposition~\ref{proposition independence} below, we will show that the above vector is independent of the choice of discs $D_i$ and of the order of the insertions.
The subscript in the left hand side of \eqref{eq: <x,...,x>_D} emphasizes the potential dependence on these choices.
(After Proposition~\ref{proposition independence} has been proven we will drop that subscript and replace it by the subscript $D$.)

Suppose now that we are given an annulus $A = D_{out}\setminus \mathring D_{in}$ with disjoin oriented extendable intervals $I_i\subset A$, and elements $x_i\in \cA(I_i)$. Let
\begin{equation}\label{eq:DDD 2}
D_{in} = D_0\subset D_1\subset \ldots \subset D_{n+1}=D_{out}
\end{equation}
be a sequence of discs\footnote{Once again, if $I_i\cap \partial A$ have multiple connected components, then such a sequence of discs might only exist after permuting the order of the $I_i$'s.} with $I_i\subset \partial D_i$, and let $A_i:=D_i\setminus D_{i-1}$:
\begin{equation*}
\begin{tikzpicture}[scale=0.7,baseline]
  \filldraw[fill=red!10!blue!20!gray!30!white, draw=black, thick] (0,0) circle (3cm);
  \filldraw[fill=white, draw=black, thick] (0,0) circle (1cm);
  \begin{scope}[xshift=-10]
  \draw[very thick] (0.5,1.7) coordinate(a1) to[out=60,in=120] (0.9,1.7)  to[out=-60,in=240] (1.3,1.6) coordinate(b1);
  \node at (0.9,2) {${\scriptstyle x_1}$};
  \end{scope}
  \begin{scope}[yshift=-50, xshift=-10, rotate=-70]
  \draw[very thick] (-1.5,-1.2) coordinate(a2) to[out=10,in=170] (-1,-1.3) to[out=-10,in=190] (-0.5,-1.2) coordinate(b2);
  \node at (-1,-0.9) {${\scriptstyle x_2}$};
  \end{scope}
  \begin{scope}[yshift=60, xshift=-5, rotate=60]
  \draw[very thick] (-1.7,1) coordinate(a3) to[out=30,in=150] (-1.2,1.1) to[out=-30,in=210] (-0.8,1) coordinate(b3);
  \node at (-1.2,1.4) {${\scriptstyle x_3}$};
  \end{scope}
\draw (a1)
to[out=-120, in=40] (125:1.3)
to[out=180+40, in=90] (180:1.3)
to[out=-90, in=180] (270:1.3)
to[out=0, in=-90] (0:1.3)
to[out=90, in=-120] (50:1.9)
to[out=60, in=60, looseness=3] (b1);
\draw (a2)
to[out=120, in=-130] (160:1.8)
to[out=50, in=-150](90:2.1)
to[out=30, in=140](50:2.3)
to[out=-40, in=80](0:2)
to[out=-100, in=10](270:2)
to[out=-170, in=-60](b2);
\draw (b3)
to[out=90, in=-140] (117:2.7)
to[out=40, in=180, looseness=.8](90:2.7)
to[out=0, in=135, looseness=.8](50:2.65)
arc(50:-170:2.65)
to[out=100, in=-85, looseness=1.3](a3);
\node[scale=.85] at (75:1.35) {$A_1$};
\node[scale=.85] at (30:1.8) {$A_2$};
\node[scale=.85] at (115:2.25) {$A_3$};
\node[scale=.85] at (160:2.6) {$A_4$};
\end{tikzpicture}
\end{equation*}
Then we may form the operator
\begin{equation}\label{eq: A[x,...,x]_D}
A[x_1 x_2 \ldots x_n]_{(D_j)}:=A_{n+1}x_n \cdots  x_2 A_2 x_1 A_1 : H_0(D_{in}) \to H_0(D_{out}).
\end{equation}
where $A_j=Y_{A_j}$ 
as in \eqref{eq: A:H_0(D_in) --> H_0(D_out)}.

Given boundary parametrizations $\varphi_{in/out}$ of $A$, a $z$-lift $\underline{A}\in \tAnn_c$, and a representation $K$ of the conformal net,
we can also form the operator
\begin{equation}\label{eq: A[x,...,x]_D - with parametrisations}
\underline{A}[x_1 x_2 \ldots x_n]_{(D_j)}\,:=\,\underline{A}_{n+1}x_n \cdots  x_2 \underline{A}_2 x_1 \underline{A}_1  : K \to K.
\end{equation}
Here, $\underline{A}_j\in \tAnn_c$ are $z$-lifts of the annuli $A_j$ which appear in \eqref{eq: A[x,...,x]_D}, subject to the constraint $\prod \underline{A}_j = \underline{A}$, and
the parametrisation $\varphi_j$ of $\partial_{out}A_j$ is required to agree with that of $\partial_{in}A_{j+1}$.
The elements $x_j\in\cA(I_j)$ act on $K$ by means of the homomorphism
$\varphi_j^*:\cA(I_j)\to \cA(\varphi_j^{-1}(I_j))$.

Note that if $\underline{A} \in \tAnn_c(I)$, $K=H_0$, and the intervals $I_i$ are disjoint from $\varphi_{in}(I')=\varphi_{out}(I')$, then provided we make suitable choices of intermediate parametrizations, we have
\begin{equation}
\label{eqn: annulus with worm insertions is local}
\underline{A}[x_1 \ldots x_n]_{(D_j)} \in \cA(I)
\end{equation}
by construction.

When $K=H_0$, we may compare \eqref{eq: A[x,...,x]_D} and \eqref{eq: A[x,...,x]_D - with parametrisations}, as follows.
Let $A = D_{out} \setminus \mathring{D}_{in}$ and $\varphi_{in/out}:S^1\to\partial D_{in/out}$ be as in Lemma \ref{lem: Aunderline in terms of A}, and let $\underline A\in \Ann_c$ be a lift of $(A,\varphi_{in},\varphi_{out})$.
By Lemma \ref{lem: Aunderline in terms of A}, we may choose unitaries $U(\varphi_{in/out}):H_0 \to H_0(D_{in/out})$ implementing ${\varphi_{in/out}}$, and $z\in\bbC^\times$ such that $\underline{A}=z\cdot U(\varphi_{out})^*AU(\varphi_{in})$.
We then claim that the same relation also holds in the presence of worm insertions:
\begin{equation}\label{eq: parametrized vs unparametrized annuli}
\underline A[x_1 x_2 \ldots x_n]_{(D_j)} = z\cdot U(\varphi_{out})^* A[x_1 x_2 \ldots x_n]_{(D_j)} U(\varphi_{in}).
\end{equation}
To see that, choose unitaries $U(\varphi_j)$ implementing the intermediate parametrizations $\varphi_j$.
Since $\underline{A} = \prod\underline{A}_j$, we may choose these unitaries so as to have $\underline{A}_i = U(\varphi_i)^*A_iU(\varphi_{i-1})$ for all $i=1, \ldots, n$, and 
$\underline{A}_{n+1} = z\cdot U(\varphi_{n+1})^*A_{n+1}U(\varphi_{n})$
(where by convention $\varphi_0 = \varphi_{in}$ and $\varphi_{n+1} = \varphi_{out}$).
We can then check that the relation \eqref{eq: parametrized vs unparametrized annuli} indeed holds:
\begin{align*}
&\,\,\,\,\,\,\,\,\underline{A}[x_1 x_2 \ldots x_n]_{(D_j)}
\\&= \underline{A}_{n+1} \varphi_{n}^*(x_n)\underline{A}_n \cdots \varphi_1^*(x_1) \underline{A}_1
\\
&= [z\cdot U(\varphi_{out})^*A_{n+1}U(\varphi_{n})]\varphi_{n}^*(x_n)[U(\varphi_{n})^*A_{n}U(\varphi_{n-1})] \cdots \varphi_{1}^*(x_1)[ U(\varphi_{1})^*A_{1}U(\varphi_{in})]
\\
&= z\cdot U(\varphi_{out})^* A_{n+1}x_nA_{n-1} \cdots x_1 A_1 U(\varphi_{in})
\\
&=  z\cdot U(\varphi_{out})^* A[x_1 x_2 \ldots x_n]_{(D_j)} U(\varphi_{in}).
\end{align*}

Our next goal is to show that \eqref{eq: <x,...,x>_D}, \eqref{eq: A[x,...,x]_D}, and \eqref{eq: A[x,...,x]_D - with parametrisations} are independent of the choices of discs and ordering of the insertions.
In the proof of Proposition~\ref{proposition independence} below, we first assume that $I_i\cap \partial D$ and $I_i\cap \partial A$ are either empty or connected.
This ensures that sequences of discs as in \eqref{eq:DDD 1} or \eqref{eq:DDD 2} always exist when needed.
We then explain, at the end of the proof, how to remove that constraint.

Recall that an annulus $A=D_{out}\setminus \mathring D_{in}$ is called `thin'  if $\partial D_{out}\cap \partial D_{in}$ contains an interval.

\begin{prop} \label{proposition independence}
\begin{enumerate}[\rm (\!\!\!\!\; \it a\!\! \rm )\!\! \it]
\item   \label{item 1 independence : discs}
The vector \eqref{eq: <x,...,x>_D} corresponding to a disc with worm insertions is independent of the choice of intermediate discs~$D_i$, and of the ordering of the insertions.
\item   \label{item 2 independence : annuli}
The operator \eqref{eq: A[x,...,x]_D} corresponding to an unparametrised annulus with worm insertions is independent of the choice of intermediate discs $D_i$, and of the ordering of the insertions.
\item   \label{item 3 independence : annuli}
For every representation $K$ of $\cA$, the operator \eqref{eq: A[x,...,x]_D - with parametrisations}  on $K$ corresponding to a $z$-lifted parametrised annulus with worm insertions is independent of the choice of intermediate discs $D_i$, and of the ordering of the insertions.
\end {enumerate}
\end{prop}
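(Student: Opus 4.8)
The plan is to prove part~\ref{item 1 independence : discs} in full and then observe that parts~\ref{item 2 independence : annuli} and~\ref{item 3 independence : annuli} follow by the same argument. The strategy is to isolate two \emph{elementary moves} on the auxiliary data (a nested sequence of discs together with an ordering of the insertions) that manifestly preserve the vector \eqref{eq: <x,...,x>_D}, and then to show that any two admissible configurations are joined by a finite chain of such moves. Everything rests on exactly two inputs: the functoriality $Y_{A_1\cup A_2}=Y_{A_1}\circ Y_{A_2}$ of the unparametrised annulus operators, and Lemma~\ref{lem: thin annuli intertwine action of boundary intervals} on thin annuli (used together with the locality axiom of the net).

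\emph{First move (disc refinement).} If $(D_j')$ arises from $(D_j)$ by inserting one extra disc $D'$ with $D_{k-1}\subset D'\subset D_k$ and with no interval $I_i$ placed on $\partial D'$, then the annulus $A_k=D_k\setminus\mathring D_{k-1}$ factors, by functoriality, as the composite $Y_{D_k\setminus\mathring D'}\circ Y_{D'\setminus\mathring D_{k-1}}$, so the telescoping product \eqref{eq: <x,...,x>_D} is unchanged. To compare two disc sequences realizing the \emph{same} ordering, I would pass to a common refinement and apply this move repeatedly; this gives independence of the choice of intermediate discs once the ordering is fixed.

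\emph{Second move (transposition).} To swap two adjacent insertions $x_k$ and $x_{k+1}$, I would first use the refinement move to arrange that the separating annulus $A_{k+1}=D_{k+1}\setminus\mathring D_k$ is thin with $I_k$ lying in its thin part, i.e.\ $I_k\subset\partial D_k\cap\partial D_{k+1}$; this is possible because $I_k$ and $I_{k+1}$ are disjoint, so $\partial D_{k+1}$ may be taken to coincide with $\partial D_k$ near $I_k$ while bulging outward to carry $I_{k+1}$. By Lemma~\ref{lem: thin annuli intertwine action of boundary intervals} the operator $Y_{A_{k+1}}$ is then $\cA(I_k)$-equivariant, hence commutes with $x_k$; this slides $x_k$ out to the stage $H_0(D_{k+1})$, where both $x_k$ and $x_{k+1}$ now act via \eqref{eq: action of A(I) on H_0(D)} through the disjoint intervals $I_k,I_{k+1}\subset\partial D_{k+1}$ and therefore commute by locality. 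Slotting an annulus back between them and sliding $x_{k+1}$ inward yields the configuration with $x_k$ and $x_{k+1}$ transposed, which gives independence of the ordering.

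The main obstacle I expect is the geometric and combinatorial bookkeeping underlying both moves: one must check that any two admissible configurations are genuinely connected by finitely many refinements and thin-annulus slides, and — as flagged in the footnotes to \eqref{eq:DDD 1} and \eqref{eq:DDD 2} — that when some $I_i\cap\partial D$ is disconnected the requisite nested sequences exist only after first permuting the insertions. I would therefore run the argument first under the standing hypothesis that each $I_i\cap\partial D$ is empty or connected (so the sequences always exist), and remove this hypothesis at the end. Part~\ref{item 2 independence : annuli} is then proved verbatim, reading $H_0(D_{in})$ in place of the vacuum $\Omega_1$ and $D_{in}=D_0\subset\cdots\subset D_{n+1}=D_{out}$ in place of \eqref{eq:DDD 1}. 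For part~\ref{item 3 independence : annuli} the same two moves apply on an arbitrary representation $K$: functoriality is replaced by the homomorphism property $\pi(\underline A_1\cup\underline A_2)=\pi(\underline A_1)\pi(\underline A_2)$, while the commutation of adjacent local insertions, and of localized annuli with disjoint insertions, follows from Proposition~\ref{lem: local annuli lie in local algebra} combined with locality in $\cA$; alternatively, in the case $K=H_0$ one may simply transport the conclusion of part~\ref{item 2 independence : annuli} through the identity \eqref{eq: parametrized vs unparametrized annuli}.
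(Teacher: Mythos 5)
Your two moves are assembled from the right ingredients (functoriality of the $Y_A$, Lemma~\ref{lem: thin annuli intertwine action of boundary intervals}, and locality), but as written the argument has a gap at its core: neither move ever changes the disc $D_i$ that carries the interval $I_i$ on its boundary. The refinement move, by your own stipulation, only inserts discs with \emph{no} interval on their boundary, so ``passing to a common refinement'' cannot compare two sequences $(D_j)$ and $(D_j')$ in which $D_i\neq D_i'$ both contain $I_i$ in their boundary --- and this is precisely the nontrivial part of disc-independence, since $x_i$ is applied at the stage $H_0(D_i)$ in one configuration and at $H_0(D_i')$ in the other (the union of the two chains need not even be totally ordered by inclusion, so a literal common refinement need not exist). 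Moreover, your transposition move tacitly performs exactly such a replacement: taking ``$\partial D_{k+1}$ to coincide with $\partial D_k$ near $I_k$ while bulging outward to carry $I_{k+1}$'' modifies the disc carrying $I_{k+1}$, which is not a refinement, so the argument is circular --- transpositions presuppose the disc-independence they are meant to help establish. The missing third move is a disc replacement: choose $D_i''\subset D_i\cap D_i'$ with $I_i\subset\partial D_i''$ and slide $x_i$ through the thin annuli $D_i\setminus\mathring D_i''$ and $D_i'\setminus\mathring D_i''$ using Lemma~\ref{lem: thin annuli intertwine action of boundary intervals}. This is exactly what the paper's Step~1 does with the disc $D_0''$; the paper then avoids your deferred connectivity problem altogether by inducting on the number of insertions and interleaving statements (\ref{item 1 independence : discs}) and (\ref{item 2 independence : annuli}), so that only the innermost insertion ever has to be slid --- the outer part is rearranged wholesale using the inductive hypothesis.

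Two further points. For part (\ref{item 3 independence : annuli}) the definition \eqref{eq: A[x,...,x]_D - with parametrisations} also involves choices of intermediate parametrizations $\varphi_j$ and of the individual $z$-lifts $\underline{A}_j$, and independence from these must be proved as well (the paper's Step~3); you do not address it. Also, on a general representation $K$ the commutation of $\varphi_k^*(x_k)$ with an intermediate annulus requires that annulus to be \emph{localized} in an interval disjoint from the image of $I_k$, not merely thin --- a thin annulus whose two boundary parametrizations disagree everywhere is not localized --- which is why the paper first conjugates by a lifted diffeomorphism $\underline{\beta}$ before invoking Proposition~\ref{lem: local annuli lie in local algebra}. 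Finally, note that the paper needs a separate gluing argument (Step~4, via the path-connected space of decompositions) to pass from thin to thick annuli, and a separate injectivity argument (via Lemma~\ref{lem: annuli have dense image}) to handle intervals meeting the boundary in several components; if your moves are meant to cover these cases directly, that has to be argued rather than asserted.
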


In light of Proposition~\ref{proposition independence}, we will later simplify the notations
\eqref{eq: <x,...,x>_D}, \eqref{eq: A[x,...,x]_D}, \eqref{eq: A[x,...,x]_D - with parametrisations} by simply writing 
$| x_1 x_2 \ldots x_n\rangle_D$
in place of $| x_1 x_2 \ldots x_n\rangle_{(D_j)}$,
$A[x_1 x_2 \ldots x_n]$
in place of $A[x_1 x_2 \ldots x_n]_{(D_j)}$,
and
$\underline{A}[x_1 x_2 \ldots x_n]$
in place of $\underline{A}[x_1 x_2 \ldots x_n]_{(D_j)}$.
Note that, by construction, if $A=D_{out}\setminus \mathring D_{in}$ is an annulus, then we have
\begin{equation}\label{eq: compatibility of discs and annuli}
A[x_1 x_2 \ldots x_n] | y_1 y_2 \ldots y_m\rangle_{D_{in}}
= | x_1 x_2 \ldots x_n y_1 y_2 \ldots y_m\rangle_{D_{out}}.
\end{equation}

\begin{proof}
As mentioned above, we start by assuming that the intersections $I_i\cap \partial D$ and $I_i\cap \partial A$ are either empty or connected.

Our proof is by induction, broken into four steps.
Our base case is the statement $(\ref{item 2 independence : annuli})$ for $n=0$, in which case there is nothing to show.
In \emph{Step 1} of our inductive proof, we show that if $(\ref{item 2 independence : annuli})$ holds whenever there are $n$ insertions (for $n \ge 0$) then 
$(\ref{item 1 independence : discs})$ holds whenever there are $n+1$ insertions.
In \emph{Step 2}, we show that if $(\ref{item 1 independence : discs})$ holds whenever there are $n$ insertions for some $n \ge 1$, then $(\ref{item 2 independence : annuli})$ holds whenever there are $n$ insertions and the annulus $A$ is thin.
In \emph{Step 3}, we show that $(\ref{item 2 independence : annuli})$ for thin annuli with $n$ insertions implies the corresponding parametrised version $(\ref{item 3 independence : annuli})$, again just for thin annuli.
Finally, in \emph{Step 4}, we show that the statements $(\ref{item 2 independence : annuli})$ and $(\ref{item 3 independence : annuli})$ 
for thin annuli imply the corresponding statements without the restriction of the annuli being thin.

\noindent\emph{Step 1.}
In this step, assuming that $(\ref{item 2 independence : annuli})$ holds when there are $n$ insertions, we show that $(\ref{item 1 independence : discs})$ holds when there are $n+1$ insertions.
Let $D$ be a disc, let $I_0,\ldots,I_n\subset D$ be 
disjoint oriented extendable intervals, and let $x_i\in\cA(I_i)$. 
Given discs $D_0\subset \ldots \subset D_n=D$ such that $I_i\subset \partial D_i$, we can form
$| x_0 x_1 \ldots x_n\rangle_{(D_j)}\in H_0(D)$ as in \eqref{eq: <x,...,x>_D}.
Let $\sigma$ be a permutation of the set $\{0,\ldots,n\}$,
let $D'_0\subset \ldots \subset D'_n=D$ be such that $I_{\sigma(i)}\subset\partial D'_i$,
and let 
$| x_{\sigma(0)} x_{\sigma(1)} \ldots x_{\sigma(n)}\rangle_{(D'_j)}\in H_0(D)$ be the corresponding element.
We wish to show that 
$| x_{\sigma(0)} x_{\sigma(1)} \ldots x_{\sigma(n)}\rangle_{(D'_j)}=
| x_0 x_1 \ldots x_n\rangle_{(D_j)}$.

If $\sigma(0)=0$, pick a disc $D''_0\subset D_0\cap D'_0$ that contains $I_0$ in its boundary, and let $\Omega_{D''_0}\in H_0(D''_0)$ be the corresponding vacuum vector. 
Using Lemmas~\ref{lem: annuli map Om to Om} and~\ref{lem: thin annuli intertwine action of boundary intervals},
we then check:
\begin{align*}
| x_0 x_1 \ldots x_n\rangle_{(D_j)}
&= (D\setminus \mathring D_0)[x_1 x_2 \ldots x_n] x_0(D_0\setminus \mathring D''_0)\Omega_{D''_0}
\\&= (D\setminus \mathring D_0)[x_1 x_2 \ldots x_n](D_0\setminus \mathring D''_0) x_0\Omega_{D''_0}
\\&= (D\setminus \mathring D''_0)[x_1 x_2 \ldots x_n] x_0\Omega_{D''_0},
\end{align*}
and similarly 
\begin{align*}
| x_{\sigma(0)} x_{\sigma(1)} \ldots x_{\sigma(n)}\rangle_{(D'_j)}
&= 
(D\setminus \mathring D''_0)[x_{\sigma(1)} x_{\sigma(2)} \ldots x_{\sigma(n)}] x_0\Omega_{D''_0}
\\&=
(D\setminus \mathring D''_0)[x_1 x_2 \ldots x_n] x_0\Omega_{D''_0},
\end{align*}
where the last equality follows from our inductive assumption about $(\ref{item 2 independence : annuli})$ holding whenever there are $n$ insertions.
It follows that 
\begin{equation}\label{eq: step 1 part 2}
| x_{\sigma(0)} x_{\sigma(1)} \ldots x_{\sigma(n)}\rangle_{(D'_j)}=
| x_0 x_1 \ldots x_n\rangle_{(D_j)}.
\end{equation}

If $\sigma(0)\not=0$, pick permutations $\pi$ and $\tau$ of $\{0,\ldots,n\}$ that satisfy $\pi(0)=\tau(1)=0$ and $\pi(1)=\tau(0)=\sigma(0)$. 
Let $D''_0\subset D$ be a disc that contains both $I_0$ and $I_{\sigma(0)}$ in its boundary, and that doesn't intersect any of the other intervals, and let $\Omega_{D''_0}\in H_0(D''_0)$ be the associated vacuum vector.
We then have:
\begin{align*}
| x_{\sigma(0)} x_{\sigma(1)} \ldots x_{\sigma(n)}\rangle_{(D'_j)}
&=(D\setminus \mathring D''_0)[x_{\sigma(1)} x_{\sigma(2)} \ldots x_{\sigma(n)}] x_{\sigma(0)}\Omega_{D''_0}
\\&=(D\setminus \mathring D''_0)[x_{\tau(1)} x_{\tau(2)} \ldots x_{\tau(n)}] x_{\tau(0)}\Omega_{D''_0}
\\&=(D\setminus \mathring D''_0)[ x_{\tau(2)} \ldots x_{\tau(n)}] x_{\tau(1)}x_{\tau(0)}\Omega_{D''_0}
\\&= (D\setminus \mathring D''_0)[x_{\pi(2)} \ldots x_{\pi(n)}] x_{\pi(1)} x_{\pi(0)}\Omega_{D''_0}
\\&= (D\setminus \mathring D''_0)[x_{\pi(1)} x_{\pi(2)} \ldots x_{\pi(n)}] x_{\pi(0)}\Omega_{D''_0}
\\&= (D\setminus \mathring D''_0)[x_1 x_2 \ldots x_n] x_0\Omega_{D''_0}
\\&= | x_0 x_1 \ldots x_n\rangle_{(D_j)},
\end{align*}
where the first and last equalities are special cases of \eqref{eq: step 1 part 2}.
The second, fourth, and sixth equalities hold by our inductive assumption about $(\ref{item 2 independence : annuli})$ holding when there are $\le n$ insertions, while the fourth equality also uses the locality axiom to argue that $x_{\tau(1)} x_{\tau(0)}=x_{\pi(1)} x_{\pi(0)}$.

\noindent\emph{Step 2.}
In this step, assuming that $(\ref{item 1 independence : discs})$ holds with $n$ insertions ($n \ge 1$), 
we show that $(\ref{item 2 independence : annuli})$ holds for thin annuli with the same number of insertions.
Let $D_{in} \subset D_{out}$ be discs such that $\partial D_{out} \cap \partial D_{in}$ contains an interval $J$, and let $A = D_{out} \setminus \mathring D_{in}$ be the corresponding thin annulus.
Let $I_1,\ldots,I_n\subset A\setminus J$ be disjoint oriented extendable intervals, and let $x_i \in \cA(I_i)$.

Given a family of discs 
$D_{in} = D_0 \subset \ldots \subset D_{n+1} = D_{out}$
such that $I_i\subset\partial D_i$,
let\[
X_{(D_j)} := A[x_1 x_2 \ldots x_n]_{(D_j)} : H_0(D_{in}) \to H_0(D_{out})
\]
be as in \eqref{eq: A[x,...,x]_D}.
Given a permutation $\sigma$ of $\{1,\ldots,n\}$ and discs $D_{in} = D'_0 \subset \ldots \subset D'_{n+1} = D_{out}$ such that $I_{\sigma(i)}\subset \partial D'_i$,
we wish to show that $X_{(D_j)}=X_{(D_j')}:=A[x_{\sigma(1)} x_{\sigma(2)} \ldots x_{\sigma(n)}]_{(D'_j)}$.
By our induction hypothesis, we have
\[
X_{(D_j)}\Omega_{D_{in}} = | x_1 x_2 \ldots x_n\rangle_{(D_j)}
= | x_{\sigma(1)} x_{\sigma(2)} \ldots x_{\sigma(n)}\rangle_{(D_j')} = X_{(D_j')}\Omega_{D_{in}}.
\]
The maps $X_{(D_j)} $ and $X_{(D_j')}$ are $\cA(J)$-linear by Lemma~\ref{lem: thin annuli intertwine action of boundary intervals}, and
the vacuum vector $\Omega_{D_{in}}\in H_0(D_{in})$ is cyclic for the action of $\cA(J)$ by the Reeh-Schlieder theorem. It follows that $X_{(D_j)}=X_{(D_j')}$.

\noindent\emph{Step 3.}
In this step we assume that $(\ref{item 2 independence : annuli})$ holds for thin annuli with $n$ insertions ($n\ge 1$), and show that $(\ref{item 3 independence : annuli})$ holds for thin annuli with the same number of insertions.

Let $\underline{A} \in \tAnn_c$ be a thin $z$-lifted annulus, with underlying annulus $A = D_{out} \setminus \mathring{D}_{in}$,
and boundary parametrizations $\varphi_{in/out}:S^1 \to \partial D_{in/out}$.
Let $J\subset \partial_{in}A\cap\partial_{out}A$ be an interval,
$I_1,\ldots,I_n\subset A\setminus J$ be disjoint oriented extendable intervals, and $x_i \in \cA(I_i)$.
Choose discs $D_{in} = D_0 \subset D_1 \subset \ldots \subset D_{n+1} = D_{out}$
such that $I_i \subset \partial D_i$, as well as parametrizations $\varphi_i:S^1 \to \partial D_i$ for $i\in\{1,\ldots,n\}$.
Equip $A_i := D_{i} \setminus \mathring{D}_{i-1}$ with the boundary parametrizations $\varphi_{i-1}$ and $\varphi_{i}$ (where by convention $\varphi_0=\varphi_{in}$ and $\varphi_{n+1}=\varphi_{out}$), and choose $z$-lifts $\underline{A}_i \in \tAnn_c$ of $(A_i,\varphi_{i-1}, \varphi_i)$ satisfying $\prod \underline{A}_j = \underline{A}$.
For every representation $K$ of the conformal net $\cA$, we then have an operator 
\begin{equation}\label{eq: def operator X}
X_{(D_j,\varphi_j)}
:=
\underline{A}_{n+1}\varphi_n^*(x_n) \cdots \underline{A}_3 \varphi_2^*(x_2) \underline{A}_2 \varphi_1^*(x_1) \underline{A}_1  : K \to K,
\end{equation}
as in \eqref{eq: A[x,...,x]_D - with parametrisations}.
We need to show that this operator is independent of the choices of ordering of the intervals,  intermediate discs, intermediate parametrizations $\varphi_j$, and lifts $\underline{A}_j$.

If $K=H_0$ is the vacuum sector, then by \eqref{eq: parametrized vs unparametrized annuli} and our assumption $(\ref{item 2 independence : annuli})$ we have
\begin{equation}\label{eq: visibly independent of the choices}
X_{(D_j,\varphi_j)} = U(\varphi_{out})^*A[x_1 \ldots x_n] U(\varphi_{in}),
\end{equation}
which is visibly independent of the choices.

Returning to the case of an arbitrary sector $K$, we start by showing that $X_{(D_j,\varphi_j)}$ does not depend on the choice of intermediate parametrizations $\varphi_j$.
Given new such parametrizations $\varphi_j':S^1\to D_j$, let $\alpha_j:= \varphi^{-1}_j \circ\varphi'_j\in \Diff(S^1)$. 
Pick $z$-lifts $\underline{A}_j^\prime$, $\underline{\alpha}_j\in\tAnn_c$ of $(A_j,\varphi'_{j-1},\varphi'_j)$ and $\alpha_j$
subject to the condition
$
\underline{A}_j' = \underline{\alpha}_j^{-1} \underline{A}_j\, \underline{\alpha}_{j-1}$
(by convention, $\underline{\alpha}_0$ and $\underline{\alpha}_{n+1}$ are taken to be the identity).
We then have $\prod \underline{A}'_j = \underline{A}$, and we show by direct computation
that \eqref{eq: def operator X} does not depend on the intermediate parametrizations or the individual lifts $\underline{A}_j$:
\begin{align*}
X_{(D_j,\varphi_j')} &= 
 \underline{A}_{n+1} \underline{\alpha}_{n} \varphi_n'^*(x_n) \underline{\alpha}_n^{-1} \underline{A}_{n}\, \underline{\alpha}_{n-1} \cdots  \underline{\alpha}_1\varphi_1'^*(x_1)\underline{\alpha}_1^{-1}\underline{A}_1\\
&= 
\underline{A}_{n+1} \varphi_n^*(x_n) \underline{A}_{n}  \cdots \varphi_1^*(x_1)\underline{A}_1\\
&= 
X_{(D_j,\varphi_j)}.
\end{align*}
In light of the above, from now on, we denote this operator simply by $X_{(D_j)}$.

We now show that $X_{(D_j)}$ does not depend on the choices of discs $D_j$.
We first work under the additional assumption that $\underline A$ is localised in
$(\varphi_{in}^{-1}J)'$, i.e., that
$\underline A \in \Ann_c((\varphi_{in}^{-1}J)')\subset \tAnn_c$
(which in particular implies $\varphi_{in}|_{\varphi_{in}^{-1}J}=\varphi_{out}|_{\varphi_{in}^{-1}J}$).
Without loss of generality, we may insist that the intermediate parametrizations $\varphi_j$ are chosen so that $\varphi_{in}=\varphi_0,\varphi_1, \ldots, \varphi_{n+1}=\varphi_{out}$ all agree on $\varphi_{in}^{-1}J$, and that the $z$-lifts  
$\underline A_j$ are localised in
$(\varphi_{in}^{-1}J)'$.
Write $\pi:\cA((\varphi_{in}^{-1}J)') \to B(K)$ for the action of the conformal net, and let $X^0$ be the operator on $H_0$ defined by the same formula \eqref{eq: A[x,...,x]_D - with parametrisations}.
We showed in \eqref{eq: visibly independent of the choices} that $X^0$ does not depend on the choices of discs. 
By Proposition~\ref{lem: local annuli lie in local algebra} and the fact that $\varphi_j^*(x_j) \in \cA((\varphi_{in}^{-1}J)')$, we then have
\[
\pi(X^0) = X_{(D_j)},
\]
from which we can infer that $X_{(D_j)}$ also doesn't depend on the choice of discs $D_j$.

Finally, we drop the assumption that $\underline A$ is localised in some interval.
Since $A$ is thin, we may pick a lifted diffeomorphism $\underline{\beta} \in \tDiff_c(S^1)$ such that $\underline{\beta} \underline{A}$ is localised in
$(\varphi_{in}^{-1}J)'$.
Let $X_{\underline \beta\underline{A}}$ be the operator \eqref{eq: def operator X} corresponding to $\underline{\beta} \underline{A}$.
By the results of the previous paragraph,
$X_{\underline \beta \underline{A}}$
does not depend on choice of intermediate discs.
It follows that
$X_{(D_j)} = \underline{\beta}^{-1} X_{\underline{\beta} \underline{A}}$ also
doesn't depend on choice of intermediate discs.

\noindent\emph{Step 4.}
In this step, we show that if $(\ref{item 2 independence : annuli})$ and $(\ref{item 3 independence : annuli})$ hold for thin annuli with $n$ insertions, then they hold for all annuli with that same number $n$ of insertions.
We treat in parallel the following two cases which correspond, respectively, to $(\ref{item 2 independence : annuli})$ and $(\ref{item 3 independence : annuli})$:\\
\emph{Case (I):} annuli with unparametrised boundary (acting on vacuum sectors only), and\\
\emph{Case (II):} $z$-lifted annuli with parametrised boundary, acting an arbitrary sectors.

Let $A=D_{out}\setminus D_{in}$ be an annulus with $n$ insertions along intervals $I_1,\ldots, I_n$. [In case (II), we further equip $A$ with boundary parametrisations, and a lift to the central extension $\tAnn_c$.]
Our first task is to produce an operator $A[x_1 x_2 \ldots x_n]^\sim$, not necessarily equal to $A[x_1 x_2 \ldots x_n]_{(D_j)}$, but independent of any choices. Later we will show that these two operators are in fact equal to each other.

Consider the quotient space $A/{\sim}$ obtained by crushing each of the intervals $I_i$ to a single point $p_i\in A/{\sim}$. The annulus $A/{\sim}$ is a priori not a smooth manifold (it's certainly not a complex manifold), but we may pick a smooth structure extending the one given on $A\setminus\bigcup I_i$. Given a subspace $X\subset A/{\sim}$, we write $\tilde X$ for its preimage in $A$.

Let $\cD$ denote the space of possible decompositions $A/{\sim} = B \cup C$ into two annuli with thin parts.
[In case (II), both $\tilde B$ and $\tilde C$ are equipped with boundary parametrisations, and lifts to $\tAnn_c$, composing to the given lift of $A$.]
The space $\cD$ 
contains a subspace $\mathring\cD\subset \cD$ of decompositions such that the insertion points are not on the boundary of $B$ or $C$.
For $(B,C)\in \mathring\cD$, the annuli $\tilde B$ and $\tilde C$ have smooth boundary, and we set 
\begin{equation}\label{eq: A[xxx]=B[...] circ C[...]}
A[x_1 x_2 \ldots x_n]^\sim := \tilde B[\ldots] \tilde C[\ldots]
\end{equation}
where the unspecified dots refer to the insertions that lie inside $B$ and inside $C$, respectively.

Let us define $(B,C)$ and $(B',C')$ in $\mathring\cD$ to be \emph{close enough}, if there exists $(B'',C'')\in \mathring\cD$ such that $B''\subset B\cap B'$ and $C''\supset C\cup C'$.
In that case, we have
\begin{align}
\tilde B[\ldots]  \tilde C[\ldots]
&=\tilde B''[\ldots]  (\tilde B\setminus \tilde B'')[\ldots]  \tilde C[\ldots]
\notag
\\&=\tilde B''[\ldots]  \tilde C''[\ldots]
\label{eq: lots of B and C}
\\
\notag
&=\tilde B''[\ldots]  (\tilde B'\setminus \tilde B'')[\ldots]  \tilde C'[\ldots]
=\tilde B'[\ldots]  \tilde C'[\ldots]
\end{align}
As a consequence of the existence of framings (\cite[Corollary 4.17]{HenriquesTener24ax}),
the space $\cD$ is path connected. So any two decompositions $A/{\sim} = B_0 \cup C_0$ and $A/{\sim} = B_1 \cup C_1$ can be connected by a smooth path
\[
A/{\sim} = B_t \cup C_t\qquad t\in[0,1],
\]
which we may furthermore assume to be transverse to the insertion points $p_i$.
Pick $0=t_0<t_1<\ldots<t_m=1$ such that $(B_{t_i},C_{t_i})$ and $(B_{t_{i+1}},C_{t_{i+1}})$ are close enough,
and such that $(B_{t_i},C_{t_i})\in\mathring\cD$.
It follows from \eqref{eq: lots of B and C} that 
\[
\tilde B_0[\ldots]  \tilde C_0[\ldots]=\tilde B_1[\ldots]  \tilde C_1[\ldots].
\]
This finishes the proof that
\eqref{eq: A[xxx]=B[...] circ C[...]} is independent of the choice of point in $\mathring \cD$.

It remains to show that \eqref{eq: A[xxx]=B[...] circ C[...]} is equal to $A[x_1 x_2 \ldots x_n]_{(D_j)}$. This is obvious in the case $n=1$ of a single insertion.
So it's enough to prove that for every decomposition 
$A = A_1 \cup A_2$ into two annuli (not necessarily thin), compatible with the insertions, we have
\[
A[x_1 x_2 \ldots x_n]^\sim = A_1[\ldots]^\sim  A_2[\ldots]^\sim.
\]
Pick thin annuli $B_1, B_2, B'_2, C_1, C'_1, C_2 \subset A$ satisfying
$A_1 = B_1 C_1$,
$A_2 = B_2 \cup C_2$,
$\mathring B_2 = \mathring B'_2$,
$\mathring C_1 = \mathring C'_1$,
and such that $C_1\cup B_2=B'_2\cup C'_1$ is also thin.
[In case (I), we also require $B_2 = B'_2$ and $C_1 = C'_1$ as elements of $\tAnn_c$.]
Here's a typical example of what these annuli might look like:
\[
\begin{tikzpicture}[scale=.65]
\filldraw[thick, fill=red!10!blue!20!gray!30!white] (0,0) circle (1);
\filldraw[thick, fill=white] (0,0) circle (.4);
\draw[very thin] (0,0) circle (.7);
\draw[very thin] (-60:.4) to[out=30,in=-120] (-30:.7);
\draw[very thin] (-120:.4) to[out=150,in=-60] (-150:.7);
\draw[very thin] (60:1) to[out=-30,in=120] (30:.7);
\draw[very thin] (120:1) to[out=-150,in=60] (150:.7);
\node[scale=.9] at (0,-1.5) {$A$};
\end{tikzpicture}
\quad
\begin{tikzpicture}[scale=.65]
\filldraw[thick, fill=red!10!blue!20!gray!30!white] (0,0) circle (1);
\filldraw[thick, fill=white] (0,0) circle (.7);
\draw (0,0) circle (1);
\draw (0,0) circle (.7);
\draw (0,0) circle (.4);
\node[scale=.9] at (0,-1.5) {$A_1$};
\end{tikzpicture}
\quad
\begin{tikzpicture}[scale=.65]
\filldraw[thick, fill=red!10!blue!20!gray!30!white] (0,0) circle (.7);
\filldraw[thick, fill=white] (0,0) circle (.4);
\draw (0,0) circle (1);
\draw (0,0) circle (.7);
\draw (0,0) circle (.4);
\node[scale=.9] at (0,-1.5) {$A_2$};
\end{tikzpicture}
\quad
\begin{tikzpicture}[scale=.65]
\filldraw[thick, fill=red!10!blue!20!gray!30!white] (0,0) circle (1);
\filldraw[thick, fill=white]
(60:1) to[out=-30,in=120] (30:.7)
arc(360+30:150:.7)
(150:.7) to[out=60,in=-150] (120:1)
arc(120:60:1);
\draw (0,0) circle (1);
\draw (0,0) circle (.4);
\node[scale=.9] at (0,-1.5) {$B_1$};
\end{tikzpicture}
\quad
\begin{tikzpicture}[scale=.65]
\filldraw[thick, fill=red!10!blue!20!gray!30!white] (0,0) circle (.7);
\filldraw[thick, fill=white]
(-60:.4) to[out=30,in=-120] (-30:.7)
arc(-30:360-150:.7)
(-150:.7) to[out=-60,in=150] (-120:.4)
arc(-120:-60:.4);
\draw (0,0) circle (1);
\draw (0,0) circle (.7);
\draw (0,0) circle (.4);
\node[scale=.9] at (0,-1.5) {$B_2$};
\end{tikzpicture}
\quad
\begin{tikzpicture}[scale=.65]
\filldraw[thick, fill=red!10!blue!20!gray!30!white] (0,0) circle (.7);
\filldraw[thick, fill=white]
(-60:.4) to[out=30,in=-120] (-30:.7)
arc(-30:30:.7)
to[out=120,in=-30] (60:1)
arc(60:120:1)
to[out=-150,in=60] (150:.7)
arc(150:360-150:.7)
(-150:.7) to[out=-60,in=150] (-120:.4)
arc(-120:-60:.4);
\draw (0,0) circle (1);
\draw (0,0) circle (.4);
\node[scale=.9] at (0,-1.5) {$B'_2$};
\end{tikzpicture}
\quad
\begin{tikzpicture}[scale=.65]
\filldraw[thick, fill=red!10!blue!20!gray!30!white] 
(30:.7)
to[out=120,in=-30] (60:1)
arc(60:120:1)
to[out=-150,in=60] (150:.7)
arc(150:360+30:.7);
\filldraw[thick, fill=white] (0,0) circle (.7);
\draw (0,0) circle (1);
\draw (0,0) circle (.7);
\draw (0,0) circle (.4);
\node[scale=.9] at (0,-1.5) {$C_1$};
\end{tikzpicture}
\quad
\begin{tikzpicture}[scale=.65]
\filldraw[thick, fill=red!10!blue!20!gray!30!white] 
(30:.7)
to[out=120,in=-30] (60:1)
arc(60:120:1)
to[out=-150,in=60] (150:.7)
arc(150:360-150:.7) to[out=-60,in=150] (360-120:.4) arc(360-120:360-60:.4)
(360-60:.4) to[out=30,in=-120] (360-30:.7)
arc(360-30:360+30:.7);
\filldraw[thick, fill=white] 
(-30:.7) arc(-30:360-150:.7) to[out=-60,in=150] (360-120:.4) arc(360-120:360-60:.4)
(360-60:.4) to[out=30,in=-120] (360-30:.7);
\draw (0,0) circle (1);
\draw (0,0) circle (.4);
\node[scale=.9] at (0,-1.5) {$C'_1$};
\end{tikzpicture}
\quad
\begin{tikzpicture}[scale=.65]
\filldraw[thick, fill=red!10!blue!20!gray!30!white] 
(-30:.7) arc(-30:360-150:.7) to[out=-60,in=150] (360-120:.4) arc(360-120:360-60:.4)
(360-60:.4) to[out=30,in=-120] (360-30:.7);
\filldraw[thick, fill=white] (0,0) circle (.4);
\draw (0,0) circle (1);
\draw (0,0) circle (.4);
\node[scale=.9] at (0,-1.5) {$C_2$};
\end{tikzpicture}
\]
Letting $B := B_1 B'_2$ and
$C := C'_1 C_2$,
we then have $A=B\cup C$, and therefore:
\begin{align*}
A_1[\ldots]^\sim A_2[\ldots]^\sim
&= B_1[\ldots] C_1[\ldots] B_2[\ldots] \circ C_2[\ldots]
\\&= B_1[\ldots] B'_2[\ldots] C'_1[\ldots] C_2[\ldots]
= B[\ldots] C[\ldots]
= A[x_1 x_2 \ldots x_n]^\sim.
\end{align*}
This finishes the proof of Step 4.

To finish the proof of Proposition~\ref{proposition independence}, we must still treat the case when the intervals $I_i$ touch the boundary in more than one connected component:

$(\ref{item 1 independence : discs})$ Let
$| x_1 x_2 \ldots x_n\rangle_D^{(1)}$
and
$| x_1 x_2 \ldots x_n\rangle_D^{(2)}$
be two instances of \eqref{eq: <x,...,x>_D},
defined with different choices of intermediate discs and orderings of the insertions.
If $A$ is a thick annulus with $\partial_{in}A = \partial D$,
then
\[
 Y_A |x_1 \ldots x_n \rangle_D^{(1)} = |x_1 \cdots x_n \rangle_{A \cup D}^{(1)} = 
 |x_1 \cdots x_n \rangle_{A \cup D}^{(2)} =
  Y_A |x_1 \ldots x_n \rangle_D^{(2)},
\]
where the middle equality holds by the work we have done (as the intervals $I_i$ do not touch the boundary of $A \cup D$).
By Lemmas~\ref{lem: annuli have dense image} and \ref{lem: Aunderline in terms of A}, the operator $Y_A$ is injective.
It follows that  $|x_1 \ldots x_n \rangle_D^{(1)} =
|x_1 \ldots x_n \rangle_D^{(2)}$.

$(\ref{item 2 independence : annuli})$
Let $A = D_{out}\setminus \mathring D_{in}$, and 
let
$A[x_1 x_2 \ldots x_n]^{(1)}$
and 
$A[x_1 x_2 \ldots x_n]^{(2)}$
be two instances of \eqref{eq: A[x,...,x]_D},
defined with different choices of intermediate discs and orderings of the insertions.
Pick thick annuli $A_1$ with $\partial_{in}A_1 = \partial_{out} A$, and $A_2\subset D_{in}$ with 
$\partial_{out}A_2 = \partial_{in} A$.
We then have:
\[
 A_1 A[x_1 \ldots x_n]^{(1)}A_2 = (A_1 A A_2)[x_1 \cdots x_n ]^{(1)} = 
 (A_1 A A_2)[x_1 \cdots x_n ]^{(2)} =
  A_1 A[x_1 \ldots x_n]^{(2)} A_2,
\]
where the middle equality holds 
as the intervals $I_i$ do not touch the boundary of $A_1 \cup A \cup A_2$.
By Lemmas~\ref{lem: annuli have dense image} and \ref{lem: Aunderline in terms of A}, the operator $Y_{A_1}$ and $Y_{A_2}$ (denoted above simply $A_1$ and $A_2$, following our convention \eqref{eq: A:H_0(D_in) --> H_0(D_out)}) are injective with dense image.
It follows that  $
A[x_1 \ldots x_n]^{(1)} =
A[x_1 \ldots x_n]^{(2)}
$.

The last case, $(\ref{item 3 independence : annuli})$, is established like $(\ref{item 2 independence : annuli})$, by an analogous application of Lemma~\ref{lem: annuli have dense image}.
\end{proof}

We finish this section by collecting a couple of useful lemmas.

\begin{lem}\label{lem: parametrized vs unparametrized annuli -- with worms}
Let $A = D_{out} \setminus \mathring{D}_{in}$ be an annulus equipped 
with boundaries parametrizations $\varphi_{in/out}:S^1\to\partial D_{in/out}$, and let $\underline{A}\in\tAnn_c$ be a $z$-lift of $A$.
Pick unitaries $U(\varphi_{in/out}):H_0  \to H_0(D_{in/out})$ implementing $\varphi_{in/out}$, and $z \in \bbC^\times$, satisfying
\[
\underline{A} = z\cdot U(\varphi_{out})^*A U(\varphi_{in}) 
\]
(such unitaries and scalar exist by Lemma~\ref{lem: Aunderline in terms of A}).
Then the same relation also holds in the presence of worm insertions:
\[
\underline{A}[x_1 x_2 \ldots x_n] = z\cdot U(\varphi_{out})^* A[x_1 x_2 \ldots x_n] U(\varphi_{in}).
\]
\end{lem}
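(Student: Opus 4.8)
The plan is to rerun the telescoping computation behind \eqref{eq: parametrized vs unparametrized annuli}, but this time for the arbitrary implementing unitaries $U(\varphi_{in/out})$ and scalar $z$ allowed in the statement, and to invoke Proposition~\ref{proposition independence} so that none of the operators involved depend on intermediate choices. Concretely, I would pick a chain of discs $D_{in}=D_0\subset\cdots\subset D_{n+1}=D_{out}$ with $I_j\subset\partial D_j$, intermediate parametrizations $\varphi_j:S^1\to\partial D_j$ with implementing unitaries $U(\varphi_j)$ (taking $U(\varphi_0)=U(\varphi_{in})$ and $U(\varphi_{n+1})=U(\varphi_{out})$ to be the given ones), and lifts $\underline{A}_j\in\tAnn_c$ of the intermediate annuli $A_j:=D_j\setminus\mathring D_{j-1}$ subject to $\prod\underline{A}_j=\underline{A}$.

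By Lemma~\ref{lem: Aunderline in terms of A}, for each $j$ there is a scalar $c_j\in\bbC^\times$ with
\[
\pi_0(\underline{A}_j)=c_j\,U(\varphi_j)^*Y_{A_j}U(\varphi_{j-1}).
\]
The key bookkeeping step is to pin down the product of these scalars. Multiplying the displayed identity over all $j$ and using functoriality $Y_{A_{n+1}}\cdots Y_{A_1}=Y_A$, the intermediate unitaries cancel and one obtains $\pi_0(\underline A)=\big(\prod_j c_j\big)U(\varphi_{out})^*Y_AU(\varphi_{in})$. Comparing this with the hypothesis $\pi_0(\underline A)=z\,U(\varphi_{out})^*Y_AU(\varphi_{in})$, and using that $Y_A$ is injective (Lemma~\ref{lem: annuli have dense image} together with Lemma~\ref{lem: Aunderline in terms of A}) so that $U(\varphi_{out})^*Y_AU(\varphi_{in})\neq0$, forces $\prod_j c_j=z$.

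Finally I would substitute these expressions into the worm-decorated product \eqref{eq: A[x,...,x]_D - with parametrisations}. Since $U(\varphi_j)$ implements $\varphi_j$, we have $U(\varphi_j)\,\varphi_j^*(x_j)\,U(\varphi_j)^*=x_j$ as operators on $H_0(D_j)$, so every intermediate unitary telescopes exactly as in the derivation of \eqref{eq: parametrized vs unparametrized annuli}, leaving
\[
\underline{A}[x_1\ldots x_n]_{(D_j)}=\Big(\prod_j c_j\Big)U(\varphi_{out})^*\,A[x_1\ldots x_n]_{(D_j)}\,U(\varphi_{in})=z\,U(\varphi_{out})^*\,A[x_1\ldots x_n]_{(D_j)}\,U(\varphi_{in}).
\]
Dropping the now-redundant subscripts via parts~$(\ref{item 2 independence : annuli})$ and~$(\ref{item 3 independence : annuli})$ of Proposition~\ref{proposition independence} yields the claimed identity.

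The main, and essentially only non-formal, obstacle is the scalar computation $\prod_j c_j=z$: one must ensure that the central-extension scalars attached to the pieces $\underline{A}_j$ multiply to \emph{exactly} the prescribed $z$, rather than merely up to a phase, which is precisely what the injectivity of $Y_A$ secures. Everything else is the same telescoping already carried out for \eqref{eq: parametrized vs unparametrized annuli}, now legitimized for arbitrary implementing unitaries by this scalar identity and by the choice-independence furnished by Proposition~\ref{proposition independence}.
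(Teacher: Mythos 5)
Your proposal is correct and is essentially the paper's own argument: the paper proves this lemma simply by pointing back to the telescoping computation that established \eqref{eq: parametrized vs unparametrized annuli}, which is exactly the computation you carry out. The only (harmless) difference is bookkeeping of the central scalar — the paper normalizes the intermediate lifts so that all scalars except the last are $1$, whereas you allow general scalars $c_j$ and recover $\prod_j c_j = z$ from the hypothesis via the injectivity of $Y_A$, which has the small advantage of handling arbitrary implementing unitaries directly.
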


\begin{proof}
This relation was established in \eqref{eq: parametrized vs unparametrized annuli}, on out way towards proving that the operators $A[x_1 x_2 \ldots x_n]$ and $\underline A[x_1 x_2 \ldots x_n]$ are well-defined.
\end{proof}

The following is a variant of 
\eqref{eq: compatibility of discs and annuli}:

\begin{lem}\label{lem: univalent acting on worm insertions}
Let $f:\bbD \to \bbD$ be a univalent map, let $\underline{A}_f \in \Univ \subset \Ann_c$ be the corresponding lifted annulus, and
let $y_1, \ldots, y_m$ be worm insertions with support intervals in $A_f$.
Then
\[
\underline{A}_f[y_1 \ldots y_m]\big(|x_1 \ldots x_n\rangle_\bbD\big) = | y_1 \ldots y_m\,  \cA(f)(x_1) \ldots \cA(f)(x_n) \rangle_\bbD.
\]
\end{lem}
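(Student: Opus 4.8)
The plan is to reduce this parametrized statement to the unparametrized compatibility relation \eqref{eq: compatibility of discs and annuli}, in two moves: first rewrite $\underline A_f[y_1 \ldots y_m]$ in terms of the unparametrised annulus operator and a pushforward, and then show that worm-insertion vectors transform naturally under holomorphic isomorphisms of discs. Throughout, write $A_f = \bbD \setminus f(\mathring\bbD)$ with $D_{in} = f(\bbD)$, $D_{out} = \bbD$, and the canonical holomorphic boundary parametrizations $\varphi_{in} = f$, $\varphi_{out} = \id$.

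First I would pin down the univalent case of Lemma~\ref{lem: parametrized vs unparametrized annuli -- with worms}. Since $f$ and $\id$ are genuine holomorphic parametrizations, the proof of Lemma~\ref{lem: Aunderline in terms of A} shows that the canonical lift $\underline A_f \in \Univ$ corresponds to the scalar $z = 1$ together with the unitaries $U(\varphi_{out}) = \id : H_0 \to H_0(\bbD)$ and $U(\varphi_{in}) : H_0 \to H_0(f(\bbD))$, $\xi \mapsto (f,\xi)$. A one-line check against \eqref{eqn: pushforward by (anti)holomorphic map} (take $\varphi_1 = \id$, $\varphi_2 = f$, so that $\varphi_2^{-1}\circ f\circ\varphi_1 = \id$) identifies $U(\varphi_{in})$ with the pushforward $f_* : H_0(\bbD) \to H_0(f(\bbD))$ along the isomorphism $f : \bbD \to f(\bbD)$. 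Feeding this into Lemma~\ref{lem: parametrized vs unparametrized annuli -- with worms} gives
\[
\underline A_f[y_1 \ldots y_m] = A_f[y_1 \ldots y_m] \circ f_* : H_0 \to H_0.
\]

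The main step is then the \emph{naturality} identity
\[
f_*\big(|x_1 \ldots x_n\rangle_\bbD\big) = |\cA(f)(x_1) \ldots \cA(f)(x_n)\rangle_{f(\bbD)},
\]
which I would prove directly from the construction \eqref{eq: <x,...,x>_D}. Choose nested discs $D_1 \subset \ldots \subset D_n = \bbD$ with $I_i \subset \partial D_i$; then $f(D_1) \subset \ldots \subset f(D_n) = f(\bbD)$ computes the right-hand side, with annuli $f(A_j) = f(D_j)\setminus f(\mathring D_{j-1})$ and boundary intervals $f(I_j)$ (extendability is preserved because $f$ is an orientation-preserving biholomorphism). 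Two compatibilities let me push $f_* = (f|_{D_n})_*$ inward through the alternating product defining \eqref{eq: <x,...,x>_D}: (i) functoriality of $D \mapsto H_0(D)$ applied to the commuting square with edges $D_{j-1}\hookrightarrow D_j$, $f : D_{j-1} \to f(D_{j-1})$, $f : D_j \to f(D_j)$, $f(D_{j-1})\hookrightarrow f(D_j)$, yielding $(f|_{D_j})_* \circ Y_{A_j} = Y_{f(A_j)} \circ (f|_{D_{j-1}})_*$; and (ii) the relation $(f|_{D_j})_* \circ x_j = \cA(f)(x_j)\circ (f|_{D_j})_*$ on $H_0(D_j)$, which follows from the definitions \eqref{eq: action of A(I) on H_0(D)} and \eqref{eqn: pushforward by (anti)holomorphic map} together with the functoriality identity $\cA((f\circ\varphi)^{-1})\circ \cA(f) = \cA(\varphi^{-1})$ of the coordinate-free net (both sides land in $\cA(\varphi^{-1}(I_j))$). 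Since $f_*$ also sends $\Omega_{D_1}$ to $\Omega_{f(D_1)}$, this telescopes exactly to $|\cA(f)(x_1) \ldots \cA(f)(x_n)\rangle_{f(\bbD)}$.

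Finally, combining the two displays and applying \eqref{eq: compatibility of discs and annuli} to the annulus $A_f$ (with insertions $y_1,\ldots,y_m$) acting on $|\cA(f)(x_1) \ldots \cA(f)(x_n)\rangle_{f(\bbD)}$ gives
\[
\underline A_f[y_1 \ldots y_m]\big(|x_1 \ldots x_n\rangle_\bbD\big) = |y_1 \ldots y_m\, \cA(f)(x_1) \ldots \cA(f)(x_n)\rangle_\bbD,
\]
as claimed. I expect the naturality step to be the only real obstacle: the delicate point is the bookkeeping of the coordinate-free functor, so that the transported insertions are precisely $\cA(f)(x_i) \in \cA(f(I_i))$, and checking that the chosen nested discs push forward to an admissible sequence (which may require reordering the insertions, but is harmless by the independence already established in Proposition~\ref{proposition independence}).
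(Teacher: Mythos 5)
Your proposal is correct and follows essentially the same route as the paper: both reduce to $\underline A_f[y_1\ldots y_m]=A_f[y_1\ldots y_m]\circ f_*$ via Lemma~\ref{lem: parametrized vs unparametrized annuli -- with worms}, invoke naturality of $|x_1\ldots x_n\rangle_D$ under the isomorphism $f:\bbD\to f(\bbD)$, and finish with \eqref{eq: compatibility of discs and annuli}. The only cosmetic differences are that the paper fixes the scalar by noting both $\underline A_f$ and $A_fU_f$ preserve the vacuum, whereas you compute the canonical lift directly, and that you spell out the functoriality step that the paper asserts in one line.
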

\begin{proof}
Let us write
$
A_f[y_1 \ldots y_m]:H_0(f(\bbD))\to H_0(\bbD)
$ and
$\underline A_f[y_1 \ldots y_m]:H_0(\bbD)\to H_0(\bbD)
$
for the operators defined in 
\eqref{eq: A[x,...,x]_D} and
\eqref{eq: A[x,...,x]_D - with parametrisations}, respectively (using the embedding $\Univ \hookrightarrow \Ann_c$ for the latter).
If $U_f:H_0(\bbD)\to H_0(f(\bbD))$ is the unitary induced by $f$ then,
by Lemma~\ref{lem: Aunderline in terms of A}, the operators $A_f:H_0(f(\bbD))\to H_0(\bbD)$
and $\underline{A}_f:H_0(\bbD)\to H_0(\bbD)$
satisfy
\begin{equation}\label{A_f = Y_{A_f}U_f}
\underline{A}_f = A_fU_f
\end{equation}
up to scalar. And as both sides fix the vacuum vector $\Omega \in H_0(\bbD)$, the identity \eqref{A_f = Y_{A_f}U_f} in fact holds on the nose.
It follows by Lemma~\ref{lem: parametrized vs unparametrized annuli -- with worms} that
\[
\underline{A}_f[y_1 \ldots y_m] = A_f[y_1 \ldots y_m]U_f.
\]
Since \eqref{eq: <x,...,x>_D} is functorial with respect to isomorphisms, we have
\[
U_f| x_1 \ldots x_n \rangle_\bbD = | \cA(f)(x_1) \ldots \cA(f)(x_n) \rangle_{f(\bbD)}.
\]
Hence, by \eqref{eq: compatibility of discs and annuli}, we have
\begin{align*}
\underline{A}_f[y_1 \ldots y_m] | x_1 \ldots x_n \rangle_\bbD
&=
A_f[y_1 \ldots y_m]U_f | x_1 \ldots x_n \rangle_\bbD\\
&=
A_f[y_1 \ldots y_m] \, | \cA(f)(x_1) \ldots \cA(f)(x_n) \rangle_{f(\bbD)}\\
&=
|y_1 \ldots y_m \, \cA(f)(x_1) \ldots \cA(f)(x_n) \rangle_{\bbD}. \qedhere
\end{align*}
\end{proof}

Our next result is an immediate consequence of Lemma~\ref{lem: thin annuli intertwine action of boundary intervals}:

\begin{lem}\label{lem: thin part A(I) module map}
Let $A=D_{out} \setminus \mathring D_{in}$ be a thin annulus, and let $I\subset \partial_{in}A \cap \partial_{out}A$ be an interval in the thin part of $A$.
Then if the support intervals of the worms are disjoint from $I$, the operator
\[
A[x_1 x_2 \ldots x_n]: H_0(D_{in}) \to H_0(D_{out})
\]
is equivariant for the actions of $\cA(I)$ on $H_0(D_{in})$ and on $H_0(D_{out})$.
\hfill $\square$
\end{lem}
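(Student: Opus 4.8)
The plan is to use the explicit description \eqref{eq: A[x,...,x]_D} of the operator $A[x_1 \ldots x_n]$ as the alternating composition $A_{n+1}x_n \cdots x_1 A_1$ of the maps $Y_{A_j}:H_0(D_{j-1})\to H_0(D_j)$ and the worm insertions $x_i$ acting on the intermediate vacuum sectors $H_0(D_i)$, and then to propagate the $\cA(I)$-action through this composition one factor at a time.

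First I would observe that, since $A$ is thin along $I$, the boundary circles $\partial D_{in}$ and $\partial D_{out}$ coincide on $I$; as every intermediate disc $D_j$ in a defining sequence $D_{in}=D_0\subset\cdots\subset D_{n+1}=D_{out}$ is sandwiched between $D_{in}$ and $D_{out}$, its boundary is pinched along $I$ and therefore also agrees with them there. Consequently $I\subset \partial_{in}A_j\cap\partial_{out}A_j$, so each sub-annulus $A_j=D_j\setminus\mathring D_{j-1}$ is thin along $I$, and Lemma~\ref{lem: thin annuli intertwine action of boundary intervals} applies to each of them: every $Y_{A_j}$ intertwines the actions of $\cA(I)$ on $H_0(D_{j-1})$ and on $H_0(D_j)$.

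Next I would handle the worm factors. Each $x_i$ acts on $H_0(D_i)$ by the formula \eqref{eq: action of A(I) on H_0(D)}, as does every $y\in\cA(I)$; since the support interval $I_i$ and $I$ have disjoint interiors, for a parametrization $\varphi$ of $\partial D_i$ the operators $\varphi^{-1}(x_i)$ and $\varphi^{-1}(y)$ lie in the commuting algebras $\cA(\varphi^{-1}(I_i))$ and $\cA(\varphi^{-1}(I))$ by the locality axiom of the conformal net. Hence the action of $x_i$ commutes with the action of $\cA(I)$ on $H_0(D_i)$. Combining the two observations, for any $y\in\cA(I)$ I can push its action from $H_0(D_{in})$ through $A_1$, then past $x_1$, then through $A_2$, and so on, until it emerges on $H_0(D_{out})$; this yields exactly the asserted equivariance of $A[x_1 \ldots x_n]$ for the $\cA(I)$-actions on $H_0(D_{in})$ and on $H_0(D_{out})$.

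There is essentially no obstacle, which is why the result is recorded as an immediate corollary. The only point requiring a moment's care is verifying that each intermediate disc meets $\partial D_{in/out}$ along $I$, so that Lemma~\ref{lem: thin annuli intertwine action of boundary intervals} is applicable to every $A_j$; by Proposition~\ref{proposition independence} the operator $A[x_1 \ldots x_n]$ is in any case independent of the chosen discs, so one is free to use such a sandwiched sequence.
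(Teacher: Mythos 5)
Your proof is correct and is precisely the argument the paper leaves implicit: the paper records this lemma as an immediate consequence of Lemma~\ref{lem: thin annuli intertwine action of boundary intervals} with no written proof, and your factor-by-factor propagation through the decomposition $A_{n+1}x_n\cdots x_1A_1$ (each $A_j$ being thin along $I$ since the intermediate discs are sandwiched, and each worm commuting with $\cA(I)$ by locality) is exactly the intended justification. The only superfluous remark is the appeal to Proposition~\ref{proposition independence} at the end: every admissible sequence of intermediate discs is automatically sandwiched between $D_{in}$ and $D_{out}$, so no special choice is needed.
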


The reader might have noticed that when $A$ is a thin annulus, we have used the notation $\underline{A}[x_1 x_2 \ldots x_n]$ to refer to two potentially different things.
The following lemma shows that there is in fact no problem:

\begin{lem}\label{lem: A[worms] in B(K)}
Let $\underline{A}\in\tAnn_c$ be localised in an interval $I$, and suppose that the support intervals of the worms are disjoint from the image of $I'$ in $A$. Then the operator
\begin{equation}\label{eq: A[worms] in A(I)}
\underline{A}[x_1 x_2 \ldots x_n]:H_0\to H_0
\end{equation}
lies in $\cA(I)\subset B(H_0)$.

Moreover, if $(K,\rho)$ is a representation of the conformal net, then the operator
\begin{equation}\label{eq: A[worms] in A(I) rep}
\underline{A}[x_1 x_2 \ldots x_n]:K\to K
\end{equation}
(defined in \eqref{eq: A[x,...,x]_D - with parametrisations}) is the image of \eqref{eq: A[worms] in A(I)} under the homomorphism $\rho_I:\cA(I)\to B(K)$.
\end{lem}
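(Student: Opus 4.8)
The plan is to evaluate both operators \eqref{eq: A[worms] in A(I)} and \eqref{eq: A[worms] in A(I) rep} on one and the same decomposition of $A$ adapted to the interval $I$, read off the first claim factor by factor, and then transport the identity to $K$ using the multiplicativity of $\rho_I$. The key external inputs will be Proposition~\ref{proposition independence} (well-definedness of the operators, so that I may use my preferred decomposition), Proposition~\ref{lem: local annuli lie in local algebra} (the compatibility $\pi=\rho_I\circ\pi_0$ on locally supported annuli), and the homomorphism property of $\rho_I$.

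First I would produce the adapted decomposition. Since $\underline{A}$ is localised in $I$ (Definition~\ref{def: 4.12}), its boundary parametrizations agree on $I'$, giving a well-defined common image $\varphi_{in}(I')=\varphi_{out}(I')$ in $A$, which by hypothesis is disjoint from the worm supports $I_i$. Exactly as in the construction behind \eqref{eqn: annulus with worm insertions is local}, I would choose intermediate discs $D_{in}=D_0\subset\cdots\subset D_{n+1}=D_{out}$ with $I_i\subset\partial D_i$, intermediate parametrizations $\varphi_j$ all agreeing with $\varphi_{in}$ on $I'$, and lifts $\underline{A}_j$ with $\prod\underline{A}_j=\underline{A}$, each localised in $I$ (so that $\underline{A}_j\in\Ann_c(I)$). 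The disjointness hypothesis then forces $\varphi_j^{-1}(I_j)\subset I$, whence $\varphi_j^*(x_j)\in\cA(\varphi_j^{-1}(I_j))\subset\cA(I)$ for all $j$.

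For the first claim I would simply observe that, on $H_0$, every factor of the product \eqref{eq: A[x,...,x]_D - with parametrisations} lies in $\cA(I)$: the factors $\pi_0(\underline{A}_j)$ do so by the discussion following \eqref{eq: pi_0:Ann_c to B(H_0)}, and the factors $\varphi_j^*(x_j)$ do so by the previous paragraph. Hence the product lies in $\cA(I)$, and by Proposition~\ref{proposition independence} it equals the well-defined operator $\underline{A}[x_1\ldots x_n]$, independent of all choices. For the second claim I would compute the same product on $K$: by Proposition~\ref{lem: local annuli lie in local algebra} each $\underline{A}_j\in\Ann_c(I)$ acts on $K$ as $\pi(\underline{A}_j)=\rho_I(\pi_0(\underline{A}_j))$, while each $x_j$ acts as $\rho_I(\varphi_j^*(x_j))$. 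Since $\rho_I:\cA(I)\to B(K)$ is a homomorphism, the product on $K$ equals $\rho_I$ applied to the product on $H_0$, i.e.\ $\underline{A}[x_1\ldots x_n]_K=\rho_I\big(\underline{A}[x_1\ldots x_n]_{H_0}\big)$; Proposition~\ref{proposition independence} again guarantees that both sides are independent of the chosen decomposition.

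The only step with genuine content is the construction of the adapted decomposition in the second paragraph, which is precisely the geometric input already established for \eqref{eqn: annulus with worm insertions is local}. Everything after that is bookkeeping of which factors land in $\cA(I)$, combined with the compatibility $\pi=\rho_I\circ\pi_0$ and the multiplicativity of $\rho_I$; I expect no further obstacle.
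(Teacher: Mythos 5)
Your proposal is correct and follows essentially the same route as the paper: the first claim is exactly the "by construction" observation recorded in \eqref{eqn: annulus with worm insertions is local} (an adapted decomposition with all intermediate parametrizations agreeing on $I'$, so every factor lands in $\cA(I)$), and the second claim is the factorwise application of Proposition~\ref{lem: local annuli lie in local algebra} combined with the multiplicativity of $\rho_I$. The paper's proof is just a two-line citation of these same two ingredients, so your write-up is a faithful (and more explicit) expansion of it.
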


\begin{proof}
The first statement follows from \eqref{eqn: annulus with worm insertions is local}.
The second statement is immediate from Proposition~\ref{lem: local annuli lie in local algebra}, and the respective definitions of \eqref{eq: A[worms] in A(I)} and \eqref{eq: A[worms] in A(I) rep}.
\end{proof}

We now describe the adjoint of the operator $\underline{A}[x_1 x_2 \cdots x_n]:K\to K$.

\begin{lem}\label{lem: adjoint of the operator underline A}
Let $\underline{A} \in \tAnn_c$ and let $x_1, \ldots, x_n$ be a collection of worm insertions.
Then
\[
\underline{A}[x_1 x_2 \cdots x_n]^*
= \underline{A}^\dagger[x_1^* x_2^* \cdots x_n^*].
\]
\end{lem}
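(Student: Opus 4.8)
The plan is to read off the adjoint directly from the defining formula \eqref{eq: A[x,...,x]_D - with parametrisations} and then to recognize the result as the same formula applied to $\underline{A}^\dagger$. The two inputs that do all the work are the compatibility \eqref{eq: pi(Adag)=pi(A)*} of the annulus representation with the adjoint, and the fact that the maps appearing in the worm insertions are $*$-homomorphisms.

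Concretely, I would fix a sequence of discs $D_{in} = D_0 \subset \cdots \subset D_{n+1} = D_{out}$, intermediate parametrizations $\varphi_0 = \varphi_{in}, \varphi_1, \ldots, \varphi_{n+1} = \varphi_{out}$, and $z$-lifts $\underline{A}_j$ of $(A_j, \varphi_{j-1}, \varphi_j)$ with $\prod_j \underline{A}_j = \underline{A}$, so that
\[
\underline{A}[x_1 \ldots x_n] = \pi(\underline{A}_{n+1})\, \varphi_n^*(x_n)\, \pi(\underline{A}_n) \cdots \varphi_1^*(x_1)\, \pi(\underline{A}_1),
\]
where each $\varphi_j^*(x_j)$ acts on $K$ through the $*$-representation of $\cA$. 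Taking the adjoint reverses the product, and I would simplify each factor: $\pi(\underline{A}_j)^* = \pi(\underline{A}_j^\dagger)$ by \eqref{eq: pi(Adag)=pi(A)*}, while $\varphi_j^*(x_j)^* = \varphi_j^*(x_j^*)$ since $\varphi_j$ is orientation preserving (so $\varphi_j^*$ is a $\bbC$-linear $*$-homomorphism) and the net acts by a $*$-representation. This gives
\[
\underline{A}[x_1 \ldots x_n]^* = \pi(\underline{A}_1^\dagger)\, \varphi_1^*(x_1^*)\, \pi(\underline{A}_2^\dagger) \cdots \varphi_n^*(x_n^*)\, \pi(\underline{A}_{n+1}^\dagger).
\]

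It then remains to see that the right-hand side is $\underline{A}^\dagger[x_1^* \ldots x_n^*]$. By multiplicativity of the dagger, $(\underline{A}\cup\underline{B})^\dagger = \underline{B}^\dagger\cup\underline{A}^\dagger$ (the $z$-lifted analogue of \eqref{eq: dag of product}), so $\underline{A}^\dagger = \underline{A}_1^\dagger \cup \cdots \cup \underline{A}_{n+1}^\dagger$ is a genuine decomposition of $\underline{A}^\dagger$ into $n+1$ sub-annuli, now in reversed order with $\underline{A}_1^\dagger$ outermost. The geometric point to check is the bookkeeping of boundary data: since $\dagger$ exchanges the incoming and outgoing circles while keeping their parametrizations fixed, the circle $\partial D_j$ carrying the $j$-th worm sits between $\underline{A}_j^\dagger$ (outside) and $\underline{A}_{j+1}^\dagger$ (inside), and is still parametrized by $\varphi_j$; moreover $\varphi_j$, which was the orientation-reversing incoming parametrization of $A_{j+1}$, becomes the orientation-preserving outgoing parametrization of $A_{j+1}^\dagger$ precisely because reversing the complex structure reverses the orientation. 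Hence in the decomposition of $\underline{A}^\dagger$ the worm $x_j^*$ is again inserted through the $\bbC$-linear map $\varphi_j^*$, and the displayed expression is exactly \eqref{eq: A[x,...,x]_D - with parametrisations} for $\underline{A}^\dagger$ with this decomposition and ordering of the worms. Finally I would invoke Proposition~\ref{proposition independence}, part~$(\ref{item 3 independence : annuli})$, which guarantees that both $\underline{A}[x_1 \ldots x_n]$ and $\underline{A}^\dagger[x_1^* \ldots x_n^*]$ are independent of the choices of discs, parametrizations, lifts, and ordering, legitimizing the convenient decompositions used above.

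The step requiring the most care is this last matching: one must verify, using the precise conventions for how $\dagger$ acts on boundary parametrizations, that the worm intervals land between the correct consecutive sub-annuli of $\underline{A}^\dagger$ and that each $\varphi_j$ retains the orientation-preserving character needed for $\varphi_j^*$ to be $\bbC$-linear (rather than anti-linear). The algebra of taking the adjoint and applying \eqref{eq: pi(Adag)=pi(A)*} is routine; it is this identification of the two sides as instances of a single defining formula that carries the content.
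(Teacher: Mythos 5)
Your proposal is correct and follows essentially the same route as the paper's (very terse) proof: decompose $\underline{A}$ into sub-annuli separated by the worm intervals, take the adjoint factor by factor using \eqref{eq: pi(Adag)=pi(A)*} and the fact that the net acts by $*$-homomorphisms, and recognize the reversed product as the defining expression for $\underline{A}^\dagger[x_1^*\cdots x_n^*]$ via \eqref{eq: dag of product $z$-lifted}. The extra care you take with the orientation and parametrization bookkeeping, and the appeal to Proposition~\ref{proposition independence} to justify the choice of decomposition, are exactly the details the paper leaves implicit.
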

\begin{proof}
Decompose $\underline{A} = \underline{A}_0 \underline{A}_1 \cdots \underline{A}_n$ with the interval labelled by $x_j$ on the outgoing boundary of $A_j$.
We then have
\[
\underline{A}[x_1 x_2 \cdots x_n]^*
= \big(\underline{A}_0 x_1 \underline{A}_1 x_2 \cdots \underline{A}_n \big)^*
= \underline{A}_n^\dagger \cdots x_2^* \underline{A}_1^\dagger  x_1^* \underline{A}_0^\dagger 
= \underline{A}^\dagger[x_1^* x_2^* \cdots x_n^*].
\]
\end{proof}

\subsection{Holomorphicity of worm insertions}

Let $I_1,\ldots,I_n$ be abstract intervals (not embedded in any disc or annulus), and let $x_i\in\cA(I_i)$ be algebra elements.

\begin{lem}\label{lem: holomorphicity of worm insertions in annuli}
Let $A:=D_{out} \setminus \mathring D_{in}$ be an annulus,
let $M$ be a finite dimensional complex manifold, and let $\alpha_i:M\times I_i\to \mathring A$ be smooth maps which are pointwise holomorphic in the sense that $\forall q\in I_i$
\[
\alpha_i|_{M \times \{q\}}: M \to \mathring A
\]
is holomorphic.
Let us further assume that for every point $m\in M$, the the maps $\alpha_i|_{\{m\}\times I_i}$ are embeddings with disjoint images.
For $m\in M$, let $I_i(m):=\alpha_i(\{m\}\times I_i)$, and let
\[
x_i(m):=(\alpha_i|_{\{m\}\times I_i})_*(x_i)\in \cA(I_i(m)).
\]
Then the map 
\begin{equation}\label{ref: hol of worms -- UNparametrised}
M \to \cB\big(H_0(D_{in}), H_0(D_{out})\big): m \mapsto A[x_1(m) \ldots x_n(m)]
\end{equation}
is holomorphic.

Similarly, if we have a $z$-lift $\underline{A} \in \tAnn_c$ of $A$, and a representation $K$ of the conformal net $\cA$, then the map
\begin{equation}\label{ref: hol of worms -- parametrised}
M \to \cB(K): m \mapsto \underline{A}[a_1(m) \ldots a_n(m)]
\end{equation}
is holomorphic.
\end{lem}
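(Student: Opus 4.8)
The plan is to push all the $m$-dependence off of the net elements $x_i$ and onto the annuli, where it is controlled by the holomorphicity of the integrated representation $\pi:\tAnn_c\to\cB(K)$ of \eqref{eq: pi:Ann_c to B(H)} (established in \cite{HenriquesTenerIntegratingax}). Since holomorphicity is a local condition on $M$, I fix a point $m_0\in M$ and work in a small neighbourhood. There the disjoint moving intervals $I_i(m)$ stay close to their positions at $m_0$, so after possibly shrinking the neighbourhood and reordering the insertions (permissible by Proposition~\ref{proposition independence}) I can choose a nested sequence of cut curves through them, i.e. discs $D_{in}=D_0(m)\subset\cdots\subset D_{n+1}(m)=D_{out}$ with $I_i(m)\subset\partial D_i(m)$, together with boundary parametrizations, all depending holomorphically on $m$ in the sense that each is built from the pointwise-holomorphic data $\alpha_i$. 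This realises the fixed annulus $A$ as a welding $A=A_{n+1}(m)\cup\cdots\cup A_1(m)$ of the sub-annuli $A_j(m):=D_j(m)\setminus\mathring D_{j-1}(m)$, with the total annulus independent of $m$.

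The key trick is to choose on each internal boundary $\partial D_i(m)$ a parametrization $\varphi_i(m):S^1\to\partial D_i(m)$ \emph{adapted to the insertion}, carrying a fixed reference interval $J_i\subset S^1$ onto $I_i(m)$ so that $\varphi_i(m)|_{J_i}$ agrees with $\alpha_i(m,-)$ under the fixed identification $I_i\cong J_i$. With this choice the element $\hat x_i:=\big(\varphi_i(m)^{-1}\circ\alpha_i(m,-)\big)_*x_i\in\cA(J_i)$ is \emph{independent of $m$}, since $\varphi_i(m)^{-1}\circ\alpha_i(m,-)$ is the fixed reference identification $I_i\to J_i$. Lifting each $A_i(m)$ to $\underline A_i(m)\in\tAnn_c$ with $\prod_j\underline A_j(m)=\underline A$, the definition \eqref{eq: A[x,...,x]_D - with parametrisations} then reads
\[
\underline A[a_1(m)\ldots a_n(m)]
=\pi(\underline A_{n+1}(m))\,\rho_{J_n}(\hat x_n)\,\pi(\underline A_n(m))\cdots\rho_{J_1}(\hat x_1)\,\pi(\underline A_1(m)),
\]
where the factors $\rho_{J_i}(\hat x_i)$ are fixed operators and all $m$-dependence sits in the $\pi(\underline A_j(m))$. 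By Proposition~\ref{proposition independence} (independence of the intermediate discs, ordering, parametrizations and individual lifts) this equals the operator we started with, so it suffices to prove that each $m\mapsto\pi(\underline A_j(m))$ is holomorphic.

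For this I would produce, for each $j$, a framing $h_j(m):S^1\times[0,1]\to A_j(m)$ (Definition~\ref{def: Framings and paths}) depending holomorphically on $m$ pointwise — for instance by interpolating in $A\subset\bbC$ between the two cut curves bounding $A_j(m)$, which move holomorphically. The associated path $\widetilde X_j(m)(t)\in\widetilde{\cX}_c$ is then holomorphic in $m$, and by the description \eqref{eq: under A (formal)} of $\tAnn_c$ the map $m\mapsto\underline A_j(m)$ is holomorphic; holomorphicity of the representation $\pi$ on the arbitrary sector $K$ then yields that each $m\mapsto\pi(\underline A_j(m))$ is holomorphic. A finite product of holomorphic $\cB(K)$-valued maps interleaved with the fixed $\rho_{J_i}(\hat x_i)$ is holomorphic, which proves \eqref{ref: hol of worms -- parametrised}. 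The unparametrised statement \eqref{ref: hol of worms -- UNparametrised} follows by specialising to $K=H_0$ and applying Lemma~\ref{lem: parametrized vs unparametrized annuli -- with worms}, which relates $A[x_1(m)\ldots x_n(m)]$ to $\underline A[x_1(m)\ldots x_n(m)]$ through the fixed unitaries $U(\varphi_{in/out})$ and scalar $z$.

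The main obstacle is precisely this last analytic step: translating the hypothesis that each $\alpha_i$ is holomorphic in $m$ for fixed boundary point into genuine holomorphicity of the cut curves, the adapted parametrizations, and the resulting framings, so that the holomorphicity of $\pi$ can legitimately be invoked. Everything else is bookkeeping justified by Proposition~\ref{proposition independence} and the adapted-parametrization device; the content lies in exhibiting $m\mapsto\underline A_j(m)\in\tAnn_c$ as a bona fide holomorphic family of welding data.
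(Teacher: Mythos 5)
Your overall strategy is the same as the paper's: localise in $M$, use Proposition~\ref{proposition independence} to rearrange the decomposition freely, push all of the $m$-dependence off the algebra elements and onto the annuli by choosing cut curves through the moving intervals with parametrizations adapted to the insertions (so that the inserted element becomes a fixed $\hat x_i\in\cA(J_i)$), and then invoke holomorphicity of the integrated representation of $\tAnn_c$. The paper organises this slightly differently --- it first separates the worms using \emph{fixed} intermediate discs $D_i$ with $I_i(m)\subset\mathring D_i\setminus D_{i-1}$ for all $m$ near $m_0$, reducing to a single worm per annulus, and only then introduces a single moving cut circle through that worm, writing $\underline A[x(m)]=\underline B(m)\,x\,\underline C(m)$ --- but this is the same idea as your nested chain of moving discs.

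The one place where your argument is incomplete is exactly the step you flag as ``the main obstacle'': producing the cut curves, adapted parametrizations, and $z$-lifts as a genuinely holomorphic family from the hypothesis that each $\alpha_i|_{M\times\{q\}}$ is holomorphic. This is not routine, because the data you are given is only a pointwise-holomorphic family of \emph{interval} embeddings, and you need a pointwise-holomorphic family of \emph{circle} embeddings (the cut curves) restricting to them. The paper closes this by an explicit construction: it extends $\alpha:M\times I\to A$ to $\hat\alpha:M\times S^1\to A$ by first choosing an extension at $m=m_0$, then extending the Taylor coefficients of $m\mapsto\hat\alpha(m,q)$ (which are smooth functions of $q\in I$) to smooth functions on all of $S^1$, with sup-norms controlled by their boundary values so that the resulting Taylor series have positive radius of convergence and define embeddings near $m_0$. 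Holomorphic $z$-lifts $\underline B(m),\underline C(m)$ with $\underline B(m)\underline C(m)=\underline A$ are then supplied by \cite[Prop.~5.11]{HenriquesTener24ax}, and holomorphicity of the resulting operators on $K$ by \cite[Thm.~6.4]{HenriquesTenerIntegratingax}. Your alternative suggestion of building the framing by ``interpolating between the two cut curves'' presupposes that the cut curves already form a holomorphic family, so it does not by itself supply the missing construction. With the Taylor-coefficient extension (or an equivalent device) added, your argument goes through and coincides in substance with the paper's.
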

\begin{proof}
By Lemma~\ref{lem: parametrized vs unparametrized annuli -- with worms}, we have $\underline{A}[x_1(m) \ldots x_n(m)]=z \, U(\varphi_{out})^*A[x_1(m) \ldots x_n(m)]U(\varphi_{in})$ for suitable unitaries $U(\varphi_{in})$ and $U(\varphi_{out})$, and scalar $z \in \bbC^\times$ which does not depend on $m$.
So \eqref{ref: hol of worms -- UNparametrised} follows from \eqref{ref: hol of worms -- parametrised}, and
we only need to consider the parametrised case.

The result being local in $M$, we may assume without loss of generality that $I_i(m)\cap I_j(m')=\emptyset$ for every $m,m'\in M$ and every $i\not = j$.
Choose a sequence of discs 
\[
D_{in} = D_0 \subset D_1 \subset \cdots \subset D_n = D_{out}
\]
such that $I_i(m) \subset \mathring D_i \setminus D_{i-1}\,\, \forall m\in M$.
Let $A_i = D_i \setminus \mathring D_{i-1}$, 
and let $\underline{A}_i \in \tAnn_c$ be compatible $z$-lifts satisfying $\underline{A}=\underline{A}_1\ldots \underline{A}_n$. Then we have:
\[
\underline{A}[x_1(m) \ldots x_n(m)] = \underline{A}_1[x_1(m)] \ldots \underline{A}_n[x_n(m)].
\]
We are thus reduced to proving the result for a single worm in an annulus.
Write $x$ for $x_1$, and $I$ for $I_1$.
Let us also fix an embedding $A\hookrightarrow \bbC$.

We wish to establish the holomorphicity of \eqref{ref: hol of worms -- parametrised} around some point $m_0\in M$.
We assume without loss of generality that $M\subset \bbC^N$, and that $I \subset S^1$.
As a first step, we wish to extend $\alpha : M \times I \to A$ to a pointwise-holomorphic map $\hat\alpha : M\times S^1 \to A$ with the property that each restriction $\hat\alpha|_{\{m\}\times S^1}: S^1 \to A$ is an embedding with winding number $1$ (it represents the standard generator of $\pi_1(A)$).
For that, first pick $\hat\alpha|_{\{m_0\}\times S^1}$ extending $\alpha|_{\{m_0\}\times S^1}$.
For every point $q\in I$, the map $\hat\alpha|_{M\times \{q\}}$ is determined by the coefficients of its Taylor series (which are smooth functions on $I$). We extend these coefficients arbitrarily to smooth functions on $S^1$, and declare them to be the Taylor coefficient of some map $M\times S^1 \to A$. Provided these smooth extensions have their sup-norm controlled by their values at $\partial I$ (e.g. they are bounded by twice those values), and that they vanish identically on the subset of $S^1$ which maps to $\partial A$, the corresponding Taylor series will have positive radius of convergence and will determine, locally, a family of embeddings $S^1 \to A$ parametrised by the points of $M$.

This family of embeddings $\hat\alpha|_{\{m\}\times S^1} : S^1 \to A$ provides a family  of decompositions $A = B(m) \cup C(m)$ in $\Ann$.
By \cite[Prop. 5.11]{HenriquesTener24ax}, we may (locally) choose $z$-lifts $\underline{B}(m)$, $\underline{C}(m)\in \tAnn_c$, which depend holomorphically on $m$.
These $z$-lifts may furthermore be chosen so that $\underline{A} = \underline{B}(m) \underline{C}(m)$.
The corresponding operators $\underline{B}(m)$ and $\underline{C}(m)$ in $\cB(K)$ depend holomorphically on $m$ by \cite[Thm. 6.4]{HenriquesTenerIntegratingax}.
It follows that $\underline{A}[x(m)] = \underline{B}(m) x \underline{C}(m)$ also depends holomorphically on $m$.
\end{proof}

\begin{cor}\label{cor: holomorphicity of worm insertions}
Let $D$ be a disc.
Let $M$ be a finite dimensional complex manifold, and let $\alpha_i:M\times I_i\to \mathring{D}$ be smooth maps which are pointwise holomorphic in the sense that $\forall q\in I_i$
\[
\alpha_i|_{M\times\{q\}}: M \to \mathring D
\]
is holomorphic. Let us further assume that for every point $m\in M$, the maps $\alpha_i|_{\{m\}\times I_i}$ are embeddings with disjoint images.
For $m\in M$, let $I_i(m)$ and $x_i(m)$ be as in Lemma~\ref{lem: holomorphicity of worm insertions in annuli}.
Then the map
\[
M \to H_0(D): m\mapsto | x_1(m) x_2(m) \ldots x_n(m) \rangle_D
\]
is holomorphic.
\end{cor}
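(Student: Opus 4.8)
The plan is to reduce the statement to the already-established holomorphicity result for annuli, Lemma~\ref{lem: holomorphicity of worm insertions in annuli}, by carving an inner disc out of $D$ that is disjoint from all the worm insertions. Since holomorphicity is a local condition on $M$, I would fix a point $m_0 \in M$ and prove holomorphicity on a neighbourhood $U$ of $m_0$ with compact closure. The mechanism tying the disc picture to the annulus picture is the compatibility relation \eqref{eq: compatibility of discs and annuli}: if $D_{in}\subset D$ is an inner disc whose associated annulus $A := D \setminus \mathring D_{in}$ contains all the insertions, then, inserting nothing inside $D_{in}$ so that $|\,\rangle_{D_{in}} = \Omega_{D_{in}}$, we have
\[
| x_1(m) \ldots x_n(m) \rangle_D = A[x_1(m) \ldots x_n(m)]\,\Omega_{D_{in}}
\]
for every $m \in U$.

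First I would produce the inner disc $D_{in}$. The set $K := \bigcup_i \alpha_i(\overline U \times I_i)$ is a compact subset of $\mathring D$, and being a finite union of continuous images of compact $1$-manifolds it has empty interior; hence $\mathring D \setminus K$ is open and nonempty, and I would pick a small coordinate disc $D_{in}$ with $\overline{D_{in}} \subset \mathring D \setminus K$. For every $m \in U$ the insertions $I_i(m) = \alpha_i(\{m\}\times I_i)$ then avoid both $\overline{D_{in}}$ and $\partial D$, so they lie in the interior of the (thick) annulus $A = D \setminus \mathring D_{in}$, exactly as needed to apply the annulus lemma to the restricted maps $\alpha_i|_{U\times I_i} : U\times I_i \to \mathring A$ (which remain pointwise holomorphic embeddings with disjoint images).

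With $D_{in}$ fixed, Lemma~\ref{lem: holomorphicity of worm insertions in annuli} gives that $m \mapsto A[x_1(m) \ldots x_n(m)]$ is a holomorphic map $U \to \cB\big(H_0(D_{in}), H_0(D)\big)$. Composing with the bounded, hence holomorphic, linear evaluation map $\cB\big(H_0(D_{in}), H_0(D)\big)\to H_0(D)$, $T \mapsto T\,\Omega_{D_{in}}$, and invoking the displayed identity above, I conclude that $m \mapsto | x_1(m) \ldots x_n(m) \rangle_D$ is holomorphic on $U$; since $m_0$ was arbitrary, the claim follows. The only content beyond quoting the annulus lemma is the geometric step of fitting $D_{in}$ into a gap left by the worms, and this is where the (mild) obstacle lies; it is dispatched by the empty-interior observation for $K$, which guarantees such a gap always exists.
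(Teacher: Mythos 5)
Your proof is correct and takes essentially the same route as the paper: localise in $M$, choose an inner disc $D_{in}$ missed by all the worms, write $| x_1(m) \ldots x_n(m) \rangle_D = A[x_1(m) \ldots x_n(m)]\,\Omega_{D_{in}}$ with $A = D\setminus \mathring D_{in}$ via \eqref{eq: compatibility of discs and annuli}, and apply Lemma~\ref{lem: holomorphicity of worm insertions in annuli}. One tiny quibble: your justification that $K=\bigcup_i\alpha_i(\overline U\times I_i)$ has empty interior is not right in general (the domain $\overline U\times I_i$ has real dimension $2\dim_{\bbC}M+1$, which can exceed $2$, so the image can have interior), but the conclusion you actually need — that $\mathring D\setminus K$ is a nonempty open set into which a small disc fits — already follows from $K$ being a compact subset of the open set $\mathring D$.
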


\begin{proof}
The result being local in $M$, we may assume without loss of generality that there exists a disc $D_{in} \subset D$ which is disjoint from the images of all the $\alpha_i$.
Let $A:= D\setminus \mathring D_{in}$.
The result then follows from Lemma \ref{lem: holomorphicity of worm insertions in annuli}, as
\[
| x_1(m) x_2(m) \ldots x_n(m) \rangle_D = A[x_1(m) x_2(m) \ldots x_n(m)] \Omega_{D_{in}}.\qedhere
\]
\end{proof}

The same arguments used to establish holomorphicity of worm insertions can also be used to show continuity all the way up to the boundary:

\begin{lem}\label{lem: continuity of worm insertions}
Let $M$ be a topological space.
\begin{enumerate}[i)]
\item Let $D$ be a disc, and let $\alpha_i:M\times I_i\to D$ be a continuous family of embeddings of $I_i$ into $D$ (the corresponding map $M \to C^\infty(I_i,D)$ is continuous). Let us further assume that for every point $m\in M$, the maps $\alpha_i|_{\{m\}\times I_i}$ have disjoint images.
For $m\in M$, let $I_i(m):=\alpha_i(\{m\}\times I_i)$ and
$x_i(m):=(\alpha_i|_{\{m\}\times I_i})_*(x_i)\in \cA(I_i(m))$.
Then the map
\begin{equation}\label{eq: cont fam of worms}
M \to H_0(D): m\mapsto | x_1(m) x_2(m) \ldots x_n(m) \rangle_D
\end{equation}
is continuous.

\item Let $A:=D_{out} \setminus \mathring D_{in}$ be an annulus, and let $\alpha_i:M\times I_i\to A$ be a continuous family of smooth embeddings of $I_i$ into $A$.
Assume that for every point $m\in M$, the the maps $\alpha_i|_{\{m\}\times I_i}$ are embeddings with disjoint images, and
let $I_i(m)$ and $x_i(m)$ be as above.
Then the map 
\begin{equation*}
M \to \cB\big(H_0(D_{in}), H_0(D_{out})\big): m \mapsto A[x_1(m) \ldots x_n(m)]
\end{equation*}
is continuous for the strong operator topology on $\cB(H_0(D_{in}), H_0(D_{out}))$.

\item Similarly, given a $z$-lift $\underline{A} \in \tAnn_c$ of $A$, and a representation $K$ of the conformal net
\begin{equation*}
M \to \cB(K): m \mapsto \underline{A}[a_1(m) \ldots a_n(m)]
\end{equation*}
is continuous for the strong operator topology on $\cB(K)$.
\end{enumerate}
\end{lem}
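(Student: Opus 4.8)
The plan is to mirror the proofs of Lemma~\ref{lem: holomorphicity of worm insertions in annuli} and Corollary~\ref{cor: holomorphicity of worm insertions}, replacing holomorphic dependence by continuity throughout and, crucially, allowing the support intervals to touch the boundary --- which is the only genuinely new feature. First I would reduce the three statements to a single one: it suffices to prove (iii). Given (iii), statement (ii) follows by specializing to $K=H_0$ and invoking Lemma~\ref{lem: parametrized vs unparametrized annuli -- with worms}, which gives $A[x_1(m)\ldots x_n(m)] = z^{-1}U(\varphi_{out})\,\underline A[x_1(m)\ldots x_n(m)]\,U(\varphi_{in})^*$ for fixed unitaries and a fixed scalar; precomposing and postcomposing an SOT-continuous family with fixed bounded operators preserves SOT-continuity. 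Statement (i) follows from (ii): working locally near a point $m_0\in M$ and using compactness of the $I_i$, I would pick a central disc $D_{in}\subset\mathring D$ disjoint from all the images $I_i(m)$ for $m$ near $m_0$, set $A:=D\setminus\mathring D_{in}$, and use $|x_1(m)\ldots x_n(m)\rangle_D = A[x_1(m)\ldots x_n(m)]\,\Omega_{D_{in}}$; SOT-continuity of the operators applied to the fixed vector $\Omega_{D_{in}}$ then yields the desired norm-continuity of the vectors.

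To prove (iii), I would argue locally near $m_0$ and reduce to a single worm exactly as in the holomorphic case: choose one sequence of discs $D_{in}=D_0\subset\cdots\subset D_n=D_{out}$, valid for all $m$ in a compact neighbourhood $N$ of $m_0$, with $I_i(m)$ supported in $A_i:=D_i\setminus\mathring D_{i-1}$ (thin when $I_i$ meets $\partial A$, which is permitted for worm insertions), together with compatible lifts satisfying $\prod\underline A_i=\underline A$, so that $\underline A[x_1(m)\ldots x_n(m)] = \underline A_1[x_1(m)]\cdots\underline A_n[x_n(m)]$. Since the factors are uniformly bounded in norm for $m\in N$, and a product of uniformly bounded SOT-continuous families is again SOT-continuous, the problem reduces to a single worm.

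For a single worm $x$ along $I(m)$, I would extend $\alpha$ to a continuous family of embedded winding-number-one circles $\hat\alpha\colon N\times S^1\to A$ containing $I(m)$ as a sub-arc, now permitting the circle to run along $\partial A$ wherever $I(m)$ meets the boundary; this produces a continuous family of decompositions $A=B(m)\cup C(m)$ into possibly-thin annuli glued along a fixed interface on which $x$ sits, giving $\underline A[x(m)] = \pi(\underline B(m))\,x\,\pi(\underline C(m))$ with $x$ a fixed operator and all $m$-dependence carried by $\underline B(m),\underline C(m)$. I would then invoke \cite[Prop. 5.11]{HenriquesTener24ax} to choose continuous $z$-lifts with $\underline A=\underline B(m)\underline C(m)$, and the continuity form of \cite[Thm. 6.4]{HenriquesTenerIntegratingax} to obtain SOT-continuity and local norm-boundedness of $\pi(\underline B(m))$ and $\pi(\underline C(m))$; one more application of the product principle finishes the argument. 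The main obstacle --- and the only place where the argument exceeds the holomorphic one --- is checking that these two cited results remain valid for thin annuli and for merely continuous families up to the boundary; everything else is a routine transcription of the holomorphicity proof, with \emph{holomorphic} replaced by \emph{SOT-continuous} and with the product-continuity bookkeeping carried out for operators rather than vectors.
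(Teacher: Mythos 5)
Your proposal is correct and takes essentially the same route as the paper, whose proof consists precisely of the remark that the argument is identical to those of Lemma~\ref{lem: holomorphicity of worm insertions in annuli} and Corollary~\ref{cor: holomorphicity of worm insertions} with holomorphic dependence replaced by continuity. The one ``obstacle'' you flag --- whether the cited external results survive for merely continuous families reaching the boundary --- is exactly what the paper addresses, by substituting the continuous analogue [Prop.~5.5] of \cite{HenriquesTener24ax} for [Prop.~5.11], and the continuity clause of [Thm.~6.4] of \cite{HenriquesTenerIntegratingax} for its holomorphicity clause.
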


\begin{proof}
The proof is identical to those of Lemma~\ref{lem: holomorphicity of worm insertions in annuli} and Corollary~\ref{cor: holomorphicity of worm insertions},
with 
\cite[Prop. 5.5]{HenriquesTener24ax}
in place of \cite[Prop. 5.11]{HenriquesTener24ax},
and the continuity statement in \cite[Thm. 6.4]{HenriquesTenerIntegratingax}
in place of the holomorphicity statement in \cite[Thm. 6.4]{HenriquesTenerIntegratingax}.
\end{proof}

\subsection{Unoriented worms}

Recall that $\bbD$ denotes the standard unit disc. Let $I_-\subset \partial \bbD$ be the lower semi-circle, let $I_+:=I_-'\subset \partial \bbD$ be the upper semi-circle (so that $\partial I_-=\partial I_+=\{\pm1\}$),
and
let $\delta:i\mathbb R\to \Aut(\bbD)$ be the unique action which fixes $\{\pm 1\}$, normalised so that the derivative at $1$ of $\delta_{it}$ is $e^t$.
Recall from Section \ref{sec: unparametrised annuli} that the Bisognano-Wichmann theorem for conformal nets provides a canonical identification  of $H_0 = H_0(\bbD)$ with the $L^2$-space of the von Neumann algebra $\cA(I_-)$. Under that identification, the rescaled modular flow $\Delta^{it/2\pi} : L^2\cA(I_-) \to L^2\cA(I_-)$ agrees with the induced action of $\delta_{it}:\bbD \to \bbD$ on $H_0(\bbD)$.

The homomorphism $i\bbR\to \Mob:it\mapsto \delta_{it}$ can be analytically continued to a homomorphism $\bbC\to \mathrm{Aut}(\bbC\bbP^1):z\mapsto \delta_z$,
which again fixes the points $\pm1$.
For example, the map $\delta_{\pi}$ exchanges $I_-$ and $I_+$:
\[
\delta_{\pi}\,:\hspace{-.2cm}\tikz[baseline=-3]{
\draw circle (1);
\draw[->] (0,-.85) -- (0,.85);
\draw[->] (0,1.15) -- (0,1.85);
\draw[<-] (0,-1.15) -- (0,-1.85);
\fill (1,0) circle (.05) (-1,0) circle (.05);
\draw[->] (4,0) + (163.3:3.87) arc (163.3:152:3.87);
\draw[->] (4,0) + (-152:3.87) arc (-152:-163.3:3.87);
\draw[->] (2,0) + (145:1.73) arc (145:110:1.73);
\draw[->] (2,0) + (-110:1.73) arc (-110:-145:1.73);
\draw[->] (1.5,0) + (130.2:1.12) arc (130.2:-130.2:1.12);
\draw[->] (1.2,0) + (111:.66) arc (111:-111:.66);
\draw[->] (1.05,0) + (85:.32) arc (85:-85:.32);
\draw[<-] (2,0) + (155:1.73) arc (155:360-155:1.73);
\draw[<-] (1.2,0) + (135:.66) arc (135:360-135:.66);
\draw[->] (-4,0) + (180-163.3:3.87) arc (180-163.3:180-152:3.87);
\draw[->] (-4,0) + (180+152:3.87) arc (180+152:180+163.3:3.87);
\draw[->] (-2,0) + (180-145:1.73) arc (180-145:180-110:1.73);
\draw[->] (-2,0) + (180+110:1.73) arc (180+110:180+145:1.73);
\draw[->] (-1.5,0) + (180-130.2:1.12) arc (180-130.2:180+130.2:1.12);
\draw[->] (-1.2,0) + (180-111:.66) arc (180-111:180+111:.66);
\draw[->] (-1.05,0) + (180-85:.32) arc (180-85:180+85:.32);
\draw[<-] (-2,0) + (180-155:1.73) arc (180-155:180-360+155:1.73);
\draw[<-] (-1.2,0) + (180-135:.66) arc (180-135:180-360+135:.66);
\node[scale=.9] at (1.17,-.05) {$\scriptstyle1$};
\node[scale=.9, fill=white, inner sep=-1] at (-1.27,-.05) {$\scriptstyle-1$};
}
\]
As explained in Section \ref{sec: unparametrised annuli}, the modular conjugation $J:L^2\cA(I_-) \to L^2\cA(I_-)$ corresponds to an operator $\Theta: H_0 \to H_0$ which satisfies $\Theta(\cA(I))\Theta=\cA(\vartheta I)$, where $\vartheta(z)=\bar z$. Let us define
\[
\theta: \cA(I) \to \cA(\vartheta(I)), \qquad \theta(x) := \Theta x \Theta.
\]
As explained in \cite[\S2A]{BartelsDouglasHenriques15},
the map $\theta$ is, up to a $\ast$, a special case of the map $\cA(f):\cA(I)\to \cA(J)$
associated to an orientation reversing diffeomorphism $f:I\to J$:
\[
\theta: \cA(I) \to \cA(\vartheta I)
\qquad
\theta(x) = \cA(\vartheta)(x^*).
\]

If $I$ is an interval, let us write $\overline I$ for that same interval equipped with the opposite orientation, and let $j_I:I\to \overline I$ denote the identity map viewed as an orientation reversing map from $I$ to $\overline I$.
For $y\in \cA(I)$, let us then write $j(y):=\cA(j_I)(y)\in \cA(\overline I)$ for its image under the $\bbC$-linear anti-homomorphism $\cA(j_I):\cA(I)\to \cA(\overline I)$.

\begin{lem} \label{lem: <b> = < j(b*) >}
Let $y\in \cA(I)$ and $j(y)\in \cA(\overline I)$ be as above.
\begin{enumerate}[i)]
\item
 Let $D$ be a disc, let $x_1, \ldots, x_n, y$ be worms in $D$, and let us assume that the support interval $I$ of $y$ is contained in the interior of $D$. Then:
\begin{equation}\label{eq: b=j(b*)}
| x_1\ldots x_n y \rangle_{D} = | x_1 \ldots x_n j(y)\rangle_{D}.
\end{equation}
\item Let $A=D_{out} \setminus \mathring{D}_{in}$ be an annulus, let $x_1, \ldots, x_n$ be worms in $A$, and let $y$ be a worm with support interval $I\subset \mathring{A}$.
Then:
\begin{equation}\label{eq: unparametrised annuli: b=j(b*)}
A[x_1 \ldots x_n y] = A[x_1 \ldots x_n j(y)].
\end{equation}
\item
If $\underline{A} \in \tAnn_c$ is a $z$-lift of $A$, and $y$ is as above, then, in any representation $K$ of the conformal net, we have:
\begin{equation}\label{eq: annuli: b=j(b*)}
\underline{A}[x_1 \ldots x_n y] = \underline{A}[x_1 \ldots x_n j(y)].
\end{equation}
\end{enumerate}
\end{lem}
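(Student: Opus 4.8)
The plan is to deduce all three statements from the single-worm case of (i): that $|y\rangle_{D} = |j(y)\rangle_{D}$ whenever $y \in \cA(I)$ has support interval $I \subset \mathring D$. To reduce the multi-worm case of (i) to this, I would isolate $y$ inside a small disc $D_y \subset \mathring D$ disjoint from the support intervals of $x_1,\dots,x_n$ and invoke the compatibility relation \eqref{eq: compatibility of discs and annuli} to write $|x_1 \ldots x_n y\rangle_D = (D\setminus \mathring D_y)[x_1\ldots x_n]\,|y\rangle_{D_y}$, and likewise with $j(y)$; the single-worm equality $|y\rangle_{D_y} = |j(y)\rangle_{D_y}$ then gives (i). The operator statements (ii) and (iii) follow from (i) by exactly the injectivity argument used at the end of the proof of Proposition~\ref{proposition independence}: pre- and post-composing $A[\cdots]$ (resp. $\underline A[\cdots]$) with thick annuli reduces the desired operator identity to an identity of vectors $|{\cdots}\rangle_{D_{out}}$, the injectivity and dense image of the annulus operators being supplied by Lemmas~\ref{lem: annuli have dense image} and~\ref{lem: Aunderline in terms of A}.

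It remains to prove $|y\rangle_D = |j(y)\rangle_D$ for a single interior worm. Using the functoriality of \eqref{eq: <x,...,x>_D} under orientation-preserving isomorphisms (as in Lemma~\ref{lem: univalent acting on worm insertions}), which is compatible with $\cA(j_I)$ because both are natural, I may assume $D = \bbD$ and, after a holomorphic change of coordinates, that $I$ lies just inside a boundary interval $K\subset S^1$. By the defining property of worm vectors, $|y\rangle_\bbD = Y_{\bbD\setminus\mathring D_+}\,(y\,\Omega_{D_+})$, where $D_+ \subset \bbD$ is a disc having $I$ on its boundary and lying on one side of $I$, while $|j(y)\rangle_\bbD = Y_{\bbD\setminus \mathring D_-}\,(j(y)\,\Omega_{D_-})$ with $D_-$ on the opposite side of $I$. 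Thus the two constructions differ precisely in the \emph{side} from which the interior interval $I$ is collapsed, and the content of the lemma is that this choice is immaterial once $y$ is replaced by $j(y)$. I emphasize that covariance alone is insufficient: conjugating by the modular conjugation $\Theta$ of \eqref{eq: first Theta} and using $\Theta\cA(I)\Theta = \cA(\vartheta I)$ together with $\theta(x) = \cA(\vartheta)(x^*)$ yields only the relation $\Theta|y\rangle_\bbD = |j(y^*)\rangle_\bbD$ (equivalently $|j(y)\rangle_\bbD = \Theta|y^*\rangle_\bbD$), and feeding this back into the claim is circular.

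The heart of the proof is therefore to identify the two one-sided collapses by genuinely analytic means, which is why this subsection recalls the modular objects of $(\cA(I_-),\Omega)$ and the analytic continuation $z\mapsto\delta_z$ of its modular flow. Writing $f=\varphi_{out}^{-1}\circ\varphi_{in}\in\Univ$ for the univalent map collapsing $I$ from the $D_+$ side and $a\in\cA(K')$ for the corresponding boundary element, one checks that $|y\rangle_\bbD=\pi(f)\,a\,\Omega$ and that the lemma is equivalent to the single operator identity $\Theta\,\pi(f)\,a\,\Omega=\pi(f)\,a^{*}\,\Omega$. I would prove this using Bisognano--Wichmann together with Tomita--Takesaki theory: since $\Theta$ is the modular conjugation and $\Delta^{it}$ is implemented by $\delta_{2\pi it}$, with $\delta_\pi$ exchanging $I_-$ and $I_+$ (and hence interchanging the two sides $D_+\leftrightarrow D_-$, cf. the displayed picture of $\delta_\pi$), the $S$-operator relation $\Theta\,\Delta^{1/2}\,b\,\Omega=b^{*}\,\Omega$ supplies the required boundary matching, while the holomorphic dependence of the worm vectors on the continued flow parameter, furnished by Corollary~\ref{cor: holomorphicity of worm insertions}, bridges the two sides. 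The adjoint $b\mapsto b^{*}$ appearing in the modular relation is converted into the orientation-reversal $j$ on the nose by the identification of $\cA(j_I)$ as the Bartels--Douglas--Henriques orientation-reversal map \cite[\S2A]{BartelsDouglasHenriques15}.

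The main obstacle is precisely this last step, namely promoting the purely covariant relation $\Theta|y\rangle=|j(y^{*})\rangle$ to the orientation-only identity $|y\rangle=|j(y)\rangle$; this demands analytic input (holomorphicity in the modular parameter plus the $S=\Theta\,\Delta^{1/2}$ relation) rather than the algebraic equivariance used elsewhere. Care will be needed to verify that the holomorphic family of collapsing discs interpolating $D_+$ and $D_-$ stays within the regime where Corollary~\ref{cor: holomorphicity of worm insertions} applies, and that its boundary values reproduce $y\,\Omega_{D_+}$ and $j(y)\,\Omega_{D_-}$ at the two endpoints of the continuation.
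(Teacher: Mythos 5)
Your treatment of the heart of the lemma --- the single interior worm in $\bbD$ --- is essentially the argument the paper uses. You correctly identify that covariance under $\Theta$ only yields $\Theta|y\rangle_\bbD=|j(y^*)\rangle_\bbD$ and that the actual content requires analytic input, and you name the right ingredients: Bisognano--Wichmann, the Tomita--Takesaki relation $\Theta\Delta^{1/2}b\,\Omega=b^*\Omega$, and holomorphicity of worm insertions in the continued modular parameter. The paper's implementation is cleaner than your sketch: it normalises the support interval onto the \emph{real axis} (not ``just inside a boundary interval''), sets $x:=\delta_{-\pi/2}(y)\in\cA(I_-)$, and compares two functions $F_1(z)=|\delta_z(x)\rangle_\bbD$ and $F_2(z)=|\delta_z(j(x))\rangle_\bbD$ on half-open strips, each matched with the Bisognano--Wichmann continuation $F$ of $it\mapsto\Delta^{it/2\pi}x\Omega$ along one of the two boundary lines $i\bbR$ and $\pi+i\bbR$; analytic continuation forces $F_1=F_2$ on the open strip, and $z=\pi/2$ gives the claim. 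Your proposed reformulation as the ``single operator identity'' $\Theta\,\pi(f)a\,\Omega=\pi(f)a^*\Omega$ is only equivalent to the lemma in the real-axis normalisation (where $\vartheta I=I$ and $\theta(y)=j(y^*)$), so as written it does not mesh with your chosen normalisation; this is repairable but should be fixed.

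There is, however, a concrete gap in your reduction of (ii) and (iii) to (i). Pre- and post-composing with \emph{thick} annuli and invoking injectivity/dense image (Lemma~\ref{lem: annuli have dense image}) does not convert the operator identity into a vector identity: $A[x_1\ldots x_n y]\circ Y_{A_2}=(A\cup A_2)[x_1\ldots x_n y]$ is still an operator, and knowing that two operators agree on the single vector $\Omega$ does not determine them. The injectivity argument at the end of the proof of Proposition~\ref{proposition independence} serves a different purpose (removing the constraint that worms not touch the boundary). What actually works, and what the paper does, is to factor $A=B\cup C$ with $B$ \emph{thin} and containing all the worms: then $B[x_1\ldots x_n y]$ and $B[x_1\ldots x_n j(y)]$ are $\cA(I)$-module maps for $I$ in the thin part (Lemma~\ref{lem: thin part A(I) module map}), agree on $\Omega_{D_{in}}$ by part (i), and hence agree everywhere by Reeh--Schlieder. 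Part (iii) then reduces to part (ii) by localising $\underline A=\underline B\,\underline C$ with $\underline B\in\tAnn_c(I_0)$ and invoking Lemma~\ref{lem: A[worms] in B(K)} together with Lemma~\ref{lem: parametrized vs unparametrized annuli -- with worms}. You should also make explicit the preliminary use of continuity (Lemma~\ref{lem: continuity of worm insertions}) to push all support intervals into the interior before any of these decompositions.
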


\begin{proof}
{\it i)}
Since both sides of \eqref{eq: b=j(b*)} depend continuously on the support intervals of the worms (Lemma~\ref{lem: continuity of worm insertions}), we may assume without loss of generality that each of the worms $x_i$ and $y$ are supported in the interior of our disc $D$, and that their support intervals are analytically embedded.
In this case, we may write $D = A \cup D'$ where $A$ is an annulus containing the worms $x_j$, and $D'$ is a disc containing the support interval of $y$ in its interior.
We then have:
\[
| x_1\ldots x_n y \rangle_{D} = A[x_1 \ldots x_n]|y\rangle_{D'} \quad \text{and} \quad | x_1 \ldots x_n j(y)\rangle_{D} = B[x_1 \ldots x_n] |j(y)\rangle_{D'}.
\]
Thus, by replacing $D$ by $D'$, it is enough to treat the case of a single insertion (i.e. $n=0$).
By choosing $D'$ small enough, we may furthermore assume that there exists a holomorphic embedding $f:D'\to \bbD$ mapping the support interval $I$ of $y$ to some interval on the real axis.
Applying $Y_{\bbD\setminus f(D')}$ to the vectors $|y\rangle_{D'}$ and $|j(y)\rangle_{D'}$ and using the injectivity of $Y_{\bbD\setminus f(D')}$ (Lemmas~\ref{lem: annuli have dense image} and \ref{lem: Aunderline in terms of A}),
we are further reduced to the case $D=\bbD$, with a single worm insertion whose support interval $I$ lies on the real axis.

Let $I_-\subset \partial \bbD$ be the lower semicircle.
By the Bisognano-Wichman theorem,
for any element $x \in \cA(I_-)$, the map $i\bbR\to H_0$ given by $it\mapsto \Delta^{it/2\pi} x\Omega$
admits an extension  
\[
F:\{z\in\bbC\,|\,0\le \Re\mathrm{e}(z)\le \pi\} \to H_0
\]
which is continuous on the closed strip and holomorphic in the open strip.
It satisfies
\[
F(it) = \delta_{it}(x)\Omega
\,\,\,\quad \text{and} \,\,\,\quad   F(\pi+it) =  \Theta \delta_{it}(x^*)\Omega = \theta(\delta_{it}(x^*)) \Omega = \delta_{it}(\theta(x^*)) \Omega
\]
for $t \in \bbR$
(where $\delta_{it}(x)$ denotes the image of $x$ under the map $\cA(\delta_{it}):\cA(I_-)\to \cA(I_-)$).

Let us now set $x := \delta_{-\pi/2}(y)\in \cA(\delta_{-\pi/2}(I))\subset\cA(I_-)$, and
consider the map
\[
F_1:\{z\in\bbC\,|\,0\le \Re\mathrm{e}(z) < \pi\} \to H_0
\]
given by
$F_1(z) = | \delta_{z}(x)\rangle_{\bbD}$.
Here, $\delta_{z}(x)$ denotes the image of the element $x$ under the map $\cA(\delta_{z}):\cA(\delta_{-\pi/2}(I))\to \cA(\delta_{-\pi/2+z}(I))$.
Importantly, note that when $0 < \Re\mathrm{e}(z) < \pi$ we have $\delta_{-\pi/2+z}(I)\subset \mathring{\bbD}$, hence $| \delta_{z}(x)\rangle_{\bbD}$ is well defined.
The map $F_1$ is continuous (Lemma~\ref{lem: continuity of worm insertions}), and holomorphic in the interior of the strip (Corollary~\ref{cor: holomorphicity of worm insertions}).
By analytic continuation, since $F$ and $F_1$ agree on $i\bbR$, they also agree in the interior of the strip.

Next, consider the map
\[
F_2:\{z\in\bbC\,|\,0< \Re\mathrm{e}(z) \le \pi\} \to H_0
\]
given by $
F_2(z) =
| \delta_{z}(j(x))\rangle_{\bbD}=
| \delta_{z-\pi}(\theta(x^*))\rangle_{\bbD}.
$
The map $F_2$ is again continuous and holomorphic in the interior.
The functions $F_2$ and $F$ agree on $\pi+i\bbR$,
therefore agree on the interior of the strip.
It follows that $F_1(z) = F_2(z)$ for $0 < \Re\mathrm{e}(z) < \pi$.
Specialising to $z=\pi/2$, we get
$|y\rangle_{\bbD} = F_1(\pi/2) = F_2(\pi/2) = | j(y) \rangle_{\bbD}$, as desired.

{\it ii)} Once again, both sides of \eqref{eq: annuli: b=j(b*)} depend continuously on the support intervals of the worm, and we may assume without loss of generality that the worms are supported in the interior of $A$.
Write $A = B \cup  C$, where $B$ is a thin annulus containing the support intervals of all the worms.
We then have
\[
B[x_1 \ldots x_n y]\Omega_{D_{in}} = |x_1 \ldots x_n y\rangle_{B\cup D_{in}} = |x_1 \ldots x_n j(y)\rangle_{B\cup D_{in}} = B[x_1 \ldots x_n j(y)]\Omega_{D_{in}},
\]
where the middle equality follows from part {\it i)}.
As $B$ is thin, it follows from the Reeh-Schlieder theorem that
$B[x_1 \ldots x_n y] = B[x_1 \ldots x_n j(y)]$.
The desired statement follows.

{\it iii)} We may again assume that the worms are located in the interior of $A$.
Write $\underline A$ as a product $\underline A=\underline B\hspace{.5mm} \underline C$ of $z$-lifted annuli, with the property that $\underline{B} \in \tAnn_c(I_0)$ for some interval $I_0$, and such that $B$ contains the support intervals of all of the worms.
We then have
\[
\underline{A}[x_1 \ldots x_n y] = \underline{B}[x_1 \ldots x_n y] \underline{C}
\quad
\text{and}
\quad
\underline{A}[x_1 \ldots x_n j(y)] = \underline{B}[x_1 \ldots x_n j(y)] \underline{C},
\]
and we are thus reduced to establishing the desired identity with $\underline B$ in place of $\underline A$.
By Lemma~\ref{lem: A[worms] in B(K)},
it suffices to consider when $K=H_0$ is the vacuum sector, in which case the desired identity follows from part {\it ii)} by  Lemma~\ref{lem: parametrized vs unparametrized annuli -- with worms}.
\end{proof}

Recall that $\vartheta: \bbD\to \bbD : z \mapsto \overline{z}$ denotes complex conjugation, and that $\Theta:H_0(\bbD) \to H_0(\bbD)$ is the corresponding antiunitary operator. For an interval $I\subset \bbD$, let us write $\theta:\cA(I)\to \cA(\vartheta(I))$
for the map $\theta(x) 
=\cA(\vartheta)(x^*)$,
where $\vartheta(I)$ is oriented in such a way that $\vartheta: I \to \vartheta(I)$ is orientation reversing.

\begin{lem}\label{lem: Theta applied to worm insertions}
Let $I_1, \ldots, I_n\subset \bbD$ be disjoint intervals, and let $x_j\in \cA(I_j)$.
Then
\begin{equation}\label{eq: Theta(aaa) = (th(a),th(a),th(a))}
\Theta (| x_1 \ldots x_n \rangle_{\bbD}) =
| \theta (x_1) \ldots \theta (x_n) \rangle_{\bbD} .
\end{equation}
\end{lem}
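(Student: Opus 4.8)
The plan is to recognise the anti-unitary $\Theta$ as a concrete anti-linear pushforward along complex conjugation, and then to carry that description through the defining recursion \eqref{eq: <x,...,x>_D} for the worm vector. Write $\vartheta:\bbD\to\bbD$, $z\mapsto\bar z$, for complex conjugation, and recall from \eqref{eqn: pushforward by (anti)holomorphic map} that it induces an anti-linear map $\vartheta_*:H_0(\bbD)\to H_0(\vartheta(\bbD))=H_0(\bbD)$. First I would check that $\vartheta_*=\Theta$: evaluating \eqref{eqn: pushforward by (anti)holomorphic map} in the identity parametrisation gives $\vartheta_*(\mathrm{id},\xi)=(\mathrm{id},\pi(\vartheta)\xi)$, and $\pi(\vartheta)=\Theta$ because $\vartheta$ is precisely the generator of the $\bbZ/2$ factor in the action \eqref{eq: pi:Ann_c rtimes Z/2 -> B(H_0)}. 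More generally, for any disc $D\subset\bbC$ I would write $\Theta_D:=\vartheta_*:H_0(D)\to H_0(\vartheta(D))$ for the anti-linear pushforward by $\vartheta$, so that $\Theta_\bbD=\Theta$.

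Next I would record three naturality relations that make $\Theta_D$ commute with the ingredients of \eqref{eq: <x,...,x>_D}. First, for discs $D\subset D'$ with $A=D'\setminus\mathring D$, functoriality of $H_0(-)$ under (anti-)holomorphic embeddings, applied to the square relating the inclusion $D\hookrightarrow D'$ and its conjugate $\vartheta(D)\hookrightarrow\vartheta(D')$ via the vertical anti-holomorphic isomorphisms $\vartheta$, yields $\Theta_{D'}\circ Y_A=Y_{\vartheta(A)}\circ\Theta_D$. Second, since $\vartheta_*$ is a pushforward and every element of $\Univ\rtimes\bbZ/2$ fixes $\Omega$ by \eqref{eq:  A Omega=Omega} together with $\Theta\Omega=\Omega$ (compare Lemma~\ref{lem: annuli map Om to Om}), we get $\Theta_D(\Omega_D)=\Omega_{\vartheta(D)}$. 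Third, for $x\in\cA(I)$ with $I\subset\partial D$, I claim $\Theta_D\circ x=\theta(x)\circ\Theta_D$ as maps $H_0(D)\to H_0(\vartheta(D))$, where the actions are those of \eqref{eq: action of A(I) on H_0(D)}. This is the one relation requiring a computation: unwinding \eqref{eqn: pushforward by (anti)holomorphic map} in a holomorphic parametrisation $\varphi:\bbD\to D$ reduces it to the identity $\Theta\,\varphi^{-1}(x)\,\Theta=\theta(\varphi^{-1}(x))$ for the boundary element $\varphi^{-1}(x)\in\cA(\varphi^{-1}(I))$, which in turn is the defining property $\Theta\cA(J)\Theta=\cA(\vartheta J)$ of $\Theta$ combined with $\theta(y)=\cA(\vartheta)(y^*)$.

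With these three relations in hand, the conclusion is a formal computation. Choosing nested discs $D_1\subset\cdots\subset D_n=\bbD$ with $I_j\subset\partial D_j$, formula \eqref{eq: <x,...,x>_D} expresses $|x_1\cdots x_n\rangle_\bbD$ as an alternating composite of the operators $Y_{A_j}$ and the boundary actions of the $x_j$, applied to $\Omega_{D_1}$. Applying $\Theta=\Theta_\bbD$ and pushing it to the right using the first relation past each $Y_{A_j}$, the third past each $x_j$, and finally the second on $\Omega_{D_1}$, I obtain the same alternating composite built from $Y_{\vartheta(A_j)}$ and the boundary actions of $\theta(x_j)$, applied to $\Omega_{\vartheta(D_1)}$. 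Since the discs $\vartheta(D_1)\subset\cdots\subset\vartheta(D_n)=\bbD$ realise $|\theta(x_1)\cdots\theta(x_n)\rangle_\bbD$ via \eqref{eq: <x,...,x>_D} (here one uses that $\theta(x_j)\in\cA(\vartheta(I_j))$ with $\vartheta(I_j)\subset\partial\vartheta(D_j)$ compatibly oriented), this composite is exactly $|\theta(x_1)\cdots\theta(x_n)\rangle_\bbD$, as desired.

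The main obstacle is the bookkeeping in the first two paragraphs rather than the final computation: one must correctly match the abstractly defined modular operator $\Theta$ with the geometric pushforward $\vartheta_*$, and verify the anti-holomorphic naturality of the algebra actions, keeping track of the orientation conventions built into $\theta$ and into the extendability hypothesis. Once $\Theta=\vartheta_*$ and the three naturality relations are secured, propagating the identity through \eqref{eq: <x,...,x>_D} is purely formal, and no further analytic input (such as the Bisognano--Wichmann/analytic-continuation argument used in Lemma~\ref{lem: <b> = < j(b*) >}) should be required.
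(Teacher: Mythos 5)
Your proof is correct and rests on the same two pillars as the paper's: the extension \eqref{eq: pi:Ann_c rtimes Z/2 -> B(H_0)} of the annulus action to $\Ann_c\rtimes\,\bbZ/2$ (so that $\Theta$ intertwines the operator of an annulus with that of its complex conjugate) and the identity $\Theta x\Theta=\theta(x)$ for boundary algebra elements. The only difference is organizational: the paper first reduces by continuity to interior worms, packs them into a single element $x=x_1\cdots x_n$ of one boundary algebra and handles the case $n=1$ via a Riemann map, whereas you commute $\Theta$ through the full telescoping product \eqref{eq: <x,...,x>_D} using the three naturality relations; both routes are sound.
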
 

\begin{proof}
By continuity (Lemma~\ref{lem: continuity of worm insertions}), we may assume without loss of generality that the support intervals $I_j$ are all in $\mathring \bbD$.
Pick a disc $D\subset \bbD$ such that the  intervals $I_j$ are contained in $\partial D$, and let $A=\bbD\setminus\mathring D$. By choosing a single interval $I\subset \partial D$ containing every $I_j$ and letting $x=x_1 \ldots x_n \in \cA(I)$, we are reduced to proving equation \eqref{eq: Theta(aaa) = (th(a),th(a),th(a))} in the case $n=1$:
\[
\Theta(| x\rangle_{\bbD}) = | \theta (x)\rangle_{\bbD}.
\]

Choose a Riemann map $\varphi:D \to \bbD$.
Parametrize the incoming boundary of $A$ by $\varphi^{-1}$, and equip the outgoing boundary $S^1=\partial \bbD$ with the identity parametrization, so that $A\in\Univ$.
Letting $\overline\varphi:=\vartheta\circ\varphi\circ\vartheta$,
we then have:
\begin{align*}
\Theta | x \rangle_{\bbD}
= \Theta A | \varphi(x) \rangle_{\bbD}
= \Theta A \varphi(x) \Omega
&= \overline{A} \Theta\varphi(x) \Omega
\\&= \overline{A} \theta(\varphi(x)) \Omega
= \overline{A} \overline{\varphi}(\theta(x)) \Omega
= \overline{A} | \overline{\varphi} (\theta(x)) \rangle_\bbD
= | \theta (x)\rangle_{\bbD},
\end{align*}
where the third equality holds by \eqref{eq: pi:Ann_c rtimes Z/2 -> B(H_0)} and \eqref{eq: the invol} (and we've written $\overline A$ for the action \eqref{eq: the invol} of $\bbZ/2\bbZ$ on $A\in\Univ$).
\end{proof}

Recall that if
$f:D_1 \to D_2$ is an embedding which is either holomorphic or anti-holomorphic, then the pushforward $f_*:H_0(D_1) \to H_0(D_2)$ was defined in \eqref{eqn: pushforward by (anti)holomorphic map}.

\begin{cor}\label{cor: anti-holomorphic map applied to worm insertion}
Let $D_1$ and $D_2$ be discs, let $x_i$ be worms in $D_1$, and let $f:D_1\to D_2$ be an embedding that is either holomorphic or anti-holomorphic.
Then
\begin{equation}\label{eq: f_* no 1}
f_* | x_1 \ldots x_n \rangle_{D_1} = | \cA(f)(x_1) \ldots \cA(f)(x_n) \rangle_{D_2}
\end{equation}
when $f$ is holomorphic, and
\begin{equation}\label{eq: f_* no 2}
f_* | x_1 \ldots x_n \rangle_{D_1} = | \cA(f)(x_1^*) \ldots \cA(f)(x_n^*) \rangle_{D_2}
\end{equation}
when $f$ is anti-holomorphic (in the latter case, if $I_i$ is the support interval of $x_i$, then the support interval $f(I_i)$ of $\cA(f)(x_i^*)$ is oriented in such a way that the map $f:I_i\to f(I_i)$ is orientation reversing).
\end{cor}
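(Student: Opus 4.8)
The plan is to reduce both identities to facts already in hand: the functoriality of the assignment $D \mapsto H_0(D)$ and of worm insertions under isomorphisms, the compatibility \eqref{eq: compatibility of discs and annuli}, and the single case of the standard conjugation recorded in Lemma~\ref{lem: Theta applied to worm insertions}. No new analytic input should be needed; the content is a careful factorization of $f$ together with bookkeeping of the functor $\cA$.

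First I would treat the holomorphic case \eqref{eq: f_* no 1}. Factor $f$ as $f = \iota \circ \tilde f$, where $\tilde f : D_1 \to f(D_1)$ is the holomorphic isomorphism obtained by corestriction and $\iota : f(D_1) \hookrightarrow D_2$ is the inclusion. By functoriality of $D \mapsto H_0(D)$ (established at the end of Section~\ref{sec: unparametrised annuli}) we have $f_* = \iota_* \circ \tilde f_*$. The functoriality of worm insertions under isomorphisms (as used in the proof of Lemma~\ref{lem: univalent acting on worm insertions}) gives $\tilde f_* | x_1 \ldots x_n \rangle_{D_1} = | \cA(\tilde f)(x_1) \ldots \cA(\tilde f)(x_n) \rangle_{f(D_1)}$, and $\cA(\tilde f)(x_i) = \cA(f)(x_i)$ since $\tilde f$ and $f$ agree as maps of intervals. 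The inclusion is $\iota_* = Y_A$ for the annulus $A := D_2 \setminus f(\mathring D_1)$ carrying no insertions, so the special case of \eqref{eq: compatibility of discs and annuli} with empty annulus insertions yields $Y_A | y_1 \ldots y_n \rangle_{f(D_1)} = | y_1 \ldots y_n \rangle_{D_2}$. Composing these gives \eqref{eq: f_* no 1}.

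For the anti-holomorphic case \eqref{eq: f_* no 2}, I would reduce to Lemma~\ref{lem: Theta applied to worm insertions}, which is precisely this statement for the standard conjugation $\vartheta : \bbD \to \bbD$, $z \mapsto \bar z$, whose pushforward is $\Theta$. Choose a holomorphic isomorphism $\psi : D_1 \to \bbD$ and set $g := f \circ \psi^{-1} \circ \vartheta : \bbD \to D_2$; since $\psi^{-1}\circ\vartheta$ and $f$ are both anti-holomorphic, $g$ is a holomorphic embedding, and by construction $f = g \circ \vartheta \circ \psi$. Functoriality of the extended functor gives $f_* = g_* \circ \Theta \circ \psi_*$, which I would evaluate from the inside out: $\psi_* | x_1 \ldots x_n \rangle_{D_1} = | \cA(\psi)(x_1) \ldots \rangle_{\bbD}$ by the isomorphism case; then $\Theta | \cA(\psi)(x_i) \ldots \rangle_\bbD = | \theta(\cA(\psi)(x_i)) \ldots \rangle_\bbD$ by Lemma~\ref{lem: Theta applied to worm insertions}; then the holomorphic case applied to $g$ converts the remaining insertions by $\cA(g)$.

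The only place real care is needed is checking that the three maps on algebra elements compose to $\cA(f)(x_i^*)$. Since $\psi$ is orientation preserving, $\cA(\psi)$ is a $*$-homomorphism, so $(\cA(\psi)(x_i))^* = \cA(\psi)(x_i^*)$; unwinding the definition $\theta(y) = \cA(\vartheta)(y^*)$ and using functoriality of $\cA$ then gives $\theta(\cA(\psi)(x_i)) = \cA(\vartheta)(\cA(\psi)(x_i^*)) = \cA(\vartheta \circ \psi)(x_i^*)$. Applying $\cA(g)$ and functoriality once more yields $\cA(g \circ \vartheta \circ \psi)(x_i^*) = \cA(f)(x_i^*)$, which is \eqref{eq: f_* no 2}. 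The orientation convention comes out automatically: $\cA(\psi)$ and $\cA(g)$ are orientation preserving while $\cA(\vartheta)$ is orientation reversing, so $\cA(f)$ is orientation reversing, exactly as the statement requires for $f(I_i)$. The anticipated obstacle is thus entirely this tracking of the single $*$ introduced by $\vartheta$, of the orientations, and of the functoriality of $\cA$ on composable (anti)holomorphic embeddings; everything else is a direct appeal to functoriality of $D \mapsto H_0(D)$ and to \eqref{eq: compatibility of discs and annuli}.
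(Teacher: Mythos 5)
Your proposal is correct and follows essentially the same route as the paper: the holomorphic case is reduced to functoriality under isomorphisms together with \eqref{eq: compatibility of discs and annuli}, and the anti-holomorphic case is obtained by factoring $f$ as $D_1\to\bbD\xrightarrow{\vartheta}\bbD\to D_2$ with holomorphic outer maps and applying Lemma~\ref{lem: Theta applied to worm insertions} to the middle map. Your explicit tracking of the single $*$ introduced by $\theta(y)=\cA(\vartheta)(y^*)$ and of the orientation conventions is exactly the bookkeeping the paper leaves implicit.
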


\begin{proof}
When $f$ is holomorphic, equation \eqref{eq: f_* no 1} is a special case of \eqref{eq: compatibility of discs and annuli}.\vspace{-1mm}
For $f:D_1\to D_2$ anti-holomorphic,
write it as a composite 
$D_1\to \bbD\stackrel{\vartheta}\to \bbD \to D_2$
where the first and third maps are holomorphic, and apply 
Lemma~\ref{lem: Theta applied to worm insertions} (which is the special case of \eqref{eq: f_* no 2} for $f=\vartheta$) to the middle map.
\end{proof}

\begin{cor}\label{cor: anti-holomorphic identity acting on worms in a disc}
Let $D$ be a disc, let $D^\dagger$ be its complex conjugate, and the $i:D\to D^\dagger$ be the anti-holomorphic identity map. If the support intervals of the worms are contained in $\mathring D$, then we have
\[
i_* | x_1 \ldots x_n \rangle_{D}
= | x_1^* \ldots x_n^* \rangle_{D^\dagger}
= | j(x_1^*) \ldots j(x_n^*) \rangle_{D^\dagger},
\]
where $j$ is as in Lemma~\ref{lem: <b> = < j(b*) >}.

If the support intervals of the worms meet $\partial D$, then we still have 
\[
i_* | x_1 \ldots x_n \rangle_{D}
= | j(x_1^*) \ldots j(x_n^*) \rangle_{D^\dagger},
\]
but the expression $| x_1^* \ldots x_n^* \rangle_{D^\dagger}$ is no longer defined,
as the support intervals of the worms $x_i^*$ fail the orientation requirements from Definition~\ref{def: extendable}.
\end{cor}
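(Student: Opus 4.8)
The plan is to deduce everything from two results already established: Corollary~\ref{cor: anti-holomorphic map applied to worm insertion}, which computes the pushforward of a worm vector along an anti-holomorphic embedding, and Lemma~\ref{lem: <b> = < j(b*) >}(i), which replaces an interior worm $y$ by $j(y)$ inside a disc. No new analytic input (continuity, holomorphicity) is needed, since both of these prior results already incorporate it.

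First I would apply Corollary~\ref{cor: anti-holomorphic map applied to worm insertion} directly to the anti-holomorphic identity $i:D\to D^\dagger$, which gives
\[
i_* | x_1 \ldots x_n \rangle_{D} = | \cA(i)(x_1^*) \ldots \cA(i)(x_n^*) \rangle_{D^\dagger}.
\]
The first key step is to identify $\cA(i)(x_j^*)$ with $j(x_j^*)$. Since $i$ is the identity on underlying points, its restriction to the support interval $I_j$ of $x_j$ is the identity map $I_j\to i(I_j)$, and by the orientation convention built into Corollary~\ref{cor: anti-holomorphic map applied to worm insertion} the target $i(I_j)$ carries the orientation making $i|_{I_j}$ orientation reversing, i.e.\ $i(I_j)=\overline{I_j}$. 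Thus $i|_{I_j}$ is literally the map $j_{I_j}$, and since the coordinate-free net $\cA$ depends only on the underlying interval embedding, $\cA(i)(x_j^*)=\cA(j_{I_j})(x_j^*)=j(x_j^*)$. This yields
\[
i_* | x_1 \ldots x_n \rangle_{D} = | j(x_1^*) \ldots j(x_n^*) \rangle_{D^\dagger},
\]
which is the identity valid in \emph{both} cases of the statement.

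Next, to complete the first displayed chain (interior worms), I would note that when every $I_j\subset\mathring D$ the orientation requirement of Definition~\ref{def: extendable} is vacuous, so $x_j^*\in\cA(I_j)$ is a legitimate worm in $D^\dagger$ and $| x_1^* \ldots x_n^* \rangle_{D^\dagger}$ is defined. Applying Lemma~\ref{lem: <b> = < j(b*) >}(i) once for each index (reordering the worms via Proposition~\ref{proposition independence} to bring each one to the last slot, where the Lemma applies, using that each $x_j^*$ has interior support) converts $| x_1^* \ldots x_n^* \rangle_{D^\dagger}$ into $| j(x_1^*) \ldots j(x_n^*) \rangle_{D^\dagger}$, closing the chain.

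Finally, for the boundary case I would argue that $| x_1^* \ldots x_n^* \rangle_{D^\dagger}$ is genuinely undefined: extendability in $D$ forces the orientation of any $I_j$ meeting $\partial D$ to agree with $\partial D$, but passing to the complex conjugate reverses the induced boundary orientation, so in $D^\dagger$ the orientation of $I_j$ disagrees with $\partial D^\dagger$ and $x_j^*$ fails Definition~\ref{def: extendable}; whereas $j(x_j^*)\in\cA(\overline{I_j})$ carries the opposite orientation, which now agrees with $\partial D^\dagger$ and remains admissible, so only $| j(x_1^*) \ldots j(x_n^*) \rangle_{D^\dagger}$ survives. The one place I would be most careful is precisely this orientation bookkeeping: matching $\cA(i)$ with $j$ via the reversal convention, and tracking how complex conjugation flips the boundary orientation so as to make the undefinedness claim precise.
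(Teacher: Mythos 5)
Your proposal is correct and follows exactly the route the paper intends: the corollary is stated without proof as an immediate consequence of Corollary~\ref{cor: anti-holomorphic map applied to worm insertion} (applied to $f=i$, with the orientation convention identifying $\cA(i)(x_k^*)$ with $j(x_k^*)$) together with Lemma~\ref{lem: <b> = < j(b*) >}(i) for the interior case. Your orientation bookkeeping for why $|x_1^*\ldots x_n^*\rangle_{D^\dagger}$ becomes undefined when the worms meet $\partial D$ is also the right justification.
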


\section{Point insertions}\label{sec:Point insertions}

\subsection{Construction of point insertions}\label{sec: construction of point insertions}

In this section, we establish a state-field correspondence between finite energy vectors $v\in H_0^{f.e.}$ in the vacuum sector, and point-localised fields (\emph{local operators} in the physics terminology).
Our main technical lemma establishes that every point insertion can be written as a linear combination of two worms:

\begin{lem}\label{lem: finite energy vectors via worms}
Let $v\in H_0 = H_0(\bbD)$ be a finite energy vector, and let $I_1,I_2\subset S^1$ be intervals whose interiors cover $S^1$. Then there exist $x_1\in\cA(I_1)$ and $x_2\in\cA(I_2)$ such that 
\begin{equation}\label{eq: two worms for a pt}
v=x_1\Omega+x_2\Omega.
\end{equation}
\end{lem}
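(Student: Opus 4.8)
The plan is to realize the sharp energy cutoff applied to a cleverly chosen vector $a\Omega$ as a sum of two rotation-averages, one forced into $\cA(I_1)$ and the other into $\cA(I_2)$. First I would invoke the Lebesgue number lemma: since $\{\mathring I_1,\mathring I_2\}$ is an open cover of the compact circle $S^1$, there is $\eta>0$ such that every arc of length $<\eta$ lies inside $\mathring I_1$ or inside $\mathring I_2$; fix an interval $J\subset S^1$ of length $<\eta$. Writing $U(r_\theta)=e^{i\theta L_0}$ for rotation by angle $\theta$ (an honest unitary, as rotations sit in $\Mob$), equivariance of the net gives $U(r_\theta)\cA(J)U(r_\theta)^*=\cA(r_\theta J)$, and $r_\theta J$ again has length $<\eta$, so it lands in $\mathring I_1$ or $\mathring I_2$ for \emph{every} $\theta$. (On the vacuum sector the eigenvalues of $L_0$ are integers, so $\theta\mapsto U(r_\theta)$ is $2\pi$-periodic and genuinely a function of $\theta\in S^1$.)

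Next I extract the low-energy part of $v$ exactly. Let $N$ be at least the top energy occurring in $v$, let $E_m$ be the projection onto $H_0(m)$, and $P_N:=\sum_{m=0}^N E_m$ the projection onto the finite-dimensional space $\bigoplus_{m\le N}H_0(m)$. By the Reeh--Schlieder theorem $\cA(J)\Omega$ is dense in $H_0$, so $P_N\cA(J)\Omega$ is a dense subspace of the finite-dimensional space $P_NH_0$ and hence equals it; in particular there is some $a\in\cA(J)$ with $P_N a\Omega=v$.

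Now set $\Theta_1:=\{\theta\in S^1: r_\theta J\subseteq\mathring I_1\}$, which is open, and $\Theta_2:=S^1\setminus\Theta_1$, so that $r_\theta J\subseteq\mathring I_2$ for every $\theta\in\Theta_2$. With $g(\theta):=\tfrac1{2\pi}\sum_{m=0}^N e^{-im\theta}$, define
\[
x_i:=\int_{\Theta_i} g(\theta)\,U(r_\theta)\,a\,U(r_\theta)^*\,d\theta\qquad(i=1,2),
\]
the integral converging in the strong operator topology. For $\theta\in\Theta_i$ the integrand lies in $\cA(r_\theta J)\subseteq\cA(I_i)$, and since any element of the commutant $\cA(I_i)'$ commutes with the integrand for all $\theta$ it also commutes with $x_i$; thus $x_i\in\cA(I_i)''=\cA(I_i)$. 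Finally, using $U(r_\theta)^*\Omega=\Omega$ together with $\Theta_1\cup\Theta_2=S^1$ and the spectral theorem for $\hat g(m)=\mathbb{1}_{0\le m\le N}$,
\[
x_1\Omega+x_2\Omega=\int_{S^1} g(\theta)\,e^{i\theta L_0}a\Omega\,d\theta=\sum_{m=0}^N E_m a\Omega=P_N a\Omega=v,
\]
which gives the desired $x_1\in\cA(I_1)$ and $x_2\in\cA(I_2)$.

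The conceptual heart, and the only place the covering hypothesis enters, is the splitting of the single rotation-average $\int_{S^1}g\,U(r_\theta)aU(r_\theta)^*\,d\theta$ — which on its own is only known to lie in $B(H_0)$ — into two pieces over $\Theta_1$ and $\Theta_2$ that are forced into $\cA(I_1)$ and $\cA(I_2)$ respectively. A single interval cannot implement a sharp energy cutoff this way, because the rotations keeping a fixed arc inside one interval span only a proper sub-arc of angles; it is precisely the covering condition, via the Lebesgue number, that lets the \emph{full} circle of rotations be partitioned between the two local algebras. I expect this splitting step to be the main point to get right; the remaining ingredients — weak-integral closedness of von Neumann algebras, the exact extraction $P_N a\Omega=v$ from finite-dimensionality, and the $2\pi$-periodicity of rotations on $H_0$ — are routine.
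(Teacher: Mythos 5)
Your proof is correct and is essentially the paper's own argument: both use Reeh--Schlieder plus finite-dimensionality of the low-energy subspace to find $a$ in a small interval algebra $\cA(J)$ whose energy-projected vacuum vector is $v$, write that projection as a rotation average of $a$, and split the average over the circle of angles into two pieces forced into $\cA(I_1)$ and $\cA(I_2)$. The only differences are cosmetic — you use a sharp measurable partition $\Theta_1,\Theta_2$ of the angles and the cutoff $P_N$ where the paper reduces to homogeneous $v$ and uses a smooth partition of unity subordinate to slightly shrunken intervals — and these do not affect correctness.
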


\begin{proof}
We may assume without loss of generality that $v$ is homogeneous.
Let $n\in \bbN$ be the conformal dimension of $v$, and let $p_n:H_0\to H_0(n)$ be the orthogonal projection onto the subspace of vectors of conformal dimension $n$.
The projection $p_n$ is given by the formula
\[
p_n = \frac{1}{2 \pi} \int_{S^1} e^{-in\theta} R_\theta \, d\theta,
\]
where $R_\theta = e^{i \theta L_0}$ is the operator of anticlockwise rotation by $\theta$.
Let $\delta$ be the length of the smallest connected component of $I_1\cap I_2$, and
let $I_j^-$ be the result of shortening $I_j$ by $\delta/3$ on either side.

Let $J$ be an interval of length $\delta/3$ centered around $1$.
The map $\cA(J)\to H_0:x\mapsto x\Omega$ has dense image by the Reeh-Schlieder theorem.
Therefore $\cA(J)\to H_0(n):x\mapsto p_n x\Omega$ also has dense image.
As $H_0(n)$ is finite-dimensional, that last map is therefore surjective.
Pick $x\in \cA(J)$ such that $p_n(x\Omega)=v$, and pick a partition of unity $\{\varphi_1,\varphi_2\}$ subordinate to the cover $\{I_1^-, I_2^-\}$.
Then
\begin{align*}
v=p_n(x\Omega)
&= \left(\frac{1}{2 \pi} \int_{S^1} e^{-in\theta} R_\theta \, x \,d\theta \right) \Omega\\
&= \left(\frac{1}{2 \pi} \int_{S^1} e^{-in\theta} r_\theta(x) \, d\theta \right) \Omega\\
&= \left(\frac{1}{2 \pi} \int_{I_1^-} \varphi_1(e^{i \theta}) e^{-in\theta} r_\theta(x) \, d\theta\right) \Omega
+ \left(\frac{1}{2 \pi} \int_{I_2^-}\varphi_2(e^{i \theta}) e^{-in\theta} r_\theta(x) \, d\theta\right) \Omega,
\end{align*}
where $r_\theta(x)=R_\theta x R_{-\theta}$.
We finish by noting that since $x\in\cA(J)$, we have $r_\theta(x) \in \cA(I_j)$ for every $\theta\in I_j^-$. 
Thus setting
$
x_j := \frac{1}{2 \pi} \int_{I_j^-} \varphi_j(e^{i \theta}) e^{-in\theta} r_\theta(x) \, d\theta \in \cA(I_j)
$
gives the desired decomposition \eqref{eq: two worms for a pt}.
\end{proof}

Given a point $z$ in some Riemann surface $\Sigma$ (in our applications, $\Sigma$ will either be a disc or $\bbC P^1$), one can form the inverse limit (i.e. intersection)
\[
H_0(z) := \underleftarrow{\lim}_{\mathring D\ni z} H_0(D)
\]
of all the vacuum sectors $H_0(D)$, were the inverse limit runs over all discs $D\subset \Sigma$ containing $z$ in their interior.
Here, given two discs $D_1 \subset D_2$ containing $z$, the structure map in the above inverse limit is given by
\[
Y_A:H_0(D_1)\to H_0(D_2),
\]
where $A$ is the annulus $D_2 \setminus \mathring D_1$.

The space $H_0(z)$ admits an action by the group $G_z$ of germs (defined on a neighbourhood of $z$) of invertible maps $\Sigma\to \Sigma$ fixing the point $z$.

\begin{defn}\label{def: V = space of finite energy vectors}
Let $z$ be a point in some Riemann surface.
The space $V(z)$ of finite energy vectors at $z$ is the subspace\footnote{We will show later, in Lemma \ref{lem: V(z)=V(z)}, that the space $V(z)$ is the same as the one considered in \cite[Ch. 6]{FrenkelBenZvi04}, in the context of vertex algebras.}
\begin{equation}\label{eq: def V(z) number 1}
V(z) := \big\{ v \in H_0(z)\,\big|\, \spann\{gv : g \in G_z\} \text{ is finite dimensional}\big\}.
\end{equation}
If the Riemann surface is $\bbC$ (or $\bbD$) and the point $z$ is the origin $0\in\bbC$, then we denote $V(z)$ simply by $V$.
\end{defn}

\begin{lem}
The space $V \subset H_0$ is the algebraic direct sum of all the eigenspaces of $L_0$.
\end{lem}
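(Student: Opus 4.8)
The plan is to make the inverse limit $H_0(0)$ completely explicit, and then prove the two inclusions $V\subseteq\bigoplus_n H_0(n)$ and $\bigoplus_n H_0(n)\subseteq V$ separately: the first uses only the rescaling subgroup of $G_0$, while the second uses that the action on a fixed finite-energy vector depends only on a finite jet of the germ.

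First I would identify $H_0(0)$ concretely. Taking the round subdiscs $D_r=\{|z|\le r\}$ parametrized by $z\mapsto rz$, the structure map $Y_{\bbD\setminus\mathring D_r}:H_0(D_r)\to H_0(\bbD)=H_0$ becomes, under the induced unitary identification $H_0(D_r)\cong H_0$, exactly the operator $r^{L_0}$; this is immediate from the definition \eqref{Y_A def} of $Y_A$ together with Example~\ref{ex: r^L_0}. Since each $Y_A$ is injective (Lemma~\ref{lem: annuli have dense image}), a compatible family is determined by its $\bbD$-component, and $H_0(0)$ is thereby identified with $\bigcap_{0<r<1}r^{L_0}H_0\subseteq H_0$; concretely, writing $\xi=\sum_n\xi_n$ with $\xi_n\in H_0(n)$, membership means $\sum_n r^{-2n}\|\xi_n\|^2<\infty$ for every $r<1$. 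In particular every element of $H_0(0)$ lies in the domain of every power of $L_0$, and the germ $m_\lambda:t\mapsto\lambda t$ (which is in $G_0$ for all $\lambda\ne 0$) acts by $\xi\mapsto\lambda^{L_0}\xi=\sum_n\lambda^n\xi_n$.

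For $V\subseteq\bigoplus_n H_0(n)$, let $\xi\in V$, so $W:=\spann\{g\xi:g\in G_0\}$ is finite dimensional. Then $\lambda^{L_0}\xi\in W$ for all $\lambda$, and $\lambda\mapsto\lambda^{L_0}\xi$ is a $W$-valued holomorphic function; differentiating at $\lambda=1$ gives $L_0(L_0-1)\cdots(L_0-k+1)\xi\in W$ for all $k$, hence $\spann\{L_0^k\xi\}_{k\ge0}$ is finite dimensional. Thus some nonzero polynomial satisfies $p(L_0)\xi=0$, i.e.\ $\sum_n p(n)\xi_n=0$; as the $H_0(n)$ are mutually orthogonal this forces $\xi_n=0$ whenever $p(n)\ne0$, and since $p$ has finitely many roots $\xi$ is a finite sum of $L_0$-eigenvectors, i.e.\ $\xi\in\bigoplus_n H_0(n)$. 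Conversely, fix $\xi\in F_N:=\bigoplus_{m\le N}H_0(m)$; I must show $G_0\xi\subseteq F_N$, which is finite dimensional since the eigenspaces are. The key point is that $L_m$ lowers $L_0$-degree by $m$, so $L_m\xi=0$ for all $m>N$; hence a germ tangent to the identity to order $N+1$ (a product of exponentials of vector fields in $\overline{\mathrm{Span}}\{\ell_m\}_{m\ge N+1}$, acting through the $L_m$ with $m\ge N+1$) acts as the identity on $F_N$. A germ $f$ extending to a univalent self-map of $\bbD$ fixing $0$ lies in $\Univ_0$ and acts on the $\bbD$-component by $\pi(A_f)$; writing $A_f=\prod_\tau\Exp(X(\tau)d\tau)$ as in \eqref{eq: A univalent} with each $X(\tau)$ a univalent vector field vanishing at $0$, i.e.\ in $\overline{\mathrm{Span}}\{\ell_n\}_{n\ge0}$, we see from \eqref{eq: def how annuli act} that $\pi(A_f)$ is a time-ordered product of exponentials of the operators $L_n$ with $n\ge0$, none of which raises energy, so $\pi(A_f)F_N\subseteq F_N$. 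Finally, an arbitrary $g\in G_0$ differs from a suitable extendable $f\in\Univ_0$ with the same $(N+1)$-jet by a germ tangent to the identity to order $N+1$; since the latter acts trivially on $F_N$, we get $g\xi=f\xi\in F_N$, so $\xi\in V$.

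The main obstacle is the reverse inclusion, which requires translating the purely geometric action of the germ group $G_0$ into the Virasoro operators $L_n$ and controlling germs that do not extend to self-maps of the disc. The device that makes this manageable is the truncation $L_m\xi=0$ for $m>N$: it shows that the action on a fixed energy-$\le N$ vector depends only on a finite jet of the germ, reducing everything to the extendable case handled by \eqref{eq: A univalent}. I would also take care to justify that a univalent framing of $f\in\Univ_0$ can be chosen through maps fixing $0$, so that the generating vector fields lie in $\overline{\mathrm{Span}}\{\ell_n\}_{n\ge0}$.
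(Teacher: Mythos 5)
Your identification of $H_0(0)$ with $\bigcap_{0<r<1}r^{L_0}H_0$ and your proof of the inclusion $V\subseteq\bigoplus_n H_0(n)$ via the rescaling subgroup are correct (and slightly more detailed than the paper, which just observes that a vector with infinitely many nonzero $L_0$-components has infinite-dimensional rotation orbit). Your treatment of $\Univ_0$ via the time-ordered exponential \eqref{eq: A univalent} is also the paper's argument, including the point you flag about choosing the univalent framing through maps fixing $0$.

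The gap is in your reduction of an arbitrary germ $g\in G_0$ to the extendable case. You factor $g=f\circ h$ with $f\in\Univ_0$ sharing the $(N+1)$-jet of $g$ and $h$ tangent to the identity to order $N+1$, and you dispose of $h$ by describing it as ``a product of exponentials of vector fields in $\overline{\mathrm{Span}}\{\ell_m\}_{m\ge N+1}$, acting through the $L_m$ with $m\ge N+1$.'' That description is only valid for the \emph{formal} action of $\Aut(\bbC[[t]])$; the action of $G_0$ on $H_0(0)$ is the geometric one, and the identification of the geometric action with exponentials of the $L_m$ is available only for elements of $\Univ_0$ (or $\tUniv$), via \eqref{eq: A univalent}. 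But no nontrivial element of $\Univ_0$ is generated by inward-pointing vector fields lying in $\overline{\mathrm{Span}}\{\ell_m\}_{m\ge N+1}$: for $f(z)\partial_z$ with $f(z)=\sum_{m\ge 1}a_mz^{m+1}$, the radial component $\mathrm{Re}(\bar z f(z))=\mathrm{Re}\big(\sum_{m\ge1}a_mz^m\big)$ has mean zero on $\partial\bbD$, so being everywhere $\le 0$ forces it to vanish identically. Hence your parenthetical justification is geometrically vacuous, and the key assertion ``germs tangent to the identity to order $N+1$ act trivially on $F_N$'' — equivalently, that the geometric action on $F_N$ depends only on the $(N+1)$-jet — is exactly what still needs proving. (A secondary problem: by the Schwarz lemma an $f\in\Univ_0$ has $|f'(0)|\le 1$, so no $f\in\Univ_0$ can match even the $1$-jet of $g$ when $|g'(0)|>1$.) The paper avoids both issues with a simpler reduction: write $g=g'\circ(t\mapsto e^{q}t)$ with $\mathrm{Re}(q)$ sufficiently negative; the contraction makes $g'$ a genuine univalent self-map of $\bbD$ fixing $0$, and the scaling acts by $e^{qL_0}$, which visibly preserves each eigenspace. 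You could repair your argument by conjugating $h$ by such a contraction, but as written the step does not go through.
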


\begin{proof}
Let $G$ be the group of germs of invertible maps $\bbC\to \bbC$ fixing the origin.
Consider the $1$-parameter subgroup $S^1\subset G$ given by rotations.
If a vector $v\in H_0(\bbD)$ is not in the algebraic direct sum of the $L_0$-eigenspaces,
then $\spann\{gv : g \in S^1\}$ is already infinite dimensional, so $v\not \in V$.

Conversely, if $L_0v=Nv$ for some $N\in\bbN$, we will show that 
for any $g\in G$, we have $gv\in H_0^{\le N}:= \bigoplus_{n\le N} H_0(n)$,
and hence the span of all such vectors $gv$ is finite dimensional.
Let $g\in G$ be a group element.
By writing $g$ as $g'\exp(qL_0)$ for some scalar $q$ and some $g'\in\Univ_0$ fixing the origin, we are reduced to the case $g\in \Univ_0$.

If $g\in \Univ_0$, then
the action of $g$ on $V$ extends to a map $H_0 \to H_0$.
By \cite[Prop. 4.22]{HenriquesTener24ax} and the fact that $g$ fixes $0$, we may write $g$ as a time-ordered exponential
\[
g = \prod_{1\ge \tau \ge 0} \Exp( X_\tau d\tau)
\]
where each
$
X_\tau = \sum_{n\ge 0} a_m(\tau) L_m
$
is a holomorphic vector field on $\bbD$ that vanishes at the origin, and whose restriction to $\partial\bbD$ is an inward pointing vector field.
The operators $L_m$ for $m\ge 0$ preserve  $H_0^{\le N}$. Therefore so does $g$.
\end{proof}

By the arguments in the above proof, the action of $G$ (defined above) on $V$ extends to the group $\Aut(\bbC[[t]])$. Indeed, on each filtered piece $H_0^{\le N}$, the action of $G$ descends to its finite dimensional quotient $\Aut(\bbC[t]/t^{N+1})$, and the group $\Aut(\bbC[[t]])$ is the inverse limit of those finite dimensional quotients.

Starting from the action of $\Aut(\bbC[[t]])$ on $V$,
an alternative definition of $V(z)$ goes as follows. Let $\Sigma$ be a Riemann surface,  let $z\in \Sigma$ be a point, and
let $\cO_z(\Sigma)$ be the formal completion of the ring of holomorphic functions on $\Sigma$ at the point $z$. We can then define
\begin{equation}\label{eq: def V(z) number 2}
V(z) = V \times_{\Aut(\bbC[[t]])} \mathrm{Isom}(\cO_z(\Sigma),\bbC[[t]]),
\end{equation}
where we have written $\mathrm{Isom}(\cO_z(\Sigma),\bbC[[t]])$ for the set of $\bbC$-algebra isomorphisms between $\cO_z(\Sigma)$ and $\bbC[[t]]$. 
This construction is functorial in $z$ in the sense that whenever a holomorphic map $f:\Sigma_1\to \Sigma_2$ sends $z_1$ to $z_2$ and the map is a local isomorphism around these points, then there is an induced map $f_*:V(z_1)\to V(z_2)$. When $z_1=z_2=0\in\bbC$, this recovers with the original action of $\Aut(\bbC[[t]])$ on $V$.
In particular, given a local coordinate at a point $z\in \Sigma$ (i.e. a holomorphic map $w \mapsto f(w)$ defined for $w$ in some neighbourhood of $0\in\bbC$, and satisfying $f(0)=z$), we get an isomorphism $V \to V(z)$.
We denote this isomorphism by $v \mapsto v(z; f)$.
When the choice of local coordinate is clear from context, we write this isomorphism simply as $v \mapsto v(z)$.

\begin{lem}\label{lem: V(z)=V(z)}
The two constructions \eqref{eq: def V(z) number 1} and \eqref{eq: def V(z) number 2} of $V(z)$ are canonically isomorphic.
\end{lem}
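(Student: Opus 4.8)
The plan is to fix the auxiliary choice of an honest holomorphic local coordinate, build from it an explicit embedding of $V$ into $H_0(z)$ landing inside the subspace \eqref{eq: def V(z) number 1}, and then show that the resulting identification is independent of the chosen coordinate, so that it descends to a canonical isomorphism out of the balanced product \eqref{eq: def V(z) number 2}.

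First I would treat the base case $z=0\in\bbC$ with the standard coordinate. Writing $m_r\colon\bbD\to\bbD$ for the scaling $w\mapsto rw$ and using that, via the standard parametrizations, the structure map $H_0(r\bbD)\to H_0(\bbD)$ of the inverse limit acts as $r^{L_0}$, one checks that for a finite energy vector $v\in V=H_0^{f.e.}$ the family $\xi_{r\bbD}:=(m_r)_*(r^{-L_0}v)$ is compatible and therefore assembles into an element $\iota_0(v)\in H_0(0)$ whose $\bbD$-component is $v$. (The twist by $r^{-L_0}$ is forced: it is exactly the failure of the naive family $(m_r)_*v$ to be scale-compatible, and it is what converts a vector into a genuine element of the inverse limit.) By the lemma identifying $V\subset H_0$ with the algebraic direct sum of the $L_0$-eigenspaces, the map $\iota_0$ is a linear isomorphism of $V$ onto $V(0)$ in the sense of \eqref{eq: def V(z) number 1}. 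The key compatibility I would then record is that $\iota_0$ is \emph{equivariant}: for $g\in\Univ_0$ acting on $H_0$ by $\pi(g)$ (the action used to extend the $G$-action to $\Aut(\bbC[[t]])$ just before the statement) and on $H_0(0)$ by germ-pushforward of families, one has $g\cdot\iota_0(v)=\iota_0(\pi(g)v)$. This is verified by computing the $\bbD$-component of $g\cdot\iota_0(v)$: it equals $Y_{\bbD\setminus\mathring{g(\bbD)}}\,g_*v$, which by \eqref{Y_A def} and functoriality of $D\mapsto H_0(D)$ is precisely $\pi(A_g)v=\pi(g)v$, and injectivity of the structure maps then forces the two families to agree. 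Passing through the finite quotients $\Aut(\bbC[t]/t^{N+1})$ upgrades this to equivariance for all of $\Aut(\bbC[[t]])$.

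Next, for a general pointed Riemann surface $(\Sigma,z)$ and an honest holomorphic local coordinate $g\colon(U,0)\to(\Sigma,z)$, I would set $\Psi_g(v):=g_*\,\iota_0(v)\in H_0(z)$, where $g_*$ is the pushforward of compatible families induced by the local isomorphism $g$. Since $g$ conjugates $G_0$ isomorphically onto $G_z$, the finite-dimensionality of $\spann\{G_0\cdot\iota_0(v)\}$ transfers to $\spann\{G_z\cdot\Psi_g(v)\}$, so $\Psi_g$ maps $V$ isomorphically onto $V(z)$ in the sense of \eqref{eq: def V(z) number 1}. To compare with \eqref{eq: def V(z) number 2}, I would note that because the $\Aut(\bbC[[t]])$-action on a fixed $v\in H_0^{\le N}$ factors through $\Aut(\bbC[t]/t^{N+1})$, both the class $v(z;g)\in V(z)$ of \eqref{eq: def V(z) number 2} and the vector $\Psi_g(v)$ depend only on the jet of $g$ to order $N$; hence every element of the balanced product is of the form $v(z;g)$ for an honest $g$, and one may \emph{define} $\Phi(v(z;g)):=\Psi_g(v)$.

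The main obstacle, and the heart of the argument, is proving that $\Phi$ is well defined, i.e. independent of the representative $(v,g)$ of a balanced-product class. Concretely, if $g'$ is $g$ reparametrized by a coordinate change $\gamma\in\Aut(\bbC[[t]])$ and $v'=\gamma^{-1}v$ is the correspondingly twisted vector dictated by the relation defining \eqref{eq: def V(z) number 2}, then one must show $\Psi_{g'}(v')=\Psi_g(v)$. Writing $\Psi_{g'}=g_*\circ\gamma_*$, where $\gamma_*$ is the pushforward on $H_0(0)$ induced by the coordinate change, this reduces to $\gamma_*\,\iota_0(v')=\iota_0(v)$, and by the equivariance of $\iota_0$ established above we get $\gamma_*\,\iota_0(v')=\iota_0(\gamma\cdot v')=\iota_0(v)$. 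In other words, the content is exactly that the $\Aut(\bbC[[t]])$-action on $V$ used in \eqref{eq: def V(z) number 2} agrees, under $\iota_0$, with the germ action on $H_0(0)$ used to cut out \eqref{eq: def V(z) number 1}. Granting this, $\Phi$ is a well-defined linear isomorphism; it refers to no auxiliary coordinate, hence is canonical, and it is routine to check that it intertwines the functorial maps $f_*$ attached to local isomorphisms in the two models.
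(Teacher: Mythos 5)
Your proof is correct and is in essence the paper's argument made explicit: the paper simply observes that both constructions are functors on the connected groupoid of pointed Riemann surfaces and germs of local isomorphisms, checks that they agree on the skeleton object $(\bbC,0)$ together with its automorphism group, and invokes the general fact that two functors agreeing on a skeleton of a connected groupoid are naturally equivalent. Your map $\iota_0$ (the compatible family $(m_r)_*(r^{-L_0}v)$) and its $G_0$-equivariance constitute exactly the ``agreement on the skeleton'' step, and your choice of an honest local coordinate followed by the well-definedness check is the by-hand unwinding of the skeleton argument.
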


\begin{proof}
Let $\cG$ be the connected groupoid whose objects are pairs $(\Sigma, z)$ with $z\in \Sigma$,\vspace{-1mm} and whose morphisms $(\Sigma_1, z_1)\to(\Sigma_2, z_2)$ are germs of local isomorphisms $\Sigma_1 \supset U \stackrel f \to \Sigma_2$ satisfying $f(z_1)=z_2$.

Both \eqref{eq: def V(z) number 1} and \eqref{eq: def V(z) number 2} are functorial in $z$ in the sense that a local isomorphisms $f:\Sigma_1\to \Sigma_2$ induces a map $f_*:V(z)\to V(f(z))$. In other words, both constructions are functors from $\cG$ to the category of vector spaces.
The two constructions visibly agree on the object $(\bbC,0)\in\cG$, and on $G=\Aut_\cG(\bbC,0)$.
We finish the proof by invoking the general fact that two functors $F,G:\cC\to\cD$ which are naturally equivalent when restricted to a skeleton of $\cC$ are in fact already naturally equivalent.
\end{proof}

\begin{defn}\label{semigroup of annuli with point insertions}
Given a vector space $V$ equipped with an action of $\Aut(\bbC[[t]])$,
we let $\Ann^V$ denote the semigroup whose elements consists of triples $(A,\{z_1,\ldots,z_n\},\{v_1,\ldots,v_n\})$ where $A$ is an annulus, $z_i\in \mathring A$ are an unordered collection of points in the interior of $A$, and $v_i\in V(z_i)$ (where $V(z_i)$ is defined as in \eqref{eq: def V(z) number 2}).
We will typically denote such a triple $A[v_1 \ldots v_n]$, or alternatively $A[v_1(z_1) \ldots v_n(z_n)]$ when we wish to make the insertion points explicit.

The central extensions $\tAnn_c^V$ and $\Ann_c^V$ are defined in the same way, using an element $\underline A$ of $\tAnn_c$ or $\Ann_c$ instead of $A\in \Ann$.
\end{defn}

When $v_1, \ldots, v_n \in V$, and the points $z_i$ are equipped with local coordinates $f_i$, we can then form $A[v_1 \ldots v_n]\in\Ann^V$ by implicitly replacing the $v_i$ by their images $v_i(z_i; f_i)$ under the isomorphism $V \cong V(z_i)$.
For example,
given an annulus $A$ equipped with an embedding $A\subset \bbC$, along with points $z_i\in \mathring{A}$ and vectors $v_i\in V$, we will typically identify the vector spaces $V(z_i)$ with $V$ using the standard local coordinates $w \mapsto w+z_i$.

If $D$ is a disc and $v\in V(z)$ for some point $z\in \mathring D$, let us write $|v\rangle_D$ for the image of $v$ in $H_0(D)$.
By definition, these satisfy
\begin{equation}\label{eq: Y|v> = |v>}
Y_{D_2\setminus \mathring D_1} (|v\rangle_{D_1}) = |v\rangle_{D_2}
\end{equation}
whenever $D_1\subset D_2$.

The following is an immediate corollary of Lemma~\ref{lem: finite energy vectors via worms}:

\begin{cor}\label{cor: finite energy vectors via worms in discs}
    Let $D$ be a disc, let $I_1,I_2 \subset \partial D$ be intervals whose interiors cover $\partial D$, and let $z \in \mathring{D}$ be a point in the interior of the disc.
    Then for every vector $v \in V(z)$ there exist $x_i \in \cA(I_i)$ such that $|v\rangle_D = x_1\Omega_D + x_2 \Omega_D$.
\end{cor}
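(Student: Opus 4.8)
The plan is to deduce the general statement from Lemma~\ref{lem: finite energy vectors via worms} by transporting everything to the standard disc via a holomorphic isomorphism. First I would use that $\Mob=\Aut(\bbD)$ acts transitively on $\mathring\bbD$ to choose a holomorphic isomorphism $\varphi\colon\bbD\to D$ with $\varphi(0)=z$. Such a $\varphi$ induces the linear pushforward $\varphi_*\colon H_0(\bbD)\to H_0(D)$ of \eqref{eqn: pushforward by (anti)holomorphic map}, together with an isomorphism $V(\varphi)\colon V=V(0)\to V(z)$ that is compatible with the maps to the vacuum sectors, in the sense that $|V(\varphi)(u)\rangle_D=\varphi_*|u\rangle_\bbD$. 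Moreover $\varphi$ carries the intervals $J_i:=\varphi^{-1}(I_i)\subset S^1$, whose interiors cover $S^1$, to the given intervals $I_i\subset\partial D$.

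Next I would set $u:=V(\varphi)^{-1}(v)\in V$. Under the canonical identification of $V(0)$ with the finite-energy subspace $V\subset H_0=H_0(\bbD)$ we have $|u\rangle_\bbD=u$, so the compatibility above gives $|v\rangle_D=\varphi_*|u\rangle_\bbD=\varphi_*u$. I would then apply Lemma~\ref{lem: finite energy vectors via worms} to the finite-energy vector $u\in H_0$ and the cover $\{J_1,J_2\}$ of $S^1$, producing $y_i\in\cA(J_i)$ with $u=y_1\Omega+y_2\Omega$.

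Finally I would push this decomposition forward. Since $\varphi$ is holomorphic, $\varphi_*\Omega_\bbD=\Omega_D$ (the functor $D\mapsto H_0(D)$ sends vacuum to vacuum, cf.\ \eqref{eq:  A Omega=Omega}), and each single boundary worm $y_i\Omega=|y_i\rangle_\bbD$ is carried, by the holomorphic case \eqref{eq: f_* no 1} of Corollary~\ref{cor: anti-holomorphic map applied to worm insertion}, to $|\cA(\varphi)(y_i)\rangle_D=\cA(\varphi)(y_i)\Omega_D$, the last equality holding because $\cA(\varphi)(y_i)\in\cA(I_i)$ is supported on $\partial D$. Setting $x_i:=\cA(\varphi)(y_i)\in\cA(I_i)$ and using linearity of $\varphi_*$ then yields
\[
|v\rangle_D=\varphi_*u=\varphi_*(y_1\Omega)+\varphi_*(y_2\Omega)=x_1\Omega_D+x_2\Omega_D,
\]
as desired.

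I do not anticipate a genuine obstacle, which is precisely why the result is flagged as an immediate corollary. The only points that require a little care are ensuring the chosen isomorphism $\varphi$ may be taken to fix the origin—so that $u$ lands in $V=V(0)$, the exact setting of the lemma—and checking that the pushforward of a boundary worm is again a boundary worm of the form $\cA(\varphi)(y_i)\Omega_D$; both follow directly from the functoriality of worm insertions already established in Corollary~\ref{cor: anti-holomorphic map applied to worm insertion}.
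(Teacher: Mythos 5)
Your proof is correct and is exactly the transport argument the paper has in mind: the paper gives no written proof, stating only that the corollary is ``an immediate corollary of Lemma~\ref{lem: finite energy vectors via worms}'', and your choice of a uniformizer $\varphi\colon\bbD\to D$ with $\varphi(0)=z$, followed by an application of the lemma on $S^1$ and the pushforward identity $\varphi_*(y_i\Omega)=\cA(\varphi)(y_i)\Omega_D$, is the standard way to fill in that step. The two points you flag (normalizing $\varphi(0)=z$ so that $u$ lands in $V=V(0)$, and the equivariance of $\varphi_*$ on boundary worms) are indeed the only details needed, and both follow from \eqref{eq: action of A(I) on H_0(D)} and the functoriality of $z\mapsto V(z)$ as you say.
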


For every configuration of $n$ distinct points $z_1,\ldots,z_n\in \mathring{D}$, and vectors $v_i \in V(z_i)$, we will now describe a vector
\begin{equation}\label{eq: n-point on disc}
| v_1 v_2 \cdots v_n \rangle_D \in H_0(D). 
\end{equation}
Pick disjoint discs $D_i\subset D$ containing the points $z_i$ in their respective interiors.
Using Corollary~\ref{cor: finite energy vectors via worms in discs}, pick intervals $I_{1,i},I_{2,i}\subset \partial D_i$ and algebra elements $x_{1,i}\in\cA(I_{1,i})$ and $x_{2,i}\in\cA(I_{2,i})$ such that
\[
|v_i\rangle_{D_i} = x_{1,i}\Omega_{D_i} + x_{2,i}\Omega_{D_i}
\]
in $H_0(D_i)$.
We then define $| v_1 v_2 \cdots v_n \rangle_D \in H_0(D)$ to be the sum of the following $2^n$ terms:
\begin{equation}\label{eqn: sum of worms}
\sum_{(\epsilon_1,\ldots,\epsilon_n)\in \{1,2\}^n} | x_{\epsilon_1,1} x_{\epsilon_2,2} \cdots x_{\epsilon_n,n}\rangle_D.
\end{equation}
We will show below that this is independent of the choices of discs $D_i$, intervals $I_{1,i},I_{2,i}$, and algebra elements $x_{\epsilon,i}\in\cA(I_{\epsilon,i})$).

\begin{lem}\label{lem: points insert well defined}
The vector \eqref{eqn: sum of worms} is independent of the choices of discs $D_i$, intervals $I_{1,i},I_{2,i}\subset \partial D_i$, and algebra elements $x_{1,i}\in\cA(I_{1,i})$ and $x_{2,i}\in\cA(I_{2,i})$.
\end{lem}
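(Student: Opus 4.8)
The plan is to settle the single–insertion case first and then reduce the general case to it by a factorization–and–multilinearity argument, the essential new input being an intrinsicness statement for a point insertion in an annulus. First I would treat $n=1$. Given the point $z_1\in\mathring D$, an auxiliary disc $D_1$, and a decomposition $|v_1\rangle_{D_1}=x_1\Omega_{D_1}+x_2\Omega_{D_1}$ with $x_\epsilon\in\cA(I_\epsilon)$ supported on $\partial D_1$, property \eqref{eq: worms in a disc} gives $|x_\epsilon\rangle_{D_1}=x_\epsilon\Omega_{D_1}$, and the second defining property of worm insertions gives $|x_\epsilon\rangle_D=Y_{D\setminus\mathring D_1}(x_\epsilon\Omega_{D_1})$. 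Hence the sum \eqref{eqn: sum of worms} equals $Y_{D\setminus\mathring D_1}|v_1\rangle_{D_1}=|v_1\rangle_D$ by \eqref{eq: Y|v> = |v>}; this manifestly depends only on $v_1\in V(z_1)$, with independence of $D_1$ following from the inverse–limit compatibility built into the definition of $V(z_1)$.

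For general $n$, I would choose a nested chain of discs $E_1\subset E_2\subset\cdots\subset E_n=D$ with $E_1=D_1$ and with $D_i$ contained in the interior of the annulus $A_i:=E_i\setminus\mathring E_{i-1}$ for $i\ge 2$ (such a chain exists since the $D_i$ are disjoint discs in $D$). By Proposition~\ref{proposition independence} the worm–insertion vector may be computed along this chain, and iterating \eqref{eq: compatibility of discs and annuli} rewrites each summand of \eqref{eqn: sum of worms} as $A_n[x_{\epsilon_n,n}]\cdots A_2[x_{\epsilon_2,2}]\,x_{\epsilon_1,1}\Omega_{E_1}$. Since each operator $A_i[\,\cdot\,]$ is linear in its single worm and the index $\epsilon_i$ occurs only in the $i$-th factor, summing over $\epsilon\in\{1,2\}^n$ factors the expression as $T_n\cdots T_2\,|v_1\rangle_{E_1}$, where $T_i:=A_i[x_{1,i}]+A_i[x_{2,i}]$ and $|v_1\rangle_{E_1}$ is intrinsic by the $n=1$ case. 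It therefore suffices to prove the claim: for an annulus $A$ and a disc $D_0$ floating in its interior around a point $z_0$, the operator $A[x_1]+A[x_2]$ depends only on the class $v$ of $x_1\Omega_{D_0}+x_2\Omega_{D_0}$ in $V(z_0)$.

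I expect this claim to be the main obstacle, and I would prove it by reduction to thin annuli. When $A$ is thin, with an interval $J$ in its thin part, Lemma~\ref{lem: thin part A(I) module map} shows $A[x_1]+A[x_2]$ is $\cA(J)$–linear, while applying it to $\Omega_{D_{in}}$ and using \eqref{eq: compatibility of discs and annuli} together with the $n=1$ case gives $(A[x_1]+A[x_2])\Omega_{D_{in}}=|x_1\rangle_{D_{out}}+|x_2\rangle_{D_{out}}=|v\rangle_{D_{out}}$, a vector depending only on $v$. Since $\Omega_{D_{in}}$ is cyclic for $\cA(J)$ by the Reeh–Schlieder theorem, an $\cA(J)$–linear operator is determined by its value there, so $A[x_1]+A[x_2]$ depends only on $v$. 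For a general (thick) $A$ I would factor $A=A_{out}\cup A_{mid}\cup A_{in}$ with $A_{mid}$ thin and containing $D_0$ in its interior, and write $A[x_1]+A[x_2]=Y_{A_{out}}\big(A_{mid}[x_1]+A_{mid}[x_2]\big)Y_{A_{in}}$, reducing to the thin case.

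Finally, the claim immediately gives independence of each $T_i$ from the chosen decomposition and from the floating disc $D_i$, settling independence for every non–innermost point; since $n\ge 2$ allows any prescribed point to be made non–innermost by re–slicing (Proposition~\ref{proposition independence}), this also covers the innermost disc $D_1$ (alternatively, shrinking $D_1$ to $D_1'\subset D_1$ replaces $T_2$ by $T_2\,Y_{D_1\setminus\mathring D_1'}$ and $|v_1\rangle_{E_1}$ by $|v_1\rangle_{D_1'}$, leaving the product unchanged by \eqref{eq: Y|v> = |v>}). Combining these steps yields independence of \eqref{eqn: sum of worms} from all of the discs $D_i$, the intervals $I_{1,i},I_{2,i}$, and the algebra elements $x_{1,i},x_{2,i}$, where the injectivity of annulus operators from Lemma~\ref{lem: annuli have dense image} is invoked to compare different global choices.
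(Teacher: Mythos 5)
Your proof is correct, but it takes a genuinely different route from the paper's. The paper's argument is a short telescoping computation: it observes that it suffices to compare a given choice with one in which a single disc $D_i$ is replaced by a smaller disc $D_i'\subseteq D_i$ (any two global choices being linked through such one-at-a-time refinements), and then the identity follows in five lines from \eqref{eq: compatibility of discs and annuli}, the relation $|v_1\rangle_{D_1}=Y_{D_1\setminus\mathring D_1'}|v_1\rangle_{D_1'}$ of \eqref{eq: Y|v> = |v>}, and the defining property $|x_{1,1}\rangle_{D_1}+|x_{2,1}\rangle_{D_1}=|v_1\rangle_{D_1}$ --- no Reeh--Schlieder, no thin annuli. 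Your route instead promotes the statement to the operator level first: your ``key claim'' that $A[x_1]+A[x_2]$ depends only on $v\in V(z_0)$ is essentially the single-insertion case of the paper's subsequent Lemma~\ref{lem: point insertion in annuli well defined}, and you prove it exactly as the paper does there (factor through a thin annulus, use $\cA(J)$-linearity from Lemma~\ref{lem: thin part A(I) module map} and the separating property of $\Omega_{D_{in}}$ from Reeh--Schlieder). What your approach buys is that the annulus version comes for free along the way; what it costs is heavier machinery for a statement that only needs functoriality, plus a bit more care in comparing two arbitrary global choices: your factorization $T_n\cdots T_2|v_1\rangle_{E_1}$ fixes a chain $(E_i)$ adapted to one choice, so to compare with a second choice you still need either the one-disc-at-a-time common-refinement step (which is precisely the paper's whole proof) or a third choice of worms adapted to both chains together with Proposition~\ref{proposition independence}; you gesture at this but do not spell it out. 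The closing appeal to injectivity from Lemma~\ref{lem: annuli have dense image} is not actually needed anywhere in your argument.
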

\begin{proof}
It is enough to show that \eqref{eqn: sum of worms} remains unchanged when one of the discs $D_i$ gets replaced by a smaller disc $D'_i \subseteq D_i$, and the two worms $x_{1,i}$ and $x_{2,i}$ supported on $\partial D_i$ get replaced by worms $x'_{1,i}$ and $x'_{2,i}$ supported on $\partial D'_i$.
Without loss of generality we may take $i = 1$.
Let $A=D\setminus \mathring D_1$, $A'=D\setminus \mathring D'_1$, and $A'' = D_1 \setminus \mathring D'_1$.
By 
\eqref{eq: compatibility of discs and annuli}
and \eqref{eq: Y|v> = |v>},
we then have:
\begin{align*}
\sum_{(\epsilon_1,\ldots,\epsilon_n)\in \{1,2\}^n} | x_{\epsilon_1,1} x_{\epsilon_2,2} \cdots x_{\epsilon_n,n}\rangle_D
&=
\sum_{(\epsilon_2,\ldots,\epsilon_n)\in \{1,2\}^{n-1}}
A[x_{\epsilon_2,2} \cdots x_{\epsilon_n,n}] \left(| x_{1,1} \rangle_{D_1}+| x_{2,1} \rangle_{D_1}\right)
\\
&=
\sum_{(\epsilon_2,\ldots,\epsilon_n)\in \{1,2\}^{n-1}}
A[x_{\epsilon_2,2} \cdots x_{\epsilon_n,n}] (| v_1\rangle_{D_1})
\\&=
\sum_{(\epsilon_2,\ldots,\epsilon_n)\in \{1,2\}^{n-1}}
A[x_{\epsilon_2,2} \cdots x_{\epsilon_n,n}] A''(| v_1\rangle_{D'_1})
\\&=
\sum_{(\epsilon_2,\ldots,\epsilon_n)\in \{1,2\}^{n-1}}
A'[x_{\epsilon_2,2} \cdots x_{\epsilon_n,n}] \left(| x'_{1,1} \rangle_{D'_1}+| x'_{2,1} \rangle_{D'_1}\right)
\\&=
\sum_{(\epsilon_1,\ldots,\epsilon_n)\in \{1,2\}^n} | x'_{\epsilon_1,1} x_{\epsilon_2,2} \cdots x_{\epsilon_n,n}\rangle_D.
\end{align*}
\end{proof}

As in \eqref{eqn: sum of worms}, given an annulus $A=D_{out} \setminus \mathring D_{in}$, points $z_1,\ldots,z_n\in \mathring{A}$, and vectors $v_i \in V(z_i)$, we can define an operator
$A[v_1\cdots v_n]: H_0(D_{in}) \to H_0(D_{out})$.
Pick disjoint discs $D_i\subset \mathring A$ containing the points $z_i$ in their interiors,
intervals $I_{1,i},I_{2,i}\subset \partial D_i$ and algebra elements $x_{\epsilon,i}\in\cA(I_{\epsilon,i})$
satisfying
$|v_i\rangle_{D_i} = x_{1,i}\Omega_{D_i} + x_{2,i}\Omega_{D_i}$, and set
\begin{equation}\label{eq: A[vvv] one}
A[v_1\cdots v_n]:=\!\!
\sum_{(\epsilon_1,\ldots,\epsilon_n)\in \{1,2\}^n} \!A[x_{\epsilon_1,1} x_{\epsilon_2,2} \cdots x_{\epsilon_n,n}]
: H_0(D_{in}) \to H_0(D_{out}).
\end{equation}
Finally, given a $z$-lifted annulus $\underline{A}\in\tAnn_c$ with underlying annulus $A$, points $z_i\in \mathring A$ and $v_i \in V(z_i)$ as above, and a representation $K$ of the conformal net, we can form 
\begin{equation}\label{eq: A[vvv] two}
\underline{A}[v_1\cdots v_n]:=
\sum_{(\epsilon_1,\ldots,\epsilon_n)\in \{1,2\}^n} \underline{A}[x_{\epsilon_1,1} x_{\epsilon_2,2} \cdots x_{\epsilon_n,n}] \,:\, K\,\to\, K.
\end{equation}
As before, we will need to show that \eqref{eq: A[vvv] one} and \eqref{eq: A[vvv] two} are independent of the choices of discs $D_i$, and algebra elements $x_{\epsilon,i}$.
This will be proven further down in Lemma~\ref{lem: point insertion in annuli well defined}.

We have the following immediate consequence of Lemma~\ref{lem: A[worms] in B(K)}.
\begin{lem}\label{lem: eqn annulus with points insertions is local}
Let $\underline{A} \in \tAnn_c$ be localised in some interval $I$.
Then the operator
\begin{equation}
\label{eqn: annulus with points insertions is local}
\underline{A}[v_1 \ldots v_n]:H_0\to H_0
\end{equation}
defined in \eqref{eq: A[vvv] two} lies in $\cA(I)\subset B(H_0)$.
Moreover, if $(K,\rho)$ is a representation of the conformal net then
\[
\underline{A}[v_1 v_2 \ldots v_n]:K\to K
\]
is the image of \eqref{eqn: annulus with points insertions is local}
under the homomorphism 
$\rho_I:\cA(I)\to B(K)$. \hfill $\square$
\end{lem}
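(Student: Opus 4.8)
The plan is to read off the statement directly from the defining formula \eqref{eq: A[vvv] two}, which already expresses $\underline{A}[v_1\ldots v_n]$ as a \emph{finite} sum of worm-insertion operators $\underline{A}[x_{\epsilon_1,1}\cdots x_{\epsilon_n,n}]$. Since the paper advertises this as an immediate consequence of Lemma~\ref{lem: A[worms] in B(K)}, the entire task is to check that the hypothesis of that lemma—namely that the support intervals of the worms be disjoint from the image of $I'$ in $A$—is met for each summand, and then to invoke linearity.

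First I would recall that in forming \eqref{eq: A[vvv] two} via Corollary~\ref{cor: finite energy vectors via worms in discs}, one is free to choose the disjoint discs $D_i$ around the insertion points $z_i$. Because each $z_i$ lies in the interior $\mathring A$, which is an open set, I can shrink every $D_i$ so that $D_i\subset\mathring A$, and hence $\partial D_i\subset\mathring A$. On the other hand, $\underline A$ being localised in $I$ forces $A$ to be thin with its thin part (where $\varphi_{in}$ and $\varphi_{out}$ agree) sitting on the boundary; thus the image of $I'$ in $A$ is contained in $\partial_{in}A\cap\partial_{out}A$ and is therefore disjoint from $\mathring A$. Consequently the support intervals $I_{\epsilon,i}\subset\partial D_i$ of all the worms $x_{\epsilon,i}$ are automatically disjoint from the image of $I'$, exactly as required.

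With this choice in place, Lemma~\ref{lem: A[worms] in B(K)} applies termwise: each $\underline{A}[x_{\epsilon_1,1}\cdots x_{\epsilon_n,n}]:H_0\to H_0$ lies in $\cA(I)$, and in any representation $(K,\rho)$ the corresponding operator on $K$ is the image of the $H_0$-operator under $\rho_I$. Since $\cA(I)$ is a von Neumann algebra, hence a linear subspace of $B(H_0)$, the finite sum $\underline{A}[v_1\ldots v_n]$ again lies in $\cA(I)$; and since $\rho_I$ is a linear homomorphism, the operator on $K$ defined by \eqref{eq: A[vvv] two} is precisely $\rho_I\big(\underline{A}[v_1\ldots v_n]\big)$.

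There is essentially no obstacle beyond the bookkeeping of the support-interval condition; all the content is carried by Lemma~\ref{lem: A[worms] in B(K)}, and the passage from worms to points is purely the linearity of $\cA(I)$ and of $\rho_I$. The one subtlety worth flagging is that this argument does \emph{not} require the independence of \eqref{eq: A[vvv] two} from the auxiliary choices (the subject of Lemma~\ref{lem: point insertion in annuli well defined}): it suffices to exhibit \emph{one} admissible choice of discs and worm decompositions for which the resulting operator lands in $\cA(I)$, and that is all the present statement asserts.
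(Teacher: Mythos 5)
Your argument is correct and is exactly the route the paper intends: the lemma is stated as an immediate consequence of Lemma~\ref{lem: A[worms] in B(K)}, applied termwise to the finite sum \eqref{eq: A[vvv] two} and combined with the linearity of $\cA(I)$ and of $\rho_I$. Your verification that the support intervals lie in $\mathring A$ (hence miss the image of $I'$) is automatic from the construction, which already requires $D_i\subset\mathring A$, and your closing remark that no appeal to Lemma~\ref{lem: point insertion in annuli well defined} is needed is accurate and consistent with the order in which the paper uses these results.
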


\begin{lem}\label{lem: point insertion in annuli well defined}
The operators \eqref{eq: A[vvv] one} and \eqref{eq: A[vvv] two} are 
independent of the choices of discs $D_i\subset A$, intervals $I_{1,i},I_{2,i}\subset \partial D_i$, and algebra elements $x_{1,i}\in\cA(I_{1,i})$ and $x_{2,i}\in\cA(I_{2,i})$.
\end{lem}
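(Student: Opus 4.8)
The plan is to prove independence first for the unparametrised operator \eqref{eq: A[vvv] one}, then bootstrap to the vacuum-sector case of \eqref{eq: A[vvv] two}, and finally to an arbitrary representation $K$.

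For \eqref{eq: A[vvv] one}, I would test the operator $A[v_1\cdots v_n]:H_0(D_{in})\to H_0(D_{out})$ against the vectors $|w_1\cdots w_m\rangle_{D_{in}}$ arising from worms $w_k$ supported on $\partial D_{in}$. Applying \eqref{eq: compatibility of discs and annuli} to each of the $2^n$ terms in \eqref{eq: A[vvv] one} gives
\[
A[v_1\cdots v_n]\,|w_1\cdots w_m\rangle_{D_{in}} = \sum_{(\epsilon_1,\ldots,\epsilon_n)\in\{1,2\}^n}|x_{\epsilon_1,1}\cdots x_{\epsilon_n,n}\,w_1\cdots w_m\rangle_{D_{out}}.
\]
The right-hand side is precisely the defining sum \eqref{eqn: sum of worms} for the disc point-insertion $|v_1\cdots v_n\rangle_{D_{out}}$, with the boundary worms $w_k$ merely carried along, and the proof of Lemma~\ref{lem: points insert well defined} applies verbatim (the spectator worms $w_k$ never enter the argument) to show this is independent of the discs $D_i$, the intervals $I_{\epsilon,i}$, and the elements $x_{\epsilon,i}$. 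Since the single-worm vectors $|w\rangle_{D_{in}}=w\Omega_{D_{in}}$ span a dense subspace of $H_0(D_{in})$ by the Reeh--Schlieder theorem, and $A[v_1\cdots v_n]$ is a finite sum of bounded operators, the operator is determined by these values and hence independent of all the choices.

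For \eqref{eq: A[vvv] two} with $K=H_0$, I would apply Lemma~\ref{lem: parametrized vs unparametrized annuli -- with worms} to each term and sum over $(\epsilon_1,\ldots,\epsilon_n)$, obtaining $\underline{A}[v_1\cdots v_n]=z\cdot U(\varphi_{out})^* A[v_1\cdots v_n]\,U(\varphi_{in})$ for a scalar and unitaries that do not depend on the worm decomposition; independence on $H_0$ is then immediate from the previous paragraph. For a general sector $K$ I would localize one insertion point at a time. Fixing all data except that at $z_1$ and grouping the sum over $\epsilon_1$, it suffices to show that $\sum_{\epsilon_1}\underline{A}[x_{\epsilon_1,1}\,\underline{x}]$ (with the other worms $\underline{x}$ held fixed) is independent of the decomposition $|v_1\rangle_{D_1}=x_{1,1}\Omega+x_{2,1}\Omega$. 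I would factor $\underline{A}=\underline{P}\,\underline{Q}\,\underline{R}$ with $\underline{Q}$ a thin annulus localised in an interval $I_0$ whose thick part contains a neighbourhood of $z_1$ big enough to hold the discs and worms of both decompositions being compared, while $\underline{P},\underline{R}$ hold the remaining fixed worms; such a decomposition exists because $z_1\in\mathring{A}$ (the relevant geometry is that used with framings in Step 4 of Proposition~\ref{proposition independence}). By Proposition~\ref{proposition independence} the worm operators factor over this decomposition, so $\sum_{\epsilon_1}\underline{A}[x_{\epsilon_1,1}\,\underline{x}]=\underline{P}[\ldots]\,\underline{Q}[v_1]\,\underline{R}[\ldots]$, reducing the problem to the single localised factor $\underline{Q}[v_1]$. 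For each fixed decomposition, Lemma~\ref{lem: A[worms] in B(K)} (summed, i.e.\ Lemma~\ref{lem: eqn annulus with points insertions is local}) gives $\underline{Q}[v_1]_K=\rho_{I_0}\big(\underline{Q}[v_1]_{H_0}\big)$, and the vacuum-sector operator is choice-independent by the preceding paragraph; applying the fixed homomorphism $\rho_{I_0}$ then settles the case of general $K$.

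The main obstacle is this last, general-$K$ step: concretely, the geometric decomposition of $\underline{A}$ that encloses each insertion point in the thick part of a thin, interval-localised annulus disjoint from the image of $I_0'$, together with verifying that the worm operators genuinely factor over that decomposition. Everything else is a formal consequence of Lemma~\ref{lem: points insert well defined}, Lemma~\ref{lem: parametrized vs unparametrized annuli -- with worms}, and the density of $\{|w\rangle_{D_{in}}\}$; once the problem has been localised, Lemma~\ref{lem: eqn annulus with points insertions is local} transports it to the already-settled vacuum computation.
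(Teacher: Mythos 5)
Your argument is correct, and its skeleton matches the paper's: reduce the general-representation case to $K=H_0$ by localising the annulus, reduce the parametrised operator \eqref{eq: A[vvv] two} to the unparametrised one \eqref{eq: A[vvv] one} via Lemma~\ref{lem: parametrized vs unparametrized annuli -- with worms}, and then settle the unparametrised case by a Reeh--Schlieder argument resting on Lemma~\ref{lem: points insert well defined}. The differences are in the execution of the last step and in the bookkeeping. For the unparametrised case the paper first factors $A=BC$ with $B$ thin, notes that $A[v_1\cdots v_n]\Omega_{D_{in}}=|v_1\cdots v_n\rangle_{D_{out}}$ is choice-independent by Lemma~\ref{lem: points insert well defined}, and concludes via the \emph{separating} property of $\Omega_{D_{in}}$ among $\cA(I)$-equivariant operators (Lemma~\ref{lem: thin part A(I) module map}); you instead use the \emph{cyclic} property, testing the operator against the dense family $\cA(I)\Omega_{D_{in}}$ and re-running the computation of Lemma~\ref{lem: points insert well defined} with spectator worms carried along via \eqref{eq: compatibility of discs and annuli}. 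Your version avoids the reduction to thin annuli for this step, at the price of having to observe (correctly) that the spectator worms are inert in that computation; the paper's version reuses the equivariance lemma and keeps Lemma~\ref{lem: points insert well defined} untouched. For general $K$, the paper localises all insertion points at once inside a single localised factor $B$ and invokes Lemma~\ref{lem: eqn annulus with points insertions is local}, whereas you localise one insertion at a time with a decomposition $\underline{P}\,\underline{Q}\,\underline{R}$; both are legitimate, yours being somewhat more laborious but requiring a smaller thick region at each stage. No gap.
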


\begin{proof}
We treat both cases in parallel.
Let $A$ be an annulus which is either of the form $D_{out} \setminus \mathring D_{in}$, or an element of $\tAnn_c$.
By writing $A$ as a composite $A=BC$, where $B$ is thin/localised and contains all the insertion points, we are reduced to proving the statements for thin/localised annuli.

If $\underline A$ in \eqref{eq: A[vvv] two} is localised, then by Lemma~\ref{lem: eqn annulus with points insertions is local} the choice of representation is immaterial. So it's enough to prove the statement for $K=H_0$; we henceforth assume that to be the case.
By Lemma~\ref{lem: parametrized vs unparametrized annuli -- with worms},
the independence of \eqref{eq: A[vvv] two} from all the choices is then a consequence of the corresponding independence statement for \eqref{eq: A[vvv] one}.
We are thus reduced to proving that \eqref{eq: A[vvv] one} is independent of the choices of discs $D_i$, intervals $I_{\epsilon,i}$, and algebra elements $x_{\epsilon,i}$, when $A=D_{out} \setminus \mathring D_{in}$ is a thin annulus.

Let $I$ be an interval in the thin part of $\partial A$.
By Lemma~\ref{lem: points insert well defined}, the expression
\[
A[v_1\cdots v_n]\Omega_{D_{in}}=
|v_1\cdots v_n\rangle_{D_{out}}.
\]
is independent of all choices.
As a consequence of the Reeh-Schlieder theorem, the vacuum vector $\Omega_{D_{in}}$ is separating among operators $H_0(D_{in})\to H_0(D_{out})$ that commute with $\cA(I)$.
The operator $A[v_1\cdots v_n]$ commutes with the actions of $\cA(I)$ by Lemma~\ref{lem: thin part A(I) module map}. It follows that $A[v_1\cdots v_n]$ is also independent of all the choices.
\end{proof}

By construction, if $A=D_{out}\setminus \mathring D_{in}$ is an annulus, we have
\begin{equation}\label{eq: compatibility of discs and annuli -- point insertions}
A[v_1 v_2 \ldots v_n] | u_1 u_2 \ldots u_m\rangle_{D_{in}}
= | v_1 v_2 \ldots v_n u_1 u_2 \ldots u_m\rangle_{D_{out}}.
\end{equation}
For $\underline{A}_1,\underline{A}_2\in\tAnn_c$, and $K$ a representation of the conformal net, we also have
\begin{equation}\label{eq: representation of annuli with point insertions}
\underline{A}_1[v_1 v_2 \ldots v_n] \underline{A}_2[u_1 u_2 \ldots u_m]
= \underline{A}_1 \underline{A}_2 [v_1 v_2 \ldots v_n u_1 \ldots u_m].
\end{equation}
This gives an action on $K$ of the semigroup $\tAnn_c^V$ (defined in Definition~\ref{semigroup of annuli with point insertions}). And when $K=H_0$, that action descends to $\Ann_c^V$.

Note that the relationship between the operators associated to parametrised and to unparametrised annuli established in Lemmas~\ref{lem: Aunderline in terms of A} and~\ref{lem: parametrized vs unparametrized annuli -- with worms} extends verbatim to the case of point insertions:

\begin{lem}\label{lem: parametrized vs unparametrized annuli -- with point fields}
Let $A = D_{out} \setminus \mathring{D}_{in}$ be an annulus with boundary parametrizations $\varphi_{in/out}:S^1\to\partial D_{in/out}$, and let $\underline{A}\in\tAnn_c$ be a $z$-lift.
Pick unitaries $U(\varphi_{in/out}):H_0  \to  H_0(D_{in/out})$ implementing $\varphi_{in/out}$, and $z \in \bbC^\times$ satisfying 
$\underline{A} = z \cdot U(\varphi_{out})^*A U(\varphi_{in})$.
Then the same relation holds in the presence of point insertions:
\[
\underline{A}[v_1 v_2 \ldots v_n] = z \cdot U(\varphi_{out})^* A[v_1 v_2 \ldots v_n] U(\varphi_{in}).
\]
\end{lem}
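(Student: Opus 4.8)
The plan is to reduce directly to the worm-insertion case already handled in Lemma~\ref{lem: parametrized vs unparametrized annuli -- with worms}, exploiting the fact that, by definition, both sides of the desired identity are \emph{finite} linear combinations of worm insertions indexed by the same set $\{1,2\}^n$.

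First I would fix a single system of auxiliary data realizing the point insertions as worms: choose disjoint discs $D_i\subset\mathring A$ containing the points $z_i$, intervals $I_{1,i},I_{2,i}\subset\partial D_i$, and algebra elements $x_{\epsilon,i}\in\cA(I_{\epsilon,i})$ with $|v_i\rangle_{D_i}=x_{1,i}\Omega_{D_i}+x_{2,i}\Omega_{D_i}$, exactly as in the definitions \eqref{eq: A[vvv] one} and \eqref{eq: A[vvv] two}. Using the \emph{same} choices on both sides is legitimate because Lemma~\ref{lem: point insertion in annuli well defined} guarantees that both $A[v_1\cdots v_n]$ and $\underline A[v_1\cdots v_n]$ are independent of these choices.

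Next I would apply Lemma~\ref{lem: parametrized vs unparametrized annuli -- with worms} term by term. For each multi-index $(\epsilon_1,\ldots,\epsilon_n)\in\{1,2\}^n$ that lemma gives
\[
\underline{A}[x_{\epsilon_1,1}\cdots x_{\epsilon_n,n}] = z\cdot U(\varphi_{out})^* A[x_{\epsilon_1,1}\cdots x_{\epsilon_n,n}]\, U(\varphi_{in}),
\]
with the \emph{same} unitaries and scalar $z$ for every term, since the data $U(\varphi_{in/out})$ and $z$ depend only on $\underline A$ and not on the worm insertions. Summing over all $2^n$ multi-indices and pulling the linear operation $X\mapsto z\cdot U(\varphi_{out})^*X\,U(\varphi_{in})$ outside the finite sum then yields
\[
\underline{A}[v_1\cdots v_n] = z\cdot U(\varphi_{out})^* A[v_1\cdots v_n]\, U(\varphi_{in}),
\]
which is exactly the claim.

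Since this is a finite-sum linearity argument layered on top of an already-established worm identity, I do not expect any real obstacle; the only step deserving a word of justification is the compatibility of the choices of $D_i$, $I_{\epsilon,i}$, and $x_{\epsilon,i}$ across the two sides, which is precisely what Lemma~\ref{lem: point insertion in annuli well defined} supplies.
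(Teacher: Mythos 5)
Your proposal is correct and is exactly the argument the paper intends: the paper's proof simply says the result is ``immediate from Lemma~\ref{lem: parametrized vs unparametrized annuli -- with worms} and the definition of $\underline{A}[v_1 v_2 \ldots v_n]$,'' and your term-by-term application of the worm identity over the $2^n$ multi-indices, justified by the choice-independence from Lemma~\ref{lem: point insertion in annuli well defined}, is the correct unpacking of that one-line proof.
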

\begin{proof}
Immediate from Lemma~\ref{lem: parametrized vs unparametrized annuli -- with worms} and the definition of $\underline{A}[v_1 v_2 \ldots v_n]$.
\end{proof}

The following is immediate from Lemma~\ref{lem: thin part A(I) module map}:

\begin{lem}\label{lem: thin part A(I) module map -- points}
Let $A=D_{out} \setminus \mathring D_{in}$ be a thin annulus, and let $I$ be in the thin part of $A$.
Then 
\[
A[v_1 v_2 \ldots v_n]: H_0(D_{in}) \to H_0(D_{out})
\]
is an $\cA(I)$-module map.
\hfill $\square$
\end{lem}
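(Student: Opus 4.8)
The plan is to reduce the statement directly to its worm-insertion counterpart, Lemma~\ref{lem: thin part A(I) module map}, by unwinding the defining formula \eqref{eq: A[vvv] one} that expresses the point-insertion operator as a finite sum of worm-insertion operators.

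First I would recall that, by the very definition \eqref{eq: A[vvv] one}, the operator $A[v_1 \cdots v_n]$ is the finite linear combination
\[
A[v_1 \cdots v_n] = \sum_{(\epsilon_1,\ldots,\epsilon_n)\in\{1,2\}^n} A[x_{\epsilon_1,1} \cdots x_{\epsilon_n,n}],
\]
where the worms $x_{\epsilon,i}\in\cA(I_{\epsilon,i})$ are supported on the boundaries $\partial D_i$ of disjoint discs $D_i\subset\mathring A$ chosen around the insertion points $z_i$, realizing the decompositions $|v_i\rangle_{D_i}=x_{1,i}\Omega_{D_i}+x_{2,i}\Omega_{D_i}$ via Corollary~\ref{cor: finite energy vectors via worms in discs}.

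The key step is to note that we are free to choose these auxiliary discs $D_i$ with their closures contained in $\mathring A$, since Lemma~\ref{lem: point insertion in annuli well defined} guarantees that the resulting operator does not depend on this choice. Because $I$ lies in the thin part $\partial_{in}A\cap\partial_{out}A\subset\partial A$, while each $\overline{D_i}\subset\mathring A$ is disjoint from $\partial A$, the support intervals $I_{\epsilon,i}\subset\partial D_i$ of all the worms are automatically disjoint from $I$. This is exactly the hypothesis needed to invoke Lemma~\ref{lem: thin part A(I) module map}, which then shows that each summand $A[x_{\epsilon_1,1}\cdots x_{\epsilon_n,n}]$ is equivariant for the actions of $\cA(I)$ on $H_0(D_{in})$ and on $H_0(D_{out})$.

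Finally, since a finite linear combination of $\cA(I)$-module maps is again an $\cA(I)$-module map, the operator $A[v_1\cdots v_n]$ is $\cA(I)$-equivariant, which is the assertion. There is no genuine obstacle here; the only point requiring a moment's care is verifying that the worms can be taken disjoint from $I$, and this is immediate once the discs $D_i$ are placed strictly inside $\mathring A$.
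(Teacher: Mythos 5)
Your proof is correct and is exactly the argument the paper intends: the paper states this lemma as ``immediate from Lemma~\ref{lem: thin part A(I) module map}'', and your write-up simply makes explicit the reduction via the defining sum \eqref{eq: A[vvv] one} and the observation that the worms, being supported on $\partial D_i\subset\mathring A$, are automatically disjoint from $I\subset\partial A$.
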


We finish this section with an analog of Lemma~\ref{lem: univalent acting on worm insertions}:

\begin{lem}\label{lem: univalent acting on point insertions}
Let $f:\bbD \to \bbD$ be a univalent map, and let $\underline{A}_f \in \Univ \subset \Ann_c$ be the corresponding lifted annulus.
Let $z_1, \ldots, z_n \in \mathring{\bbD}$ and $w_1, \ldots, w_m \in \mathring{A}_f$ be points satisfying $z_i\neq z_j$ and $w_i\neq w_j$ when $i\neq j$, and let $v_1, \ldots, v_n, u_1, \ldots, u_m \in V$ be vectors.
Then
\begin{align*}
\underline{A}_f[u_1(w_1) &\ldots u_m(w_m)]\big(|v_1(z_1;f_1) \ldots v_n(z_n;f_n)\rangle_\bbD\big) \\
&= |u_1(w_1) \ldots u_m(w_m) v_1(f(z_1);f \circ f_1) \ldots v_n(f(z_n);f \circ f_n) \rangle_\bbD
\end{align*}
(where we use the standard local coordinates around the points $w_i$).
\end{lem}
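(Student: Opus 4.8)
The plan is to mirror the proof of Lemma~\ref{lem: univalent acting on worm insertions} line for line, replacing worm insertions by point insertions and invoking the point-insertion analogs of the ingredients used there. The one genuinely new input will be the functoriality of point insertions under the unitary $U_f$, which I establish by reducing to the worm functoriality already available.

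First I recall from the proof of Lemma~\ref{lem: univalent acting on worm insertions} (equation \eqref{A_f = Y_{A_f}U_f}) that $\underline{A}_f = A_f U_f$ holds on the nose, where $U_f:H_0(\bbD)\to H_0(f(\bbD))$ is the unitary induced by $f$ and $A_f = Y_{A_f}:H_0(f(\bbD))\to H_0(\bbD)$. Lemma~\ref{lem: parametrized vs unparametrized annuli -- with point fields} (the point-insertion version of Lemma~\ref{lem: parametrized vs unparametrized annuli -- with worms}) shows this factorization survives the presence of point insertions:
\[
\underline{A}_f[u_1(w_1)\ldots u_m(w_m)] = A_f[u_1(w_1)\ldots u_m(w_m)]\,U_f.
\]

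Next I would prove the point-insertion analog of the identity $U_f| x_1 \ldots x_n\rangle_\bbD = |\cA(f)(x_1)\ldots \cA(f)(x_n)\rangle_{f(\bbD)}$, namely
\[
U_f|v_1(z_1;f_1)\ldots v_n(z_n;f_n)\rangle_\bbD = |v_1(f(z_1);f\circ f_1)\ldots v_n(f(z_n);f\circ f_n)\rangle_{f(\bbD)}.
\]
Writing the left-hand side as the sum of $2^n$ worm vectors via \eqref{eqn: sum of worms}, with $|v_i(z_i;f_i)\rangle_{D_i} = x_{1,i}\Omega_{D_i}+x_{2,i}\Omega_{D_i}$ on small discs $D_i\ni z_i$ (Corollary~\ref{cor: finite energy vectors via worms in discs}), and applying the worm functoriality \eqref{eq: f_* no 1} of Corollary~\ref{cor: anti-holomorphic map applied to worm insertion} to each term, I obtain $\sum_{\epsilon}|\cA(f)(x_{\epsilon_1,1})\ldots \cA(f)(x_{\epsilon_n,n})\rangle_{f(\bbD)}$. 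It then remains to recognize, for each $i$, that $\cA(f)(x_{1,i})\Omega_{f(D_i)}+\cA(f)(x_{2,i})\Omega_{f(D_i)} = U_f|v_i(z_i;f_i)\rangle_{D_i}$ is a two-worm representation of $v_i(f(z_i);f\circ f_i)$ on $f(D_i)$; this is the single-point functoriality $f_*(v_i(z_i;f_i)) = v_i(f(z_i);f\circ f_i)$ of the construction \eqref{eq: def V(z) number 2}, tracking that the local coordinate transforms by left-composition with $f$. Reassembling the sum as a multi-point insertion --- which is legitimate by the independence of \eqref{eqn: sum of worms} from the chosen worm representatives (Lemma~\ref{lem: points insert well defined}) --- yields the displayed identity.

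Finally, combining the two displays and applying the compatibility \eqref{eq: compatibility of discs and annuli -- point insertions} between annular and disc point insertions (with $D_{in}=f(\bbD)$ and $D_{out}=\bbD$) gives
\[
\underline{A}_f[u_1(w_1)\ldots u_m(w_m)]\,|v_1(z_1;f_1)\ldots v_n(z_n;f_n)\rangle_\bbD = |u_1(w_1)\ldots u_m(w_m)\,v_1(f(z_1);f\circ f_1)\ldots v_n(f(z_n);f\circ f_n)\rangle_\bbD,
\]
which is the assertion of the lemma. The main obstacle is the middle step: making the coordinate bookkeeping precise, i.e. verifying that $U_f$ transports the point insertion built from $f_i$ to the one built from $f\circ f_i$ rather than some other coordinate. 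All of the geometric content there is exactly the functoriality of $V(z)$ under local isomorphisms, so the work is purely in tracking the composition order of the coordinate changes.
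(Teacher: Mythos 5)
Your proposal is correct and follows essentially the same route as the paper: both proofs expand each point insertion into a sum of two worms via Corollary~\ref{cor: finite energy vectors via worms in discs}, observe that the unitary induced by $f$ carries the two-worm representative of $v_i(z_i;f_i)$ on $D_i$ to a two-worm representative of $v_i(f(z_i);f\circ f_i)$ on $f(D_i)$, and then reduce to the worm-level statement of Lemma~\ref{lem: univalent acting on worm insertions}. The only difference is organizational — you factor through $U_f$ globally on $H_0(\bbD)$ and cite Lemma~\ref{lem: parametrized vs unparametrized annuli -- with point fields}, whereas the paper applies the worm lemma termwise to the $2^{n+m}$ summands — but the mathematical content is identical.
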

\begin{proof}
    Pick discs $D_i \subset \bbD$ around $z_i$, intervals $I_{\epsilon,i} \subset \partial D_i$, and elements $x_{\epsilon,i} \in \cA(I_{\epsilon,i})$ satisfying
    \[
    |v_i(z;f_i)\rangle_{D_i} = x_{1,i} \Omega_{D_i} + x_{2,i} \Omega_{D_i}.
    \]
    Similarly, pick discs $D_i' \subset A_f$, intervals $J_{\epsilon,i} \subset \partial D_i'$, and elements $y_{\epsilon,i} \in \cA(J_{\epsilon,i})$ such that
    \[
    |u_i(w_i)\rangle_{D_i'} = y_{1,i} \Omega_{D_i'} + y_{2,i}\Omega_{D_i'}.
    \]
    Writing $U_i:H_0(D_i) \to H_0(f(D_i))$ for the unitary induced by $f:D_i\to f(D_i)$, we then have
    \[
    \hspace{-2mm}
    |v_i(f(z);f \circ f_i) \rangle_{f(D_i)} 
    = U_i|v_i(z;f_i)\rangle_{D_i} 
    = U_i(x_{1,i} \Omega_{D_i} + x_{2,i} \Omega_{D_i}) 
    = \cA(f)(x_{1,i})\Omega_{f(D_i)} + \cA(f)(x_{2,i})\Omega_{f(D_i)}.
    \]
    Hence, by Lemma~\ref{lem: univalent acting on worm insertions}, we have:
    \begin{align*}
    \underline{A}_f[&u_1(w_1) \ldots u_m(w_m)] \big(|v_1(z_1;f_1) \ldots v_n(z_n;f_n)\rangle_\bbD\big)\\ 
    &= \Big(\sum_{\substack{(\epsilon_1,\ldots,\epsilon_n)\in \{1,2\}^n\\(\epsilon'_1,\ldots,\epsilon_m')\in \{1,2\}^m}} \underline{A}_f[y_{\epsilon_1',1} \ldots y_{\epsilon_m',m}] | x_{\epsilon_1,1}  \cdots x_{\epsilon_n,n}\rangle_\bbD\Big)\\
    &= \sum_{\substack{(\epsilon_1,\ldots,\epsilon_n)\in \{1,2\}^n\\(\epsilon'_1,\ldots,\epsilon_m')\in \{1,2\}^m}} |y_{\epsilon_1',1} \ldots y_{\epsilon_m',m} \, \cA(f)(x_{\epsilon_1,1})  \cdots \cA(f)(x_{\epsilon_n,n})\rangle_\bbD\\
    &= |u_1(w_1) \ldots u_m(w_m) \, v_1(f(z_1);f \circ f_1) \ldots v_n(f(z_n);f \circ f_n) \rangle_\bbD. \qedhere
    \end{align*}
\end{proof}

\subsection{Holomorphicity of point insertions}

Given a Riemann surface $\Sigma$, the space
\begin{equation}\label{eq: def of V_Sigma}
V_\Sigma := \coprod_{z\in\Sigma} V(z) \to \Sigma
\end{equation}
is a holomorphic bundle (more precisely, an inductive limit of finite dimensional holomorphic bundles).
Indeed, the bundle 
$J \Sigma:= \coprod_{z\in\Sigma} \mathrm{Isom}(\cO_z(\Sigma),\bbC[[t]])\to\Sigma$ is a principal bundle for the group $\Aut(\bbC[[t]])$, and $V_\Sigma = V \times_{\Aut(\bbC[[t]])} J \Sigma$ is an associated bundle
 (see \cite[Ch.~6]{FrenkelBenZvi04}).

Given an embedding $\Sigma_1 \hookrightarrow \Sigma_2$, we have a natural isomorphism between $V_{\Sigma_1}$ and the restriction  of $V_{\Sigma_2}$ to $\Sigma_1$.
For example, if $\Sigma$ is embedded in $\bbC$, then $V_{\Sigma}$ is the trivial bundle $\Sigma\times V$,
and holomorphic section of $V_\Sigma$ are the same thing as holomorphic functions $\Sigma \to V$.
When that is case, then for any vector $v \in V$ we can associate the corresponding constant function on $\Sigma$; we denote the induced section of $V_\Sigma$ by $z \mapsto v(z)$.
Note that this $v(z)$ agrees with what we had previously denoted $v(z; w \mapsto w + z)$.  

\begin{lem}\label{lem: point insertions are holomorphic}
Let $M$ be a finite dimensional complex manifold.
\begin{enumerate}[i)]
\item  Let $D$ be a disc, $z_1,\ldots,z_n:M\to \mathring D$ be holomorphic functions,
and $v_1,\ldots,v_n:M\to V_D$ be holomorphic lifts of those functions to $V_D$ (defined in \eqref{eq: def of V_Sigma}).
Then
\[
m\mapsto | v_1(m)\ldots v_n(m)\rangle_D
\]
is a holomorphic function from $M$ to $H_0(D)$.

\item Let $A = D_{out} \setminus \mathring D_{in}$ be an annulus, $z_1,\ldots,z_n:M\to \mathring A$ be holomorphic functions,
and $v_1,\ldots,v_n:M\to V_A$ be holomorphic lifts. 
Then
\begin{equation}\label{eq: non parametrised holomorphic family of insertions}
m\mapsto A[ v_1(m)\ldots v_n(m) ]
\end{equation}
is a holomorphic function from $M$ to the space of bounded linear maps $H_0(D_{in})\to H_0(D_{out})$.

\item If $\underline{A} \in \tAnn_c$ is a $z$-lift of $A$, $z_i:M\to \mathring A$ and $v_i:M\to V_A$ are as above, and $K$ is a representation of the conformal net, then
\begin{equation}\label{eq: parametrised holomorphic family of insertions}
m\mapsto \underline{A}[ v_1(m)\ldots v_n(m) ]
\end{equation}
is a holomorphic function from $M$ to the space of bounded linear maps $K \to K$.
\end{enumerate}
\end{lem}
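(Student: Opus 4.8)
The plan is to exploit the \textbf{defining feature} of point insertions---that each $|v(z)\rangle$ is, by Corollary~\ref{cor: finite energy vectors via worms in discs} together with the definitions \eqref{eq: A[vvv] one} and \eqref{eq: A[vvv] two}, a finite sum of worm insertions---so that all three holomorphicity statements reduce to the worm-insertion holomorphicity already established in Lemma~\ref{lem: holomorphicity of worm insertions in annuli} and Corollary~\ref{cor: holomorphicity of worm insertions}. The reductions among i), ii), iii) mirror those in the worm case. For iii), I would write $\underline{A} = \underline{B}\,\underline{C}$ with $\underline{B}$ localised in an interval $I$ and containing all the (locally fixed) insertion points in its interior, so that by \eqref{eq: representation of annuli with point insertions} one has $\underline{A}[v_1(m)\ldots v_n(m)] = \underline{B}[v_1(m)\ldots v_n(m)]\,\pi(\underline{C})$, where $\pi(\underline{C})\in\cB(K)$ is a fixed operator and, by Lemma~\ref{lem: eqn annulus with points insertions is local}, $\underline{B}[v_1(m)\ldots v_n(m)]$ is the image under the norm-continuous homomorphism $\rho_I$ of the corresponding operator in $\cA(I)\subset\cB(H_0)$. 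Thus iii) follows from the $H_0$-version of ii) for localised annuli, while Lemma~\ref{lem: parametrized vs unparametrized annuli -- with point fields} reduces the parametrised to the unparametrised case on $H_0$.

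For part ii) I would argue by \emph{weak holomorphicity}: an operator family $m\mapsto T(m)\in\cB(H_0(D_{in}),H_0(D_{out}))$ that is locally bounded and has all matrix coefficients holomorphic is automatically norm-holomorphic, and local boundedness here comes from strong continuity (the point-insertion analogue of Lemma~\ref{lem: continuity of worm insertions}) via the uniform boundedness principle. Since the vacuum is cyclic by Reeh--Schlieder, it suffices to check holomorphicity of $m\mapsto\langle\xi,\,A[v_1(m)\ldots v_n(m)]\,|y_1\ldots y_k\rangle_{D_{in}}\rangle$ for $\xi$ arbitrary and $y_1,\ldots,y_k$ fixed worms; by the compatibility \eqref{eq: compatibility of discs and annuli -- point insertions} this inner product equals $\langle\xi,\,|v_1(m)\ldots v_n(m)\,y_1\ldots y_k\rangle_{D_{out}}\rangle$, which is holomorphic once the vector case i) is known (allowing additionally finitely many fixed worm insertions). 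Hence everything rests on part i).

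For part i) I would isolate one insertion point at a time: placing a small disc $E_1\ni z_1(m)$ inside $D$ and setting $A:=D\setminus\mathring E_1$, the compatibility \eqref{eq: compatibility of discs and annuli -- point insertions} gives $|v_1(m)\ldots v_n(m)\rangle_D = A[v_2(m)\ldots v_n(m)]\,|v_1(m)\rangle_{E_1}$, expressing the $n$-point vector as the $(n-1)$-point operator of ii) (holomorphic by the previous paragraph) applied to a single-point vector; induction on the number of point insertions then reduces everything to the \textbf{single-point base case}. For a single point in a disc, which we may take to be $\bbD$, I would transport the moving point to a fixed location: choose a holomorphic family $g_m\in\Mob$ with $g_m(z(m))=z_0$, so that by the functoriality of point insertions under Möbius maps recorded in the $m{=}0$ case of Lemma~\ref{lem: univalent acting on point insertions} one has $|v(m)(z(m))\rangle_\bbD = U(g_m)^{-1}\,|w(m)(z_0)\rangle_\bbD$ with $w(m)\in V(z_0)$ depending holomorphically on $m$. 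The Möbius unitaries $U(g_m)^{-1}$ form a holomorphic family in $m$ by holomorphicity of the annulus representation, and since a holomorphic section of the inductive-limit bundle $V_\bbD$ locally factors through a finite-dimensional subspace $W\subset V(z_0)$ (from the definition \eqref{eq: def of V_Sigma}), Corollary~\ref{cor: finite energy vectors via worms in discs} applied to a basis of $W$ yields a \emph{linear} worm representation $w\mapsto(x_1(w),x_2(w))$ with fixed support intervals; then $|w(m)(z_0)\rangle_\bbD = x_1(w(m))\Omega + x_2(w(m))\Omega$ is manifestly holomorphic, and composing with $U(g_m)^{-1}$ finishes the base case (the auxiliary fixed worms needed for ii) are carried along, their supports moving by $g_m$ holomorphically as in Corollary~\ref{cor: holomorphicity of worm insertions}).

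The crux of the whole argument is exactly this base case. The difficulty is that the worm representation of a point insertion produces algebra elements that depend on the inserted vector \emph{and} whose supports must follow the moving point, so that neither the bare existence statement of Corollary~\ref{cor: finite energy vectors via worms in discs} nor the worm-holomorphicity of Lemma~\ref{lem: holomorphicity of worm insertions in annuli} (which keeps the algebra element fixed) suffices on its own. The device resolving this is to \emph{separate the two dependencies}: pin the insertion point with a holomorphic family of Möbius transformations, which renders the residual dependence on the vector \emph{linear} over a fixed finite-dimensional fiber---after first verifying that the holomorphic lift $m\mapsto v(m)$ does locally land in such a fiber.
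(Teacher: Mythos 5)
Your global architecture is close to the paper's: both arguments funnel everything into the holomorphicity of a \emph{single} point insertion of a vector drawn from a finite-dimensional subspace of $V$, inserted at a holomorphically moving point, and both ultimately rest on Lemma~\ref{lem: holomorphicity of worm insertions in annuli}. (The paper runs the reductions in the opposite direction --- it proves the annulus cases directly and deduces i) by applying the annulus operator to $\Omega_{D_{in}}$, whereas you deduce ii) from an enhanced version of i) via weak holomorphicity plus local boundedness; that route also leans on a strong-continuity statement for point insertions which is not in the paper and would itself require the same analysis as the holomorphicity claim, so it is not a free input. These issues are repairable.)

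The genuine gap is in your base case. There is no non-constant holomorphic family $m\mapsto g_m\in\Mob=\Aut(\bbD)$ with $g_m(z(m))=z_0$: the group $\Aut(\bbD)$ is a real Lie group sitting as a totally real submanifold of $PSL(2,\bbC)$, so any holomorphic map from a complex manifold into it is locally constant; concretely, the transformation sending $z(m)$ to $0$ is $w\mapsto (w-z(m))/(1-\overline{z(m)}\,w)$, which depends anti-holomorphically on $m$ through $\overline{z(m)}$. For the same reason the unitaries $U(g)$ for $g\in\Mob$ do \emph{not} form a holomorphic family: the holomorphic representation is of the semigroup $\Ann_c$, and $\Mob\subset\Diff(S^1)$ lies in its ``unitary boundary,'' on which the representation is only strongly continuous. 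So neither factor in your identity $|v(z(m))\rangle_\bbD=U(g_m)^{-1}|w(m)(z_0)\rangle_\bbD$ is holomorphic in $m$, and the vector $w(m)=(g_m)_*v$ is not holomorphic either. The paper avoids this by never acting with an automorphism of the ambient disc: it fixes a worm decomposition $|v(z_0)\rangle_{B(z_0,\varepsilon)}=x_1\Omega+x_2\Omega$ on a small ball and then \emph{translates} the ball, the support intervals, and the worms by $z-z_0$; since the local coordinate at $z$ is the standard translate of the one at $z_0$, this yields $|v(z)\rangle_{B(z,\varepsilon)}=x_1(z)\Omega+x_2(z)\Omega$ with $z\mapsto x_i(z)$ satisfying exactly the pointwise-holomorphicity hypothesis of Lemma~\ref{lem: holomorphicity of worm insertions in annuli}. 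If you want to keep your ``transport the point to a fixed location'' picture, the correct substitute for $g_m$ is a holomorphic family of \emph{univalent} maps such as $f_m(w)=z(m)+r(w-z_0)$, applied forward as $\underline{A}_{f_m}$ via Lemma~\ref{lem: univalent acting on point insertions} (so that the fixed vector $|(r^{-L_0}v)(z_0)\rangle$ is hit by a holomorphically varying bounded operator); univalent maps live in the holomorphic part of the semigroup, M\"obius transformations do not, and no inverse of a non-invertible annulus operator should appear.
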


\begin{proof}
It suffices to consider the case of annuli, as given a disc $D = D_{out}$ we may, locally in $M$, choose a disc $D_{in} \subset D_{out}$ which is disjoint from the images of the $z_i$ and write 
\[
| v_1(m)\ldots v_n(m)\rangle_D = A[ v_1(m)\ldots v_n(m) ] \Omega_{D_{in}}.
\]
It also suffices to consider the case of a single insertion, as we may decompose (again locally in $M$)
\[
\underline{A}[ v_1(m)\ldots v_n(m) ] = \underline{A}_1[v_1(m)] \ldots \underline{A}_n[v_n(m)],
\]
and similarly for the non-parametrised version.

We treat the two cases \eqref{eq: non parametrised holomorphic family of insertions} and \eqref{eq: parametrised holomorphic family of insertions} in parallel.
Let $A$ be an annulus.
Let $z(m)$ be the location of the insertion, and let $v(m)$ be the vector that is being inserted. Let us also assume without loss of generality that $A$ is embedded in $\bbC$, so that the section $v(m)$ is given by a holomorphic function $M\to V$.
The image of such a function necessarily lies in a finite dimensional subspace of $V$. 
After picking a basis $\{v_i\}$ of that subspace, we may write $v(m)=\sum_i f_i(m) v_i$, where $f_i$ are scalar-valued holomorphic functions on $M$.
We are therefore reduced to the case where the vector $v(m)$ is independent of $m\in M$.
Call that vector $v$.

At this stage, may assume without loss of generality that $M=\mathring A$. Our task is to show that $A[v(z)]$
depends holomorphically on the point $z\in \mathring A$ (where $v(z) \in V(z)$ is the image of $v$ under the isomorphism $V \cong V(z)$ induced by the standard local coordinate).
We do this locally around some point $z_0\in \mathring A$.

Let $\varepsilon>0$ be such that the closed ball $B(z_0,2\varepsilon)$ is contained in $\mathring A$, and let us
pick intervals $I_1,I_2 \subset \partial B(z_0,\varepsilon)$ and elements $x_i \in \cA(I_i)$ such that
\[
|v(z_0)\rangle_{B(z_0,\varepsilon)} = x_1\Omega_{B(z_0,\varepsilon)} + x_2 \Omega_{B(z_0,\varepsilon)}.
\]
For every $z\in B(z,\varepsilon)$, let $I_i(z) \subset \partial B(z,\varepsilon)$ be the image of $I_i$ under translation by $z-z_0$. 
Similarly, let $x_i(z) \in A(I_i(z))$ be the image of $x_i$ under the same translation.
We have $|v(z)\rangle_{B(z,\varepsilon)} = x_1(z)\Omega_{B(z,\varepsilon)} + x_2(z) \Omega_{B(z,\varepsilon)}$ so that, by definition,
\[
A[v(z)] = A[x_1(z)] + A[x_2(z)].
\]
The operators $A[x_1(z)]$ and $A[x_2(z)]$ depend holomorphically on $z$ by Lemma~\ref{lem: holomorphicity of worm insertions in annuli}, so $A[v(z)]$ also depends holomorphically on $z$.
\end{proof}

\begin{remark}
There is a slightly stronger version of the above lemma (and also of Lemma \ref{lem: holomorphicity of worm insertions in annuli}) where we also allows the annulus to vary holomorphically as a function of $m\in M$.
This stronger statement follows readily from the lemma by factoring the annuli, locally in $M$, as a product of an annulus with fixed geometry and varying point insertions, and two annuli with no point insertions and a geometry that depends on $m\in M$. 
\end{remark}

The following statement is essentially equivalent to Lemma~\ref{lem: point insertions are holomorphic}:

\begin{cor}\label{cor: embedded point insertions are holomorphic}
If $D$ is a disc embedded in $\bbC$, then $| v_1(z_1)\ldots v_n(z_n)\rangle_D \in H_0(D)$ depends holomorphically on the points $z_i\in D$, and the vectors $v_i\in V$.

Similarly, if $A=D_{out} \setminus \mathring D_{in}$ is an annulus embedded in $\bbC$, and $\underline{A} \in \tAnn_c$ is a $z$-lift of $A$, then the operators
$\underline{A}[v_1(z_1)\ldots v_n(z_n)] : K \to K$ and $A[v_1(z_1)\ldots v_n(z_n)] : H_0(D_{in}) \to H_0(D_{out})$ depend holomorphically on the points $z_i\in A$, and the vectors $v_i\in V$.
\end{cor}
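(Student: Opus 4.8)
The plan is to deduce this corollary directly from Lemma~\ref{lem: point insertions are holomorphic}, by choosing the parameter manifold $M$ appropriately. The only genuine point requiring care is the meaning of ``holomorphic dependence on the vectors $v_i\in V$'', since $V$ is infinite dimensional: as $V=\bigoplus_n V(n)$ is the (algebraic) union of its finite dimensional subspaces, I interpret a family indexed by $V$ as holomorphic precisely when its restriction to every finite dimensional subspace $W\subset V$ is holomorphic in the usual sense. With this understanding, the statement becomes a special case of the lemma, which is exactly the content of the remark preceding the corollary that the two are ``essentially equivalent''.

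First I would treat the disc case. When $D$ is embedded in $\bbC$, the bundle $V_D$ is the trivial bundle $D\times V$ (as recorded in the discussion of \eqref{eq: def of V_Sigma}), so a holomorphic lift to $V_D$ of a holomorphic map $M\to\mathring D$ is the same datum as a holomorphic map $M\to V$, i.e.\ a holomorphically varying vector. Now fix arbitrary finite dimensional subspaces $W_1,\ldots,W_n\subset V$ and set
\[
M:=\Conf_n(\mathring D)\times W_1\times\cdots\times W_n,
\]
a finite dimensional complex manifold, where $\Conf_n(\mathring D)$ is the space of configurations of $n$ distinct points in $\mathring D$. The coordinate projections furnish holomorphic functions $z_i:M\to\mathring D$ together with holomorphic maps $M\to W_i\subset V$, hence holomorphic lifts $v_i:M\to V_D$. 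Part i) of Lemma~\ref{lem: point insertions are holomorphic} then says that $m\mapsto |v_1(m)\ldots v_n(m)\rangle_D$ is holomorphic from $M$ to $H_0(D)$, which is exactly the asserted joint holomorphicity in the $z_i$ and the $v_i$ once the latter are constrained to $\prod_i W_i$. Since the $W_i$ were arbitrary finite dimensional subspaces, this establishes the disc statement in full.

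The annulus case is identical. With $A=D_{out}\setminus\mathring D_{in}$ embedded in $\bbC$ and a fixed $z$-lift $\underline A\in\tAnn_c$, I would take $M:=\Conf_n(\mathring A)\times W_1\times\cdots\times W_n$, form $z_i$ and $v_i$ by the same projections, and invoke parts ii) and iii) of Lemma~\ref{lem: point insertions are holomorphic} to obtain holomorphicity of $m\mapsto A[v_1(m)\ldots v_n(m)]$ as a map into $\cB(H_0(D_{in}),H_0(D_{out}))$ and of $m\mapsto\underline A[v_1(m)\ldots v_n(m)]$ as a map into $\cB(K)$; here $\underline A$ stays fixed throughout, playing the role of the fixed annulus in the lemma. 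Letting the $W_i$ range over all finite dimensional subspaces of $V$ again yields the claim.

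There is essentially no obstacle beyond this bookkeeping: the substantive analytic input---holomorphicity of the underlying worm insertions and of the time-ordered exponentials representing annuli---has already been absorbed into Lemma~\ref{lem: holomorphicity of worm insertions in annuli} and hence into Lemma~\ref{lem: point insertions are holomorphic}. The mild subtlety is purely the reduction from the infinite dimensional $V$ to its finite dimensional subspaces, which is what allows the finite dimensionality hypothesis on $M$ in the lemma to be met; one may note additionally that $|v_1(z_1)\ldots v_n(z_n)\rangle_D$ is multilinear in the $v_i$ (by \eqref{eq: compatibility of discs and annuli -- point insertions}, peeling off one insertion at a time), so the dependence on each $v_i$ is in fact even polynomial, which makes the restriction to finite dimensional subspaces harmless.
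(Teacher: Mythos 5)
Your proposal is correct and matches the paper's intent: the paper offers no separate proof, stating only that the corollary is ``essentially equivalent'' to Lemma~\ref{lem: point insertions are holomorphic}, and your choice of $M=\Conf_n(\mathring D)\times W_1\times\cdots\times W_n$ with coordinate projections is exactly the specialization that makes this equivalence precise. The observation that multilinearity in the $v_i$ makes the reduction to finite dimensional subspaces harmless is the right way to handle the only subtlety.
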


\subsection{Covariance for orientation reversing maps}
\label{sec: Covariance for orientation reversing maps}

Recall that $\Theta:H_0 \to H_0$ is the antiunitary operator corresponding to the map $z \mapsto \overline{z}:\bbD\to \bbD$.
Since $\Theta$ commutes with $L_0$ (as it commutes with the real $1$-parameter semigroup generated by $L_0$), we have $\Theta(V) = V$.

We know from Lemma~\ref{lem: finite energy vectors via worms} that given a finite energy vector $v\in V\subset H_0$, we can find intervals $I_1,I_2 \subset S^1$ and algebra elements $x_i \in \cA(I_i)$ such that 
\[
v = x_1\Omega + x_2\Omega = | x_1 \rangle_\bbD + | x_2 \rangle_\bbD.
\]
The vector $\Theta(v)\in V$ can then be expressed as 
\begin{equation}\label{eq: Th(v) = <th(a)> + <th(b)>}
\Theta(v)= | \theta(x_1) \rangle_\bbD + | \theta(x_2) \rangle_\bbD
\end{equation}
because $\Theta(v)
=\Theta(x_1\Omega+x_2\Omega)
=\Theta(x_1\Theta\Omega+x_2\Theta\Omega)
=\theta(x_1)\Omega+\theta(x_2)\Omega$.

The following is a version of Lemma~\ref{lem: Theta applied to worm insertions} for point-insertions:

\begin{lem}\label{lem: Theta applied to point insertions}
Let $v_1, \ldots, v_n \in V$ and let $z_1, \ldots, z_n \in \mathring{\bbD}$ be distinct points.
Then
\[
\Theta \big(| v_1(z_1) \ldots v_n(z_n) \rangle_{\bbD}\big) =
| (\Theta v_1)(\overline{z_1}) \ldots (\Theta v_n)(\overline{z_n}) \rangle_{\bbD} .
\]
\end{lem}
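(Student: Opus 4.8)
The plan is to reduce the statement to the already-established worm version, Lemma~\ref{lem: Theta applied to worm insertions}. First I would pick disjoint discs $D_i\subset\mathring\bbD$ with $z_i\in\mathring D_i$, and use Corollary~\ref{cor: finite energy vectors via worms in discs} to write each point insertion as a sum of two worms supported on $\partial D_i$:
\[
|v_i(z_i)\rangle_{D_i} = x_{1,i}\Omega_{D_i} + x_{2,i}\Omega_{D_i},\qquad x_{\epsilon,i}\in\cA(I_{\epsilon,i}),\ I_{\epsilon,i}\subset\partial D_i.
\]
By the definition \eqref{eqn: sum of worms} of point insertions we have $|v_1(z_1)\cdots v_n(z_n)\rangle_{\bbD} = \sum_{\epsilon\in\{1,2\}^n}|x_{\epsilon_1,1}\cdots x_{\epsilon_n,n}\rangle_{\bbD}$, so applying $\Theta$ and invoking Lemma~\ref{lem: Theta applied to worm insertions} term by term gives
\[
\Theta|v_1(z_1)\cdots v_n(z_n)\rangle_{\bbD} = \sum_{\epsilon\in\{1,2\}^n}|\theta(x_{\epsilon_1,1})\cdots\theta(x_{\epsilon_n,n})\rangle_{\bbD}.
\]

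It then remains to recognise the right-hand side as the point insertion $|(\Theta v_1)(\overline{z_1})\cdots(\Theta v_n)(\overline{z_n})\rangle_{\bbD}$. For this I would show that $\{\theta(x_{1,i}),\theta(x_{2,i})\}$ is a valid worm decomposition of $(\Theta v_i)(\overline{z_i})$ relative to the complex-conjugate disc $\vartheta(D_i)\ni\overline{z_i}$, i.e.
\[
|(\Theta v_i)(\overline{z_i})\rangle_{\vartheta(D_i)} = \theta(x_{1,i})\Omega_{\vartheta(D_i)} + \theta(x_{2,i})\Omega_{\vartheta(D_i)}.
\]
Granting this, the definition \eqref{eqn: sum of worms} applied at the points $\overline{z_i}$ with discs $\vartheta(D_i)$ expresses $|(\Theta v_1)(\overline{z_1})\cdots(\Theta v_n)(\overline{z_n})\rangle_{\bbD}$ as exactly the same sum, and the proof is complete. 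To verify the displayed identity I would use that $\Theta=\vartheta_*$ is the anti-holomorphic pushforward by $\vartheta:\bbD\to\bbD$, $z\mapsto\overline z$; by Corollary~\ref{cor: anti-holomorphic map applied to worm insertion} applied to the anti-holomorphic map $\vartheta:D_i\to\vartheta(D_i)$ and the relation $\theta(x)=\cA(\vartheta)(x^*)$, we have $\vartheta_*(x_{\epsilon,i}\Omega_{D_i})=\theta(x_{\epsilon,i})\Omega_{\vartheta(D_i)}$, whence $\vartheta_*|v_i(z_i)\rangle_{D_i}=\theta(x_{1,i})\Omega_{\vartheta(D_i)}+\theta(x_{2,i})\Omega_{\vartheta(D_i)}$. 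The identity thus reduces to the germ-level claim that the anti-holomorphic pushforward $\vartheta_*:V(z_i)\to V(\overline{z_i})$ sends $v_i(z_i)$ to $(\Theta v_i)(\overline{z_i})$.

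The main obstacle is therefore this last germ-level claim, $\vartheta_*(v(z))=(\Theta v)(\overline z)$ for all $v\in V$ and $z\in\mathring\bbD$. Both sides are functorial for the groupoid of (anti-)holomorphic germs, so it suffices to check the statement in the standard coordinates, where it becomes a statement about the base point: writing $\phi_z:w\mapsto w+z$ for the standard coordinate at $z$, one has $\vartheta\circ\phi_z=\phi_{\overline z}\circ\vartheta_0$ with $\vartheta_0:w\mapsto\overline w$ fixing $0$, and since the isomorphism $V\cong V(z)$ of \eqref{eq: def V(z) number 2} is by definition induced by $\phi_z$, the claim amounts to the fact that $\vartheta_0$ acts on $V=V(0)$ by $\Theta$. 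This is precisely the content of \eqref{eq: Th L Th= L}, which shows that $\Theta$ intertwines the $\Aut(\bbC[[t]])$-action on $V$ with its complex conjugate and hence realises the generator $t\mapsto\overline t$ of the extended symmetry (the base case $z=0$ being literally the computation \eqref{eq: Th(v) = <th(a)> + <th(b)>}). I expect the bulk of the care to go into this coordinate bookkeeping --- tracking how the anti-holomorphic pushforward interacts with the associated-bundle description of $V(z)$ --- rather than into any analytic difficulty, since all the analytic content has already been packaged into the worm-level results.
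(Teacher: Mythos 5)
Your proof is correct and follows essentially the same route as the paper's: decompose each point insertion into two worms, apply Lemma~\ref{lem: Theta applied to worm insertions} termwise, and recognise the result as the point insertion of the $\Theta v_i$ at the conjugate points. The only difference is that you spell out the final identification carefully (via the germ-level claim $\vartheta_*(v(z))=(\Theta v)(\overline z)$ and Corollary~\ref{cor: anti-holomorphic map applied to worm insertion}), whereas the paper dispatches that step by citing \eqref{eq: Th(v) = <th(a)> + <th(b)>}; your bookkeeping is a valid justification of exactly what that citation leaves implicit.
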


\begin{proof}
Pick disjoint discs $D_i\subset D$ containing the points $z_i$ in their interiors, intervals $I_{1,i},I_{2,i}\subset \partial D_i$ and elements $x_{1,i}\in\cA(I_{1,i})$, $x_{2,i}\in\cA(I_{2,i})$ satisfying
$|v_i\rangle_{D_i} = x_{1,i}\Omega_{D_i} + x_{2,i}\Omega_{D_i}$ so that,
by definition, 
\[
| v_1 v_2 \cdots v_n \rangle_{\bbD} = \sum_{(\epsilon_1,\ldots,\epsilon_n)\in \{1,2\}^n} | x_{\epsilon_1,1} x_{\epsilon_2,2} \cdots x_{\epsilon_n,n}\rangle_{\bbD} .
\]
We then have:
\begin{align*}
\Theta(| v_1(z_1) \ldots v_n(z_n) \rangle_{\bbD}) &= 
\Theta\big(\sum_{(\epsilon_1,\ldots,\epsilon_n)\in \{1,2\}^n} | x_{\epsilon_1,1} x_{\epsilon_2,2} \cdots x_{\epsilon_n,n}\rangle_{\bbD}\big) \\
&= \sum_{(\epsilon_1,\ldots,\epsilon_n)\in \{1,2\}^n} | \theta(x_{\epsilon_1,1}) \theta(x_{\epsilon_2,2}) \cdots \theta(x_{\epsilon_n,n})\rangle_{\bbD} \\
&=| (\Theta v_1)(\overline{z_1}) \ldots (\Theta v_n)(\overline{z_n}) \rangle_{\bbD}
\end{align*}
where the second equality holds by Lemma~\ref{lem: Theta applied to worm insertions}, and the third one follows from~\eqref{eq: Th(v) = <th(a)> + <th(b)>}.
\end{proof}

Recall that the action of $\Vir_{\ge 0}:=\mathrm{Span}\{L_n\}_{n\ge 0}$ on $V$
exponentiates to an action of the pro-algebraic group $\Aut(\bbC[[t]])$. We wish to extend it to an action of $\Aut(\bbC[[t]])\rtimes \bbZ/2$, where the generator $\vartheta$ of $\bbZ/2$ acts by complex conjugation:
\[
\vartheta\cdot \left(\sum a_i t^i\right) = \sum \overline{a_i}t^i .
\]

\begin{lem}\label{lem: action of Aut(C[[t]]) rtimes Z/2}
Let $I_- \subset S^1$ be the lower semicircle, and let $\Theta:V \to V$ be the restriction to $V=H_0^{f.e.}$ of the operator with same name defined in \eqref{eq: first Theta}. Then the action of $\Aut(\bbC[[t]])$ on $V$ together with the involution $\Theta$ assemble to an action on $V$ of the semidirect product
\[
\Aut_\bbR(\bbC[[t]]) = \Aut(\bbC[[t]])\rtimes \bbZ/2.
\]
\end{lem}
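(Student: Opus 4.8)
The plan is to exhibit the combined structure as a genuine semidirect-product action by verifying the two relations that define such an action. An action of $\Aut(\bbC[[t]])\rtimes\bbZ/2$ on $V$ is the same datum as an action of $\Aut(\bbC[[t]])$ together with an involution $\Theta$ satisfying the crossed relation $\Theta\, g\, \Theta^{-1}=\vartheta(g)$ for every $g\in\Aut(\bbC[[t]])$, where $\vartheta$ conjugates the coefficients of the power series. The $\Aut(\bbC[[t]])$-action is already in hand, since $\Vir_{\ge 0}=\mathrm{Span}\{L_n\}_{n\ge 0}$ exponentiates on each filtered piece; and $\Theta$ is an antiunitary involution, being a modular conjugation, so $\Theta^2=\id$ furnishes the $\bbZ/2$-action (and $\Theta(V)=V$ was already noted, as $\Theta$ commutes with $L_0$). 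The only remaining content is the crossed relation.

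To establish the crossed relation I would reduce it to the infinitesimal statement on generators. We already know from \eqref{eq: Th L Th= L} that $\Theta(\lambda L_n)\Theta=\bar\lambda L_n$ for all $\lambda\in\bbC$ and $n\ge 0$. An arbitrary $g\in\Aut(\bbC[[t]])$ acts on $V$ as an exponential (time-ordered, in the case of the $\Aut_0$-factor) of operators of the form $\lambda L_n$ with $n\ge 0$; because $\Theta$ is a conjugate-linear involution, conjugating such an exponential by $\Theta$ replaces each scalar $\lambda$ by $\bar\lambda$, and therefore carries the operator representing $g$ to the operator representing $\vartheta(g)$. This is precisely $\Theta g\Theta^{-1}=\vartheta(g)$, as required. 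All of this takes place on each finite-dimensional filtered piece $H_0^{\le N}$, where $\Aut(\bbC[[t]])$ acts through its finite-dimensional quotient $\Aut(\bbC[t]/t^{N+1})$, so no analytic or convergence issues arise in interchanging $\Theta$ with the exponentials.

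This argument is the exact analog of the proof of Lemma~\ref{lem: action of Aut(C[[t]]) rtimes Z/2 -- VOA}, with the relation \eqref{eq: Th L Th= L}---derived here from the Bisognano--Wichmann theorem for the conformal net---playing the role that the PCT automorphism axiom played there. The only point demanding care is the bookkeeping of anti-linearity: since $\Theta$ is conjugate-linear while $\Aut(\bbC[[t]])$ acts $\bbC$-linearly, consistency of the combined action is exactly what forces the $\bbZ/2$ to act on $\Aut(\bbC[[t]])$ by the coefficient-conjugation automorphism $\vartheta$, matching the stated semidirect-product structure. I do not expect any genuine obstacle beyond this bookkeeping, as the substantive input \eqref{eq: Th L Th= L} is already available.
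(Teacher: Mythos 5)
Your proof is correct, but it takes a different route from the paper's. The paper does not argue infinitesimally: it observes that the already-constructed action of $\Ann_c\rtimes\bbZ/2$ on $H_0$ from \eqref{eq: pi:Ann_c rtimes Z/2 -> B(H_0)} restricts to the sub-semigroup $\Univ_0\rtimes\bbZ/2$, that this restriction preserves $V$, and then extends the crossed commutation relation from $\Univ_0$ to all of $\Aut(\bbC[[t]])$ by density of the subgroup generated by $\Univ_0$. You instead verify the relation $\Theta g\Theta^{-1}=\vartheta(g)$ directly on Lie-algebra generators via \eqref{eq: Th L Th= L} and exponentiate, exactly mirroring the proof of Lemma~\ref{lem: action of Aut(C[[t]]) rtimes Z/2 -- VOA}; this is legitimate because $\Theta$ commutes with $L_0$, hence preserves each filtered piece $V^{\le N}$ on which $\Aut(\bbC[[t]])$ acts through the finite-dimensional quotient $\Aut(\bbC[t]/t^{N+1})$, so conjugating the exponentials by the involution $\Theta$ poses no convergence issue. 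Your argument is more elementary and self-contained (pure finite-dimensional bookkeeping from the single input \eqref{eq: Th L Th= L}); the paper's argument buys uniformity with the rest of the text, reusing the geometric semigroup action rather than re-deriving the group-level relation from the infinitesimal one. Since \eqref{eq: pi:Ann_c rtimes Z/2 -> B(H_0)} is itself ultimately deduced from \eqref{eq: Th L Th= L}, the two proofs rest on the same underlying fact.
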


\begin{proof}
We have a zigzag of inclusions
\[
\Diff_c(S^1) \hookrightarrow  \Ann_c \hookleftarrow \Univ_0 \hookrightarrow \Aut(\bbC[[t]]),
\]
where $\Univ_0$ is the sub-semigroup of univalent maps that fix the origin.
As in \eqref{eq: pi:Ann_c rtimes Z/2 -> B(H_0)}, the action of $\Ann_c(S^1)$ on $H_0$ extends to an action of the semidirect product $\Ann_c \rtimes \bbZ/2$.
The sub-semigroup of univalent maps $\Univ_0 \rtimes \bbZ/2$ therefore also acts on $H_0$.

The action of $\Univ_0 \rtimes \bbZ/2$ preserves $V$ (this is evident using Definition~\ref{def: V = space of finite energy vectors}). Since $\Univ_0$ generates a dense subgroup in $\Aut(\bbC[[t]])$, the commutation relations between $\Theta$ and $\Univ_0$ defining the semidirect product extend to $\Aut(\bbC[[t]])$.
So $\Aut(\bbC[[t]])\rtimes \bbZ/2$ acts on $V$.
\end{proof}

Given a point $z\in D$ in a disc, a holomorphic local coordinate $f$, and a vector $v\in V$, we have introduced the notation $v(z; f)$ in Section~\ref{sec: construction of point insertions}
for the insertion $v_1(z)$, where $v_1\in V(z)$ is the image of $v$ under the isomorphism $V\to V(z)$ induced by $f$. We can extend this notation to include anti-holomorphic local coordinates (i.e., anti-holomorphic maps $w \mapsto f(w)$ with $f(0)=z$, defined for $w$ in some neighbourhood of $0$). Once again, the map $f$ induces an isomorphism $V\to V(z)$ (which is now complex anti-linear), and we write $v(z; f)$ for the insertion $v_1(z)$, where $v_1\in V(z)$ is the image of $v$ under the above isomorphism.

We can use this freedom to use anti-holomorphic local coordinates to equip the semigroups and $\Ann^V$ and $\tAnn_c^V$ (Definition~\ref{semigroup of annuli with point insertions}) with the structures of dagger-semigroups.
Recall that $\Ann$ and $\tAnn_c$ have been equipped with the structures of dagger-semigroups in Section \ref{sec: semigroup of annuli} by declaring the dagger of an annulus $A$ to be the complex conjugate annulus $A^\dagger$ with incoming and outgoing boundaries exchanged.
If we write $\iota:A\to A^\dagger$ for the anti-holomorphic ``identity'' map, then
this map induces an anti-linear isomorphism $V(z) \cong V(\iota(z))$ which we shall denote $v\mapsto \bar v$.

\begin{defn}\label{semigroup of annuli with point insertions - dagger}
Let $V$ be a complex vector space equipped with a representation of $\Aut(\bbC[[t]])\rtimes \bbZ/2$ which is linear on the identity component and anti-linear on the non-identity component.
We may then equip $\Ann^V$ and $\tAnn_c^V$ with the structure of a dagger-semigroup by
setting 
\[
A[v_1 \ldots v_n]^\dagger \,:=\, A^\dagger[ \bar v_1 \ldots \bar v_n]
\]
where $v_i \in V(z_i)$, and $z_1, \ldots, z_n \in \mathring{A}$.
The dagger structures on the central extensions $\tAnn_c^V$ and $\Ann_c^V$ are defined in the same way, by using the dagger structures on $\tAnn_c$ and $\Ann_c$, respectively.
\end{defn}

Let $0\in D_{in}\subset D_{out}\subset \bbC$ be discs in the complex plane.
For $A=D_{out}\setminus D_{in}$ with boundary parametrisations $\varphi_{in/out}$, we may identify $A^\dagger$ with the image of $A$ under the inversion map $\zeta \mapsto \bar \zeta^{-1}$, equipped with the induced boundary parametrisations.
In that case, given $v_i\in V$ and $z_i\in\mathring A$, we have
\begin{equation}\label{eqn: AnnV dagger operation}
A[v_1(z_1) \ldots v_n(z_n)]^\dagger = A^\dagger[v_1(z_1)^\dagger \ldots v_n(z_n)^\dagger]
\end{equation}
where $v(z)^\dagger := \tilde v(\bar z^{-1})$ and $\tilde v=e^{\bar zL_1}(-\bar z^{-2})^{L_0}\Theta v$ is the image of $v\in V$ under the change of coordinate
\[
\big(\zeta \mapsto (\bar\zeta-\bar z)^{-1}+\bar z^{-1}\big)\in \Aut_\bbR(\bbC[[t]]) = \Aut(\bbC[[t]])\rtimes \bbZ/2.
\]
Note the similarity with the notation $Y(v,z)^\dagger$ used in \eqref{eq: not: Y(v,z)^dagger}.

We finish this section by discussing how the representations of $\tAnn_c^V$ constructed in \eqref{eq: A[vvv] two} and \eqref{eq: representation of annuli with point insertions} are compatible with the dagger structures:

\begin{prop} \label{prop: annuli adjoints with point insertions}
For any representation $K$ of the conformal net, the operators $\underline{A}[v_1, \ldots, v_n]$ (constructed in \eqref{eq: A[vvv] two}, and well defined by
Lemma~\ref{lem: point insertion in annuli well defined})
give a $*$-representation of $\tAnn^V_c$ on $K$.
\end{prop}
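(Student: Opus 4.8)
The plan is to split the statement into its two components: multiplicativity (that $\underline{A}\mapsto \underline{A}[v_1\ldots v_n]$ is a semigroup homomorphism) and compatibility with the $\dagger$/$*$ structures. Multiplicativity is already available, as it is exactly the content of \eqref{eq: representation of annuli with point insertions}, which says $\underline{A}_1[v_1 \ldots v_n]\,\underline{A}_2[u_1 \ldots u_m] = (\underline{A}_1\underline{A}_2)[v_1 \ldots v_n u_1 \ldots u_m]$; this is precisely the semigroup law in $\tAnn^V_c$. Hence the real content is the adjoint formula
\[
\underline{A}[v_1(z_1) \ldots v_n(z_n)]^* = \underline{A}^\dagger[\bar v_1 \ldots \bar v_n],
\]
with the right-hand side interpreted as in Definition~\ref{semigroup of annuli with point insertions - dagger}.

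First I would reduce this identity to the worm level. Choosing disjoint discs $D_i\subset \mathring A$ around the points $z_i$ and invoking Corollary~\ref{cor: finite energy vectors via worms in discs}, I write $|v_i\rangle_{D_i} = \sum_{\epsilon_i\in\{1,2\}} x_{\epsilon_i,i}\Omega_{D_i}$ with $x_{\epsilon_i,i}\in\cA(I_{\epsilon_i,i})$ supported on $\partial D_i\subset \mathring A$. By the defining formula \eqref{eq: A[vvv] two} together with the worm adjoint formula of Lemma~\ref{lem: adjoint of the operator underline A},
\[
\underline{A}[v_1 \ldots v_n]^* = \Big(\sum_{\epsilon} \underline{A}[x_{\epsilon_1,1}\ldots x_{\epsilon_n,n}]\Big)^* = \sum_{\epsilon} \underline{A}^\dagger[x_{\epsilon_1,1}^* \ldots x_{\epsilon_n,n}^*].
\]

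Next I would identify this sum with $\underline{A}^\dagger[\bar v_1 \ldots \bar v_n]$. The key observation is that $\sum_{\epsilon_i} j(x_{\epsilon_i,i}^*)\Omega_{D_i^\dagger}$ is a valid worm decomposition of $|\bar v_i\rangle_{D_i^\dagger}$ in the conjugate disc $D_i^\dagger = \iota(D_i)\subset \mathring{A^\dagger}$. Indeed, applying the anti-holomorphic pushforward $\iota_*$ to $|v_i\rangle_{D_i}=\sum_{\epsilon_i}|x_{\epsilon_i,i}\rangle_{D_i}$ and using Corollary~\ref{cor: anti-holomorphic identity acting on worms in a disc} (whose boundary-touching case produces precisely the $j(\cdot^*)$ combination) gives $\iota_*|v_i\rangle_{D_i}=\sum_{\epsilon_i} j(x_{\epsilon_i,i}^*)\Omega_{D_i^\dagger}$; on the other hand $\iota_*|v_i\rangle_{D_i}=|\bar v_i\rangle_{D_i^\dagger}$ by the very definition of $\bar v_i$ as the anti-linear image of $v_i$ under $\iota$ and the anti-holomorphic covariance of point insertions (which one gets concretely by factoring $\iota$ through the standard conjugation $\vartheta$ via Lemma~\ref{lem: Theta applied to point insertions}). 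Since Lemma~\ref{lem: point insertion in annuli well defined} guarantees that $\underline{A}^\dagger[\bar v_1 \ldots \bar v_n]$ is independent of all auxiliary choices, I may evaluate it using exactly these discs $D_i^\dagger$ and decompositions, obtaining $\underline{A}^\dagger[\bar v_1 \ldots \bar v_n] = \sum_\epsilon \underline{A}^\dagger[j(x_{\epsilon_1,1}^*)\ldots j(x_{\epsilon_n,n}^*)]$. Finally, repeatedly applying the orientation-reversal invariance of worm insertions, Lemma~\ref{lem: <b> = < j(b*) >}(iii) (all worms being supported in $\mathring{A^\dagger}$), lets me replace each $j(x_{\epsilon_i,i}^*)$ by $x_{\epsilon_i,i}^*$, which matches the displayed expression for $\underline{A}[v_1 \ldots v_n]^*$ and finishes the proof.

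The main obstacle, and the step requiring the most care, is the orientation bookkeeping in this middle step: one must track that the adjoint $x^*$ lies in $\cA(I)$ for the same oriented interval while $j(x^*)$ lies in $\cA(\overline I)$, and then confirm that Corollary~\ref{cor: anti-holomorphic identity acting on worms in a disc} and Lemma~\ref{lem: <b> = < j(b*) >}(iii) exactly reconcile the two conventions. A secondary point to pin down is the anti-holomorphic covariance $\iota_*|v\rangle_D = |\bar v\rangle_{\iota D}$ for point insertions; although not stated as a separate result, it follows routinely from the holomorphic gluing identity \eqref{eq: compatibility of discs and annuli -- point insertions} and Lemma~\ref{lem: Theta applied to point insertions} by factoring $\iota$ through $\vartheta$. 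Once these two bookkeeping matters are settled, the equality of the two sums, and hence the $*$-representation property, is immediate.
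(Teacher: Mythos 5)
Your proposal is correct and follows essentially the same route as the paper's proof: multiplicativity via \eqref{eq: representation of annuli with point insertions}, reduction of the adjoint identity to worms via Corollary~\ref{cor: finite energy vectors via worms in discs} and Lemma~\ref{lem: adjoint of the operator underline A}, and the reconciliation of $x^*$ with $j(x^*)$ via Corollary~\ref{cor: anti-holomorphic identity acting on worms in a disc} and Lemma~\ref{lem: <b> = < j(b*) >}. The only (cosmetic) difference is that the paper first reduces to the case of a single insertion using that $\tAnn^V_c$ is generated by $\tAnn_c$ and annuli with one insertion, whereas you carry the $2^n$-term multi-index sum through directly.
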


\begin{proof}
The map $\tAnn^V_c\to B(K)$ is a representation by \eqref{eq: representation of annuli with point insertions}, so all that remains is to do is check that it is a $*$-representation.
That is, for $\underline{A} \in \tAnn_c$ and $v_i\in V(z_i)$, we must show that the corresponding operators
\[
\underline{A}[v_1,\ldots,v_n]: K \to K
\quad \text{and}  \quad
\underline{A}^\dagger [\bar v_1,\ldots,\bar v_n]: K \to K
\]
are each other's adjoints.
Since $\tAnn^V_c$ is generated by $\tAnn_c$ along with the annuli with a single insertion, we may assume without loss of generality that $n\le 1$. The case of annuli with no insertions was treated in \cite{HenriquesTenerIntegratingax} (and restated in \eqref{eq: pi(Adag)=pi(A)*}), so we focus on the case $n=1$.
Namely, for $z\in \mathring A$ and $v\in V(z)$, we must show that
$\underline A[v]^*=\underline A^\dagger[\bar v]$.

Let $D\subset \mathring A$ be a disc 
containing the point $z$ in its interior,
and let $I_{1},I_{2}\subset \partial D$ and $x\in\cA(I_{1})$ and $y\in\cA(I_{2})$ such that
$|v\rangle_{D} = |x\rangle_{D} + |y\rangle_{D}$, so that $\underline{A}[v]=\underline{A}[x]+\underline{A}[y]$.

Letting  $i:D\to D^\dagger$ be the anti-holomorphic ``identity'' map, we have
\begin{equation}\label{eqn: bar vi in terms of j}
| \bar v \rangle_{D^\dagger}
=i_*(| v \rangle_{D})
=i_*(| x \rangle_{D} + | y \rangle_{D})
=| j(x^*) \rangle_{D^\dagger} + | j(y^*) \rangle_{D^\dagger}
\end{equation}
(where $i_*$ is as in \eqref{eqn: pushforward by (anti)holomorphic map}),
where the first equality holds by the definition of $\bar v$, and the
last equality holds by Corollary~\ref{cor: anti-holomorphic identity acting on worms in a disc}.
It follows that
\[
\underline{A}[v]^*=\underline{A}[x]^*+\underline{A}[y]^*
= \underline{A}^\dagger[x^*]
+\underline{A}^\dagger[y^*]
= \underline{A}^\dagger[j(x^*)]
+ \underline{A}^\dagger[j(y^*)]= \underline{A}^\dagger [\bar v]
\]
where the second equality holds by Lemma~\ref{lem: adjoint of the operator underline A}, the third equality holds by Lemma~\ref{lem: <b> = < j(b*) >},
and the last one follows from \eqref{eqn: bar vi in terms of j}.
\end{proof}

\section{The vertex algebra associated to a conformal net}
\label{sec: The vertex algebra associated to a conformal net}

In this section, starting with a conformal net $\cA$, we construct a vertex algebra structure on the space $V=H_0^{f.e.}$ of finite energy vectors of the vacuum sector of the conformal net. Our only assumptions on the conformal net are that it is diffeomorphism covariant, and that the generator $L_0$ of rotations has finite-dimensional eigenspaces when acting on $H_0$ (see Remark~\ref{rem: VOA without finite dimensional eigenspaces} for what we believe can be achieved in the absence of that second condition).

\subsection{Completion of the state space}

Let $D\subset \bbC$ be a disc.
In the previous section, we have seen how to assign a vector
\[
| v_1(z_1) v_2(z_2) \cdots v_n(z_n) \rangle_{D} \in H_0(D). 
\]
to a configuration of points $z_1,\ldots,z_n\in \mathring D$, and a collection of vectors $v_i \in V$.
Moreover, if $D' \subset D$ is such that $v_1,\ldots,v_m\in\mathring D'$, and $v_{m+1},\ldots,v_n\in \mathring D\setminus D'$, then letting $A:=D\setminus\mathring D'$, we have
$| v_1(z_1) \ldots v_n(z_n) \rangle_{D} = 
A[v_{m+1}(z_{m+1}) \ldots v_n(z_n)] | v_1(z_1) \ldots v_m(z_m) \rangle_{D'}$.

We now wish to introduce a space $\widehat{V}$ which contains all the expressions $| v_1(z_1) \cdots v_n(z_n) \rangle$, without the constraint that the points $z_i$ lie in some specified disc.
If $V$ has the weight space decomposition $V=\bigoplus_{n\in\bbN}V(n)$, then we set 
$\widehat{V} := \prod_{n\in\bbN}V(n)$, and call it the \emph{algebraic completion} of $V$.

Consider the collection of all discs (not necessarily round) embedded in the complex plane.
This is a poset under inclusion, and for any conformal net $\cA$, there is a functor from that poset to the category of Hilbert spaces given by $D \mapsto H_0(D)$.
We denote the colimit by
\begin{equation}\label{eq: H_0 for union of all discs}
\widehat{H_0}(\bbC) := \underrightarrow{\lim}_{D} H_0(D).
\end{equation}
We claim that $\widehat{H_0}(\bbC)$ is naturally a subspace of $\widehat{V}$.
Indeed, the limit in \eqref{eq: H_0 for union of all discs} may be computed over the sub-poset of round discs centered at the origin. For $D$ such a round disc, the Hilbert spaces $H_0(D)$ admits a direct sum decomposition in terms of its weight spaces for rotation:
\[
H_0(D)=\bigoplus_{n\in\bbN}\!\!\!{\phantom{\big|}}^{\ell^2}H_0(D)(n).
\]
The inclusion $V\hookrightarrow H_0(D)$ induces isomorphisms of weight spaces $V(n)\cong H_0(D)(n)$, and the $\ell^2$-direct sum maps to the product in the obvious way:
\begin{equation}\label{eq: map to hat V}
\bigoplus_{n\in\bbN}\!\!\!{\phantom{\big|}}^{\ell^2}H_0(D)(n) \to 
\prod_{n\in\bbN} H_0(D)(n) \cong \prod_{n\in\bbN} V(n).
\end{equation}
Taken together, the maps \eqref{eq: map to hat V} assemble to a map 
$\widehat{H_0}(\bbC)\to \widehat V$.
In particular, for every disc $D\subset \bbC$ (not necessarily round) there is a canonical map $H_0(D)\to \widehat V$.

By construction, for any discs $0\in D'\subset D \subset \bbC$, we then have a commutative diagram
\[
\begin{tikzcd}
V \arrow[rd, bend right=10] \arrow[r] & H_0(D') \arrow[d,"Y_{D \setminus \mathring{D}'}"]\arrow[rd, bend left=10] &\\
& H_0(D) \arrow[r] & \widehat{V}
\end{tikzcd}
\]
Finally, if $z_1,\ldots,z_n \in \bbC$ are distinct points, and $v_1,\ldots,v_n$ are vectors in $V$, we write
\begin{equation}\label{eq: <v_1(z_1)...v_n(z_n)> in H_0(C)}
| v_1(z_1) v_2(z_2) \cdots v_n(z_n) \rangle \in \widehat V
\end{equation}
for the image of $| v_1(z_1) v_2(z_2) \cdots v_n(z_n) \rangle_{D} \in H_0(D)$ under the map $H_0(D)\to \widehat V$, for any large enough disc $D\subset \bbC$.

\subsection{Geometric vertex algebras}
\label{sec: geometric vertex algebras}

In this section we will show that the functions $| v_1(z_1) v_2(z_2) \cdots v_n(z_n) \rangle$ constructed in \eqref{eq: <v_1(z_1)...v_n(z_n)> in H_0(C)} equip the vector space $V$ with the structure of a \emph{geometric vertex algebra}.
We use a definition from \cite{Bruegmann2012}.

\begin{defn}[Geometric vertex algebra]
The data of a (bounded below, locally finite dimensional) geometric vertex algebra is:
\begin{itemize}
\item A $\bbN$-graded vector space $V = \bigoplus_{n \in \bbN} V(n)$ over $\bbC$ with finite-dimensional weight spaces.
\item Let $\Conf_n = \bbC^n \setminus \Delta$ denote the configuration space of $n$ points in $\bbC$.
For every $n \ge 0$, $z = (z_1, \ldots, z_n) \in \Conf_n$ and $v=(v_1, \ldots, v_n) \in V^n$, we have a vector $$| v(z) \rangle = | v_1(z_1) \ldots v_n(z_n) \rangle \in \widehat{V}:= \prod_{n\in\bbN}V(n)$$ which is multilinear in $v$ and depends holomorphically on $z$.
\end{itemize}

This data must satisfy the following axioms.
\begin{itemize}
\item (permutation invariance) For every permutation $\sigma \in S_n$ we have $| v^\sigma(z^\sigma) \rangle = | v(z) \rangle$.
\item (insertion at zero) For every $v \in V$ we have $| v(0) \rangle = v$.
\item (associativity) For $n \in \bbN$, let $p_n : \widehat{V} \to V(n)$ denote the projection. 
For all $v_1, \ldots, v_m$, $u_1, \ldots, u_n \in V$ and configurations $z \in \Conf_{m+1}$ and $w \in \Conf_n$ with $\max_i \abs{w_i} < \min_{1 \le j \le m} \abs{z_j - z_{m+1}}$ we require that the sum
\begin{equation}\label{eqn: gva associativity sum}
\sum_{k \in \bbN} \Big| v_1(z_1) \ldots v_m(z_m) \, p_k | u_1(w_1) \ldots u_n(w_n) \rangle(z_{m+1})\Big\rangle
\end{equation}
converges in the sense that, for each $\ell \in \bbN$, the $\ell$-th graded piece of the above expression is a locally uniformly absolutely convergent sum of functions of the $z_i$ and $w_j$ with values in $V(\ell)$.
Finally, we require that
\begin{align*}
\notag
\big| v_1(z_1) \ldots v_m(z_m)  u_1(w_1 + z_{m+1}) \ldots &u_n(w_n + z_{m+1}) \big\rangle
\\
&=
\Big| v_1(z_1) \ldots v_m(z_m) \, | u_1(w_1) \ldots u_n(w_n) \rangle(z_{m+1}) \Big\rangle 
\end{align*}
where the right-hand side is defined by the sum \eqref{eqn: gva associativity sum}.
\item ($\bbC^\times$-covariance) For all $z \in \bbC^\times$, $v_1, \ldots, v_n \in V$, and $w \in \Conf_n$,
\begin{equation}\label{eqn: Ctimes invariance}
z . | v_1(w_1) \ldots v_n(w_n) \rangle =  | z.v_1(zw_1) \ldots z.v_n(z w_n) \rangle
\end{equation}
where $z$ acts on $\widehat{V}$ and $V$ via multiplication by $z^\ell$ on $V(\ell)$.
\end{itemize}
\end{defn}

\begin{remark}
The definition in \cite{Bruegmann2012} includes a meromorphicity axiom requiring that for some $N$ the function $z^N | v(z) u(0) \rangle$ has a removable singularity at $z=0$.
However, by Prop. 2.3 of loc.~cit., this axiom is redundant if the grading on $V$ is bounded below, which we have assumed.
\end{remark}

\begin{thm}\label{thm: geometric vertex algebra from conformal net}
Let $\cA$ be a conformal net, and let 
$V=H_0^{f.e.}$ be the space of finite energy vectors of its vacuum sector (Definition~\ref{def: V = space of finite energy vectors}).
Then the linear maps
\begin{align*}
V^{\otimes n} \,\,\,&\to\,\,\,\,\, \widehat{V} \\
v_1\otimes\ldots\otimes v_n \,&\mapsto\, | v_1(z_1) v_2(z_2) \cdots v_n(z_n) \rangle
\end{align*}
defined in \eqref{eq: <v_1(z_1)...v_n(z_n)> in H_0(C)}
equip $V$ with the structure of a geometric vertex algebra.
\end{thm}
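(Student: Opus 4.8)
The plan is to verify the four axioms of a geometric vertex algebra for the functions $|v_1(z_1)\cdots v_n(z_n)\rangle\in\widehat V$ defined in \eqref{eq: <v_1(z_1)...v_n(z_n)> in H_0(C)}. Multilinearity in the $v_i$ is immediate from the construction \eqref{eqn: sum of worms}, which is manifestly multilinear in the worm decompositions, and holomorphicity in the $z_i$ is exactly Corollary~\ref{cor: embedded point insertions are holomorphic}. For \emph{insertion at zero}, I would take $D=\bbD$ with the standard coordinate: then $v(0)\in V(0)$ is just $v$, so $|v(0)\rangle_\bbD$ is the image of $v\in V$ in $H_0(\bbD)=H_0$, and the canonical map $H_0\to\widehat V$ sends it to $v$. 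For \emph{permutation invariance}, the point is that each of the $2^n$ worm terms $|x_{\epsilon_1,1}\cdots x_{\epsilon_n,n}\rangle_D$ in \eqref{eqn: sum of worms} is independent of the ordering of its insertions by Proposition~\ref{proposition independence}; a permutation $\sigma$ merely reindexes the sum over $\epsilon\in\{1,2\}^n$, so $|v^\sigma(z^\sigma)\rangle=|v(z)\rangle$.

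For \emph{$\bbC^\times$-covariance}, I would first treat $z$ with $|z|\le 1$, where $f:\zeta\mapsto z\zeta$ is a univalent self-map of $\bbD$ whose lifted annulus acts by $\pi(\underline A_f)=z^{L_0}$ (Example~\ref{ex: r^L_0} together with the rotation action), and $z^{L_0}$ maps under $H_0\to\widehat V$ to the operator $z.$ that is multiplication by $z^\ell$ on $V(\ell)$. Applying Lemma~\ref{lem: univalent acting on point insertions} with no inner insertions and standard coordinates $f_i$, and noting that $f\circ f_i$ differs from the standard coordinate at $zw_i$ by precomposition with scaling by $z$ (which acts on the inserted vector as $z^{L_0}=z.$), one obtains $z.|v_1(w_1)\cdots v_n(w_n)\rangle=|z.v_1(zw_1)\cdots z.v_n(zw_n)\rangle$. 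Both sides are holomorphic in $(z,w)\in\bbC^\times\times\Conf_n$ (by Corollary~\ref{cor: embedded point insertions are holomorphic} and holomorphicity of $z\mapsto z^\ell$), so since they agree on a nonempty open set the identity \eqref{eqn: Ctimes invariance} extends to all of $\bbC^\times$ by the identity theorem.

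The substance is the \emph{associativity} axiom. Given configurations with $\max_i|w_i|<\min_j|z_j-z_{m+1}|$, I would fix a round disc $D_{m+1}$ centered at $z_{m+1}$ of radius $\rho$ with $\max_i|w_i|<\rho<\min_j|z_j-z_{m+1}|$, so the shifted inner points $w_i+z_{m+1}$ lie inside $D_{m+1}$ and the outer points $z_j$ lie outside. The first task is to compute the weight decomposition of $\xi:=|u_1(w_1+z_{m+1})\cdots u_n(w_n+z_{m+1})\rangle_{D_{m+1}}$. The translation $t_{z_{m+1}}:\zeta\mapsto\zeta+z_{m+1}$ is a holomorphic isomorphism from the origin-centered disc $D_{m+1}-z_{m+1}$ onto $D_{m+1}$ intertwining rotation about $0$ with rotation about $z_{m+1}$, and by functoriality (the holomorphic case of Corollary~\ref{cor: anti-holomorphic map applied to worm insertion}) it carries $|u_1(w_1)\cdots u_n(w_n)\rangle_{D_{m+1}-z_{m+1}}$ to $\xi$. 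Since $D_{m+1}-z_{m+1}$ is centered at the origin, its weight-$k$ subspace is identified with $V(k)$, and the weight-$k$ component of the latter vector is exactly $\psi_k:=p_k|u_1(w_1)\cdots u_n(w_n)\rangle$. This gives the orthogonal, $\ell^2$-convergent decomposition
\[
\xi=\sum_{k}|\psi_k(z_{m+1})\rangle_{D_{m+1}}\quad\text{in }H_0(D_{m+1}).
\]
Then, choosing a large round disc $D\supset D_{m+1}$ containing all $z_j$ and setting $A:=D\setminus\mathring D_{m+1}$, I would apply the bounded operator $A[v_1(z_1)\cdots v_m(z_m)]$ and use the compatibility \eqref{eq: compatibility of discs and annuli -- point insertions} to get, in $H_0(D)$,
\[
|v_1(z_1)\cdots v_m(z_m)\,u_1(w_1+z_{m+1})\cdots u_n(w_n+z_{m+1})\rangle_D=\sum_k|v_1(z_1)\cdots v_m(z_m)\,\psi_k(z_{m+1})\rangle_D,
\]
convergent because the operator is bounded; projecting by the continuous $p_\ell$ and mapping to $\widehat V$ yields the associativity identity graded piece by piece.

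The hard part will be upgrading this to the \emph{locally uniform absolute} convergence demanded by the axiom. The key observation is that $p_\ell A[v_1(z_1)\cdots v_m(z_m)]:H_0(D_{m+1})\to V(\ell)$ has finite rank (at most $\dim V(\ell)$), hence is Hilbert--Schmidt, with Hilbert--Schmidt norm locally bounded in $(z,w)$ by Corollary~\ref{cor: embedded point insertions are holomorphic}. Writing $\xi=\sum_k\xi_k$ with $\xi_k:=|\psi_k(z_{m+1})\rangle_{D_{m+1}}$ in the orthogonal weight spaces, the estimate
\[
\sum_k\big\|p_\ell A[v_1(z_1)\cdots v_m(z_m)]\,\xi_k\big\|\le\Big(\sum_k\big\|p_\ell A[\cdots]\big|_{H_0(D_{m+1})(k)}\big\|^2\Big)^{1/2}\Big(\sum_k\|\xi_k\|^2\Big)^{1/2}\le\big\|p_\ell A[\cdots]\big\|_{HS}\,\|\xi\|
\]
bounds the sum by a locally bounded quantity, and the uniform smallness of the tails follows because $p_\ell A[\cdots]$ is finite rank: its defining vectors depend continuously on $(z,w)$, so their images form a compact family and the weight-tails of the associated Hilbert--Schmidt norms decay locally uniformly. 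I expect this reconciliation of the Hilbert-space ($\ell^2$) convergence produced by the annulus operators with the graded, absolutely and locally uniformly convergent sense required by the axiom to be the main obstacle; the remaining three axioms reduce cleanly to the functoriality, order-independence, and locality results already established.
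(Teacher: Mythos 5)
Your proposal is correct and, for the first three axioms and the pointwise part of associativity, follows essentially the same route as the paper: permutation invariance via Proposition~\ref{proposition independence}, insertion at zero via the worm decomposition $v=x_1\Omega+x_2\Omega$, $\bbC^\times$-covariance via Lemma~\ref{lem: univalent acting on point insertions} plus analytic continuation (the paper verifies it on $|z|=1$ using the rotation unitary rather than on $|z|\le 1$ using univalent maps, an immaterial difference), and associativity via the translation unitary $U$, the conjugated projections $\tilde p_k=Up_kU^*$ summing strongly to the identity, and the bounded operator $A[v_1(z_1)\ldots v_m(z_m)]$. The one genuinely different step is the locally uniform absolute convergence: the paper rescales by $r<1$ and invokes $\bbC^\times$-covariance a second time to produce a geometric factor $r^{k}$ in the $k$-th term, from which uniform absolute convergence is immediate; you instead observe that $p_\ell A[v_1(z_1)\ldots v_m(z_m)]$ has rank at most $\dim V(\ell)$, hence is Hilbert--Schmidt with locally bounded HS-norm, and apply Cauchy--Schwarz across the orthogonal decomposition $\xi=\sum_k\tilde p_k\xi$. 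Your inequality $\sum_k\|T\tilde p_k\xi\|\le\|T\|_{HS}\|\xi\|$ is valid and gives the absolute convergence with an explicit bound, but be aware that the locally \emph{uniform} decay of the tails is the delicate point in your version: you need $\|T(z)(1-\tilde P_{\le N}(z_{m+1}))\|_{HS}\to 0$ uniformly over compact parameter sets, where both the operator and the spectral projections vary with the parameters. This does follow --- the adjoints $T(z)^*$ carry the unit ball of the finite-dimensional space $V(\ell)$ into a single precompact subset of $H_0(D_{m+1})$ as the parameters range over a compact set, and $(1-P_{\le N})$ tends to zero uniformly on precompact sets --- but it deserves to be spelled out; the paper's $r^{k}$ trick avoids the compactness argument entirely and is the cleaner of the two.
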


\begin{proof}
The quantity $| v_1(z_1) v_2(z_2) \cdots v_n(z_n) \rangle$ depends holomorphically on the points $z_1,\ldots, z_n$ by Lemma~\ref{lem: point insertions are holomorphic}

The permutation axiom hold true by construction.
In Proposition~\ref{proposition independence}, we proved that worm-insertions are independent of the choice of ordering of the insertions. This independence of the choice of ordering propagates to $| v_1 v_2 \cdots v_n \rangle_D$, and hence to \eqref{eq: <v_1(z_1)...v_n(z_n)> in H_0(C)}.

To show the insertion at zero axiom, 
write $v = x_1\Omega + x_2\Omega$ as in \eqref{eq: two worms for a pt}.
Then using the embeddings
$V \hookrightarrow H_0(\bbD) \hookrightarrow \widehat{V}$,
we have
$| v(0) \rangle = | v(0) \rangle_{\bbD} = x_1\Omega + x_2\Omega = v$. 

For $\bbC^\times$-covariance \eqref{eqn: Ctimes invariance},
it suffices to show 
\begin{equation}\label{eq: p_k of stuff}
p_k \, z . | v_1(w_1) \ldots v_n(w_n) \rangle = 
p_k | z.v_1(zw_1) \ldots z.v_n(zw_n) \rangle .
\end{equation}
Both sides are holomorphic functions of $z$ and $w_j$,
so it's enough to verify the equality when $\abs{z} = 1$ and $\abs{w_j} < 1$.
In that case, by summing over $k$, \eqref{eq: p_k of stuff} is equivalent to the equation
\[
z . | v_1(w_1) \ldots v_n(w_n) \rangle_\bbD \,=\, 
| z.v_1(zw_1) \ldots z.v_n(zw_n) \rangle_\bbD
\]
in $H_0(\bbD)$.
Let $U_z:H_0(\bbD) \to H_0(\bbD)$ be the unitary induced by the automorphism $w \mapsto zw$ of $\bbD$, and let us denote by the same letter its restriction to $V$. Then, by Lemma~\ref{lem: univalent acting on point insertions}, we have
\begin{align*}
z . | v_1(w_1) \ldots v_n(w_n) \rangle_{\bbD}
&=
U_z | v_1(w_1) \ldots v_n(w_n) \rangle_{\bbD}\\
&=
| U_z v_1(zw_1) \ldots U_z v_n(zw_n) \rangle_{\bbD}
=
| z.v_1(zw_1) \ldots z.v_n(zw_n) \rangle_{\bbD}.
\end{align*}

For the associativity axiom,
let $\ell \in\bbN$, $v_1,\ldots,v_m,$ $u_1,\ldots,u_n\in V$,
and $z_1,\ldots,z_{m+1},$ $w_1,\ldots,w_n\in \bbC$ 
as in the statement of the axiom.
We must show that the sum
\[
\sum_{k\in\bbN} \,\,p_\ell\, \big| v_1(z_1)\ldots v_m(z_m) \, p_k | u_1(w_1)\ldots u_n(w_n)\rangle (z_{m+1}) \big\rangle
\]
is locally uniformly absolutely convergent, and that this sum is equal to
\[
p_\ell\, \big| v_1(z_1) \ldots v_m(z_m)  u_1(w_1 + z_{m+1}) \ldots u_n(w_n + z_{m+1}) \big\rangle.
\]
Let $D$ be a round disc centered around $z_{m+1}$, containing the points $z_{m+1}+w_i$ in its interior, and containing none of the points $z_j$ for $j\le m$, and let $D'$ be a round disc centered around $0$, containing $D$ and all the $z_j$ in its interior.
It suffices to show that, in $H_0(D')$, we have
\begin{align} \label{eqn: assoc convergence in Dprime}
\sum_{k\in\bbN} \,\;&p_\ell\,\big| v_1(z_1)\ldots v_m(z_m) \, p_k | u_1(w_1)\ldots u_n(w_n)\rangle (z_{m+1}) \big\rangle_{D'} \\
&=\,p_\ell\, \big| v_1(z_1) \ldots v_m(z_m)  u_1(w_1 + z_{m+1}) \ldots u_n(w_n + z_{m+1}) \big\rangle_{D'} \notag
\end{align}
with locally uniform absolute convergence.
Let $D_0:=D-z_{m+1}$, and let
$f:D_0 \to D$ be the isomorphism given by translation by $z_{m+1}$.
If we write $U:H_0(D_0) \to H_0(D)$ for the unitary corresponding to $f$,
then we have
\begin{align}
\notag
\big| p_k |u_1(w_1) \ldots u_n(w_n)\rangle(z_{m+1})\big\rangle_D
&=
U \big| p_k |u_1(w_1) \ldots u_n(w_n)\rangle(0)\big\rangle_{D_0}\\
\label{eq: stuff in proof of ass axiom}
&=
U p_k |u_1(w_1) \ldots u_n(w_n)\rangle_{D_0}\\
\notag
&=
(U p_k U^*) |u_1(w_1+z_{m+1}) \ldots u_n(w_n+z_{m+1}) \rangle_D.
\end{align}
Let $\tilde p_k = U p_k U^*$, and note that $\sum \tilde p_k = 1_{H_0(D)}$ in the strong operator topology.
Hence
\begin{align*}
\sum_{k \in \bbN} \;\big| p_k |u_1(w_1) \ldots u_n(w_n)\rangle(z_{m+1})\rangle_D
&=
\sum_{k \in \bbN} \;\tilde p_k |u_1(w_1+z_{m+1}) \ldots u_n(w_n+z_{m+1}) \rangle_D\\
&= |u_1(w_1+z_{m+1}) \ldots u_n(w_n+z_{m+1}) \rangle_D.
\end{align*}
Let $A:=D'\setminus\mathring D$. Applying the bounded operator $A[v_1(z_1) \ldots v_m(z_m)]$ to the above equation yields
\begin{align*}
\sum_{k\in\bbN} \;&\big| v_1(z_1)\ldots v_m(z_m) \, p_k | u_1(w_1)\ldots u_n(w_n)\rangle (z_{m+1}) \big\rangle_{D'}\\ 
&= \big| v_1(z_1) \ldots v_m(z_m)  u_1(w_1 + z_{m+1}) \ldots u_n(w_n + z_{m+1}) \big\rangle_{D'},
\end{align*}
with pointwise convergence in a neighbourhood of the $z_j$ and $w_i$.

It remains to establish the locally uniform absolute convergence of \eqref{eqn: assoc convergence in Dprime}.
Applying the operator $A[v_1(z_1) \ldots v_m(z_m)]$ to \eqref{eq: stuff in proof of ass axiom} gives us
\begin{align*}
\big|& v_1(z_1)\ldots v_m(z_m) \, p_k | u_1(w_1)\ldots u_n(w_n)\rangle (z_{m+1}) \big\rangle_{D'}\\
& =
A[v_1(z_1) \ldots v_m(z_m)] \tilde p_k |u_1(w_1+z_{m+1}) \ldots u_n(w_n+z_{m+1}) \rangle_D.
\end{align*}
By Lemma~\ref{lem: point insertions are holomorphic}, $A[v_1(z_1) \ldots v_m(z_m)]$ and $|u_1(w_1+z_{m+1}) \ldots u_n(w_n+z_{m+1}) \rangle_D$ depend holomorphically on the points $z_j$ and $w_i$. So, on sufficiently small neighbourhoods of these points, we have
\[
\norm{\,\big| v_1(z_1)\ldots v_m(z_m) \, p_k | u_1(w_1)\ldots u_n(w_n)\rangle (z_{m+1}) \big\rangle_{D'}} \le C
\]
for some constant $C$ independent of $k$ and of the points $z_j$ and $w_i$.
Pick a number $r<1$ which close enough to $1$ so that points $r z_j$ still lie in these neighbourhoods.
By $\bbC^\times$-covariance, 
\begin{align*}
p_\ell \,&\big| v_1(z_1)\ldots v_m(z_m) \, p_k | u_1(w_1)\ldots u_n(w_n)\rangle (z_{m+1}) \big\rangle_{D'}\\
&= r^{-\ell} \, p_\ell \, r.\big| v_1(z_1)\ldots v_m(z_m) \, p_k | u_1(w_1)\ldots u_n(w_n)\rangle (z_{m+1}) \big\rangle_{D'}\\
&= r^{-\ell+\sum n_j+k} \, p_\ell \, \big| v_1(rz_1)\ldots v_m(rz_m) \, p_k | u_1(w_1)\ldots u_n(w_n)\rangle (rz_{m+1}) \big\rangle_{D'},
\end{align*}
when $v_j \in V(n_j)$.
Hence, on sufficiently small neighbourhoods of the $z_j$ and $w_i$, we have
\[
\norm{p_\ell \big| v_1(z_1)\ldots v_m(z_m) \, p_k | u_1(w_1)\ldots u_n(w_n)\rangle (z_{m+1}) \big\rangle_{D'}} \le r^k C',
\]
where again $C'$ does not depend on $k$ or the points $z_j$ or $w_i$.
It follows that \eqref{eqn: assoc convergence in Dprime} converges locally uniformly absolutely, as required.
\end{proof}

Combined with \cite[Thm 1.11]{Bruegmann2012} (which states that the structures of a vertex algebra and that of a geometric vertex algebra are equivalent), the above theorem proves that $V$ is naturally equipped with the structure of a vertex algebra:

\begin{cor}\label{cor: n point functions from conformal nets}
Let $\cA$ be a conformal net with vacuum sector $H_0$, and let $V \subset H_0$ be the subspace of finite energy vectors, equipped with its standard $\bbN$-grading.
Then the maps
\[
Y(v,z)u := | v(z)u(0) \rangle, \qquad Tv := \frac{d}{dz} | v(z) \rangle\Big|_{z=0}
\]
equip $V$ with the structure of an $\bbN$-graded vertex algebra.
Moreover, whenever $\abs{z_1} > \cdots > \abs{z_m}$, we have
\begin{equation}\label{YYY = ...}
Y(v_1, z_1) \ldots Y(v_m,z_m)\Omega \,=\, | v_1(z_1) \ldots v_m(z_m) \rangle
\end{equation}
as vectors in $\widehat V$.
\end{cor}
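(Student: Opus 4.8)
The plan is to deduce both assertions of the corollary from Theorem~\ref{thm: geometric vertex algebra from conformal net} together with the equivalence \cite[Thm 1.11]{Bruegmann2012}. By Theorem~\ref{thm: geometric vertex algebra from conformal net}, the vectors $|v_1(z_1)\cdots v_n(z_n)\rangle$ make $V$ into a geometric vertex algebra. The cited equivalence asserts that the data of a geometric vertex algebra is the same as that of a vertex algebra, and unwinding the construction in \emph{loc.\ cit.}\ the vertex operator attached to the geometric structure is precisely $Y(v,z)u = |v(z)u(0)\rangle$, the translation operator is $Tv = \tfrac{d}{dz}|v(z)\rangle|_{z=0}$, and the $\bbN$-grading is the weight grading $V=\bigoplus_n V(n)$. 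This settles the first assertion, so it remains only to establish \eqref{YYY = ...}.

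Since $V$ is now a vertex algebra, the product $Y(v_1,z_1)\cdots Y(v_m,z_m)\Omega$ converges in $\widehat V$ on the domain $|z_1|>\cdots>|z_m|\ge 0$, as recalled in Section~\ref{sec: VOAs}, so both sides of \eqref{YYY = ...} are defined there. I would prove the identity by induction on $m$, using throughout that the extension of a field to $\widehat V$ is $Y(v,z)w = \sum_k |v(z)\,(p_k w)(0)\rangle$, which is exactly the right-hand side of the associativity axiom with insertion point~$0$.

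For the base case $m=1$ I must check $Y(v_1,z_1)\Omega = |v_1(z_1)\rangle$. By definition $Y(v_1,z_1)\Omega = |v_1(z_1)\,\Omega(0)\rangle$, and the insertion of the vacuum at a point is trivial: using the (degenerate) decomposition $\Omega = 1\cdot\Omega + 0\cdot\Omega$ afforded by Lemma~\ref{lem: finite energy vectors via worms}, with the identity $1\in\cA(I_1)$, the corresponding worm insertion of the identity operator merges the adjacent annulus factors and thus contributes nothing, giving $|v_1(z_1)\,\Omega(0)\rangle = |v_1(z_1)\rangle$. For the inductive step, suppose the formula holds with $m-1$ insertions, and set $u := |v_2(z_2)\cdots v_m(z_m)\rangle = Y(v_2,z_2)\cdots Y(v_m,z_m)\Omega$. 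Applying the associativity axiom with a single outer insertion $v_1$ at $z_1$, inner insertions $v_2,\dots,v_m$ at $w_i := z_{i+1}$, and insertion point $0$ gives
\[
|v_1(z_1) v_2(z_2)\cdots v_m(z_m)\rangle = \sum_k \big|v_1(z_1)\, p_k|v_2(z_2)\cdots v_m(z_m)\rangle(0)\big\rangle = Y(v_1,z_1)\,u,
\]
where the convergence hypothesis $\max_i|w_i| < |z_1-0| = |z_1|$ of the axiom is exactly $|z_1|>\max(|z_2|,\dots,|z_m|)$, which holds by the ordering $|z_1|>\cdots>|z_m|$. Combined with the inductive hypothesis for $u$, this yields \eqref{YYY = ...}.

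The steps are routine once this dictionary is in place; the only points requiring care — and the main potential obstacle — are (i) confirming that the composition of vertex operators on $\widehat V$, interpreted as nested graded sums, coincides literally with the right-hand side of the geometric associativity axiom, so that the axiom applies with insertion point $0$, and (ii) the triviality of the vacuum insertion in the base case. Both are handled above, the former by the very definition of the extension of $Y(v,z)$ to $\widehat V$ and the latter by the identity-worm observation.
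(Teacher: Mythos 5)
Your proposal is correct and follows essentially the same route as the paper: the corollary is obtained by combining Theorem~\ref{thm: geometric vertex algebra from conformal net} with the equivalence of vertex algebras and geometric vertex algebras from \cite[Thm 1.11]{Bruegmann2012}. Your explicit induction for \eqref{YYY = ...} (base case via the trivial vacuum insertion, inductive step via the associativity axiom with insertion point $0$ under the hypothesis $|z_1|>\max_i|z_{i+1}|$) is a correct unwinding of what the paper leaves implicit in that dictionary.
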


\begin{remark}\label{rem: VOA without finite dimensional eigenspaces}
In our definition of conformal net (Definition~\ref{def: definition of conformal net}),
we have chosen to include the requirement that the operator $L_0$ has finite dimensional eigenspaces --- see the footnote on page~\pageref{foot: finite dim L0 eigenspaces} for a discussion of that condition. This ensures that the graded pieces of the associated vertex algebra are finite dimensional, and is used crucially in the proof of Lemma~\ref{lem: finite energy vectors via worms}.

If the $L_0$-eigenspaces of $H_0$ are not finite dimensional, we believe that it is still possible to get a vertex algebra structure on a suitable subspace $V=\bigoplus V(n)\subset \bigoplus H_0(n)=H_0^{f.e.}$ of the space of finite energy vectors (albeit not one whose graded pieces are finite-dimensional).
The graded pieces $V(n)$ consist of those vectors $\xi\in H(n)$ that can be written as
$\xi=\int_{M} | x_1(m) \ldots x_n(m) \rangle_\bbD\, d\mu(m)$
where $M$ is a topological space (or maybe just a manifold), $x_i(m)$ are as in \eqref{eq: cont fam of worms}, and $\mu$ is a measure on~$M$.
\end{remark}

Recall the standard vertex algebra notation $Y(u,z)v = \sum u_{(n)}v\,z^{-n-1}$ (see Section~\ref{sec: VOAs}), so that 
\begin{equation}\label{eq: u(n)v}
u_{(n)}v \,=\, 
\oint_{\abs{z}=\epsilon} Y(u,z)v\, z^n  \,dz
\end{equation}
in $\widehat V$.

\begin{lem}\label{lem: modes of fields in annuli}
Let $\cA$ be a conformal net,
let $V\subset H_0$ be as in Corollary~\ref{cor: n point functions from conformal nets}, and let $u,v \in V$.
Given an embedded annulus $A=D_{out}\setminus\mathring D_{in}\subset \bbC$, $z \in \mathring{A}$, $n \in \bbZ$, and $\epsilon>0$ small enough,
we have an equality
\begin{equation}\label{eqn: associativity in annuli -- unparametrised}
\oint_{w\,:\,\abs{w-z}=\epsilon} {A}[v(z)u(w)] (w-z)^n  \,dw
=
{A}[ \big(u_{(n)}v\big)(z) ]
\end{equation}
of operators $H_0(D_{in})\to H_0(D_{out})$.

If $\underline{A} \in \tAnn_c$ is a $z$-lift of $A$, and $K$ is a representation of $\cA$,
we also have
\begin{equation}\label{eqn: associativity in annuli}
\oint_{w\,:\,\abs{w-z}=\epsilon} \underline{A}[v(z)u(w)] (w-z)^n  \,dw
=
\underline{A}[ \big(u_{(n)}v\big)(z) ]
\end{equation}
as operators on $K$.
\end{lem}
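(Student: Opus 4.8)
The plan is to prove the identity first at the level of vectors inside a disc, then upgrade it to unparametrised annuli by a thinness-plus-Reeh--Schlieder argument, and finally to the parametrised case on an arbitrary sector $K$ by localisation. The starting point is the vector identity: for a disc $D$, a point $z\in\mathring D$, and $\epsilon$ small enough that the closed $\epsilon$-ball around $z$ lies in $\mathring D$,
\[
\oint_{\abs{w-z}=\epsilon} | v(z)u(w)\rangle_D\,(w-z)^n\,dw \,=\, |(u_{(n)}v)(z)\rangle_D .
\]
By permutation invariance $| v(z)u(w)\rangle = | u(w)v(z)\rangle$, and by the associativity axiom of the geometric vertex algebra (Theorem~\ref{thm: geometric vertex algebra from conformal net}, applied with no ``outer'' insertions) combined with $|u(s)v(0)\rangle = Y(u,s)v = \sum_m (u_{(m)}v)s^{-m-1}$ from Corollary~\ref{cor: n point functions from conformal nets}, one obtains the OPE expansion $|u(w)v(z)\rangle = \sum_m |(u_{(m)}v)(z)\rangle (w-z)^{-m-1}$ in $\widehat V$, converging locally uniformly absolutely in each graded piece. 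Projecting onto $V(\ell)$ with $p_\ell$ (which is continuous and commutes with the integral), the convergence is uniform on the compact contour, so term-by-term integration, using $\oint (w-z)^{j}\,dw=\delta_{j,-1}$ and the definition \eqref{eq: u(n)v} of $u_{(n)}v$, picks out exactly $p_\ell|(u_{(n)}v)(z)\rangle$. As a vector of $H_0(D)$ is determined by its graded pieces and $H_0(D)\hookrightarrow\widehat V$ is injective, the displayed identity holds in $H_0(D)$; all integrals converge in the relevant Banach spaces because the integrand is holomorphic in $w$ by Corollary~\ref{cor: embedded point insertions are holomorphic}.

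Next I would prove \eqref{eqn: associativity in annuli -- unparametrised}. Writing $A=B\cup C$ with $B$ a thin annulus containing the closed $\epsilon$-ball around $z$ and $C$ an insertion-free annulus, the factorisation $A[v(z)u(w)] = B[v(z)u(w)]\,C$ lets me strip off $C$ from both sides of the desired equation, reducing to the case where $A=B$ is thin. For thin $B$ and $I$ an interval in its thin part, both $\oint B[v(z)u(w)](w-z)^n\,dw$ and $B[(u_{(n)}v)(z)]$ are $\cA(I)$-module maps $H_0(D_{in})\to H_0(D_{out})$ by Lemma~\ref{lem: thin part A(I) module map -- points} (a norm-convergent integral of module maps is again a module map). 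By \eqref{eq: compatibility of discs and annuli -- point insertions}, applying either operator to $\Omega_{D_{in}}$, and using that evaluation at a fixed vector commutes with the contour integral, reduces the claimed equality to the vector identity of the previous paragraph in $H_0(D_{out})$. Since $\Omega_{D_{in}}$ is separating among operators $H_0(D_{in})\to H_0(D_{out})$ commuting with $\cA(I)$ (Reeh--Schlieder, exactly as in the proof of Lemma~\ref{lem: point insertion in annuli well defined}), the two operators coincide.

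Finally, for \eqref{eqn: associativity in annuli}: when $K=H_0$, Lemma~\ref{lem: parametrized vs unparametrized annuli -- with point fields} writes $\underline A[\,\cdot\,]=z_0\,U(\varphi_{out})^*A[\,\cdot\,]U(\varphi_{in})$ with $z_0$ and the unitaries independent of the insertions and of $w$, so conjugating the unparametrised identity yields the parametrised one. For general $K$, I would factor $\underline A=\underline B\,\underline C$ with $\underline B\in\tAnn_c(I)$ localised and containing all the insertion data; by \eqref{eq: representation of annuli with point insertions} the $\underline C$ factor pulls out of the integral, and by Lemma~\ref{lem: eqn annulus with points insertions is local} the localised operator $\underline B[\,\cdot\,]$ on $K$ is the image under the normal homomorphism $\rho_I\colon\cA(I)\to B(K)$ of the corresponding operator on $H_0$. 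As $\rho_I$ is bounded it commutes with the norm-convergent contour integral, so the statement on $K$ follows from the already-established case $K=H_0$.

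The main obstacle is the first step: passing from geometric-vertex-algebra associativity --- a statement about $\widehat V$-valued functions and infinite sums --- to a genuine identity of vectors in the Hilbert space $H_0(D)$. This requires both the locally uniform absolute convergence supplied by the associativity axiom (to integrate term by term on the contour) and the injectivity of $H_0(D)\hookrightarrow\widehat V$ (to conclude equality of honest vectors). Everything afterward is the standard thin-annulus/Reeh--Schlieder and localisation bookkeeping used repeatedly in this section.
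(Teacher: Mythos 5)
Your proof is correct and follows essentially the same route as the paper: reduce to thin/localised annuli by factoring off an insertion-free piece, use Lemma~\ref{lem: parametrized vs unparametrized annuli -- with point fields} and the localisation lemma to handle $\underline{A}$ and general $K$, and use the Reeh--Schlieder separating property of $\Omega_{D_{in}}$ among $\cA(I)$-module maps to reduce to a vector identity, which is the definition of $u_{(n)}v$ via \eqref{eq: u(n)v} and \eqref{YYY = ...}. The only difference is that you spell out the convergence and term-by-term integration behind that last vector identity (via the geometric-vertex-algebra associativity axiom), which the paper treats as immediate.
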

\begin{proof}
By performing a translation, we may assume without loss of generality that $z=0$.
Writing $A=BC$ (respectively $A=\underline B\hspace{.5mm}\underline C$), where $B$ is thin (respectively $\underline B$ is localised in some interval) and contains all the insertion points, we are reduced to proving the statements for thin/localised annuli.
By Lemma~\ref{lem: A[worms] in B(K)},
we may then assume without loss of generality that $K=H_0$ in the statement of \eqref{eqn: associativity in annuli}.
Finally, \eqref{eqn: associativity in annuli} follows from \eqref{eqn: associativity in annuli -- unparametrised} by Lemma~\ref{lem: parametrized vs unparametrized annuli -- with point fields}. So we are reduced to proving \eqref{eqn: associativity in annuli -- unparametrised} for thin annuli.

Let $I\subset A$ be contained in the thin part of $A$. By the Reeh-Schlieder theorem, the vacuum vector $\Omega_{D_{in}}$ is separating among $\cA(I)$-module maps $H_0(D_{in})\to H_0(D_{out})$.
The operators in \eqref{eqn: associativity in annuli -- unparametrised} are $\cA(I)$-module maps by Lemma~\ref{lem: thin part A(I) module map -- points}. So
it's enough to check that equation  upon evaluating on $\Omega_{D_{in}}$, in which case it becomes
\[
\oint_{\abs{w}=\epsilon} |u(w)v(0)\rangle_{D_{out}} w^n  \,dw
=
| (u_{(n)}v)(0)\rangle_{D_{out}}.
\]
The latter is immediate from \eqref{eq: u(n)v} (using \eqref{YYY = ...}).
\end{proof}

We finish this section by identifying the translation operator of the vertex algebra constructed in Corollary~\ref{cor: n point functions from conformal nets}.
Recall that the \emph{translation operator} of $V$ is the operator
\begin{equation}\label{eq: transl op}
V\to V: v \,\mapsto\, \frac{d}{dz} |v(z) \rangle\Big|_{z=0} .
\end{equation}

\begin{lem}\label{lem: T is L-1}
The translation operator \eqref{eq: transl op} coincides with (the restriction to $V$ of) the operator $L_{-1}$ on $H_0$ constructed in Proposition~\ref{prop: exist: stress energy tensor}.
\end{lem}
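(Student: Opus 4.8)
The plan is to identify two operators on $H_0$ — the translation operator $T$ of the vertex algebra, defined by $Tv = \frac{d}{dz}|v(z)\rangle|_{z=0}$, and the Virasoro operator $L_{-1}$ coming from the stress-energy tensor of Proposition~\ref{prop: exist: stress energy tensor} — and to show they agree on the finite-energy subspace $V$. Both are infinitesimal generators of the ``translation'' action, so the strategy is to realize each as the derivative of the same one-parameter family of geometric operations and invoke uniqueness.

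First I would recall that $L_{-1}=T_{H_0}(e^{-i\theta})$, and more usefully that $L_{-1}$ is the infinitesimal generator (in the sense of \eqref{eq: def how annuli act}) of the one-parameter subsemigroup of $\tAnn_c$ obtained by exponentiating the vector field $\ell_{-1}=z^0\tfrac{\partial}{\partial z}$, which is the germ of translation. The key geometric input is that translating an insertion point is implemented by the action of annuli: concretely, for small $s$, the vector $|v(s)\rangle$ and the vector $\pi(\Exp(s\,\ell_{-1}))\,v$ both describe ``the state $v$ inserted near the origin, translated by $s$.'' I would make this precise using Lemma~\ref{lem: univalent acting on point insertions}: for a univalent map $f$ fixing a neighbourhood appropriately, $\underline A_f[\,\cdot\,]$ relates point insertions at $z$ to point insertions at $f(z)$, and translation $w\mapsto w+s$ is (locally) such a map. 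Differentiating the resulting identity at $s=0$ converts the geometric translation of the insertion point into the action of the generating vector field $\ell_{-1}$, i.e. into $L_{-1}$.

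The cleanest route is therefore: take $v\in V$, write $|v(z)\rangle\in\widehat V$, and compute $\frac{d}{dz}|v(z)\rangle|_{z=0}$ by comparing $|v(z)\rangle$ with the action of the univalent germ of $w\mapsto w+z$ on $|v(0)\rangle=v$. By Lemma~\ref{lem: univalent acting on point insertions} (applied with $f$ the translation and a single insertion), $|v(z)\rangle$ equals the image of $v$ under the annulus corresponding to that translation, evaluated in $\widehat V$. Differentiating in $z$ at $0$ and using that the derivative of $\pi(\Exp(z\,\widetilde{\ell_{-1}}))$ at $z=0$ is $\pi(\ell_{-1})=L_{-1}$ (the defining property of the integrated action \eqref{eq: def how annuli act}, together with Proposition~\ref{prop: exist: stress energy tensor} identifying the Virasoro generators), we obtain $Tv=L_{-1}v$. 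The holomorphicity of $z\mapsto |v(z)\rangle$ (Corollary~\ref{cor: embedded point insertions are holomorphic}) guarantees the derivative exists and may be computed this way.

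The main obstacle will be the bookkeeping of \emph{which} geometric operation implements translation of a single point insertion while keeping the ambient disc fixed, and ensuring the differentiation is carried out in the right Hilbert space (or in $\widehat V$) so that the standard fact ``$L_{-1}$ generates translations'' is applied legitimately rather than circularly. In particular one must check that the germ at the origin of $w\mapsto w+z$ is genuinely the univalent element whose generator is $\ell_{-1}$, and that the passage from the action on $H_0(D)$ to the action on $\widehat V$ respects this derivative — this is where Corollary~\ref{cor: embedded point insertions are holomorphic} and the compatibility of the maps $H_0(D)\to\widehat V$ are essential. Once the correct one-parameter family is pinned down, the identification $T=L_{-1}$ is a one-line differentiation; all the real content is in translating the geometric statement of Lemma~\ref{lem: univalent acting on point insertions} into the infinitesimal statement about $\ell_{-1}$.
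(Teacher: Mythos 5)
Your overall strategy — realize $|v(z)\rangle$ as the image of $v$ under a univalent annulus implementing translation of the insertion point, then differentiate at $z=0$ — is the right one, and it is essentially the paper's. But the specific one-parameter family you propose does not exist in the semigroup, and this is not mere bookkeeping: the map $w\mapsto w+z$ is \emph{not} a univalent self-map of $\bbD$ (it moves part of the disc outside itself), the vector field $\ell_{-1}=\tfrac{\partial}{\partial z}$ is not inward-pointing along all of $S^1$, and consequently $\Exp(z\,\ell_{-1})$ is not an element of $\Univ\subset\Ann_c$. There is therefore no bounded operator $\pi(\Exp(z\,\widetilde{\ell_{-1}}))$ supplied by the integration theorem whose derivative at $z=0$ you could identify with $L_{-1}$; indeed $e^{zL_{-1}}$ is unbounded on $H_0$. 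The step you flag as ``the main obstacle'' — checking that the germ of $w\mapsto w+z$ is ``genuinely the univalent element whose generator is $\ell_{-1}$'' — is precisely the assertion that fails, so as written the central step of your argument has no meaning.

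The paper's fix is to pre-compose with a contraction: take $f(w)=rw+z$ with $r<1$ fixed and $|z|$ small, so that $A_f\in\Univ$ honestly. Lemma~\ref{lem: univalent acting on point insertions} then gives $A_f v=r^n|v(z)\rangle_\bbD$ for $v\in V(n)$, and an explicit univalent framing exhibits $A_f=\exp\bigl(\ln(r)L_0-\tfrac{z\ln(r)}{1-r}L_{-1}\bigr)$. Note also that the final differentiation is not the ``one-line'' derivative-at-the-identity you envision: one is differentiating $\exp$ of a $z$-dependent generator at a point of the semigroup other than the identity (namely the contraction $r^{L_0}$), which requires the Duhamel-type formula
\[
\tfrac{d}{dz}\exp(X(z))=\int_0^1\exp((1-s)X)\,X'(z)\,\exp(sX)\,ds
\]
from \cite[Lem. 3.7]{HenriquesTenerIntegratingax}; the resulting integral $-\tfrac{\ln r}{1-r}\int_0^1 r^{1-s}\,ds$ conveniently evaluates to $1$, yielding $L_{-1}v$. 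Your proposal needs both of these ingredients inserted before it becomes a proof.
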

\begin{proof}
For $v\in V$, we will show that $\frac{d}{dz} | v(z) \rangle_{\bbD}\big|_{z=0} = L_{-1}v$.
Fix $v \in V(n)$, and fix a positive number $r < 1$.
For every $z \in \bbD$ sufficiently small, we have a univalent map $f:\bbD \to \bbD$ given by $f(w) = rw + z$.
Writing just $A_f:H_0\to H_0$ for the operator associated to the annulus $A_f \in \Univ$ then, by Lemma~\ref{lem: univalent acting on point insertions}, we have:
\begin{equation}\label{eqn: v(z)D}
A_f v
=A_f |v(0)\rangle_{\bbD} = 
r^n|v(z)\rangle_{\bbD}.
\end{equation}
The annulus $A_f$ has a univalent framing $h:S^1 \times [0,1] \to A_f$ given by
\[
h(\theta,t) = r^{1-t}e^{i \theta} + \frac{1-r^{1-t}}{1-r} z,
\]
with associated path (Definition~\ref{def: Framings and paths}) given by:
\[
X(t) = -\frac{h_t}{h_\theta} \partial_\theta = -i \ln(r) \left(1 - \frac{e^{-i\theta} z}{1-r}\right) \partial_\theta = 
\ln(r)L_0 - \frac{z\ln(r)}{1-r} L_{-1}
\]
(and since $X(t)$ does not depend on $t$, we will just write $X$ below).
By definition, the operator $A_f$ on $H_0$ is then given by
\[
A_f = \prod_{1 \ge t \ge 0} \Exp(X \, dt) \;\!= \exp(X).
\]
We can then compute:
\begin{align*}
\frac{d}{dz} |v(z)\rangle_{\bbD}\Big|_{z=0} &=  r^{-n}\frac{d}{dz} A_f v\Big|_{z=0}\\
&=  r^{-n}\frac{d}{dz} \exp\Big(\ln(r)L_0 - \frac{z\ln(r)}{1-r} L_{-1}\Big) v\Big|_{z=0}\\
&=  r^{-n}\int_{0}^1\left.
\exp((1-s)X)
\left(-\frac{\ln(r)}{1-r} L_{-1}\right) 
\exp(sX)
v\, ds\right|_{z=0}  \\
&= -r^{-n}\left(\frac{\ln(r)}{1-r}\right)\int_0^1  r^{(1-s)L_0}\,  L_{-1}\,  r^{sL_0} \, v \, ds\\
&= -\left(\frac{\ln(r)}{1-r}\int_0^1 r^{1-s} \, ds \right)  L_{-1}v\\
&= L_{-1}v
\end{align*}
where the first equality holds by \eqref{eqn: v(z)D}, and 
the third one holds by
\cite[Lem. 3.7]{HenriquesTenerIntegratingax}. \end{proof}

\subsection{The conformal vector}\label{sec: conformal vector}

Recall that the vacuum sector $H_0$ of the conformal net $\cA$ comes equipped with a positive energy representation of $\Diff_c(S^1)$, which differentiates to a representation by unbounded operators of the Virasoro algebra $\Vir_c$ (see e.g. Proposition~\ref{prop: exist: stress energy tensor} or \cite[\S3.2]{CKLW18}). In this section, we will show that the associated stress-energy tensor
\[
T(z) := \sum_{n \in \bbZ} L_n z^{-n-2} : V \to \widehat V
\]
is one of the fields of the vertex algebra constructed in Corollary~\ref{cor: n point functions from conformal nets}.
This makes $V$ into a vertex operator algebra (as opposed to a mere vertex algebra), with conformal vector $L_{-2} \Omega$.

Let $\nu := L_{-2} \Omega$, let
\begin{align*}
Y(\nu,z):V&\,\to\, \widehat V\qquad\qquad \text{(for $z \in \bbC^\times$)}\hspace{-2cm}
\\
u\,\mapsto&\,\, | \nu(z)u(0) \rangle
\end{align*}
be the corresponding vertex algebra field,
and let $S_n:V\to V$ be the operators defined by
\[
Y(\nu,z) = \sum_{n \in \bbZ} S_n z^{-n-2}.
\]
The main goal of this section is to show that $S_n=L_n$ (where the $L_n$'s are as in Proposition~\ref{prop: exist: stress energy tensor}).
By construction, $S_{-2}\Omega = \nu = L_{-2}\Omega$.
It follows by induction that 
\begin{equation}\label{eq: SOm = LOm}
S_{-n}\Omega = L_{-n}\Omega\qquad\qquad \forall n \ge 2.\hspace{-2cm}
\end{equation}
Indeed, assuming 
\eqref{eq: SOm = LOm} for a given $n$,
we have
\[
L_{-(n+1)}\Omega = \tfrac{1}{n-1} [L_{-1}, L_{-n}]\Omega = \tfrac{1}{n-1} [L_{-1}, S_{-n}]\Omega = S_{-(n+1)}\Omega,
\]
where the first equality is one of the Virasoro relations, the second equality is our inductive hypothesis, and the third one is the translation axiom for the vertex algebra $V$ 
(along with Lemma~\ref{lem: T is L-1}).
We also have $L_n \Omega = S_n \Omega$ for all $n \ge -1$, as both expressions are zero (the first one is zero by the classification of unitary representations of the Virasoro algebra, and the second one is zero by the vacuum axiom of the vertex algebra $V$). It follows that
\begin{equation}\label{eqn: S and T agree on Omega}
Y(\nu,z)\Omega \,=\, T(z)\Omega
\end{equation}
in $\widehat V$, for all $z \in \bbC$.

The challenge in showing $Y(\nu,z) = T(z)$ is that these two objects have different technical characteristics, owing to their very different constructions.
We will geometrically modify them, so as to enable a direct comparison.

For $f \in C^\infty(S^1)$, let $Y(\nu,f):V \to \widehat V$ be the \emph{smeared field} defined by 
\begin{equation}\label{eq:  Y(nu,f)}
Y(\nu,f)\,v \,:=\, \sum_{n \in \bbZ} \hat f(n) S_n v = \int_{S^1} f(z) \cdot z^2 Y(\nu,z)v \, \frac{dz}{2\pi i z},
\end{equation}
where $\hat f(n)$ denotes the $n$-th Fourier coefficient of $f$.
We write $T(f)$ for the closure of the map $V \to H_0:v \mapsto \sum \hat f(n) L_n v$, and recall from Proposition~\ref{prop: exist: stress energy tensor} that $T(f)$ is affiliated with $\cA(I)$ whenever $\supp(f) \subset I$.
We wish to show that $Y(\nu,f)v=T(f)v$ for all $v\in V$, and $f \in C^\infty(S^1)$.

\begin{defn}\label{def; Ann rl0}
Let $r^{\ell_0}$ denote the round annulus $A_r:=\{z\in\bbC:r\le|z|\le 1\}$ with its standard boundary parametrizations, and
let
\[
\Ann^{\le r^{\ell_0}} := \{ A \in \Ann \mid \exists B \in \Ann : r^{\ell_0}=AB\}
\]
be the set of annuli $A$ such that $r^{\ell_0}=AB$ for some annulus $B$.
\end{defn}
Annuli $A\in \Ann^{\le r^{\ell_0}}$ admit a standard embedding in the unit disc because $A_r\subset \bbD$.

\begin{lem}\label{lem basic property of Ann^(le r^(ell_0))}
Let $K$ be a positive energy representation of the Virasoro algebra, and let $K^{f.e.}\subset K$ be its subspace of finite energy vectors (as in Definition~\ref{def: H^f.e.}).

If $\underline A$ and $\underline B \in \tAnn_c$ are such that $\underline{A}\hspace{.5mm}\underline{B}=r^{L_0}$,
then the range of $\underline{A}:K\to K$ contains $K^{f.e.}$, and 
\begin{equation}\label{eq: AK=BK}
\underline{A}^{-1}K^{f.e.} = \underline{B}K^{f.e.}.
\end{equation}
Moreover, $\underline{A}^{-1} K^{f.e.}$ is dense in $K$.
\end{lem}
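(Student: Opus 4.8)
The plan is to exploit the factorisation $\underline{A}\hspace{.5mm}\underline{B}=r^{L_0}$ together with the fact that $r^{L_0}$ is diagonalisable and invertible on finite energy vectors. Since the representation has positive energy, $L_0$ has discrete spectrum with finite-dimensional eigenspaces, so $r^{L_0}$ scales the $L_0$-eigenspace of eigenvalue $h$ by the nonzero scalar $r^h$. Consequently the algebraic operator $r^{-L_0}$, scaling that eigenspace by $r^{-h}$, is well defined on $K^{f.e.}$ and is a two-sided inverse to $r^{L_0}$ as a linear bijection $K^{f.e.}\to K^{f.e.}$. Throughout I write $\underline{A}$ and $\underline{B}$ for the bounded operators $\pi(\underline{A})$ and $\pi(\underline{B})$, so that $\underline{A}\hspace{.5mm}\underline{B}=r^{L_0}$ as operators on $K$ (using Example~\ref{ex: r^L_0} and that $\pi$ is a homomorphism), and I recall from Lemma~\ref{lem: annuli have dense image} that both are injective with dense image.

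For the first assertion, given $\xi\in K^{f.e.}$ I would note that $r^{-L_0}\xi\in K^{f.e.}\subseteq K$, so $\underline{B}\,r^{-L_0}\xi$ is a well-defined vector of $K$, and $\underline{A}(\underline{B}\,r^{-L_0}\xi)=r^{L_0}r^{-L_0}\xi=\xi$; hence $\xi$ lies in the range of $\underline{A}$, giving $K^{f.e.}\subseteq\underline{A}(K)$.

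For the equality \eqref{eq: AK=BK} I would argue by double inclusion. If $\zeta\in K^{f.e.}$ then $\underline{A}(\underline{B}\zeta)=r^{L_0}\zeta\in K^{f.e.}$, so $\underline{B}\zeta\in\underline{A}^{-1}K^{f.e.}$ and therefore $\underline{B}K^{f.e.}\subseteq\underline{A}^{-1}K^{f.e.}$. Conversely, if $\eta\in\underline{A}^{-1}K^{f.e.}$, put $\xi:=\underline{A}\eta\in K^{f.e.}$; then $\underline{A}(\underline{B}\,r^{-L_0}\xi)=r^{L_0}r^{-L_0}\xi=\xi=\underline{A}\eta$, and the injectivity of $\underline{A}$ forces $\eta=\underline{B}\,r^{-L_0}\xi\in\underline{B}K^{f.e.}$ because $r^{-L_0}\xi\in K^{f.e.}$.

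Finally, for density I would combine \eqref{eq: AK=BK} with the continuity of $\underline{B}$. Since $K^{f.e.}$ is dense in $K$ (it is the algebraic span of the eigenspaces of a self-adjoint operator with discrete spectrum, cf. Definition~\ref{def: H^f.e.}), every vector $\underline{B}\psi$ with $\psi\in K$ is a norm limit of vectors $\underline{B}\psi_n$ with $\psi_n\in K^{f.e.}$, whence $\underline{B}(K)\subseteq\overline{\underline{B}K^{f.e.}}$; as $\underline{B}(K)$ is dense by Lemma~\ref{lem: annuli have dense image}, the subspace $\underline{B}K^{f.e.}=\underline{A}^{-1}K^{f.e.}$ is dense in $K$. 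I do not expect a substantial obstacle; the only points requiring care are to keep the (unbounded) algebraic operator $r^{-L_0}$, which is applied solely to finite energy vectors, distinct from the bounded operators $\underline{A}$ and $\underline{B}$, and to invoke the injectivity of $\underline{A}$ precisely at the reverse inclusion.
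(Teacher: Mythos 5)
Your proof is correct and follows essentially the same route as the paper's: surjectivity onto $K^{f.e.}$ via $\xi = \underline{A}(\underline{B}\,r^{-L_0}\xi)$, the reverse inclusion via injectivity of $\underline{A}$ applied to $\underline{A}\eta = \underline{A}\,\underline{B}\,r^{-L_0}\underline{A}\eta$, and density from the boundedness and dense range of $\underline{B}$. Your explicit care in treating $r^{-L_0}$ as an algebraic operator defined only on $K^{f.e.}$ is exactly the point the paper leaves implicit.
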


\begin{proof}
Since $\underline{A}\hspace{.5mm}\underline{B}=r^{L_0}$, the range of $\underline{A}$ contains $K^{f.e.}$.
The inclusion $\underline{B}K^{f.e.} \subseteq \underline{A}^{-1}K^{f.e.}$ is immediate, so we focus on the reverse inclusion.
If $\eta \in K$ and $\underline{A}\hspace{.5mm}\eta \in K^{f.e.}$,
then 
\[
\underline{A}\hspace{.5mm} \eta = \underline{A}\hspace{.5mm}\underline{B}\hspace{.5mm} r^{-L_0}\underline{A}\hspace{.5mm}\eta.
\]
By Lemma~\ref{lem: annuli have dense image}, $\underline{A}$ is injective, hence $\eta = \underline{B}r^{-L_0}\underline{A}\eta$. It follows that $\eta \in \underline{B}K^{f.e.}$, as required.

Finally, since $K^{f.e.}$ is dense in $K$, and $\underline{B}:K\to K$ has dense image (Lemma \ref{lem: annuli have dense image}), the subspace \eqref{eq: AK=BK} is also dense in $K$.
\end{proof}

Given an annulus $A$, we denote by 
\[
\partial_{out}^{th} A:= \partial_{out} A\setminus (\partial_{out} A\cap \partial_{in} A)
\]
the ``thick part'' of the outgoing boundary of $A$.
If $A\in \Ann^{\le r^{\ell_0}}$, then
using the standard embedding of $A$ into $\bbD$, we may view $\partial_{out}^{th} A$ as a subset of $S^1$.
Let $\underline{A} \in \Ann_c$ be a lift
of $A$, and let $f \in C^\infty(S^1)$ be supported in some interval $I\subset \partial_{out}^{th} A$.
Then the operator
$\underline{A}^*\hspace{.5mm} T(f)\hspace{.5mm} \underline{A}$
on $H_0$ is densely defined by Lemma \ref{lem basic property of Ann^(le r^(ell_0))}, and can be used to define
a sesquilinear form 
\[
\xi,\eta\,\mapsto\, \langle \underline{A}^*T(f)\underline{A}\xi,\eta \rangle  =  \langle T(f) \underline{A}\xi, \underline{A}\eta \rangle
\]
with domain $\underline{A}^{-1}V$.

Now consider the sesquilinear form
$
\xi,\eta\,\mapsto\, 
\langle Y(\nu,f) \underline{A} \xi, \underline{A} \eta \rangle
$,
which is a smeared version of the sesquilinear form
\[
\xi,\eta\,\mapsto\, 
\langle Y(\nu,z) \underline{A} \xi, \underline{A} \eta \rangle
\]
with same domain $\underline{A}^{-1}V$.
The latter can be understood as coming from a point insertion in the doubled annulus $\hat A = A^\dagger \cup A$:

\begin{lem} \label{lem: vertex algebra fields in annuli}
Fix $A \in \Ann^{\le r^{\ell_0}}$, with its standard embedding in $\bbD$, along with a lift $\underline{A} \in \Ann_c$.
Let $\underline{A}^\dagger \in \Ann_c$ be the conjugate of $\underline{A}$, with underlying annulus $A^\dagger$, realized in $\bbC$ as the image of $A$ under the reflection $z \mapsto \overline{z}^{-1}$.
Finally let $\underline{\hat{A}} := \underline{A}^\dagger\underline{A}$, with underlying annulus $\hat A := A^\dagger\cup A\subset \bbC$.\vspace{-1mm}

Then, for any $\xi,\eta \in \underline{A}^{-1}V$, $v \in V$, and $z \in \mathring{\hat A}$, we have
\begin{equation}\label{eq: A hat expression}
\langle \underline{\hat{A}}[v(z)] \xi, \eta \rangle = \langle Y(v,z) \underline{A} \xi, \underline{A} \eta \rangle.
\end{equation}
\end{lem}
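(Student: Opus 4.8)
The plan is to exploit the holomorphic dependence on $z$ to reduce to the case where the insertion point lies in the $A$-part of $\hat A$, to factor the doubled annulus there, and then to reduce the whole statement to a computation with the round annulus $r^{L_0}$.

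First I would note that both sides of \eqref{eq: A hat expression} are holomorphic functions of $z$ on the connected open set $\mathring{\hat A}$: the left side by Lemma~\ref{lem: point insertions are holomorphic}, and the right side because $z \mapsto Y(v,z)\underline A\xi = |\,v(z)(\underline A\xi)(0)\,\rangle$ is a $\widehat V$-valued holomorphic function on $\bbC^\times \supseteq \mathring{\hat A}$ (Corollary~\ref{cor: n point functions from conformal nets}), so its pairing with the fixed vector $\underline A\eta$ is holomorphic. By analytic continuation it therefore suffices to prove \eqref{eq: A hat expression} on the open subset $\mathring A = \{z \in \mathring{\hat A} : |z| < 1\}$. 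For such $z$ the insertion sits in the $A$-part of $\hat A = A^\dagger \cup A$, so by the homomorphism property \eqref{eq: representation of annuli with point insertions} we may factor $\underline{\hat A}[v(z)] = \pi(\underline A^\dagger)\,\underline A[v(z)]$, and \eqref{eq: pi(Adag)=pi(A)*} gives $\pi(\underline A^\dagger) = \pi(\underline A)^*$.

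Moving this adjoint across the inner product, and using that $\underline A\xi \in V$ so the right side below makes sense, the claim reduces to the $\widehat V$-identity
\[
\underline A[v(z)]\,\xi \;=\; \big|\,v(z)\,(\underline A\xi)(0)\,\big\rangle .
\]
Indeed, once this is known, pairing both sides against the finite-energy vector $\eta' := \underline A\eta$ and rewriting $\langle \underline A[v(z)]\xi,\pi(\underline A)\eta\rangle = \langle \pi(\underline A)^*\underline A[v(z)]\xi,\eta\rangle = \langle \underline{\hat A}[v(z)]\xi,\eta\rangle$ recovers \eqref{eq: A hat expression}. To prove the identity, write $\xi' := \underline A\xi \in V$ and choose a lift $\underline B \in \tAnn_c$ of the annulus $B$ furnished by $A \in \Ann^{\le r^{\ell_0}}$, normalised so that $\underline A\,\underline B = r^{L_0}$ (Example~\ref{ex: r^L_0}); injectivity of $\underline A$ together with Lemma~\ref{lem basic property of Ann^(le r^(ell_0))} then gives $\xi = \underline A^{-1}\xi' = \underline B\, r^{-L_0}\xi'$. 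Applying \eqref{eq: representation of annuli with point insertions} once more collapses $\underline A$ and $\underline B$ while leaving the insertion in place (the standard embedding of $A$ as the outer part of $A_r$ keeps $z$ at its given coordinate), so that
\[
\underline A[v(z)]\,\xi = \underline A[v(z)]\,\underline B\, r^{-L_0}\xi' = (r^{L_0})[v(z)]\, r^{-L_0}\xi'.
\]
Finally, $r^{L_0}$ is the univalent lift $\underline A_f \in \Univ$ of the map $f(w) = rw$, so Lemma~\ref{lem: univalent acting on point insertions}, applied with the single annulus-insertion $v$ at $z$ and the single disc-insertion $u$ at $0$, yields $(r^{L_0})[v(z)]\,u = |\,v(z)\,(r^{L_0}u)(0)\,\rangle = r^{n}\,|\,v(z)u(0)\,\rangle$ for $u \in V(n)$; hence $(r^{L_0})[v(z)]\,r^{-L_0}\xi' = |\,v(z)\,\xi'(0)\,\rangle$, as required.

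The main obstacle I anticipate is the bookkeeping in this last step: verifying that the welding can be arranged as $\underline A\,\underline B = r^{L_0}$ exactly (not merely $A\cup B = A_r$ in $\Ann$) by fixing the scalar ambiguity in $\underline B$, that the insertion point $z\in\mathring A$ genuinely lands at its standard coordinate inside $A_r$, and that the coordinate-change factor produced by Lemma~\ref{lem: univalent acting on point insertions} is precisely $r^{L_0}$, so that it cancels the $r^{-L_0}$ rather than doubling it. A convenient internal consistency check is the case $v = \Omega$: there $\underline A[\Omega(z)] = \pi(\underline A)$, the identity collapses to the insertion-at-zero axiom, and the direction of the scaling factor $r^{n}$ is pinned down unambiguously.
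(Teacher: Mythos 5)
Your argument is correct and is essentially the paper's proof with the two reduction steps performed in the opposite order: the same ingredients appear --- analytic continuation to $\{|z|<1\}$, peeling off $\underline{A}^\dagger$ via $\pi(\underline{A}^\dagger)=\pi(\underline{A})^*$, the factorisation $r^{L_0}=\underline{A}\,\underline{B}$ with $\xi=\underline{B}\,r^{-L_0}\underline{A}\xi$ from Lemma~\ref{lem basic property of Ann^(le r^(ell_0))}, and Lemma~\ref{lem: univalent acting on point insertions} as the computational core. The one real difference is that the paper invokes holomorphicity only for the round annulus $\hat A_r$, whose interior is manifestly connected, and then treats general $A$ by conjugating by $\underline{B}$, which proves the identity for all $z\in\mathring{\hat A}$ at once; you instead continue analytically inside $\mathring{\hat A}$ for a general, possibly thin, $A$, which additionally requires that every connected component of $\mathring{\hat A}$ meet $\{|z|<1\}$. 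That is true (one checks that every component of $\mathring A$ has boundary meeting the thick part of $\partial_{out}A$, hence joins its mirror image across $S^1$ inside $\mathring{\hat A}$), but it is not automatic for thin annuli and should be stated explicitly; otherwise your bookkeeping in the final step --- the normalisation $\underline{A}\,\underline{B}=r^{L_0}$, the location of $z$ inside $A_r$, and the cancellation of $r^{L_0}$ against $r^{-L_0}$ --- is exactly as in the paper.
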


\begin{proof}
If $A = A_r := \{z \in \bbC : r \le |z| \le 1\}$, then \eqref{eq: A hat expression} is equivalent to
\begin{equation}\label{eq: worm in Ahat}
\underline{\hat{A}}[v(z)]u = r^{L_0}Y(v,z)r^{L_0}u
\vspace{-1mm}
\end{equation}
for $u \in V$ and $z\in \mathring{\hat A}$.
Both expressions being holomorphic, it is enough to show \eqref{eq: worm in Ahat} for $\abs{z} < 1$, 
in which case $\underline{\hat{A}}[v(z)] = r^{L_0} \underline A_r[v(z)]$.
So we are reduced to showing $\underline A_r[v(z)]u = Y(v,z)r^{L_0}u$ in $\widehat V$.
Unpacking the definitions, this is equivalent to showing the equality
\[
\underline A_r[v(z)]u = | v(z) (r^{L_0}u)(0) \rangle_{\bbD}
\]
in $H_0(\bbD)$.
The latter is an instance of Lemma~\ref{lem: univalent acting on point insertions}.

Back to the general case, choose $\underline{B} \in \tAnn_c$ such that $\underline A_r = \underline{A} \hspace{.5mm} \underline{B}$.
We then have $\hat A_r = \underline{B}^\dagger \hspace{.5mm} \underline{\hat{A}} \hspace{.5mm} \underline{B}$.
If $\underline{A}\xi, \underline{A}\eta \in V$ then,
by Lemma~\ref{lem basic property of Ann^(le r^(ell_0))}, we may write $\xi = \underline{B}\xi^\prime$ and $\eta = \underline{B}\eta^\prime$.
We can then compute:
\begin{align*}
\langle \underline{\hat{A}}[v(z)] \xi, \eta \rangle 
&= \langle \underline{\hat{A}}[v(z)] \underline{B}\xi^\prime, \underline{B}\eta^\prime \rangle\\
&=\langle \underline{B}^* \underline{\hat{A}}[v(z)] \underline{B}\xi^\prime, \eta^\prime \rangle\\
&=\langle \underline{\hat{A}}_{r}[v(z)] \xi^\prime, \eta^\prime \rangle\\
&=\langle Y(v,z) r^{L_0} \xi^\prime, r^{L_0} \eta^\prime \rangle\\
&=\langle Y(v,z) \underline{A}\xi, \underline{A} \eta \rangle.\qedhere
\end{align*}
\end{proof}

We can now prove the main result of this section.

\begin{thm}\label{thm: unitary VOA from conformal net}
Let $\cA$ be a conformal net, let $V$ be the associated vertex algebra (constructed in Corollary~\ref{cor: n point functions from conformal nets}),
and let $Y:V \to \End(V)[[z^{\pm 1}]]$ be the be state-field correspondence of the vertex algebra.

Let $T(z) = \sum_{n \in \bbZ} L_n z^{-n-2}$ be the stress-energy tensor of $\cA$ (constructed in Proposition~\ref{prop: exist: stress energy tensor}),
and let $\nu := L_{-2} \Omega$.
Then
\[
Y(\nu,z) = T(z), 
\]
and $V$ is a vertex operator algebra with conformal vector $\nu$.
\end{thm}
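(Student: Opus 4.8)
The plan is to prove the field identity $Y(\nu,z)=T(z)$, which unwinds to $S_n=L_n$ on $V$, equivalently $Y(\nu,f)=T(f)$ on $V$ for every $f\in C^\infty(S^1)$ (where $Y(\nu,f):=\sum_n\hat f(n)S_n$). Both sides are linear in $f$, and on the core $V$ the operator $T(f)$ is additive in $f$, so a partition of unity reduces everything to the case $\supp f\subset I$ for a proper interval $I\subsetneq S^1$. This localization is the whole point: it lets me bring $T(f)$, which is affiliated with $\cA(I)$ by Proposition~\ref{prop: exist: stress energy tensor}, into contact with the local structure of the net. The only global input I will use is the already-established agreement $Y(\nu,f)\Omega=T(f)\Omega$ coming from \eqref{eqn: S and T agree on Omega}.

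Fix such an $f$ and choose an annulus $A$ that is simultaneously localized in $I$ and lies in $\Ann^{\le r^{\ell_0}}$, with $I\subset\partial_{out}^{th}A$ (such an $A$ exists: pinch the round annulus $A_r$ along $I'$). Pick a localized lift $\underline A\in\Ann_c(I)$, so that $\underline A\in\cA(I)$, and form the doubled annulus $\underline{\hat A}=\underline A^\dagger\underline A$ of Lemma~\ref{lem: vertex algebra fields in annuli}. Since $\hat A$ is again localized in $I$, the smeared insertion
\[
P:=\frac{1}{2\pi i}\oint f(z)\,z\,\underline{\hat A}[\nu(z)]\,dz
\]
lies in $\cA(I)$ by Lemma~\ref{lem: eqn annulus with points insertions is local}. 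Integrating the identity of Lemma~\ref{lem: vertex algebra fields in annuli} against $f(z)\,z\,\tfrac{dz}{2\pi i}$ over a contour running along the seam (which winds once around $0$, so that the integral extracts the modes of the finite-energy matrix element) gives $\langle P\xi,\eta\rangle=\langle Y(\nu,f)\underline A\xi,\underline A\eta\rangle$ for all $\xi,\eta\in\underline A^{-1}V$. Setting $S:=\underline A^*T(f)\underline A$, which is affiliated with $\cA(I)$ and satisfies $\langle S\xi,\eta\rangle=\langle T(f)\underline A\xi,\underline A\eta\rangle$, and using that $\underline A$ has dense range containing $V$ (Lemma~\ref{lem basic property of Ann^(le r^(ell_0))}) so that $\underline A\xi$ ranges over all of $V$, the desired equality $Y(\nu,f)=T(f)$ on $V$ becomes exactly the operator identity $P=S$.

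To prove $P=S$ I would test on the single vector $\zeta:=\underline A^{-1}\Omega$. From the form identity at $\xi=\zeta$, together with $\underline A\zeta=\Omega$ and the vacuum agreement, $\langle P\zeta,\eta\rangle=\langle Y(\nu,f)\Omega,\underline A\eta\rangle=\langle T(f)\Omega,\underline A\eta\rangle=\langle S\zeta,\eta\rangle$ for all $\eta\in\underline A^{-1}V$, whence $P\zeta=S\zeta$. Because $\underline A\in\cA(I)$ is injective and $\Omega$ is cyclic for the commutant $\cA(I')=\cA(I)'$ (Reeh-Schlieder), the vector $\zeta$ is cyclic for $\cA(I')$, hence separating for $\cA(I)$; as $P-S$ is affiliated with $\cA(I)$ and annihilates $\zeta$, it follows that $P=S$. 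This yields $Y(\nu,f)=T(f)$ for all localized $f$, hence for all $f$, so $S_n=L_n$ and $Y(\nu,z)=T(z)$.

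I expect the separating step to be the main obstacle, for two reasons. First, it requires the (standard but not entirely formal) fact that $\zeta=\underline A^{-1}\Omega$ is genuinely cyclic for $\cA(I')$ — resting on $\underline A$ being injective with dense image (Lemma~\ref{lem: annuli have dense image}) and commuting with $\cA(I')$ — together with the Tomita-Takesaki principle that an operator affiliated with a von Neumann algebra and annihilating a separating vector is zero (one checks it vanishes on the core $\cA(I')\zeta$). Second, one must verify that $T(f)\underline A$ has the mild regularity needed for $S$ to be affiliated with $\cA(I)$ and for the contour integral defining $P$ to be interchanged with the inner products, which uses the smoothing produced by $\underline A\in\Ann^{\le r^{\ell_0}}$. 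Once $Y(\nu,z)=T(z)$ is in hand, the remaining axioms are immediate: the $L_n$ obey the Virasoro relations by Proposition~\ref{prop: exist: stress energy tensor}, $L_0$ is the grading operator by the definition of $V$, $L_{-1}$ is the translation operator by Lemma~\ref{lem: T is L-1}, and the weight spaces are finite-dimensional by hypothesis; so $V$ is a vertex operator algebra with conformal vector $\nu$.
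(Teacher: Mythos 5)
Your proposal is essentially the paper's own argument: localize $f$, pass to the doubled annulus $\underline{\hat A}=\underline A^\dagger\underline A$ for $\underline A\in\Ann^{\le r^{\ell_0}}$ localized near $\supp f$, use Lemma~\ref{lem: vertex algebra fields in annuli} to realize the sesquilinear form of $Y(\nu,f)$ as a bounded local operator, compare it with $\underline A^*T(f)\underline A$ on the single vector $\underline A^{-1}\Omega$ via the vacuum identity \eqref{eqn: S and T agree on Omega}, and finish with a separating-vector argument inside the local algebra. Two points need repair.

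First, your localization setup is literally unsatisfiable: if $\underline A$ is localized in $I$, then $\varphi_{in}$ and $\varphi_{out}$ agree on $I'$, which contains $\partial I$, so the endpoints of $I$ lie in the thin part of $A$ and hence $I\not\subset\partial_{out}^{th}A$. This is why the paper fixes an interval $J$ containing $I$ in its \emph{interior} and takes $\underline A\in\tAnn_c(J)\cap\Ann^{\le r^{\ell_0}}$ with $I\subset\partial_{out}^{th}A$; the price is that $P$ and the closure of $\underline A^*T(f)\underline A$ land in $\cA(J)$ rather than $\cA(I)$, and one recovers affiliation of $T(f)$ to $\cA(I)$ only from Proposition~\ref{prop: exist: stress energy tensor} (which is all that is needed, since the goal is the vector identity $T(f)u=Y(\nu,f)u$). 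Second, your justification that $\zeta=\underline A^{-1}\Omega$ is cyclic for $\cA(I')$ --- ``$\underline A$ is injective with dense image and $\Omega$ is cyclic for the commutant'' --- does not work as stated: $\cA(I')\zeta=\underline A^{-1}\bigl(\cA(I')\Omega\bigr)$, and the image of a dense set under the unbounded operator $\underline A^{-1}$ need not be dense. The conclusion is true, but the paper proves the equivalent separating property directly (Lemma~\ref{lem: annulus times vacuum is separating}): writing $\zeta=\underline B\Omega$ with $r^{L_0}=\underline A\,\underline B$ and factoring $\underline B=\underline\gamma\,\underline B'$ with $\underline B'\in\Univ$, one gets $\zeta=\underline\gamma\Omega$ by \eqref{eq:  A Omega=Omega}, and $x\zeta=0$ forces $\underline\gamma^{-1}x\underline\gamma\,\Omega=0$, hence $x=0$ by Reeh--Schlieder. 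With these two adjustments your argument coincides with the one in the paper.
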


\begin{proof}
Let $f \in C^\infty(S^1)$ be supported in some interval $I$.
Fix $J$ containing $I$ in its interior, and let $\underline{A} \in \tAnn_c(J) \cap \Ann^{\le r^{\ell_0} }$ be localised in $J$, with $I \subset \partial_{out}^{th} A$.
Finally, let $\underline{\hat{A}} := \underline{A}^\dagger \underline{A}$, with underlying annulus $\hat A$ embedded in $\bbC^\times$ as in Lemma~\ref{lem: vertex algebra fields in annuli}.

By
\vspace{-1mm}
Lemmas~\ref{lem: point insertions are holomorphic} and \ref{lem: eqn annulus with points insertions is local}, the assignment $z \mapsto \underline A[\nu(z)]$ 
defines a holomorphic function $\mathring{\hat A}\to \cA(J)$.
Since $I \subset \mathring{\hat A}$,
the expression 
\begin{equation}\label{eqn: Anuf is in A(J)}
\underline{\hat{A}}[\nu[f]] := \int_{S^1} f(z) z^2 \underline{\hat{A}}[\nu(z)] \frac{dz}{2\pi iz}
\end{equation}
makes sense, and is again valued in $\cA(J)$.

By Lemma~\ref{lem: vertex algebra fields in annuli},
the sesquilinear form on $\underline{A}^{-1}V$ associated to
$\underline{\hat{A}}[\nu[f]]$ satisfies
\begin{equation}\label{eq: nuf versus Y(nu,f)}
\langle \underline{\hat{A}}[\nu[f]] \xi, \eta \rangle = \langle Y(\nu,f) \underline{A}\xi, \underline{A}\eta \rangle.
\end{equation}
Pick $\underline B\in \Ann_c$ so that $r^{L_0}=\underline A \hspace{.5mm}\underline B$
and set $\xi_0 := \underline{B} \Omega \in \underline{A}^{-1}V$, so as to have $\underline A\xi_0=\Omega$.
By \eqref{eqn: S and T agree on Omega}, we then have
\[
\langle \underline{\hat{A}}[\nu[f]] \xi_0, \eta \rangle 
= \langle Y(\nu,f) \Omega, \underline{A}\eta \rangle
= \langle T(f) \Omega, \underline{A}\eta \rangle
= \langle \underline{A}^*T(f)\underline{A} \xi_0, \eta \rangle
\]
for all $\eta \in \underline{A}^{-1}V$.
As $\underline{A}^{-1}V$ is dense in $H_0$, it follows that
\[
\underline{\hat{A}}[\nu[f]] \xi_0 = \underline{A}^*T(f)\underline{A} \xi_0.
\]
The product $\underline{A}^* T(f) \underline{A}$ is densely defined by Lemma~\ref{lem basic property of Ann^(le r^(ell_0))}.
Since $\underline{A}\in \cA(J)$, and since $T(f)$ is affiliated with $\cA(J)$ (Proposition~\ref{prop: exist: stress energy tensor}),
the closure of $\underline{A}^* T(f) \underline{A}$ is also affiliated with $A(J)$.
Recall that we also have $\hat A[\nu[f]] \in \cA(J)$ by \eqref{eqn: Anuf is in A(J)}.

The vector $\xi_0$ is separating for $\cA(J)$ by the lemma below. 
It follows that, upon taking closures, we have $\underline{A}^*T(f)\underline{A} = \underline{\hat{A}}[\nu[f]]$.
By \eqref{eq: nuf versus Y(nu,f)}, the expressions
$\underline{A}^*T(f)\underline{A}$ and $\underline{A}^*Y(\nu,f)\underline{A}$ thus agree as sesquilinear form on $\underline{A}^{-1}V$,
and therefore
\begin{equation}\label{eqn: T(f)u = Y(nu,f)u}
T(f)u = Y(\nu,f)u
\end{equation}
for every $u \in V$. (This is a priori an equality of vectors in $\widehat V$, but the energy bounds for $T(f)$ ensure that this vector in fact lies in $H_0(\bbD)$.\footnote{See \cite[\S6]{CKLW18} for a general discussion of polynomial energy bounds, and \cite[Prop. 2.1]{GoWa85}\cite[\S6]{ToledanoLaredo99} for the specific case of $T(f).$})
The function $f$ in \eqref{eqn: T(f)u = Y(nu,f)u} was initially required to be supported in some interval $I$; using a partition of unity we have the same identity for every $f \in C^\infty(S^1)$.
Setting $f(z) = z^n$, we see that the modes of $T(z)$ and $Y(\nu,z)$ coincide, as desired:
\[
\qquad\qquad S_n=L_n,\qquad \forall n\in\bbZ.
\]

By Proposition~\ref{prop: exist: stress energy tensor}, the modes of $T(z)$ satisfy the Virasoro relations.
The same therefore holds for the modes $S_n$ of $Y(\nu,z)$.
By definition, $L_0$ is the grading operator for $V$, the same therefore holds for the zero-mode $S_0$ of $Y(\nu,z)$.
Finally, $Y(S_{-1}v,z) = Y(L_{-1}v,z) = \frac{d}{dz} Y(v,z)$ by Lemma~\ref{lem: T is L-1}.
This finished the proof that $\nu = L_{-2} \Omega$ is a conformal vector for $V$.
\end{proof}

We conclude this section with a lemma that was used in the above proof.

\begin{lem}\label{lem: annulus times vacuum is separating}
Let $J \subset S^1$ be an interval,
let $\underline{A} \in \Ann_c$, and let $\xi_0 := \underline{A}\Omega \in H_0(\bbD)$.    
Then $\xi_0$ is a separating vector for the action of $\cA(J)$ on $H_0$. 
\end{lem}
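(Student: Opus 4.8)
The plan is to factor $\underline{A}$ as a conformal weld of two lifted annuli, one localized in $J'$ and one localized in $J$, and then run a short commutation argument exploiting that $\Omega$ is separating for $\cA(J)$. Write $\xi_0=\pi_0(\underline{A})\Omega$, where $\pi_0:\Ann_c\to B(H_0)$ is the representation \eqref{eq: pi_0:Ann_c to B(H_0)}.

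First I would produce the geometric decomposition. Realizing the underlying annulus as $A=D_{out}\setminus \mathring D_{in}\subset\bbC$ with boundary curves $C_{in}=\partial_{in}A$ and $C_{out}=\partial_{out}A$, I would choose an intermediate smooth Jordan curve $C_{mid}$ inside $A$ that coincides with $C_{out}$ over $J$ and with $C_{in}$ over $J'$, interpolating in collars around the two endpoints of $J$. Letting $A_1$ be the annulus between $C_{mid}$ and $C_{out}$, and $A_2$ the annulus between $C_{in}$ and $C_{mid}$, with boundary parametrizations inherited from $\varphi^A_{out}$, $\varphi^A_{in}$ together with a single common parametrization of $C_{mid}$, one gets $A=A_1\cup A_2$ with $A_1$ thin over $J$ and $A_2$ thin over $J'$. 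In the sense of Definition~\ref{def: localized annuli} this says precisely that $A_1$ is localized in $J'$ and $A_2$ is localized in $J$. Lifting each factor into $\Ann_c(J')$ respectively $\Ann_c(J)$ and absorbing the leftover central scalar in $\bbC^\times$ into the first factor, I obtain $\underline{A}=\underline{A}_1\cup\underline{A}_2$ with $\underline{A}_1\in\Ann_c(J')$ and $\underline{A}_2\in\Ann_c(J)$.

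Then I would run the algebra. Since $\pi_0$ maps $\Ann_c(J')$ into $\cA(J')$ and $\Ann_c(J)$ into $\cA(J)$, the operators $\pi_0(\underline{A}_1)$ and $\pi_0(\underline{A}_2)$ lie in $\cA(J')$ and $\cA(J)$; by Lemma~\ref{lem: annuli have dense image} both are injective with dense image. By Haag duality $\cA(J')=\cA(J)'$, so $\pi_0(\underline{A}_1)$ commutes with every $x\in\cA(J)$. Now suppose $x\in\cA(J)$ satisfies $x\xi_0=0$. Then
\[
0=x\,\pi_0(\underline{A}_1)\,\pi_0(\underline{A}_2)\,\Omega=\pi_0(\underline{A}_1)\,x\,\pi_0(\underline{A}_2)\,\Omega,
\]
and injectivity of $\pi_0(\underline{A}_1)$ gives $x\,\pi_0(\underline{A}_2)\,\Omega=0$. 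As $x\,\pi_0(\underline{A}_2)\in\cA(J)$ and $\Omega$ is separating for $\cA(J)$ by the Reeh–Schlieder theorem, this forces $x\,\pi_0(\underline{A}_2)=0$; finally, since $\pi_0(\underline{A}_2)$ has dense image and $x$ is bounded, $x=0$. Hence $\xi_0$ is separating for $\cA(J)$.

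The algebra chase is routine, so the only real content — and the step I would treat most carefully — is the geometric decomposition: exhibiting $C_{mid}$ as an honest smooth Jordan curve whose two sides are bona fide elements of $\Ann$ (including in the degenerate cases where $A$ already touches its own boundary), checking that the two resulting factors are genuinely localized in $J'$ and in $J$ with compatible parametrizations along $C_{mid}$, and verifying that their weld reproduces $\underline{A}$ exactly in the central extension $\Ann_c$.
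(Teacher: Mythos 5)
Your route is genuinely different from the paper's, and the algebraic half of it is sound: given a factorization $\underline{A}=\underline{A}_1\cup\underline{A}_2$ with $\pi_0(\underline{A}_1)$ injective and lying in $\cA(J)'$, and $\pi_0(\underline{A}_2)$ lying in some local algebra with dense image, the chain $x\xi_0=0\Rightarrow x\,\pi_0(\underline{A}_2)\Omega=0\Rightarrow x\,\pi_0(\underline{A}_2)=0\Rightarrow x=0$ is correct, using Haag duality, Reeh--Schlieder, and Lemma~\ref{lem: annuli have dense image}. The problem is the geometric decomposition, which as stated does not exist when $A$ is thick. If $A_1$ is localized in $J'$, then $\varphi^{A_1}_{in}$ and $\varphi^{A_1}_{out}=\varphi^{A}_{out}$ must agree on the \emph{closed} interval $(J')'=J$, so the common parametrization $\psi$ of $C_{mid}$ satisfies $\psi(p)=\varphi^{A}_{out}(p)$ at the endpoints $p\in\partial J$; if simultaneously $A_2$ is localized in $J$, then likewise $\psi(p)=\varphi^{A}_{in}(p)$ at those same endpoints. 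For a thick annulus $\varphi^{A}_{in}(p)\in\partial_{in}A$ and $\varphi^{A}_{out}(p)\in\partial_{out}A$ are distinct points, so the two conditions are incompatible, and your ``interpolation in collars around the endpoints of $J$'' has nowhere to live, since $J\cup J'=S^1$. The fix is to give up exact localization on one side only: keep $A_1$ localized in $J'$ (that is the side you actually need, for the commutation with $\cA(J)$), let $\psi$ agree with $\varphi^{A}_{in}$ only on a compact subinterval $\tilde{J}'\subset\mathring{J'}$, and interpolate inside $J'\setminus\tilde{J}'$. Then $A_2$ is localized in the proper interval $\tilde{J}:=(\tilde{J}')'\supsetneq J$, and your argument still closes, because $x\,\pi_0(\underline{A}_2)\in\cA(\tilde{J})$ and $\Omega$ is separating for $\cA(\tilde{J})$ as well.

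For comparison, the paper's proof avoids the cutting entirely: applying the Riemann mapping theorem to the disc $A\cup\bbD$, one writes $\underline{A}=\underline{\gamma}\,\underline{B}$ with $\underline{B}\in\Univ$ and $\underline{\gamma}$ a lifted diffeomorphism; since univalent annuli fix the vacuum, $\xi_0=\underline{\gamma}\Omega$, and then $x\xi_0=0$ gives $(\underline{\gamma}^{-1}x\,\underline{\gamma})\Omega=0$ with $\underline{\gamma}^{-1}x\,\underline{\gamma}\in\cA(\gamma^{-1}(J))$, so Reeh--Schlieder finishes. That argument needs neither Haag duality nor the injectivity/dense-image lemma, and replaces your most delicate step by a one-line uniformization. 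Your approach, once repaired as above, is a legitimate alternative, but it is strictly heavier.
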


\begin{proof}
Applying the Riemann uniformization theorem to the disc $A \cup \bbD$, we may write $\underline{A} = \underline{\gamma} \underline{B}$, for some $\underline{B} \in \Univ$, and $\underline{\gamma}$ a lift of some diffeomorphism $\gamma\in\Diff(S^1)$.
By \eqref{eq:  A Omega=Omega}, we then have $\xi_0 = \underline{A}\Omega = \underline{\gamma}\Omega$.

If $x\xi_0 = 0$ for some $x \in \cA(J)$, then
    \[
\big(\underline{\gamma}^{-1}x\underline{\gamma}\big)\Omega = \underline{\gamma}^{-1}x\xi_0 = 0.
    \]
    As $\underline{\gamma}^{-1}x\underline{\gamma} \in \cA(\gamma^{-1}(J))$, it follows from the Reeh-Schlieder theorem that
$\underline{\gamma}^{-1}x\underline{\gamma} = 0$,
    and hence that $x=0$ as well.
\end{proof}

\subsection{Unitarity of the vertex algebra associated to a conformal net}\label{sec: unitarity}

We have shown that the vector space $V:=H_0^{f.e.}\subset H_0$ of finite energy vectors inside the vacuum sector of a conformal net
is a vertex operator algebra.
Its state-field correspondence is given by $Y(a,z)b = | a(z)b(0) \rangle$,
and its conformal net is $\nu=L_{-2}\Omega$. Our next goal is to equip $V$ with the structure of a unitary vertex operator algebra.

Recall (Definition~\ref{def: unitary vertex operator algebra}) that an inner product $\langle\,\,,\,\rangle$ and an involution $\Theta$ on a vertex operator algebra $V$ equip it with the structure of a \emph{unitary vertex operator algebra} if:\\
1. $\Theta$ is an anti-linear vertex operator algebra automorphism. Namely, it fixes $\Omega$ and $\nu$, and satisfies
$
\Theta Y(a,z) \Theta = Y(\Theta a, \overline{z})$.
2. The following equation holds:
$
\langle a, Y(u, z)b \rangle = \langle Y(e^{\bar zL_1}(-\bar z^{-2})^{L_0}\Theta u,\bar z^{-1})a,b \rangle$.

In our notations, the first condition becomes:
\begin{equation}\label{eqn: our notation Theta automorphism}
\Theta | a(z)b(0) \rangle =
| (\Theta a)(\bar{z}) (\Theta b)(0)\rangle,
\end{equation}
and the second one is (see Remark~\ref{rem: mysterious formula for adjoint of field}):
\begin{align}\label{eqn: our notation inner product invariant}
\big\langle a, | u(z) b(0) \rangle \big\rangle &= \big\langle | (\Theta u)(\bar z^{-1} ; w \mapsto (\bar z+w)^{-1}) a(0)\rangle, b \big\rangle
\\\notag
&=\big\langle | u(\bar z^{-1} ; w \mapsto \overline{(z+w)}^{-1}) a(0)\rangle, b \big\rangle,
\end{align}
where the last expression uses the anti-holomorphic local coordinates explained in Section~\ref{sec: Covariance for orientation reversing maps}.
Note that the sesquilinear pairing $\langle\,\,,\,\rangle:V\times V\to\bbC$ is extended to a pairing 
$V\times\widehat V\to\bbC$ in the left hand side of \eqref{eqn: our notation inner product invariant}, and to a pairing
$\widehat V\times V\to\bbC$ in the right hand side of \eqref{eqn: our notation inner product invariant}.

\begin{thm}\label{thm: unitarity of VOA}
Let $\cA$ be a conformal net, and let $V$ be the associated vertex operator algebra.
Then the inner product on $V$ inherited from $H_0$ together with the involution $\Theta: V \to V$ (introduced in \eqref{eq: first Theta} and further discussed in Section \ref{sec: Covariance for orientation reversing maps}) equip $V$ with the structure of a unitary vertex operator algebra.
\end{thm}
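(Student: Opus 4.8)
The plan is to verify the two defining conditions of a unitary VOA from Definition~\ref{def: unitary vertex operator algebra}, as written out in \eqref{eqn: our notation Theta automorphism} and \eqref{eqn: our notation inner product invariant}. Positivity of the pairing and the normalization $\|\Omega\|=1$ require no work, since the inner product on $V$ is the restriction of the Hilbert space inner product on $H_0$ and $\Omega$ is a unit vector. For the first condition, that $\Theta$ is an anti-linear automorphism, I would simply invoke Lemma~\ref{lem: Theta applied to point insertions}: its $n=2$ case, with insertions at $z$ and $0$, reads $\Theta |a(z)b(0)\rangle = |(\Theta a)(\bar z)(\Theta b)(0)\rangle$, which is exactly \eqref{eqn: our notation Theta automorphism} (both sides are anti-holomorphic in $z$, so the identity in $\widehat V$ follows from the identity in $H_0(\bbD)$ on $|z|<1$). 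It then remains to check that $\Theta$ fixes $\Omega$ and $\nu$: the former is the standard fact that the modular conjugation fixes the cyclic separating vacuum, and the latter follows from $\Theta L_{-2}\Theta = L_{-2}$, a case of \eqref{eq: Th L Th= L}, together with $\Theta\Omega=\Omega$, giving $\Theta\nu = \Theta L_{-2}\Omega = L_{-2}\Omega = \nu$.

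The substance is the adjoint relation \eqref{eqn: our notation inner product invariant}, equivalently $\langle Y(v,z)a, b\rangle = \langle a, Y(v,z)^\dagger b\rangle$ with $Y(v,z)^\dagger$ as in \eqref{eq: not: Y(v,z)^dagger}. My approach reuses the doubled-annulus construction of Lemma~\ref{lem: vertex algebra fields in annuli}. Fixing $z$, I would pick $A\in\Ann^{\le r^{\ell_0}}$ (for instance a round $A_r$ with $r$ small) with a lift $\underline A$, arranged so that both $z$ and $\bar z^{-1}$ lie in $\mathring{\hat A}$, where $\hat A=A^\dagger\cup A$ and $\underline{\hat A}=\underline A^\dagger\underline A$. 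The crucial structural point is that $\hat A$ is self-conjugate, $\underline{\hat A}^\dagger=(\underline A^\dagger\underline A)^\dagger=\underline A^\dagger\underline A=\underline{\hat A}$. Combining this with the $*$-representation property of point insertions (Proposition~\ref{prop: annuli adjoints with point insertions}) and the explicit description of the dagger of a point insertion recorded at the end of Section~\ref{sec: Covariance for orientation reversing maps}, I obtain
\[
\underline{\hat A}[v(z)]^* = \underline{\hat A}^\dagger[v(z)^\dagger] = \underline{\hat A}[\tilde v(\bar z^{-1})], \qquad \tilde v = e^{\bar z L_1}(-\bar z^{-2})^{L_0}\Theta v .
\]

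With this in hand the identity falls out by pairing. Writing $a=\underline A\xi$ and $b=\underline A\eta$ with $\xi,\eta\in\underline A^{-1}V$ (which is dense and maps onto $V$ under $\underline A$, by Lemma~\ref{lem basic property of Ann^(le r^(ell_0))}), Lemma~\ref{lem: vertex algebra fields in annuli} gives $\langle\underline{\hat A}[v(z)]\xi,\eta\rangle=\langle Y(v,z)a,b\rangle$. On the other hand, moving $\underline{\hat A}[v(z)]$ across the inner product as its adjoint and applying Lemma~\ref{lem: vertex algebra fields in annuli} a second time, now to the insertion $\tilde v(\bar z^{-1})$, yields
\[
\langle\underline{\hat A}[v(z)]\xi,\eta\rangle = \langle\xi,\underline{\hat A}[\tilde v(\bar z^{-1})]\eta\rangle = \overline{\langle Y(\tilde v,\bar z^{-1})b,a\rangle} = \langle a, Y(\tilde v,\bar z^{-1})b\rangle .
\]
Equating the two expressions gives $\langle Y(v,z)a,b\rangle=\langle a, Y(\tilde v,\bar z^{-1})b\rangle$, which is precisely $Y(v,z)^\dagger=Y(e^{\bar z L_1}(-\bar z^{-2})^{L_0}\Theta v,\bar z^{-1})$ as in \eqref{eq: not: Y(v,z)^dagger}; since this holds for $z$ in an open annulus and both sides expand as Laurent series, all modes agree. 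I expect the only real friction to be bookkeeping: tracking the conjugate-linearity conventions of the inner product, confirming that $\tilde v$ lands in $V$ (so that $e^{\bar z L_1}$ acts locally nilpotently and harmlessly), and matching the self-conjugate round annulus $\hat A$ in standard coordinates with the coordinate change underlying \eqref{eq: not: Y(v,z)^dagger} (compare Remark~\ref{rem: mysterious formula for adjoint of field}). None of these should be a genuine obstacle, as every geometric ingredient has already been prepared in the preceding sections.
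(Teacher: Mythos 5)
Your proposal is correct and follows essentially the same route as the paper: the automorphism part is handled identically (Lemma~\ref{lem: Theta applied to point insertions} together with \eqref{eq: Th L Th= L}), and your adjoint identity rests on exactly the two ingredients the paper uses, namely the self-conjugate annulus $\{r\le|\zeta|\le r^{-1}\}$ straddling the unit circle and the $*$-representation property of Proposition~\ref{prop: annuli adjoints with point insertions}. The only difference is organizational: you convert $\langle \underline{\hat A}[v(z)]\xi,\eta\rangle$ into $\langle Y(v,z)\underline A\xi,\underline A\eta\rangle$ by citing Lemma~\ref{lem: vertex algebra fields in annuli} twice, whereas the paper writes out the equivalent bookkeeping directly with the rescaling unitaries $U,U'$; both are valid.
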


\begin{proof}
We first show that $\Theta$ is a vertex operator algebra automorphism.
It fixes the vacuum by construction.
To see that it fixes the conformal vector $\nu$, recall from \eqref{eq: Th L Th= L} that $\Theta L_{-2}\Theta=L_{-2}$.
It follows that
\[
\Theta\nu = \Theta L_{-2}\Omega = \Theta L_{-2} \Theta\Omega = L_{-2}\Omega = \nu.
\]
Finally, $\Theta$ satisfies condition \eqref{eqn: our notation Theta automorphism} by Lemma \ref{lem: Theta applied to point insertions}, and is therefore a vertex operator algebra automorphism.

To complete the proof, we must verify \eqref{eqn: our notation inner product invariant}.
Pick $r$ such that $0<r<\min(|z|,|z^{-1}|)$.
Let $D$ be the disc of radius $r$,
$D'$ the disc of radius $r^{-1}$, and $A := D' \setminus \mathring D$, with standard lift $\underline A\in \Ann_c$.
The map $w\mapsto \bar{w}^{-1}$ induces an isomorphism $A^\dagger \stackrel{\scriptscriptstyle\simeq}\to A$.
By Proposition~\ref{prop: annuli adjoints with point insertions}, the adjoint of $\underline{A}[u(z)]$ is therefore given by
$\underline{A}[u(\bar z^{-1} ; w \mapsto \overline{(z+w)}^{-1}]$.
Letting $U:H_0(D) \to H_0(\bbD)$ and $U':H_0(D') \to H_0(\bbD)$ be the unitaries associated to the maps $w \mapsto r^{-1} w$ and $w \mapsto r w$, we compute:
\begin{align*}
\big\langle a, | u(z) b(0) \rangle_{\widehat V} \big\rangle
&=
\big\langle r^{2L_0}a, | u(z) b(0) \rangle_{D'} \big\rangle_{H_0(D')}\\
&=
\big\langle r^{2L_0}a, A[u(z)] |  b(0) \rangle_{D} \big\rangle_{H_0(D')}\\
&=
\big\langle r^{2L_0}a, U^{\prime *} \underline{A}[u(z)] U |  b(0) \rangle_{D} \big\rangle_{H_0(D')}\\
&=
\big\langle 
\underline{A}[u(\bar z^{-1} ; w \mapsto \overline{(z+w)}^{-1}]
U' | r^{2L_0} a(0) \rangle_{D'}, U |  b(0) \rangle_{D} \big\rangle_{H_0(\bbD)}\\
&=
\big\langle 
\underline{A}[u(\bar z^{-1} ; w \mapsto \overline{(z+w)}^{-1}]
U | a(0) \rangle_{D'}, U' | r^{2L_0} b(0) \rangle_{D} \big\rangle_{H_0(\bbD)}\\
&=
\big\langle 
U^{\prime *}\underline{A}[u(\bar z^{-1} ; w \mapsto \overline{(z+w)}^{-1}]U
 | a(0) \rangle_{D},  r^{2L_0} b \big\rangle_{H_0(D')}\\
&=
\big\langle 
{A}[u(\bar z^{-1} ; w \mapsto \overline{(z+w)}^{-1}]
 | a(0) \rangle_{D},  r^{2L_0} b \big\rangle_{H_0(D')}\\
&=
\big\langle | u(\bar z^{-1} ; w \mapsto \overline{(z+w)}^{-1}) a(0) \rangle_{D'}, r^{2L_0} b \big\rangle_{H_0(D')}\\
&=\big\langle | u(\bar z^{-1} ; w \mapsto \overline{(z+w)}^{-1}) a(0)\rangle_{\widehat V}, b \big\rangle,
\end{align*}
where the third and seventh equalities hold by Lemma~\ref{lem: parametrized vs unparametrized annuli -- with point fields}.
\end{proof}

\section{The conformal net to VOA correspondence}\label{sec: smeared fields}

In their landmark paper \cite{CKLW18}, Carpi-Kawahigashi-Longo-Weiner introduced the notion of a \emph{strongly local} unitary VOA, and explained how to associate a conformal net $\cA_V$ to a strongly local VOA $V$.
We shall work here with a slight weakening of strong locality called \emph{AQFT-locality} (Definition~\ref{def: AQFT-local voa}), which was shown in \cite[Prop. 4.8]{RaymondTanimotoTener22} to be sufficient for constructing a conformal net $\cA_V$
(the construction in \cite{RaymondTanimotoTener22} is explained in \eqref{eqn: cn from smeared fields} below, and is formally identical to the one in \cite{CKLW18}).

The main result of this section is
Theorem~\ref{thm: inverse constructions}. In it, we prove that for every conformal net $\cA$, the associated unitary VOA $V_\cA$ (constructed in Section~\ref{sec: The vertex algebra associated to a conformal net}) is AQFT-local, and that $$\cA=\cA_{V_\cA}$$
(where $\cA_{V_\cA}$ denotes the conformal net associated by \eqref{eqn: cn from smeared fields} to the unitary VOA $V_\cA$).
We also prove that for every AQFT-local unitary VOA $V$ we have $$V=V_{\cA_V}$$
(where $V_{\cA_V}$ denotes the unitary VOA associated by the construction in Section~\ref{sec: The vertex algebra associated to a conformal net} to the conformal net $\cA_V$).

We now describe what it means for a unitary VOA to be AQFT-local.
Let $V$ be a unitary VOA.
Given $v\in V$ and $f \in C^\infty(S^1)$, generalising \eqref{eq:  Y(nu,f)}, we have the \emph{smeared field}
\[
Y(v,f)=\int_{S^1} f(z) Y(z^{L_0} v,z) \, \frac{dz}{2 \pi i z}
=\int_{S^1} f(z) z^{L_0} Y(v,1) z^{-L_0} \, \frac{dz}{2 \pi i z}:V\to \widehat V.
\]
By \cite{RaymondTanimotoTener22}, 
$Y(v,f)u\in H_0$ for every $u\in V$, and $Y(v,f)$ is a closable operator on $H_0$.

\begin{remark}
It was shown in \cite{RaymondTanimotoTener22} that the map $Y(v,f):V\to H_0$ lands in the subspace $H_0^\infty\subset H_0$ of smooth vectors for the operator $L_0$.
Assuming $V$ satisfies polynomial energy bounds, it was further shown in \cite{CKLW18} that this map extends by continuity to a map $Y(v,f):H_0^\infty\to H_0^\infty$ (see loc.~cit.~for the definition of polynomial energy bounds).
For our purposes, it will be enough to just know that $Y(v,f):V\to H_0$ is a closable operator on $H_0$.
\end{remark}

\begin{defn}
Let $V$ be a unitary VOA.
For a vector $v\in V$ and a smooth function $f$ on $S^1$, we define $Y(v,f)$ to be the closure of $\int_{z\in S^1} f(z) Y(z^{L_0} v,z) \frac{dz}{2 \pi i z}$ on the domain of finite energy vectors.
\end{defn}

Given a collection $\{A_i\}_{i\in\cI}$ of unbounded operators on some common Hilbert space,
we write $\mathsf{vN}\{A_i :i\in\cI\}$ for the von Neumann algebra they generate.
By definition, it is the smallest von Neumann algebra such that each $A_i$ is affiliated with it. Two closed unbounded operators $A$ and $B$ on the same Hilbert space are said to \emph{commute strongly} if 
$\mathsf{vN}\{A\}$ and $\mathsf{vN}\{B\}$ commute.
Following \cite{CKLW18}, a unitary VOA is called \emph{strongly local} if it satisfies polynomial energy bounds, and the operators $Y(v_1,f_1)$ and $Y(v_2,f_2)$ commute strongly whenever $f_1$ and $f_2$ have disjoint supports. We will use a slightly weaker definition, which does not include the energy polynomial bounds condition:

\begin{defn}\label{def: AQFT-local voa}
A unitary VOA is called \emph{AQFT-local} if the operators $Y(v_1,f_1)$ and $Y(v_2,f_2)$ commute strongly whenever $f_1$ and $f_2$ have disjoint supports.
\end{defn}

If $V$ is AQFT-local, then by \cite[Prop. 4.8]{RaymondTanimotoTener22} (proven first in \cite{CKLW18} under the assumption of polynomial energy bounds) the assignment
\begin{equation}\label{eqn: cn from smeared fields}
I \,\,\mapsto\,\, \mathsf{vN}\big\{ Y(v,f) : \mathrm{supp}(f) \subseteq I\big\} 
\end{equation}
defines a conformal net $\cA_V$ on the Hilbert space completion of $V$.

The key fact that we will need, proven in Proposition~\ref{pro: smeared fields are affiliated} below, is that when a unitary VOA $V$ is constructed from a conformal net as in Section~\ref{sec: The vertex algebra associated to a conformal net}, the smeared fields are affiliated with the local algebras of the conformal net. 
In order to prove this, we will need to establish the boundedness of certain products of smeared fields and annuli.

Recall that $r^{\ell_0}=\bbD\setminus r\mathring \bbD$, and that $\Ann^{\le r^{\ell_0}}$ denotes the set of annuli $A$ such that $\exists B \in \Ann$ s.t.~$r^{\ell_0}=AB$ (Definition~\ref{def; Ann rl0}).
Let $\Ann^{\le ?^{\ell_0}}:=\bigcup_{r>0}\Ann^{\le r^{\ell_0}}$.
An annulus $A$ lies in $\Ann^{\le ?^{\ell_0}}$
iff its inverse outgoing boundary parametrization
$\varphi_{out}^{-1}:\partial_{out} A\to S^1$
extends to a (holomorphic) embedding $A\hookrightarrow\bbD\setminus \{0\}$.
We identify $\Ann^{\le ?^{\ell_0}}$ with the set of such embedded annuli.
By Lemma~\ref{lem basic property of Ann^(le r^(ell_0))}, for any lift $\underline{A} \in \Ann_c$ of $A\in \Ann^{\le ?^{\ell_0}}$, the subspace $\underline{A}^{-1}V$ is dense in $H_0$,
hence $Y(v,f)\underline{A}$ is densely defined.
The next result describes circumstances under which this operator is bounded:

\begin{prop}\label{prop: smeared field times annulus is bounded}
Let $\cA$ be a conformal net, and $V$ its associated unitary VOA.
Let $A\in \Ann^{\le ?^{\ell_0}}$ be embedded in $\bbD$ as above,
let $\underline{A} \in \Ann_c$ be a lift,
and let $f \in C^\infty(S^1)$ be a function whose support is disjoint from $\partial_{in}A$:
\begin{equation}\label{eq: fig of A with supp f}
\begin{tikzpicture}[baseline={([yshift=-.5ex]current bounding box.center)},rotate=-35]
	\coordinate (a) at (120:1cm);
	\coordinate (b) at (240:1cm);
	\coordinate (c) at (180:.25cm);
	\fill[fill=red!10!blue!20!gray!30!white] (0,0) circle (1cm);
	\draw (0,0) circle (1cm);
	\fill[fill=white] (a)  .. controls ++(210:.6cm) and ++(90:.4cm) .. (c) .. controls ++(270:.4cm) and ++(150:.6cm) .. (b) -- ([shift=(240:1cm)]0,0) arc (240:480:1cm);
	\draw ([shift=(240:1cm)]0,0) arc (240:480:1cm);
	\draw (a) .. controls ++(210:.6cm) and ++(90:.4cm) .. (c);
	\draw (b) .. controls ++(150:.6cm) and ++(270:.4cm) .. (c);
	\draw (150:1.05cm) -- (150:1.15cm);
	\draw (210:1.05cm) -- (210:1.15cm);
	\draw (150:1.1cm) arc (150:210:1.1cm);
	\node[left] at (180:1.1cm) {\scriptsize{$\mathrm{supp}(f)$}};
	\node[left, xshift=-2.5, yshift=7, scale=.9] at (c) {$A$};
\end{tikzpicture}
\end{equation}
Then $Y(v,f)\underline{A}$ is bounded.

If $\underline{A}$
is furthermore localised in some interval $I$, then $Y(v,f)\underline{A} \in \cA(I)$.
\end{prop}
\begin{proof}
Assume without loss of generality that $v$ is homogeneous of conformal dimension $n$.
Pick $r<1$ close enough to $1$ so that the map $z \mapsto r z$ maps the support of $f$ into the interior of $A$.
The operator 
\[
T_r := \int_{\abs{z}=1} f(z)z^{n}\underline{A}[v(r z)]  \, \frac{dz}{2 \pi iz}
\]
is bounded.
We will first show in Step 1 below that $\lim_{r \to 1^-} T_r$ converges pointwise to $Y(v,f)\underline{A}$ on a dense subspace of $H_0$.
We will separately show in Step 2 that $\lim_{r \to 1^-} T_r$ exists in the norm topology on $B(H_0)$.
It will follow from these two results that $Y(v,f)\underline{A}$ is bounded.

\emph{Step 1:} Let $\xi \in \underline{A}^{-1}V$, and let $u = \underline{A}\xi \in V$.
We will show that $\lim_{r \to 1^-} T_r\xi = Y(v,f)u$.
As the range of $\underline{A}$ contains $V$, we may assume without loss of generality that $u$ is homogeneous with conformal dimension $m$.
Choose $\underline{B} \in \Ann_c$ such that $\underline{A} \, \underline{B} = s^{L_0}$ for some $0 < s < 1$, and note that by Lemma~\ref{lem basic property of Ann^(le r^(ell_0))} we may write $\xi = \underline{B}u'$ with $u' \in V$.
We compute
\begin{equation}\label{eqn: Avz xi}
\underline{A}[v(z)]\xi = \underline{A}[v(z)]\underline{B}u' = \underline{AB}[v(z)]u' = | v(z) (\underline{A} \underline{B} u')(0) \rangle_{\bbD}  = Y(v,z)u
\end{equation}
where the third equality is Lemma~\ref{lem: univalent acting on point insertions} and the fourth equality is the definition of the vertex algebra $V$ and the fact that $\underline{A} \underline{B} u' = u$.
Write $v_k = v_{(k+n-1)}$, and denote the Fourier series coefficients of $f$ by $\hat f(k)$, so that $\hat f(k)=\int_{|z|=1}z^{-k}f(z)\frac{dz}{2\pi iz}$.
Note that since $u$ has conformal dimension $m$, we have $v_k u = 0$ for $k > m$.
Combining \eqref{eqn: Avz xi} with the definition of $T_r$ we get
\[
T_r \xi = \int_{\abs{z}=1}f(z)z^nY(v, r z)u   \, \frac{dz}{2\pi iz} = \sum_{k=-\infty}^m r^{-n-k} \hat f(k)\, v_k u 
= Y(v,g_r)u,
\]
where $g_r(z) := \sum_{k=-\infty}^m r^{-n-k} \hat f(k) z^k \in C^\infty(S^1)$.
Note that $\lim_{r \to 1^{-}} g_r = f$ in $C^\infty(S^1)$.
The expressions $Y(v,g)u$ are continuous in $g$ by \cite[Prop. 2.9]{RaymondTanimotoTener22}. Hence
\[
\lim_{r \to 1^{-}} T_r \xi = \lim_{r \to 1^{-}} Y(v,g_r)u = Y(v,f)u = Y(v,f)\underline{A}\xi
\]
as claimed.

\emph{Step 2:} We will show that $\lim_{r \to 1^-} T_r$ exists in the norm topology using standard results about distributional boundary values of holomorphic functions.
In order to do this, we must show that $\norm{\underline{A}[v(z)]}$ has at most polynomial growth as $z$ tends to the support of $f$.
Let $\underline{A}^\dagger \in \Ann_c$ be the conjugate annulus, with underlying annulus $A^\dagger$ embedded in $\bbC$ as the image of $A$ under the reflection $z \mapsto \overline{z}^{-1}$.
By Proposition~\ref{prop: annuli adjoints with point insertions} and \eqref{eqn: AnnV dagger operation}, the adjoint of the bounded operator $\underline{A}[v(z)]$ on $H_0$ is given by 
\[
\underline{A}[v(z)]^* = \underline{A}^\dagger[v(z)^\dagger] = \underline{A}^\dagger[\tilde v(\overline{z})^{-1}]
\]
where $\tilde v \in V$ is as in \eqref{eqn: AnnV dagger operation} ($\tilde v$ is the image of $v$ under a certain antiholomorphic change of coordinate, which itself depends antiholomorphically on $z$).
In particular, this means that the non-zero homogeneous components of $\tilde v$ lie in $V(k)$ with $k \le n$.
Let $\hat A = A^\dagger \cup A$ and let $\underline{\hat A} = \underline{A}^\dagger \, \underline{A}$.
Then for $\xi \in H_0$ and $z \in \mathring A$ we have
\begin{equation}\label{eqn: norm Avz}
\big\|\underline{A}[v(z)]\big\|^2 = \norm{\underline{A}^\dagger[\tilde v (\overline{z}^{-1})] \, \underline{A}[v(z)]} = \big\| \underline{\hat A}\big[\tilde v (\overline{z}^{-1}) v(z)\big]\big\|
\end{equation}
where the second equality is Proposition~\ref{prop: annuli adjoints with point insertions}.

We now analyze the function $\underline{\hat A}[v(z)  \tilde v(\overline{w}^{-1})]$. This is a holomorphic $B(H_0)$-valued function of $z$ and $\tilde w := \overline{w}^{-1}$, defined on the domain
$\{(z,\tilde w)\in(\hat A)^\circ \times (\hat A)^\circ\,|\, z\not =\tilde w\}$.
By Lemma~\ref{lem: modes of fields in annuli}, its Laurent expansion about $z=\tilde w$ is given by
\[
\underline{\hat A}[v(z)  \tilde v (\tilde w)] = \sum_{m \in \bbZ} \underline{\hat A}[(v_{(m)}\tilde v)(\tilde w)](z-\tilde w)^{-m-1}.
\]
Since the non-zero homogeneous components of $\tilde v$ have conformal dimension at most $n$, we have $v_{(k)} \tilde v = 0$ for $k \ge 2n$. 
Hence $(z-\tilde w)^{2n} \underline{\hat A}[v(z) \tilde v(\tilde w)]$ is holomorphic on $(\hat A)^\circ \times (\hat A)^\circ$.
It follows that for every compact $K\subset (\hat A)^\circ$, there is a constant $C=C(K)$ such that
\[
\big\| \underline{\hat A}[v(z) \tilde v(\tilde w)] \big\| \le C \abs{z- \tilde w}^{-2n} \le C \max \{ (1-\abs{z})^{-2n}, (\abs{\tilde w}-1)^{-2n}\}
\]
whenever $w,z\in K$ satisfy $\abs{z} < 1 $ and $\abs{\tilde w} > 1$.
Combining with \eqref{eqn: norm Avz}, we see that there is a neighborhood of $\supp(f)$ in $A$ on which we have
\[
\big\|\underline{A}[v(z)]\big\| \le C(1-\abs{z})^{-n}.
\]
It now follows by standard results (included below as Lemma~\ref{lem: growth at boundary} for the convenience of the reader) that $\lim_{r \to 1^-} T_r$ exists with respect to the norm topology on $B(H_0)$, completing Step~2.

Finally, when $\underline{A}$ is localized in an interval $I$ we have $\underline{A}[v(z)] \in \cA(I)$ by Lemma~\ref{lem: eqn annulus with points insertions is local}.
Hence $T_r \in \cA(I)$, and this passes to the limit $Y(v,f)\underline{A}$, as desired.
\end{proof}

In the proof of Proposition~\ref{prop: smeared field times annulus is bounded}, we used the following result:

\begin{lem}\label{lem: growth at boundary}
Let $I\subset S^1$ be an interval, and let $U\subset \bbC$ be an open set containing $I$.
Suppose that $U^-:=\{z\in U:|z|<1\}$ is simply connected, and
that $F:U^- \to X$ is a holomorphic function with values in some Banach space $X$.
If $F$ satisfies the growth condition
\[
\|F(z)\| \le C \cdot (1-\abs{z})^{-n}
\]
for some $n\in\bbN$, then the limit
\[
\lim_{r\to 1^-} \int_{z \in I} f(z) F(r z) \frac{dz}{2 \pi i z} 
\]
exists in $X$ for every $f \in C^\infty(S^1)$ with support in $I$.
\end{lem}

\begin{proof}
Let $F^{(-k)}$ denote a $k$th antiderivative of $F$ on $U^{-}$.
The growth of the norm of $F^{(-k)}(z)$, as $|z|\to 1^-$, is bounded by the growth of the $k$th antiderivative of $|1-r|^{-n}$.
Taking $k=n+1$, we learn that $F^{(-n-1)}$ admits a continuous extension to $U^-\cup I$.
Let $\tilde f(z)=f(z)/(2 \pi i z)$, and write $\tilde f^{(k)} = \partial_z^k \tilde f$, where $\partial_z = (2 \pi i z)^{-1} \partial_\theta$.
We have
\begin{align*}
\int_{z \in I} f(z)\, F(r z) \,\frac{dz}{2\pi i z}
&=
\int_{z \in I} \tilde f(z) \, F(rz) \,  dz
=
r^{-n-1}  \int_{z \in I} \tilde f^{(n+1)}(z) \, F^{(-n-1)}(rz)  \,dz.
\end{align*}
The latter visibly admits a limit as $r\to 1^-$.
\end{proof}

\begin{rem}\label{rem: dom Yvf contains range A}
    If $X$ is a closed unbounded operator and $B$ is a bounded operator such that $XB$ is densely defined, then $XB$ is  closed on its natural domain $B^{-1} \mathrm{Dom}(X)$.
    Under the hypotheses of Proposition~\ref{prop: smeared field times annulus is bounded}, the operator $Y(v,f)\underline{A}$ is  closed and bounded, and hence defined on all of $H_0$.
    It follows that the domain of $Y(v,f)$ contains the image of $\underline{A}$.
\end{rem}

\begin{rem}
    By \cite[Prop. 2.9]{RaymondTanimotoTener22}, the domain of a product $Y(v_1,f_1) \cdots Y(v_k,f_k)$ of smeared fields contains $V$.
    Thus for the class of annuli $A$ considered in Proposition~\ref{prop: smeared field times annulus is bounded}, the operator $Y(v_1,f_1) \cdots Y(v_k,f_k)\underline{A}$ is densely defined.
    One may show, using the same argument as Proposition~\ref{prop: smeared field times annulus is bounded}, that if each test function $f_j$ 
    has its support disjoint from $\partial_{in}A$,
    then the operator $Y(v_1,f_1) \cdots Y(v_k,f_k)\underline{A}$ is bounded.
\end{rem}

We can now show that when $V$ is the unitary VOA constructed from a conformal net, the smeared fields of $V$ are affiliated with the local algebras of the conformal net.
\begin{prop}\label{pro: smeared fields are affiliated}
Let $\cA$ be a conformal net, and $v\in V$ a vector in the associated VOA.
Then for $f\in C^\infty(S^1)$ with support in $I\subset S^1$, the operator $Y(v,f)$ is affiliated with $\cA(I)$.
\end{prop}
\begin{proof}
By the additivity property of the conformal net (see \cite[\S3.1]{KawahigashiLectureNotes} and references therein), it suffices to show that $Y(v,f)$ is affiliated with $\cA(J)$, for every interval $J$ containing $I$ in its interior.
Fix such an interval $J$.

Choose $\underline{A} \in \Ann^{\le r^{L_0} }_c$ localized in $J$, with $I$ contained in the boundary of the thick part of $A$ as in \eqref{eq: fig of A with supp f}.
As $\underline{A} \in \cA(J)$, the closed operator $\underline{A}^{-1}$ is affiliated with $\cA(J)$.
By Proposition~\ref{prop: smeared field times annulus is bounded}, $Y(v,f)\underline{A} \in \cA(J)$ as well.
It follows that the closure of $(Y(v,f)\underline{A})\underline{A}^{-1}$ is also affiliated with $\cA(J)$.
(This operator is densely defined by Lemma~\ref{lem: annuli have dense image} and Remark~\ref{rem: dom Yvf contains range A}.)

By Remark~\ref{rem: dom Yvf contains range A}, the domain of $Y(v,f)$ contains the image of $\underline{A}$,
so $(Y(v,f)\underline{A})\underline{A}^{-1}$ is precisely the restriction of $Y(v,f)$ to the image of $\underline{A}$.
By Lemma~\ref{lem basic property of Ann^(le r^(ell_0))}, the image of $\underline{A}$ contains $V$.
It follows that $Y(v,f)$ is the closure of $(Y(v,f)\underline{A})\underline{A}^{-1}$, hence affiliated with $\cA(J)$.
\end{proof}

We now establish the main result of this section, which says that the construction of conformal nets from AQFT-local unitary VOAs is inverse to our construction  of unitary VOAs from conformal nets:

\begin{thm}\label{thm: inverse constructions}
\begin{enumerate}[i)]
\item Let $\cA$ be a conformal net, and let $V$ be the associated unitary VOA.
Then $V$ is AQFT-local and the conformal net $\cA_V$ associated to $V$ coincides with $\cA$.

\item Let $V$ be a unitary VOA which is AQFT-local, and let $\cA_V$ be the associated conformal net on the Hilbert space completion of $V$.
Then the unitary VOA associated to $\cA_V$ is again $V$.
\end{enumerate}
\end{thm}
\begin{proof}
We first prove (i).
Let $\cA$ be a conformal net, and let $V$ be the associated unitary VOA.
Given $v_1,v_2 \in V$ and $f_1,f_2 \in C^\infty(S^1)$ supported in disjoint intervals $I_1$ and $I_2$, then $v_j[f_j]$ is affiliated with $\cA(I_j)$ by Proposition~\ref{pro: smeared fields are affiliated}. It follows that $v_1[f_1]$ and $v_2[f_2]$ strongly commute, and hence that $V$ is AQFT-local.
By \cite[Prop. 4.8]{RaymondTanimotoTener22}, 
\eqref{eqn: cn from smeared fields} yields a conformal net $\cA_V$ with the same vacuum Hilbert space $H_0$ as $\cA$, and which is diffeomorphism covariant with respect to the same representation of $\Diff_c(S^1)$ on $H_0$.
Since the local algebras $\cA_V(I)$ are generated by smeared fields supported in the interval $I$, and these smeared fields are affiliated with $\cA(I)$ (by Proposition~\ref{pro: smeared fields are affiliated}), it follows that $\cA_V(I) \subset \cA(I)$ for all intervals $I$.
Thus, by \cite[Lem. 2.2]{KaLo04}, we in fact have $\cA_V(I) = \cA(I)$, as required.

We now prove (ii).
Let $V$ be a unitary VOA which is AQFT-local, let $\cA:=\cA_V$ be the associated conformal net, and let $V_\cA$ be the unitary VOA associated to $\cA$.
Note that, by construction, the underlying vector spaces and inner products of $V$ and $V_\cA$ coincide; we must show that the VOA structures coincide as well.

By \cite[Prop. 4.8]{RaymondTanimotoTener22}, the conformal net $\cA$ is diffeomorphism covariant for the representation of $\Diff_c(S^1)$ obtained by integrating the stress-energy tensor of $V$.
The stress-energy tensor of $V_\cA$ is then obtained by differentiating that same representation of $\Diff_c(S^1)$ (Theorem~\ref{thm: unitary VOA from conformal net}). The stress-energy tensors and conformal vectors of $V$ and $V_\cA$ therefore agree.

The smeared fields associated to $V$ are affiliated with the local algebras of $\cA$ by construction.
The smeared fields associated to $V_\cA$ are affiliated with those same local algebras by Proposition~\ref{pro: smeared fields are affiliated}.
The VOAs $V$ and $V_\cA$ satisfy the hypothesis of \cite[Prop. 4.6]{RaymondTanimotoTener22}, and consequently $V = V_\cA$ as VOAs.
\end{proof}

\section{Modules}\label{sec: modules}

Let $\cA$ be conformal net, and let $V$ be the associated VOA. 
Let $K$ be a representation of $\cA$, and let $L_0$ be the generator of the rotation group $\widetilde{\mathrm{Rot}}(S^1)$, which is a positive operator on $K$ \cite[Prop. 3.8]{Weiner06}. 
Throughout this section, we assume that $L_0$ acts on $K$ with discrete spectrum and finite-dimensional eigenspaces, and we let $M:=K^{f.e.}$ be the algebraic direct sum of the eigenspaces of $L_0$.
In this section, we show that $M$ naturally admits the structure of a $V$-module (Definition~\ref{def: VOA module}).

Recall that by Proposition \ref{prop: annuli adjoints with point insertions}, the semigroup $\tAnn^{V}_c$ acts on $K$, and note that for any $r>0$ the operator $r^{L_0}$ acts invertibly on $M$.
For $0 < r < R$, let $A_{R,r} \in \tAnn_c$ be the annulus $\{r \le \abs{z} \le R\}$, equipped with its standard boundary parametrization and standard $z$-lift.

\begin{defn}\label{def: module mode}
For any vector $v\in V$ and integer $n\in\bbZ$, we define the $n$\textsuperscript{th} mode $v^M_{(n)}$ to be the operator which acts on $a \in M$ by
\begin{equation}\label{eqn: module mode definition}
v_{(n)}^M a = \oint_{\abs{z}=\rho} z^n A_{1,r}[v(z)]r^{-L_0} a \,  \, \tfrac{dz}{2\pi i},
\end{equation}
where $0<r<\rho<1$.
Since $z \mapsto A_{1,r}[v(z)]$ is holomorphic (Corollary~\ref{cor: embedded point insertions are holomorphic}), this operator is independent of the choices of $r$ and $\rho$.
\end{defn}

The operator $v_{(n)}^M$ leaves $M$ invariant:

\begin{lem}\label{lem: basic property of modes on M}
    Let $K$ be a representation of $\cA$ with the proprety that $L_0$ has discrete spectrum and finite-dimensional eigenspaces, and let $M=K^{f.e.}$.
    Then we have
    \begin{enumerate}[i)]
    \item The modes \eqref{eqn: module mode definition} define operators $v_{(n)}^M:M \to M$ which satisfy
    \begin{equation}\label{eqn: comm rel L0 vKm}
        [L_0, v^{M}_{(n)}] = (m-n-1)v^M_{(n)}.
    \end{equation}

    \item For $R > \rho > r > 0$, and $a \in M$, we have
    \[
    v_{(n)}^M a =  \oint_{\abs{z}=\rho} z^n\,R^{L_0} A_{R,r}[v(z)] r^{-L_0}a \,  \tfrac{dz}{2\pi i}.
    \]
    \end{enumerate}
\end{lem}
\begin{proof} (i). Suppose $v\in V$ has conformal dimension $m$. For $1 > s > 0$ and $a \in M$ we have 
\[
s^{L_0} A_{1,r}[v(z)]r^{-L_0}a = A_{s^{-1},r}[v(z)]r^{-L_0}a =  s^m A_{1,rs}[v(sz)]r^{-L_0} a =  s^m A_{1,rs}[v(sz)](rs)^{-L_0} (s^{L_0}a).
\]
Integrating this identity around the contour $\abs{z}=\rho$ with $1 > \rho > r$ yields
\begin{align*}
s^{L_0} v^M_{(n)}a &= s^m \oint_{\abs{z}=\rho} z^n A_{1,rs}[v(sz)](rs)^{-L_0} (s^{L_0}a)  \, \tfrac{dz}{2\pi i}\\ 
& = s^{m-n-1} \oint_{\abs{z}=s\rho} z^n A_{1,rs}[v(z)](rs)^{-L_0} (s^{L_0}a)  \, \tfrac{dz}{2\pi i}\\
& = s^{m-n-1} v^M_{(n)}s^{L_0}a.
\end{align*}
Thus, if $a$ is an eigenvector of $L_0$ with eigenvalue $k$, then $v^M_{(n)}a$ is an eigenvector with eigenvalue $k+m-n-1$.
The identity \eqref{eqn: comm rel L0 vKm}
immediately follows, as does the fact that $v_{(n)}^M$ leaves $M$ invariant.

(ii). Suppose $a\in M$ has conformal dimension $k$, and $v\in V$ has conformal dimension $m$.
Then, by part (i), we have:
\begin{align*}
R^{-L_0} v_{(n)}^M a &= R^{-k-m+n+1} \oint_{\abs{z}=\rho/R} z^n A_{1,r/R}[v(z)] \big(\tfrac{r}{R}\big)^{-L_0}a \,  \tfrac{dz}{2\pi i}\\
&= R^{n+1} \oint_{\abs{z}=\rho/R} z^n A_{R,r}[v(R z)] r^{-L_0}a \, \tfrac{dz}{2\pi i}\\
&= \oint_{\abs{z}=\rho} z^n A_{R,r}[v(z)] r^{-L_0}a \, \tfrac{dz}{2\pi i}
\end{align*}
as required.
\end{proof}

Let $Y^M:V \to \mathrm{End}(M)[[z^{\pm 1}]]$ be given by
\begin{equation}\label{eqn: def of YM}
    Y^M(v,z) := \sum_{n \in \bbZ} v_{(n)}^M z^{-n-1}.
\end{equation}
Letting $\widehat M:= \prod_{n \in \bbR} M(n)$ denote the algebraic completion of $M$ (where $M(n) = \mathrm{ker}(L_0-n)$), the expression $Y^M(v,z)a$ may be interpreted as a holomorphic function $\bbC^\times \to \widehat M$.

If $R > r > 0$ and $z \in \mathring A_{R,r}$, then 
\begin{equation}\label{eqn: YMvzu in terms of annuli}
Y^M(v,z)a = R^{L_0}A_{R,r}[v(z)]r^{-L_0}a
\end{equation}
since both sides define $\widehat M$-valued holomorphic functions on $\mathring A_{R,r}$ and their Laurent series coefficients coincide by Lemma~\ref{lem: basic property of modes on M}.
When $R=1$, the vector given by \eqref{eqn: YMvzu in terms of annuli} lies in $K$.
In the special case $K=H_0$ we have $M=V$ and it follows from \eqref{eqn: YMvzu in terms of annuli} and Lemma~\ref{lem: univalent acting on point insertions} that $Y^M(v,z)$ agrees with the vertex algebra action $Y(v,z)$.

\begin{prop}\label{prop: Kfe weak module}
    Let $\cA$ be a conformal net, let $V$ be the associated unitary VOA, and let $K$ be a representation of $\cA$.
    Assume that $L_0$ acts on $K$ with discrete spectrum and finite-dimensional eigenspaces.
    Then the action $Y^M$ defined in \eqref{eqn: def of YM} makes $M:=K^{f.e.}$ into a unitary weak\footnote{We will later show, in Corollary~\ref{cor: Kfe strong module}, that $M$ is in fact a strong module.} $V$-module.
\end{prop}
\begin{proof}
 The vacuum axiom of a weak module is clear, as is the lower truncation condition (by \eqref{eqn: comm rel L0 vKm}).
 By \cite[Prop 4.2.1 and 4.3.1]{LepowskyLi04}, the remaining two axioms follow from the Borcherds identities:
 \begin{align}
\nonumber \sum_{j = 0}^\infty \binom{m}{j} \big(u_{(n+j)}v\big)^M_{(m+k-j)}& = \sum_{j=0}^\infty (-1)^j \label{eqn: borcherds in proof} \binom{n}{j} u_{(m+n-j)}^M v_{(k+j)}^M\\
& -\sum_{j=0}^\infty(-1)^{j+n} \binom{n}{j} v_{(n+k-j)}^M u_{(m+j)}^M,
\end{align}
which we now establish.
Let $f(z,w)=z^k w^m (w-z)^n$, and let $u,v \in V$.
Fix a small $\epsilon > 0$, let $r=1-4\epsilon$, and consider $(z,w)\mapsto A_{1,r}[v(z) u(w)] f(z,w)$, which is a holomorphic $B(K)$-valued function of the variables $z,w \in \mathring A_{1,r}$ defined away from the diagonal $z=w$.
Let $a \in M$.
A standard contour integration argument (c.f. \cite[\S 3.3]{FrenkelBenZvi04}) yields
\begin{align}\label{eq: Borcherds identity (contours)}
\nonumber &\oint_{\abs{z}=1-2\epsilon}  \oint_{w\;\!:\;\!\abs{w-z}=\epsilon}
A_{1,r}[v(z)u(w)]  a \, f(z,w)  \,\tfrac{dw}{2\pi i}\,  \tfrac{dz}{2\pi i} \,\,= \\
\nonumber & \quad \oint_{\abs{z}=1-3\epsilon} \oint_{\abs{w}=1-\epsilon}
A_{1,1-2\epsilon}[u(w)] A_{1-2\epsilon,r}[v(z)]  a \, f(z,w) \, \tfrac{dw}{2\pi i} \, \tfrac{dz}{2\pi i}\\
 &\,\, -
\oint_{\abs{z}=1-\epsilon} \oint_{\abs{w}=1-3\epsilon}
 A_{1,1-2\epsilon}[v(z)]A_{1-2\epsilon,r}[u(w)]  a \, f(z,w)  \,\tfrac{dw}{2\pi i}  \,\tfrac{dz}{2\pi i}.
\end{align}

We begin by investigating the first term on the right-hand side of \eqref{eq: Borcherds identity (contours)}.
Let $a' = r^{L_0}a$.
Expanding $(w-z)^n=\sum_{j=0}^\infty (-1)^{j}\binom{n}{j}w^{n-j}z^j$ in the domain $\abs{w} > \abs{z}$ we have
\begin{align*}
    \oint_{\abs{z}=1-3\epsilon} &\oint_{\abs{w}=1-\epsilon}
A_{1,1-2\epsilon}[u(w)] A_{1-2\epsilon,r}[v(z)] a \, f(z,w) \, \tfrac{dw}{2\pi i} \, \tfrac{dz}{2\pi i}\\
& = 
 \oint_{\abs{w}=1-\epsilon}
A_{1,1-2\epsilon}[u(w)] \oint_{\abs{z}=1-3\epsilon} A_{1-2\epsilon,r}[v(z)] a \, f(z,w) \, \tfrac{dz}{2\pi i} \, \tfrac{dw}{2\pi i}\\
&=  \oint_{\abs{w}=1-\epsilon}
A_{1,1-2\epsilon}[u(w)]w^{m} \left[ \sum_{j=0}^\infty (-1)^{j}\binom{n}{j} w^{n-j}\oint_{\abs{z}=1-3\epsilon} A_{1-2\epsilon,r}[v(z)]a z^{k+j} \tfrac{dz}{2\pi i} \right] \tfrac{dw}{2\pi i}\\
&=  
\sum_{j=0}^\infty (-1)^j\binom{n}{j} \oint_{\abs{w}=1-\epsilon} A_{1,1-2\epsilon}[u(w)]w^{m+n-j} \,  (1-2\epsilon)^{-L_0} \,  v^M_{(k+j)}a' \, \tfrac{dw}{2\pi i}\\
&=  \sum_{j=0}^\infty (-1)^j\binom{n}{j} u^M_{(m+n-j)}v^M_{(k+j)}a',
\end{align*}
where we note that the sum in $j$ is finite, and the third equality uses Lemma~\ref{lem: basic property of modes on M}(ii).
The second term on the right-hand side of \eqref{eq: Borcherds identity (contours)} may be computed similarly, and we see that the right-hand side of \eqref{eq: Borcherds identity (contours)} is equal to the right-hand side of \eqref{eqn: borcherds in proof} applied to the vector $a'$.

We now turn to the left-hand sides of \eqref{eq: Borcherds identity (contours)} and \eqref{eqn: borcherds in proof}.
Expanding $w^m = (z + (w-z))^m=\sum_{j=0}^\infty\binom{m}jz^{m-j}(w-z)^j$ in the domain $\abs{z} > \abs{w-z}$ and applying Lemma~\ref{lem: modes of fields in annuli} yields 
\[
\oint_{w\;\!:\;\!\abs{w-z}=\epsilon} A_{1,r}[v(z)u(w)] z^k w^m (w-z)^n  \,\tfrac{dw}{2\pi i}
= A_{1,r}\big[ \sum_{j\ge 0} {\textstyle\binom{m}{j}} z^{m+k-j} \big(u_{(n+j)} v\big)(z) \big].
\]
Hence the left-hand side of \eqref{eq: Borcherds identity (contours)} also agrees with the left-hand side of \eqref{eqn: borcherds in proof} applied to the vector $a'$.
As $a' \in M$ was arbitrary, we have established the Borcherds identity, and conclude by \cite[Prop 4.2.1 and 4.3.1]{LepowskyLi04} that $M$ is a weak module.

We now show that $M$ is unitary.
Pick $v \in V$ and $a,b \in M$.
Fix $0<r<1$, and let $z \in \mathring A_{1,r}$.
By Proposition~\ref{prop: annuli adjoints with point insertions} and \eqref{eqn: AnnV dagger operation}, the adjoint of $A_{1,r}[v(z)]$ acting on $K$ is given by 
\[
A_{1,r}[v(z)]^* = A_{r^{-1},1}[v(z)^\dagger].
\]
We thus have:
\begin{align*}
\langle Y(v,z)a, b\rangle_K
&= \langle A_{1,r}[v(z)]r^{-L_0}a, b \rangle_K\\
&= \langle r^{-L_0}a, A_{r^{-1},1}[v(z)^\dagger])b \rangle_K\\
&= \langle a, r^{-L_0} A_{r^{-1},1}[v(z)^\dagger])b \rangle_{M,\widehat M}\\
&= \langle a, Y(\tilde v, \bar z^{-1})b \rangle_{M,\widehat M}
\end{align*}
where the first and final equalities follow from \eqref{eqn: YMvzu in terms of annuli}.
For this argument, we have assumed that $z \in \mathring A_{1,r}$, but the identity holds for all $z \in \bbC^\times$ by holomorphicity, and is also an identity of formal series.
We conclude that $M$ is a unitary weak module.
\end{proof}

At this point, we have two potentially different representations of the Virasoro algebra on $M$: one obtained by differentiating the positive-energy representation of $\tDiff(S^1)$ as in Proposition~\ref{prop: exist: stress energy tensor}, and one obtained from the modes $\nu^M_{(n)}$ of the conformal vector $\nu \in V$.
We will see in Corollary~\ref{cor: Virasoro representations on M agree} that these representations coincide, and it will follow (Corollary~\ref{cor: Kfe strong module}) that $M$ is a (strong) $V$-module.
In the following arguments, 
whenever $\tAnn_c$ acts on $K$, this always refers to the representation which arises from the action of $\cA$ on $K$, as opposed to one arising from the conformal vector.

Consider the smeared fields
\[
Y^M(v,f) := \int_{\abs{z}=1} f(z) Y^M(z^{L_0}v,z) \tfrac{dz}{2 \pi i z}.
\]
A priori, this formula defines a map $M \to \widehat M$, where $\widehat M$ is the algebraic completion of $M$. 
However, an argument identical to that in \cite[\S2.4]{RaymondTanimotoTener22} (specifically, Prop. 2.9 and Cor. 2.11 of loc.~cit.) shows that $Y^M(v,f)$ maps $M$ into $K$, and defines a closable operator on the domain $M$.

\begin{defn}\label{defn: module smeared field}
    For a vector $v \in V$ and a smooth function $f$ on $S^1$, we define $Y^M(v,f)$ to be the closure of $\int_{\abs{z}=1} f(z) Y^M(z^{L_0}v,z) \tfrac{dz}{2 \pi i z}$ on the domain $M$.
\end{defn}

Recall that when the test function $f$ is supported in an interval $I \subset S^1$, the smeared field $Y(v,f)$ is affiliated with $\cA(I)$ (Proposition~\ref{pro: smeared fields are affiliated}).
There is a natural way to apply the representation $\pi_I$ to $Y(v,f)$ and obtain an operator $\pi_I(Y(v,f))$, which is then affiliated with the von Neumann algebra $\pi_I(\cA(I))$.
Since $\cA(I)$ is a type III factor (\cite[Prop. 1.2]{GuidoLongo96}), the representation $\pi_I$ may be written $\pi_I = \operatorname{Ad} U$ for a unitary operator $U:H_0 \to K$.
The operator $\pi_I(Y(v,f))$ is then given by $UY(v,f)U^*$.
This operator does not depend on the choice of the implementing unitary $U$ because $Y(v,f)$ is affiliated with $\cA(I)$.

\begin{thm}\label{thm: CN rep applied to smeared field is smeared field}
Let $(K,\pi)$ be a representation of the conformal net $\cA$
on which $L_0$ acts with discrete spectrum and finite-dimensional eigenspaces.
And let $M$ be the associated unitary weak $V$-module (as in Proposition~\ref{prop: Kfe weak module}), where $V$ is the unitary VOA associated to $\cA$.
\begin{enumerate}[i)]
\item 
Let $A \in \Ann^{\le ?^{\ell_0}}$ (where $\Ann^{\le ?^{\ell_0}}:=\bigcup_{r>0}\Ann^{\le r^{\ell_0}}$ as in Proposition~\ref{prop: smeared field times annulus is bounded}), with its standard embedding $A\subset \bbC$ mapping $\partial_{out}A$ to $S^1$, and let 
$\underline{A} \in \tAnn_c$ be a $z$-lift of $A$.
Write $\underline{A}$ for the corresponding operator on $H_0$, and $\underline{A}^K$ for the action of $\underline{A}$ on $K$ induced by the representation of $\cA$.
Let $\xi \in K$ be such that $\underline{A}^K\xi \in M$.
Then, for $v \in V$ and $z \in \mathring{A}$, we have
\begin{equation}\label{eq: A^K[v(z)]=Y^M}
\underline{A}^K[v(z)]\xi = Y^M(v,z)\underline{A}^K\xi.
\end{equation}
If $\underline{A}$ is localised in $I\subset S^1$, we also have
\[
\pi_I(\underline{A}[v(z)])\xi = Y^M(v,z)\underline{A}^K\xi.
\]
\item If $f \in C^\infty(S^1)$ is supported in an interval $I\subset S^1$, then
\begin{equation}\label{eqn: piI Yvf = YMvf}
\pi_I(Y(v,f)) = Y^M(v,f).
\end{equation}
Moreover, the identity \eqref{eqn: piI Yvf = YMvf} uniquely characterises the $V$-module action $Y^M$.
\end{enumerate}
\end{thm}
\begin{proof}
(i)
We first establish \eqref{eq: A^K[v(z)]=Y^M} when $\underline{A}=A_{1,r}$ for some $0 < r < 1$, in which case $\xi \in M$.
By definition, $Y^M(v,z) = \sum_{n\in\bbZ} v^M_{(n)} z^{-n-1}$, where
\[
v^M_{(n)} r^{L_0} = \oint_{\abs{z}=\rho} A^K_{1,r}[v(z)] z^n \, \tfrac{dz}{2\pi i}.
\]
Equation \eqref{eq: A^K[v(z)]=Y^M} then follows from the standard formula for the Laurent coefficients of the holomorphic function $A^K_{1,r}[v(z)]\xi$.

For general $\underline{A} \in \Ann^{\le r^{\ell_0}}$, we have $r^{L_0}=\underline{A} \, \underline{B}$ for some $\underline{B} \in \tAnn_c$. 
By Lemma~\ref{lem basic property of Ann^(le r^(ell_0))} we may write $\xi = \underline{B}^K u$ for some $u \in M$.
Hence
\[
\underline{A}^K[v(z)]\xi = (\underline{A} \underline{B})^K[v(z)]u = Y^M(v,z)\underline{A}^K\underline{B}^K u = Y^M(v,z)\underline{A}^K \xi,
\]
which establishes \eqref{eq: A^K[v(z)]=Y^M}.

We finish part (i) by noting that when $\underline{A}$ is localised in an interval $I$, we then have $\pi_I(\underline{A}[v(z)])=\underline{A}^K[v(z)]$ by Lemma~\ref{lem: eqn annulus with points insertions is local}.

(ii)
Given $f \in C^\infty(S^1)$ supported in $I$, choose an interval $J$ containing $I$ in its interior, and an annulus $\underline{A} \in \tAnn_c(J) \cap \Ann^{\le ?^{\ell_0}}$ such that $I$ is disjoint from $\partial_{in}A$.

Assume without loss of generality that $v$ is homogeneous of conformal dimension $n$.
As in the proof of Proposition~\ref{prop: smeared field times annulus is bounded}, let $T_r \in \cA(J)$ be the bounded operator
\[
T_r = \int_{\abs{z}=1} f(z) z^{n} \underline{A}[v(r z)] \, \tfrac{dz}{2 \pi i z} .
\]
From the proof of that proposition, $\lim_{r \to 1^{-}} T_r = Y(v,f)\underline{A}$ with convergence in the norm topology.
Similarly, set
\[
T^K_r := \pi_J(T_r) = \int_{\abs{z}=1} f(z) z^{n} \pi_J(\underline{A}[v(r z)]) \, \tfrac{dz}{2 \pi i z}.
\]
We then have
\begin{equation}\label{eqn: TKr converges SOT}
\lim_{r \to 1^{-}} T^K_r = \pi_J(\lim_{r \to 1^{-}} T_r) = \pi_J(Y(v,f)\underline{A}) = \pi_J(Y(v,f)) \pi_J( \underline{A})
\end{equation}
with convergence again in the norm topology.
In particular, this means that $\pi_J(Y(v,f))\pi_J(\underline{A})$ is bounded and that the range of $\pi_J(\underline{A})$ is contained in the domain of $\pi_J(Y(v,f))$ (as in Remark~\ref{rem: dom Yvf contains range A}).

When $\pi_J(\underline{A})\xi \in M$,
by part (i) and the fact that $\underline{A}^K = \pi_J(\underline{A})$ (Proposition~\ref{lem: local annuli lie in local algebra}), we also have 
\[
T_r^K \xi = \int_{\abs{z}=1}  f(z) z^{n} Y^M(v,rz)  \pi_J(\underline{A})\xi \, \tfrac{dz}{2 \pi i z}.
\]
Combining this with \eqref{eqn: TKr converges SOT} yields
\begin{equation}\label{eqn: other TKr formula}
\lim_{r \to 1^{-}} T^K_r \xi = \lim_{r \to 1^{-}} \int_{\abs{z}=1} f(z) z^{n} Y^M(v,rz) \pi_J(\underline{A})\xi \, \tfrac{dz}{2 \pi i z} = Y^M(v,f)\pi_J(\underline{A})\xi.
\end{equation}
The second equality is clear when the limit is interpreted in $\widehat M$, but also holds in the Hilbert space topology of $K$ as the limit converges a priori in this topology by \eqref{eqn: TKr converges SOT}.

Combining \eqref{eqn: TKr converges SOT} with \eqref{eqn: other TKr formula}, we see that $\pi_J(Y(v,f))$ and $Y^M(v,f)$ agree on $M$.
By construction, $M$ is a core for $Y^M(v,f)$. So it suffices to show that $M$ is also a core for $\pi_J(Y(v,f))$.
The range of $\underline{A}$ contains $V$
by Lemma~\ref{lem basic property of Ann^(le r^(ell_0))}, and is thus a core for $Y(v,f)$.
Hence the range of $\pi_J(\underline{A})$ is a core for $\pi_J(Y(v,f))$, and it suffices to show that the range of $\pi_J(\underline{A})$ is contained in the domain of the closure of $\pi_J(Y(v,f))|_M$.

Fix $\eta = \pi_J(\underline{A})\xi$ in the range of $\pi_J(\underline{A})$.
As $\pi_J(\underline{A})^{-1} M$ is dense, we may choose a sequence $\xi_n \in K$ such that $\pi_J(\underline{A})\xi_n \in M$ and $\xi_n \to \xi$.
Since $\pi_J(Y(v,f))\pi_J(\underline{A})$ is bounded, we have
\[
\pi_J(Y(v,f))\pi_J(\underline{A})\xi_n \to \pi_J(Y(v,f))\pi_J(\underline{A})\xi
\]
and thus $\eta = \lim \pi_J(\underline{A})\xi_n$ is in the domain of the closure of $\pi_J(Y(v,f))|_M$, as required.
Hence $\pi_J(Y(v,f))$ and $Y^M(v,f)$ agree on a common core, and thus are equal.
Finally, by the definition of a representation of a conformal net, we have $\pi_I(Y(v,f)) = \pi_J(Y(v,f))$, which completes the proof of \eqref{eqn: piI Yvf = YMvf}.

It remains to show that \eqref{eqn: piI Yvf = YMvf} uniquely characterises $Y^M$.
As argued above, when $\supp(f)\subset I$, 
the domain of $\pi_I(Y(v,f))$ contains 
the range of $\pi_J(\underline{A})$,
and in particular contains $M$.
Suppose $v\in V$ has conformal dimension $n$, and write $e_m$ for the function $e_m(z) = z^m$.
The mode $v^M_{(m)}$ is given $v^M_{(m)} = Y^M(v,e_{m-n+1})$.
Choose intervals $I_j$ whose interiors cover $S^1$, and choose a partition of unity $\chi_j$ subordinate to this cover.
Then by \eqref{eqn: piI Yvf = YMvf} we have
\[
v^M_{(m)} = \sum_j Y^M\big(v,e_{m-n+1} \chi_j\big)\big|_M= \sum_j \pi_{I_j}\big(Y(v,e_{m-n+1}\chi_j)\big)\big|_M,
\]
from which we conclude that \eqref{eqn: piI Yvf = YMvf} characterises the action $Y^M$, as claimed.
\end{proof}

\begin{cor}\label{cor: Virasoro representations on M agree}
Let $\cA$, $V$, $K$, $M$ be as in Theorem~\ref{thm: CN rep applied to smeared field is smeared field}.
The following two representations of the Virasoro algebra on $M$ coincide:
\begin{enumerate}
\item The representation of $\Vir_c$ obtained by differentiating the representation of $\tDiff_c(S^1)$ associated with the conformal net representation $K$.
\item The representation of $\Vir_c$ obtained from the modes of $Y^M(\nu,z)$.
\end{enumerate}
\end{cor}

\begin{proof} 
The Virasoro representation 
\[
T^K:C^\infty(S^1)\partial_z \to \{\text{operators on } K\}
\]
obtained by differentiating the representation of $\tDiff_c(S^1)$ is given by $T^K(f) = \pi_I(Y(\nu,f))$ when $f$ is supported in $I\subset S^1$ \cite[Prop. 2.1 and Eqn (31)]{Carpi04} (see also \cite[Thm. 6.4.6]{Weiner05}).
Hence, $T^K(f) = Y^M(\nu,f)$ by Theorem~\ref{thm: CN rep applied to smeared field is smeared field}.
It follows from a partition of unity that for any $f \in C^\infty(S^1)$ the operators $T^K(f)$ and $Y^M(\nu,f)$ agree on $M$.
In particular, the corresponding representations of the Virasoro algebra agree.
\end{proof}

\begin{cor}\label{cor: Kfe strong module}
Let $\cA$ be a conformal net and let $K$ be a representation of $\cA$ such that $L_0$ acts on $K$ with discrete spectrum and finite-dimensional eigenspaces.
Let $V$ be the VOA associated to $\cA$.
Then the construction of Proposition~\ref{prop: Kfe weak module} makes $M:=K^{f.e.}$ into a (strong) unitary $V$-module.
\end{cor}
\begin{proof}
  It was shown in Proposition~\ref{prop: Kfe weak module} that $M$ is a weak unitary module.
  Moreover, it was shown that the grading by the eigenspaces of the operator $L_0$ (obtained by differentiating the action of rotation) is compatible with the weak module structure.
  By Corollary~\ref{cor: Virasoro representations on M agree}, this operator $L_0$ agrees with the zero mode of the field $Y^M(\nu,z)$, and so $M$ is a strong module.
\end{proof}

\begin{rem}
    The results of this section go through unchanged if one only assumes that $L_0$ has discrete spectrum but potentially infinite-dimensional eigenspaces. One obtains an action of $V$ on $M$ which satisfies all of the requirements of a strong module, except for the finite-dimensionality of its $L_0$-eigenspaces.
    
    In fact, the results also go through essentially unchanged even if the operator $L_0$ is not assumed to have discrete spectrum, except that one then only obtains an admissible (or $\bbN$-gradable weak) module\footnote{An admissible $V$-module is a weak module $M$ such that there exists a grading $M=\bigoplus_{n=0}^\infty M[n]$ with the property that $v_{(n)}M[k] \subset M[k+m-n-1]$ when $v$ has conformal dimension $m$. This grading is not assumed to come from the conformal vector of $V$, and the weight spaces are not required to be finite-dimensional.}.
    The vector space $M$ which carries an action of $V$ is now defined to be $M:=\bigcup_{N > 0} p_{[0,N]}K$, where $p_S$ is the spectral projection of $L_0$ corresponding to the set $S \subset [0,\infty)$.
    The necessary $\bbN$-grading is given by $M[n]=p_{[n,n+1)}K$.
    The conclusions of Proposition~\ref{prop: Kfe weak module} and Theorem~\ref{thm: CN rep applied to smeared field is smeared field} remain valid, with only minor modification to the proofs.
\end{rem}

\def\lfhook#1{\setbox0=\hbox{#1}{\ooalign{\hidewidth
  \lower1.5ex\hbox{'}\hidewidth\crcr\unhbox0}}}

\end{document}